\newcounter{res}[section]
\numberwithin{res}{section}
\newtheorem{thm}[res]{Theorem}
\newtheorem{lem}[res]{Lemma}
\newtheorem{claim}[res]{Claim}
\newtheorem{prop}[res]{Proposition}
\newtheorem{cor}[res]{Corollary}
\theoremstyle{definition}
\newtheorem{notation}[res]{Notation}
\newtheorem{dfn}[res]{Definition}
\newtheorem{propdfn}[res]{Proposition-Definition}
\newtheorem{rmk}[res]{Remark}
\newtheorem{exa}[res]{Example}
\newtheorem{cjc}[res]{Conjecture}
\newcommand{\C}{\ensuremath{\mathcal{C}}}
\newcommand{\G}{\ensuremath{\mathcal{G}}}
\newcommand{\id}{\ensuremath{\mathrm{id}}}
\newcommand{\NN}{\mathbb N} 
\newcommand{\ZZ}{\mathbb Z} 
\newcommand{\CC}{\mathbb C} 
\newcommand{\QQ}{\mathbb Q}
\newcommand{\RR}{\mathbb R} 
\newcommand{\F}{\mathcal{F}}
\newcommand{\Ker}{\mathop{\mathrm{Ker}}\nolimits}
\newcommand{\rk}{\mathop{\mathrm{rk}}\nolimits}
\newcommand{\eps}{\varepsilon}
\def \m {^{-1}}  
\newcommand{\To}{\longrightarrow}
\newcommand{\END}[1]{\mathop{\mathrm{END}{(#1)}}\nolimits}
\renewcommand {\epsilon}{\varepsilon}
\newcommand{\trinomial}[4]{\ensuremath{
    \begin{pmatrix}
     #1 \\ #2\quad  #3 \quad #4      
    \end{pmatrix}
}}
\renewcommand\marginpar[1]{}
\newcommand\kup[1]{\ensuremath\left\langle #1 \right\rangle}
\let\oldtodo\todo
\renewcommand{\todo}[1]{{\color{blue} #1}\oldtodo[]{#1}}
\newcommand{\imagesfolder}{.}
\newcommand{\ie}{i.~e.~}
\newcommand{\sll}{\ensuremath{\mathfrak{sl}}}
\newcommand{\Usl}{\ensuremath{U_q(\mathfrak{sl}_3)}}
\newcommand{\NB}[1]{$\vcenter{\hbox{\tikz[scale=0.6]{#1}}}$}
\newcommand{\dops}[1]{\fill[red] #1 circle (0.2); \node[white] at #1 {$\pmb{+}$};}
\newcommand{\doms}[1]{\fill[blue!70!white] #1 circle (0.2); \node[white] at #1 {$\pmb{-}$};}
\newcommand{\dpm}{\begin{scope}[scale= 0.6, decoration={markings, mark=at position 0.5 with {\arrow{>}}},postaction={decorate}]
\draw[postaction={decorate}] (-1,-1) .. controls +(0, +1) and +(0,+1) .. (+1, -1); \end{scope}}
\newcommand{\dmp}{\begin{scope}[scale= 0.6, decoration={markings, mark=at position 0.5 with {\arrow{>}}},postaction={decorate}]
\draw[postaction={decorate}] (1,-1) .. controls +(0, +1) and +(0,+1) .. (-1, -1); \end{scope}}
\newcommand{\hppm}{
\begin{scope}[scale= 0.6, decoration={markings, mark=at position 0.5 with {\arrow{>}}},postaction={decorate}]
     \draw[postaction={decorate}] (0,1) -- (0,0);
     \draw[postaction={decorate}] (-1,-1) .. controls +(0, +0.5) and +(-0.50,-0.5) .. (0, 0);
     \draw[postaction={decorate}] (+1,-1) .. controls +(0, +0.5) and +(0.5,-0.5) .. (0, 0);
   \end{scope}
}
\newcommand{\hmmp}{
\begin{scope}[scale =0.6, decoration={markings, mark=at position 0.5 with {\arrow{<}}},postaction={decorate}]
     \draw[postaction={decorate}] (0,1) -- (0,0);
     \draw[postaction={decorate}] (-1,-1) .. controls +(0, +0.5) and +(-0.50,-0.5) .. (0, 0);
     \draw[postaction={decorate}] (+1,-1) .. controls +(0, +0.5) and +(0.5,-0.5) .. (0, 0);
   \end{scope}
}
\title{Categorification of the colored $\sll_3$-invariant}
\author{Louis-Hadrien Robert}
\address{Universit\"a{}t Hamburg \\ Bundesstra\ss{}e 55\\ 20146 Hamburg \\ Germany }
\email{louis-hadrien.robert@uni-hamburg.de}
\newcommand{\V}{\ensuremath{V}}
\begin{document}

\let\oldnu\nu
\renewcommand{\nu}{\eta}

\begin{abstract}
  We give  explicit resolutions of all finite dimensional, simple \(U_q(\mathfrak{sl_3})\)-modules. We use these resolutions to categorify the colored $\mathfrak{sl}_3$-invariant of framed links via a complex of complexes of graded $\ZZ$-modules.  
\end{abstract}
\maketitle
\tableofcontents

\section{Introduction}
\label{sec:introduction}

In this paper, we define a categorification of the colored $\sll_3$-invariant; that is the Reshetikhin-Turaev invariant for framed links associated with arbitrary finite dimensional $\Usl$-modules. 

A categorification of the (uncolored) $\sll_3$-invariant has been defined by Khovanov \cite{MR2100691}, the so-called $\sll_3$-homology. The construction is topological and involves webs (trivalent, bipartite, planar graphs) and foams (natural cobordisms between webs). 

Two categorifications of the colored Jones polynomial (the $\sll_2$-analogue of the colored $\sll_3$-invariant) are defined by Khovanov \cite{MR2124557}. One of them is developed by Beliakova and Wehrli \cite{MR2462446}. The main idea is to give an explicit resolution of every finite dimensional simple $U_q(\sll_2)$-module. The morphisms which appear in the resolution have a rather simple expression and hence can be interpreted topologically. Using this interpretation and the categorification of the (uncolored) Jones polynomial (\ie the Khovanov homology), one achieves the construction of a categorification of the colored Jones polynomial.

In this paper, we mimic the strategy of Khovanov in the $\sll_3$-case: we find a ``nice'' resolution for every finite-dimensional simple $\Usl$-module (see theorem~\ref{mainthm}) and we interpret this resolution in terms of diagrams and foam-cobordisms. Finally, we use the $\sll_3$-homology to defined a categorification of the colored $\sll_3$-invariant (see theorem~\ref{thm:main}):

\vspace{0.3cm}
\emph{
If $D$ is an oriented link diagram colored with $V_{\mathbf{m},\mathbf{n}}$ (a collection finite dimensional, simple $\Usl$-modules), then the 
isomorphism type of $\C_\bullet(D)$ depends only on $(\mathbf{m}$, $\mathbf{n})$ and on the framed oriented isotopy type of $D$ and we have:
\[
\chi(\C_\bullet(D)) = e^{i\pi\frac23(n_+-n_-)s(\mathbf{m}-\mathbf{n})}\kup{D},
\]
Where the exponential term is a re-normalization by a complex number of module 1 and $\kup{D}$ is the colored $\sll_3$-invariant.
}
\vspace{0.3cm}

Clark \cite{MR2482322} has proven that the $\sll_3$-homology is functorial for links cobordisms. But it is not known to be functorial (or even projectively functorial) for foam-cobordisms. In this paper, we bypass the lack of functoriality by an ad-hoc construction. 

Rose \cite{MR3029720} categorified the $\sll_3$-analogue of the Jones-Wenzl projectors. This yields as well a categorification of the colored $\sll_3$-invariant. The construction we give here is quite different: We do not use complex of infinite length because we do not need to categorify some rational coefficients. It does not mean that our construction is more computable since we intensively cable link diagrams, and this operation dramatically increases the number of crossings.

\subsection*{Organisation of the paper}
\label{sec:organisation-paper}

In a first part, we shortly present the two main ingredients needed to define the colored $\sll_3$-homology:
\begin{itemize}
\item The colored $\sll_3$-invariant in section \ref{sec:color-sll_3-invar-1}. For this purpose we recall basic facts about the finite dimensional $\Usl$-modules in section~\ref{sec:repr-sll_3}.
\item The uncolored $\sll_3$-homology in section~\ref{sec:sll_3-homology}. Our normalization differs non-trivially from the original definition since we consider framed links. 
\end{itemize}

In section~\ref{sec:tens-resol-os}, we construct an explicit resolution of every finite dimensional, simple $\Usl$-module of type $1$ by tensor products of the fundamental representation and its dual (theorem~\ref{mainthm}). From the algebraic point of view, this result is very easy, but it is the key to construct the colored $\sll_3$-homology.

In the last section, we construct the colored $\sll_3$-homology. To give an idea of the construction, we start by explaining it with the hypothesis that the $\sll_3$-homology is functorial with respect to foam-cobordism. Then we explain how to bypass this hypothesis. We use a trick due to Bar-Natan (see\cite{MR2174270}) to prove some equalities up to a sign. 

In appendix~\ref{sec:complexes-cubes}, we set up a general framework to deal with the sign indeterminacy introduced in section~\ref{sec:categ-color-sll_3}.

\subsection*{Acknowledgments}
\label{sec:acknowledgments}
The author wishes to thank Mikhail Khovanov, Scott Carter, David Clark and Matt Hogancamp for enlightening conversations; Christian Blanchet and François Costantino for their constant supports; and the Max-Planck-Institute f\"u{}r Mathematik for its hospitality.


\section{Preliminiaries}
\label{sec:preliminiaries}

\subsection{The quantum group $U_q(\sll_3)$ and its finite dimensional representations}
\label{sec:repr-sll_3}

\begin{dfn}
The algebra $\Usl$ is the unital associative $\CC\left( q^{\frac16}\right)$ algebra generated by the elements $K_i$, $K_i\m$, $E_i$ and $F_i$ for $i\in\{1,2\}$ subjected to the following relations (for $i$ and $j$ in $\{1,2\}$ with $i\neq j$):
\begin{align*}
  &K_iK_i\m =K_i\m K_i= 1,  & &K_iK_j =K_jK_i,  \\
&K_i E_i = q^2 E_i K_i,& &K_i F_i = q^{-2} F_i K_i, \\
&K_i E_j = q E_i K_i,& &K_i F_i = q^{-1} F_i K_i, \\
&(q-q\m)(E_iF_i - F_iE_i)= K_i-K_i\m  & &E_iF_j = F_jE_i\\ 
&{E_i}^2E_j - [2] E_iE_jE_i + E_j{E_i}^2 = 0 & & F_i^2F_j - [2] F_iF_jF_i + F_j{F_i}^2 = 0 
\end{align*}
\end{dfn}
\begin{prop}[{\cite[Chapter 4.2]{kassel97:_quant}}]
  The following data turn $\Usl$ into a Hopf algebra:
  \begin{align*}
     &\Delta(K_i^{\pm1}) = K_i^{\pm1}\otimes K_i^{\pm1} & &\eps(K_i^{\pm1}) =1 && S(K_i) = K_i\m \\
     &\Delta(E_i) = E_i \otimes 1 + K_i \otimes E_i & &\eps(E_i)=0 && S(E_i) = -K_i\m E_i \\
     &\Delta(E_i) = F_i \otimes K_i\m + 1\otimes F_i & &\eps(F_i)=0 && S(F_i)= -K_i F_i \\
  \end{align*}
Furthermore, $\Usl$ admits a (virtual) universal $R$-matrix in an appropriate completion of $\Usl\otimes \Usl$. All these facts together imply that the category $U_q(\sll_3)$-$\mathsf{mod}$ of finite dimensional $U_q(\sll_3)$-modules is monoidal, autonomous and braided. 
\end{prop}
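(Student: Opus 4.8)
The plan is to verify the Hopf algebra axioms directly on the generators and then read off the categorical statements by formal arguments; the $\sll_2$-analogue is done in full in \cite{kassel97:_quant}, and the $\sll_3$-case is entirely parallel. To see that $(\Usl,\Delta,\eps,S)$ is a Hopf algebra one first checks that $\Delta$ and $\eps$, defined on generators, extend to algebra homomorphisms; by freeness this means verifying that the images under $\Delta$ (resp.\ $\eps$) of the two sides of each defining relation agree. For the relations involving only $K_i^{\pm1}$ together with a single $E_j$ or $F_j$ this is a one-line computation using $\Delta(K_i)=K_i\otimes K_i$ and the $q$-commutation rules; the relations $(q-q^{-1})(E_iF_i-F_iE_i)=K_i-K_i^{-1}$ and $E_iF_j=F_jE_i$ ($i\neq j$) take slightly more care but are direct; and the only genuinely tedious check is the quantum Serre relation $E_i^2E_j-[2]E_iE_jE_i+E_jE_i^2=0$ and its $F$-counterpart, where one applies $\Delta$, expands, and collapses the resulting tensors using the lower-degree relations and $[2]=q+q^{-1}$. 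Coassociativity and the counit axioms are then immediate on generators and propagate to all of $\Usl$ since the maps involved are algebra homomorphisms. One next checks that $S$ extends to an algebra anti-homomorphism (again only the Serre relations are nontrivial, now with products reversed), that $m\circ(S\otimes\id)\circ\Delta=\eta\circ\eps=m\circ(\id\otimes S)\circ\Delta$ — a standard argument reduces this identity to the generators $K_i^{\pm1},E_i,F_i$ — and finally that $S$ is invertible because $S^2$ is conjugation by an explicit group-like element (a product of powers of the $K_i$), hence an automorphism.

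For the quasi-triangular structure there is a real subtlety, signalled by the word ``virtual'': no honest element of $\Usl\otimes\Usl$ can serve as a universal $R$-matrix, because its Cartan part must act on a tensor product of weight vectors of weights $\lambda,\mu$ by the scalar $q^{(\lambda,\mu)}$, which is not realised by a finite tensor (this is also where the sixth root of $q$ enters). One therefore works in a suitable completion and writes $R=\Theta\cdot\kappa$, with $\kappa$ the Cartan part just described and $\Theta\in\Usl_{\geq 0}\,\widehat{\otimes}\,\Usl_{\leq 0}$ the quasi-$R$-matrix, an infinite sum over the positive root lattice of explicit products of divided powers of the $E_i$ and $F_i$. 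One then verifies the quasi-triangularity identities $\Delta^{\mathrm{op}}(x)=R\,\Delta(x)\,R^{-1}$, $(\Delta\otimes\id)(R)=R_{13}R_{23}$ and $(\id\otimes\Delta)(R)=R_{13}R_{12}$: the first is built into $\Theta$ by construction as the intertwiner between $\Delta$ and $\Delta^{\mathrm{op}}$, and the other two reduce to a recursion satisfied by the coefficients of $\Theta$. I expect this to be the main obstacle, since it is the step that genuinely requires a completion and involves the most delicate $q$-combinatorics; alternatively one may obtain $R$ from the quantum double construction applied to the positive Borel part of $\Usl$.

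Finally, the categorical consequences are formal. The bialgebra structure makes $\Usl$-$\mathsf{mod}$ monoidal, with unit the trivial module $\CC(q^{1/6})$ equipped with the action via $\eps$. The antipode turns every finite-dimensional module $V$ into one with a left dual $V^{*}=\operatorname{Hom}_{\CC(q^{1/6})}(V,\CC(q^{1/6}))$, where $\Usl$ acts by $(x\cdot f)(v)=f(S(x)v)$ and the evaluation and coevaluation maps are built from the canonical pairing; using $S^{-1}$ one gets right duals as well, so the category is autonomous. The universal $R$-matrix makes it braided, the braiding $c_{V,W}$ on $V\otimes W$ being the composite of the action of $R$ with the flip $\tau$, and the hexagon axioms following from the quasi-triangularity identities. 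Since $R$ exists only after completion, one checks that for any fixed pair of finite-dimensional modules the sum defining $\Theta$ truncates, so that $c_{V,W}$ is an honest morphism of $\Usl$-$\mathsf{mod}$.
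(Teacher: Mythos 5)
The paper gives no proof of this proposition at all — it is simply quoted from Kassel's book — and your outline reproduces precisely the standard argument that the citation points to: checking the Hopf-algebra axioms on generators (with the quantum Serre relations as the only tedious step), constructing the universal $R$-matrix as a Cartan part times a quasi-$R$-matrix in a completion, and deducing the monoidal, autonomous and braided structure on the module category formally, with the braiding well defined on finite-dimensional modules because the infinite sum truncates. Your proposal is therefore correct and takes essentially the same route as the source the paper relies on; nothing further is needed.
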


\begin{propdfn}[{\cite[Theorem 7.1.1]{kassel97:_quant}}]
  Let $n_1$ and $n_2$ be two non-negative integers and $\epsilon_1$ and $\epsilon_2$ two signs (elements of $\{-1, +1 \}$). Then there is a unique (up to isomorphism) finite dimensional\footnote{The ground field is $\CC\left(q^{\frac16}\right)$.} simple $U_q(\sll_3)$-module $\V_{n_1, n_2, \epsilon_1,\epsilon_2}$ such that there exists a non-zero vector $v$ in $\V_{n_1, n_2,\epsilon_1,\epsilon_2}$ satisfying the following conditions:
  \begin{align*}
    E_1\cdot v = 0,\qquad E_2\cdot v = 0,\qquad K_1\cdot v= \epsilon_1q^{n_1}v,\qquad\textrm{and}\qquad K_2\cdot \epsilon_2q^{n_2}v.
  \end{align*}
Such a vector $v$ is called a \emph{highest weight vector} of weight\footnote{$(n_1, n_2)^\star$ is the linear form on $\CC\left(q^{\frac16}\right) K_1 \oplus \CC\left(q^{\frac16}\right) K_2$, such that $(n_1, n_2)^\star(K_1)=n_1$ and $(n_1, n_2)^\star(K_2)=n_2$.} $(n_1, n_2)^\star$. 
The modules $\V_{0,0,\epsilon_1,\epsilon_2}$ have dimension 1 and we have the following isomorphisms for all $(\epsilon_1,\epsilon_2)$ and all $(n_1, n_2)$:
\[
\V_{n_1,n_2,+1,+1}\otimes \V_{0,0,\epsilon_1,\epsilon_2} \simeq  \V_{n_1, n_2,\epsilon_1,\epsilon_2}\simeq \V_{0,0,\epsilon_1,\epsilon_2}  \otimes \V_{n_1,n_2,+1,+1}.
\] 
This is why from now on, we consider only the modules $(\V_{n_1,n_2,+1,+1})_{(n_1, n_2)\in \NN}$ (they are said to be \emph{of type 1}). We write $\V_{n_1,n_2}:= \V_{n_1,n_2, +1,+1}$, $\V_+:= \V_{1,0}$ and $\V_-:=\V_{0,1}$. The dual of $V_{n_1, n_2}$ is isomorphic to $V_{n_2, n_1}$.
It is convenient to take the following convention: if $n_1$ or $n_2$ is negative we set: $V_{n_1, n_2}=\{0\}$.
\end{propdfn}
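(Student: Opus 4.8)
This is the quantum analogue of the highest-weight classification of simple $\sll_3$-modules, and the plan is to run the usual highest-weight machinery, as in \cite[Chapter~7]{kassel97:_quant}, and then to add the short computations behind the last two assertions.

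\emph{Verma modules and uniqueness.} To a four-tuple $(n_1,n_2,\epsilon_1,\epsilon_2)$ I would attach the Verma module $M = \Usl/I$, where $I$ is the left ideal generated by $E_1,E_2$ and by $K_i-\epsilon_iq^{n_i}$ for $i=1,2$. By the quantum PBW theorem $M$ is free of rank one over the subalgebra generated by $F_1$ and $F_2$; its canonical generator $v$ satisfies the conditions of the statement, and all weights of $M$ lie below $(n_1,n_2)^\star$ for the dominance order. Hence $M$ has a largest proper submodule, so a unique simple quotient $L$. Conversely, a finite-dimensional simple module has a weight that is maximal for the dominance order; a nonzero vector of that weight is annihilated by $E_1$ and $E_2$, so the module is a quotient of some $M$, hence isomorphic to the corresponding $L$. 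This settles uniqueness, and existence once finite-dimensionality is understood.

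\emph{Finite-dimensionality.} The technical core --- which I expect to be the main obstacle, and which is exactly the content of the cited theorem --- is that $L$ is finite-dimensional if and only if $n_1,n_2\geq 0$ and $\epsilon_1,\epsilon_2\in\{\pm1\}$. Restricting to the two copies of $U_q(\sll_2)$ generated by $(E_i,F_i,K_i)$ and using the $\sll_2$-theory gives the ``only if''; for the ``if'' one checks by a direct commutation (using the relations, including the quantum Serre relations) that $F_i^{n_i+1}v$ is again a highest weight vector, passes to the quotient of $M$ by the submodules these vectors generate, deduces integrability, and invokes the Lusztig braid operators to see that the weight set is then Weyl-invariant and bounded above, hence finite. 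I would simply cite \cite[Theorem~7.1.1]{kassel97:_quant} here rather than reproduce the argument.

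\emph{One-dimensional modules and the tensor isomorphisms.} For $n_1=n_2=0$ the assignment $E_i,F_i\mapsto 0$, $K_i\mapsto\epsilon_i$ satisfies all the defining relations --- the only one needing a check is $(q-q^{-1})(E_iF_i-F_iE_i)=K_i-K_i^{-1}$, whose two sides vanish because $\epsilon_i=\epsilon_i^{-1}$ --- so $\V_{0,0,\epsilon_1,\epsilon_2}$ is one-dimensional, and it is $\otimes$-invertible with inverse itself (as $\epsilon_i^2=1$), so tensoring with it preserves simple objects. Taking a highest weight vector $v$ of $\V_{n_1,n_2}$ and a basis vector $w$ of $\V_{0,0,\epsilon_1,\epsilon_2}$, the formulas $\Delta(E_i)=E_i\otimes 1+K_i\otimes E_i$ and $\Delta(K_i)=K_i\otimes K_i$ give $E_i\cdot(v\otimes w)=0=E_i\cdot(w\otimes v)$ and $K_i\cdot(v\otimes w)=\epsilon_iq^{n_i}(v\otimes w)$, and likewise for $w\otimes v$. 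Hence both tensor products are simple and carry a highest weight vector of the right weight, so by uniqueness each is isomorphic to $\V_{n_1,n_2,\epsilon_1,\epsilon_2}$.

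\emph{The dual, and the convention.} Finally $\V_{n_1,n_2}^\ast$, with the $\Usl$-action transported through the antipode $S$, is finite-dimensional and simple, and it is standard that the dual of a simple module of highest weight $\lambda$ has highest weight $-w_0\lambda$, where $w_0$ is the longest Weyl element. For $\sll_3$ the involution $-w_0$ is the Dynkin diagram symmetry exchanging the two fundamental weights, so it sends $n_1\omega_1+n_2\omega_2$ to $n_2\omega_1+n_1\omega_2$; together with the fact that the dual of a type-$1$ module is again of type $1$ (all its $K_i$-eigenvalues are positive powers of $q$), this yields $\V_{n_1,n_2}^\ast\simeq\V_{n_2,n_1}$ by uniqueness. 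The clause $\V_{n_1,n_2}=\{0\}$ for negative $n_1$ or $n_2$ is a definition and requires no proof.
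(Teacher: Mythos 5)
Your proposal is correct, and it takes the same route as the paper, which offers no argument of its own beyond citing \cite[Theorem 7.1.1]{kassel97:_quant}: you defer the hard classification/finite-dimensionality statement to that same citation and fill in the routine checks (the one-dimensional modules, the tensor isomorphisms via the coproduct, and the dual via $-w_0$ swapping the fundamental weights) correctly. No gaps worth flagging beyond the phrasing nitpick that the $K_i$-eigenvalues on the dual are powers of $q$ with sign $+1$ (not necessarily positive exponents), which is clearly what you meant.
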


We would like to introduce a few morphisms of $\Usl$-modules which will be useful later. We first need to introduce some bases for $V_+$ and $V_-$. Both are three dimensional, we set $V_+=\left< v^+_{-1}, v^+_{0}, v^+_{1}\right>$ and $V_- = \left<v^-_{-1}, v^-_{0}, v^-_{1} \right>$ and the structures of $\Usl$-modules are given by:
\begin{align*}
&K_1\cdot v^+_{-1} = v^+_{-1}, & & K_1\cdot v^+_{0} = q\m v^+_{0}, & & K_1\cdot v^+_{1} = q v^+_{1}, \\  
&K_2\cdot v^+_{-1} = q\m v^+_{-1}, & & K_2\cdot v^+_{0} =q v^+_{0}, & & K_2\cdot v^+_{1} = v^+_{1}, \\ 
&E_1\cdot v^+_{-1} = 0, & & E_1\cdot v^+_{0} = v^+_{1}, & & E_1\cdot v^+_{1} = 0, \\  
&E_2\cdot v^+_{-1} = v^+_{0}, & & E_2\cdot v^+_{0} =0, & & E_2\cdot v^+_{1} = 0, \\ 
&F_1\cdot v^+_{-1} = 0, & & F_1\cdot v^+_{0} = 0, & & F_1\cdot v^+_{1} = v^+_0, \\  
&F_2\cdot v^+_{-1} = 0, & & F_2\cdot v^+_{0} = v^+_{-1}, & & F_2\cdot v^+_{1} = 0, \\ 
\end{align*}
and
\begin{align*}
&K_1\cdot v^-_{-1} = q\m v^-_{-1} ,& & K_1\cdot v^-_{0} = q v^-_{0} ,& & K_1\cdot v^-_{1} = v^-_{1}, \\  
&K_2\cdot v^-_{-1} =  v^-_{-1} ,& & K_2\cdot v^-_{0} =q\m v^-_{0} ,& & K_2\cdot v^-_{1} = q v^-_{1}, \\ 
&E_1\cdot v^-_{-1} = v^-_0 ,& & E_1\cdot v^-_{0} = 0 ,& & E_1\cdot v^-_{1} = 0, \\  
&E_2\cdot v^-_{-1} = 0 ,& & E_2\cdot v^-_{0} =v^-_{1} ,& & E_2\cdot v^-_{1} = 0, \\ 
&F_1\cdot v^-_{-1} = 0 ,& & F_1\cdot v^-_{0} = v^-_{-1} ,& & F_1\cdot v^-_{1} = 0, \\  
&F_2\cdot v^-_{-1} = 0 ,& & F_2\cdot v^-_{0} = 0 ,& & F_2\cdot v^-_{1} = v^{-}_{0}, \\ 
\end{align*}

\begin{dfn}\label{dfn:Uslmorphisms}
  We consider 6 morphisms:
\begin{align*}
&b_{-+}: V_{0,0} \to V_-\otimes V_+, && b_{+-}: V_{0,0} \to V_+\otimes V_-, \\ 
&d_{-+}:  V_-\otimes V_+ \to V_{0,0}, && d_{-+}:  V_+\otimes V_- \to V_{0,0}, \\
&h_{++}^-: V_+ \otimes V_+ \to V_- &&h_{--}^+: V_- \otimes V_- \to V_+, \\
\end{align*} defined on the bases of $V_+$ and $V_-$ by (the images of all missing elementary tensors are meant to be equal to 0):
\begin{align*}
  b_{-+}(1)&= -q\m v^-_{-1}\otimes v^+_{1} + v^-_{0}\otimes v^+_{0} - q v^-_{1}\otimes v^+_{-1},\\
  b_{+-}(1)&= -q\m v^+_{-1}\otimes v^-_{1} + v^+_{0}\otimes v^-_{0} - q v^+_{+1}\otimes v^-_{-1},\\
\end{align*}
by:
\begin{align*}
  &d_{-+} (v^-_{-1}\otimes v^+_{1})= - q\m,   &&  d_{+-} (v^+_{-1}\otimes v^-_{1})= - q\m,  \\
  &d_{-+} (v^-_{0}\otimes v^+_{0}) =1,      &&    d_{+-} (v^+_{0}\otimes v^-_{0}) =1,       \\
  &d_{-+} (v^-_{1}\otimes v^+_{-1}) = -q,   &&    d_{+-} (v^+_{1}\otimes v^-_{-1}) = -q,    \\  
\end{align*}
and by:
\begin{align*}
  &h_{++}^-(v^+_{-1}\otimes v^+_{0}) = q^{-\frac12}v^-_{-1}, &   &h_{--}^+(v^-_{-1}\otimes v^-_{0}) = q^{-\frac12}  v^+_{-1}, \\
  &h_{++}^-(v^+_{0}\otimes v^+_{-1}) = - q^{\frac12}v^-_{-1}, &  &h_{--}^+(v^-_{0} \otimes v^-_{-1}) = - q^{\frac12}v^+_{-1},\\
  &h_{++}^-(v^+_{-1}\otimes v^+_{1}) =  q^{-\frac12}v^-_{0}, &  &h_{--}^+(v^-_{-1}\otimes v^-_{1}) =  q^{-\frac12} v^+_{0}, \\
  &h_{++}^-(v^+_{1}\otimes v^+_{-1}) = - q^{\frac12}v^-_{0}, &  &h_{--}^+(v^-_{1} \otimes v^-_{-1}) = - q^{\frac12}v^+_{0}, \\
  &h_{++}^-(v^+_{0} \otimes v^+_{1}) =  q^{-\frac12}v^-_{1},&  &h_{--}^+(v^-_{0} \otimes v^-_{1}) =  q^{-\frac12} v^+_{1},\\
  &h_{++}^-(v^+_{1} \otimes v^+_{0}) = - q^{\frac12}v^-_{1},&  &h_{--}^+(v^-_{1} \otimes v^-_{0}) = - q^{\frac12} v^+_{1}.\\
\end{align*}
\end{dfn}
\begin{prop}\label{prop:easyrelUsl}
The maps of definition~\ref{dfn:Uslmorphisms} are morphisms of $\Usl$-modules. The following relations hold:
  \begin{align*}
& (\id_{V_+} \otimes d_{-+}) \circ (b_{+-} \otimes \id_{V_+}) =  (d_{-+} \otimes \id_{V_+}) \circ (\id_{V_+} \otimes b_{+-}) = \id_{V_+}, \\
& (\id_{V_-} \otimes d_{+-}) \circ (b_{-+} \otimes \id_{V_-}) =  (d_{+-} \otimes \id_{V_-}) \circ (\id_{V_-} \otimes b_{-+}) = \id_{V_-} \\
& d_{+-}\circ(\id_{V_+} \otimes h_{++}^-) =    d_{-+}\circ(h_{++}^- \otimes \id_{V_+}),\\
& d_{-+}\circ(\id_{V_-} \otimes h_{--}^+) =    d_{+-}\circ(h_{--}^+ \otimes \id_{V_-}).
  \end{align*}
\end{prop}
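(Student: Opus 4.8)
The plan is to verify everything by direct computation on basis vectors, exploiting that $\Usl$ is generated as an algebra by $K_1^{\pm1},K_2^{\pm1},E_1,E_2,F_1,F_2$, so a linear map between $\Usl$-modules is a module map as soon as it commutes with the action of these six elements. I split the work into two parts: first that the six maps of Definition~\ref{dfn:Uslmorphisms} are morphisms, then the four relations.

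\emph{Part 1: the six maps are morphisms.} For the coevaluations $b_{-+}$ and $b_{+-}$, whose source $V_{0,0}$ is the trivial module, being a morphism amounts to $K_i\cdot b(1)=b(1)$, $E_i\cdot b(1)=0$ and $F_i\cdot b(1)=0$ for $i\in\{1,2\}$, where the action on a tensor product is computed through the coproduct $\Delta$. One substitutes the explicit formula for $b(1)$, expands $\Delta(K_i)$, $\Delta(E_i)$, $\Delta(F_i)$, and reads off the action from the tables defining $V_+$ and $V_-$: a weight ($K_i$-eigenvalue) count shows $b(1)$ has weight $(0,0)^\star$, and the three-term sums produced by $E_i$ and $F_i$ collapse to $0$ thanks to the signs and the powers $-q^{\pm1}$ sitting in front of the extremal tensors. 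For the evaluations $d_{-+},d_{+-}$ one checks dually that $d(K_i\cdot x)=d(x)$, $d(E_i\cdot x)=0$ and $d(F_i\cdot x)=0$ on the nine basis tensors of $V_-\otimes V_+$ (resp.\ $V_+\otimes V_-$), confirming in particular that sending the ``missing'' tensors to $0$ is consistent, which again falls out of the weight bookkeeping. For $h_{++}^-$ and $h_{--}^+$ the routine is the same: the six prescribed values together with the convention ``missing tensor $\mapsto 0$'' must intertwine each of $K_i,E_i,F_i$ on the nine basis tensors of $V_+\otimes V_+$ (resp.\ $V_-\otimes V_-$). This step can be shortened using Schur's lemma and $V_+^\vee\cong V_-$: the space of module maps $V_{0,0}\to V_-\otimes V_+$ is then one-dimensional, and likewise the space of maps $V_+\otimes V_+\to V_-$, so each given formula is automatically a scalar multiple of \emph{the} morphism once a nonzero one is known to exist; but exhibiting that one morphism still costs a computation, so little is truly saved.

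\emph{Part 2: the four relations.} The first two are the zig-zag (snake) identities, expressing exactly that $(b_{+-},d_{-+})$ and $(b_{-+},d_{+-})$ are duality data realising $V_-$ as a dual of $V_+$. To check $(\id_{V_+}\otimes d_{-+})\circ(b_{+-}\otimes\id_{V_+})=\id_{V_+}$ one evaluates on each $v^+_j$: the vector $b_{+-}(1)\otimes v^+_j$ is a sum of three triple tensors, $\id_{V_+}\otimes d_{-+}$ kills two of them on weight grounds, and returns the third as $v^+_j$ times a product of the form $(-q^{-1})(-q)=1$ or $1\cdot 1=1$; the three remaining zig-zags are identical. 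For the last two relations, both sides are morphisms $V_+^{\otimes 3}\to V_{0,0}$ (resp.\ $V_-^{\otimes 3}\to V_{0,0}$), being composites of maps already shown to be morphisms in Part~1, so by the multiplicity-one fact that $\mathfrak{sl}_3$-Clebsch--Gordan gives (namely $V_+\otimes V_+\cong V_{2,0}\oplus V_-$, and $V_-\otimes V_+$ contains $V_{0,0}$ exactly once) it suffices to compare the two sides on a single basis tensor on which at least one side is nonzero, e.g.\ $v^+_{-1}\otimes v^+_{0}\otimes v^+_{1}$; alternatively one simply runs the finite check over all twenty-seven basis tensors, where each side has at most one nonzero contribution and the matching powers $q^{\mp1/2}$ and $-q^{\pm1}$ conspire to give equality.

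\emph{Main obstacle.} There is no conceptual obstacle: the whole statement is bookkeeping. The real care lies in keeping conventions consistent — which coproduct is used for the tensor action, and the precise $q$-power and sign attached to the $K_i$-eigenvalue of the factor that gets multiplied when $E_i$ or $F_i$ acts (note along the way the obvious typo ``$\Delta(E_i)=F_i\otimes K_i^{-1}+1\otimes F_i$'', which must read $\Delta(F_i)$, and the reuse of the symbol $d_{-+}$ for what should be $d_{+-}$). It is precisely at the step ``the three-term sum collapses'' that an inconsistency would surface as a spurious factor $q-q^{-1}$ instead of a genuine cancellation, so that is the computation to run most carefully, and it is against this that the normalisation constants in Definition~\ref{dfn:Uslmorphisms} should be cross-checked.
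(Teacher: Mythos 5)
Your proposal is correct and is exactly the route the paper takes: its proof consists of the single line ``these are easy computations,'' i.e.\ direct verification on basis vectors using the coproduct, which is what you spell out (your optional Schur's-lemma shortcuts are fine but not needed). Nothing is missing; your remarks on the typos in $\Delta(F_i)$ and the duplicated label $d_{-+}$ are accurate.
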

\begin{proof}
  These are easy computations.
\end{proof}
\begin{rmk}\label{rmk:vpvmdual}
  The proposition~\ref{prop:easyrelUsl} tells in particular that $V_-$ is a (right and left) dual of $V_+$ and vice-versa, hence this fixes an isomoprhism between $V_+^*$ and $V_-$ and between $V_-^*$ and $V_+$.
\end{rmk}

\begin{prop}[Littlewood-Richardson (simple) rules]\label{prop:CGformula}
The category $\Usl$-$\mathsf{mod}$ is semi-simple and the following relations hold for $m$ and $n$ non-negative integers:
  \begin{align*}
   & V_+\otimes V_{m,n}  \simeq V_{m+1,n} \oplus V_{m-1, n+1} \oplus V_{m, n-1} \\
   & V_-\otimes V_{m,n}  \simeq V_{m,n+1} \oplus V_{m+1, n-1} \oplus V_{m-1, n} \\
   & V_{m,n} \oplus (V_{m-1,0}\otimes V_{0,n-1})
                         \simeq V_{m,0}\otimes V_{0,n} \\
   & V_{m,0} \oplus (V_{0,1} \otimes V_{m-2,0} )
                         \simeq V_+\otimes V_{m-1,0}\oplus V_{m-3,0} \\
   & V_{0,n} \oplus (V_{1,0} \otimes V_{0,n-2} )
                         \simeq V_-\otimes V_{0,n-1}\oplus V_{0,n-3} \\
  \end{align*}
\end{prop}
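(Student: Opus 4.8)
The plan is to reduce the whole statement to the classical representation theory of $\sll_3$. First I would note that, since the ground field $\CC(q^{1/6})$ has characteristic $0$ and $q$ is not a root of unity (it is transcendental over $\CC$), the category $\Usl$-$\mathsf{mod}$ is semisimple and its Grothendieck ring, equipped with the formal character map into the weight lattice, is isomorphic to that of the classical $\sll_3$; this is the standard generic-$q$ statement (see \cite{kassel97:_quant}, or the usual specialization argument). Hence it suffices to verify the five isomorphisms at the level of characters, i.e.\ for classical $\sll_3$-modules, where $V_{m,n}$ is the simple of highest weight $m\omega_1+n\omega_2$ and where the convention $V_{m,n}=\{0\}$ for $\min(m,n)<0$ corresponds precisely to the vanishing of the Weyl character attached to a weight lying on a wall of the dominant chamber.

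For the first two relations I would proceed as follows. From the explicit $K_1,K_2$-actions recalled above, $V_+$ has weights $\{\omega_1,\ \omega_2-\omega_1,\ -\omega_2\}$ (the standard representation) and $V_-$ the opposite weights (consistently with $V_-\simeq V_+^*$, see Remark~\ref{rmk:vpvmdual}). Multiplying $\mathrm{ch}(V_+)$ by $\mathrm{ch}(V_{m,n})$ and applying the Brauer--Klimyk straightening formula — valid in the quantum Grothendieck ring because it holds classically — produces exactly the three candidate summands $V_{m+1,n},\ V_{m-1,n+1},\ V_{m,n-1}$, those indexed by a wall weight being cancelled, which is exactly the stated convention. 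The relation for $V_-$ follows either from the same computation with the opposite weights, or by applying the (exact, monoidal) duality functor to the first relation, using $V_+^*\simeq V_-$ and $V_{m,n}^*\simeq V_{n,m}$. If one prefers to avoid invoking the character ring, an elementary alternative is to exhibit highest weight vectors of weights $(m+1,n)^\star,\ (m-1,n+1)^\star,\ (m,n-1)^\star$ inside $V_+\otimes V_{m,n}$, split off the corresponding simple submodules by semisimplicity, and close the argument with the dimension formula $\dim V_{a,b}=\tfrac12(a+1)(b+1)(a+b+2)$.

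The remaining three relations are then purely formal consequences. Substituting $(m,n)\mapsto(m-1,0)$ into the first relation gives $V_+\otimes V_{m-1,0}\simeq V_{m,0}\oplus V_{m-2,1}$ (the third term vanishes), and substituting $(m,n)\mapsto(m-2,0)$ into the second gives $V_{0,1}\otimes V_{m-2,0}\simeq V_{m-2,1}\oplus V_{m-3,0}$; comparing the two yields the fourth identity after adding a copy of $V_{m,0}$ to both sides, and the fifth is obtained by the symmetric argument (exchanging $V_+\leftrightarrow V_-$ and $V_{a,b}\leftrightarrow V_{b,a}$). For the third one I would use that $V_{m,0}$ and $V_{0,n}$ are the quantum symmetric powers of $V_+$ and $V_-$, so that $V_{m,0}\otimes V_{0,n}\simeq\bigoplus_{k\ge0}V_{m-k,n-k}$ — the quantum Pieri/Littlewood--Richardson decomposition, obtained for instance by iterating the first relation with an induction on $n$, or by transporting the classical identity through the Grothendieck-ring isomorphism; the telescoping difference with $V_{m-1,0}\otimes V_{0,n-1}\simeq\bigoplus_{k\ge0}V_{m-1-k,n-1-k}$ then leaves precisely $V_{m,n}$.

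I do not expect a genuine obstacle here: the content is classical $\sll_3$-combinatorics dressed in the (standard) fact that quantum $\sll_3$ at generic $q$ carries the same semisimple tensor category. The only point needing a little attention is the boundary bookkeeping — checking that the degenerate summands $V_{m,-1}$ and $V_{-1,n}$ disappear in exactly the way the convention prescribes — but this is precisely the vanishing of Weyl characters indexed by wall weights, hence automatic once the straightening formula is in place.
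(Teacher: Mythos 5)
Your proof is correct, but there is nothing in the paper to compare it against: Proposition~\ref{prop:CGformula} is stated as a standard fact (the simple Littlewood--Richardson/Pieri rules for $\sll_3$) and the paper offers no proof, simply importing it for use in the exactness arguments of Theorem~\ref{mainthm}. Your reduction — semisimplicity at generic $q$ plus the identification of the Grothendieck ring of $\Usl$-$\mathsf{mod}$ with the classical one via characters, then Klimyk straightening for the first two relations and formal substitution/telescoping for the last three — is a perfectly legitimate way to supply the missing argument, and the alternative you sketch (exhibiting highest weight vectors and using the Weyl dimension formula $\dim V_{a,b}=\tfrac12(a+1)(b+1)(a+b+2)$) is equally fine and closer in spirit to how the paper handles $\Usl$-modules elsewhere. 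Two small points of precision. First, your blanket claim that the convention $V_{m,n}=\{0\}$ for a negative index ``corresponds precisely to the vanishing of the Weyl character at a wall'' is only accurate for indices equal to $-1$; deeper negative indices (e.g.\ $V_{m-3,0}$ for $m<3$) are killed by the convention, not by wall vanishing — harmless here, since the Klimyk sum for tensoring with the minuscule $V_\pm$ only ever produces $-1$'s, and the other degenerate terms enter your argument through the zero convention directly. Second, your derivation of the fourth and fifth relations by substituting $(m-1,0)$ and $(m-2,0)$ into the first two relations is licensed only for $m\ge 2$; the cases $m\le 1$ need a separate (trivial) check, and in fact at $m=0$ the fourth relation as literally stated in the paper degenerates to $V_{0,0}\simeq 0$, which is a boundary defect of the paper's formulation rather than of your proof — the proposition is only invoked in ranges where the indices are genuinely non-negative.
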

Thanks to these formulas one can express $V_{m,n}$ in terms of tensor product of $V_+$ and $V_-$. It is convenient to introduce the trinomial coefficients:
\begin{notation}
  Let $a$, $b$ and $c$ three integers, then we define:
  \[\trinomial{a+b+c}{a}{b}{c}=
  \begin{cases}
    \displaystyle{\frac{(a+b+c)!}{a!b!c!}} &\textrm{if $a$, $b$ and $c$ are non-negative,}\\
0& \textrm{else}.
  \end{cases}
  \]
If $a$, $b$ and $n$ are three integers, we define:
\[
 {\trinomial{n}{a}{b}{\blacksquare}} = \trinomial{n}{a}{b}{(n-a-b)}.
\]
\end{notation}

Thanks to proposition \ref{prop:CGformula} and an easy recursion, we have the following corollary:
\begin{cor} \label{cor:explicitformula}
Let $m$ and $n$ be two non-negative integers, then we have the following isomorphism:
\begin{align*}&V_{m,n} \simeq \bigoplus_{\substack{(i,j,k,l) \in \NN^4 \\ \delta \in \{0,1\} }} (-1)^{\delta+i+k}
\trinomial{m-\delta-i -2j}{i}{j}{\blacksquare} 
\cdot\trinomial{n-\delta-k -2l}{k}{l}{\blacksquare}  \\ & \hspace{3cm}   
V_{1,0}^{\otimes m-2i+k-3j-\delta}\otimes V_{0,1}^{{\otimes n+i-2k-3l-\delta}}.
\end{align*}
In this expression, the terms of the sum with minus signs are meant to be on the left-hand side without minus signs. One can as well see this expression as an equality in the Grothendieck ring of $\Usl$-$\mathsf{mod}$.
\end{cor}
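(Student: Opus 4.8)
The plan is to pass to the Grothendieck ring $K_0$ of $\Usl$-$\mathsf{mod}$. By semi-simplicity (Proposition~\ref{prop:CGformula}) an isomorphism of finite dimensional modules is the same as an equality of classes, so it suffices to prove the displayed formula in $K_0$; write $P_{m,n}$ for its right-hand side. Setting $x:=[V_+]$, $y:=[V_-]$ and using that $[\,\cdot\,]$ sends tensor products to (commutative) products, this is equivalent to a polynomial identity in $\ZZ[x,y]$ under $V_+^{\otimes a}\otimes V_-^{\otimes b}\leftrightarrow x^ay^b$; adopt the convention that a trinomial coefficient with a negative entry is $0$, which is exactly compatible with $V_{a,0}=\{0\}$ for $a<0$. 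I would then introduce the two one-parameter families
\[
p_m := \sum_{i,j\geq 0}(-1)^i\trinomial{m-i-2j}{i}{j}{\blacksquare}\,x^{m-2i-3j}y^i, \qquad q_n := \sum_{k,l\geq 0}(-1)^k\trinomial{n-k-2l}{k}{l}{\blacksquare}\,y^{n-2k-3l}x^k.
\]

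The first step is the observation that $P_{m,n}=p_mq_n-p_{m-1}q_{n-1}$. Indeed, split the sum defining $P_{m,n}$ according to $\delta\in\{0,1\}$: in the $\delta=0$ part the $V_+$-exponent $m-2i+k-3j$ is $(m-2i-3j)+k$ and the $V_-$-exponent $n+i-2k-3l$ is $i+(n-2k-3l)$, while the sign $(-1)^{i+k}$ and the two trinomial factors are precisely those of $p_m$ and $q_n$, so the $\delta=0$ part equals $p_mq_n$ term by term; the $\delta=1$ part is the same computation with $(m,n)$ replaced by $(m-1,n-1)$ together with an overall sign, hence equals $-p_{m-1}q_{n-1}$. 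Since the third relation of Proposition~\ref{prop:CGformula}, read in $K_0$, says $[V_{m,n}]=[V_{m,0}]\,[V_{0,n}]-[V_{m-1,0}]\,[V_{0,n-1}]$ for all $m,n\geq 0$, the whole corollary reduces to the two identities $[V_{m,0}]=p_m$ and $[V_{0,n}]=q_n$; and these are exchanged by the ring involution of $K_0$ swapping $x$ and $y$ (using $V_+^*\simeq V_-$, $V_{n,0}^*\simeq V_{0,n}$, and that $q_n$ is $p_n$ with $x$ and $y$ interchanged), so it is enough to prove $[V_{m,0}]=p_m$.

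That I would do by induction on $m$. The base case $[V_{0,0}]=1=p_0$ is immediate, and for $m\geq 1$ the fourth relation of Proposition~\ref{prop:CGformula} gives in $K_0$ the recursion $[V_{m,0}]=x\,[V_{m-1,0}]+[V_{m-3,0}]-y\,[V_{m-2,0}]$ (with $V_{a,0}=\{0\}$ for $a<0$), so it remains to verify the purely combinatorial identity $p_m=x\,p_{m-1}+p_{m-3}-y\,p_{m-2}$. Extracting the coefficient of $x^ay^b$: this is nonzero only when $j:=(m-a-2b)/3$ is a non-negative integer, in which case the coefficient in $p_m$ is $(-1)^b\trinomial{a+b+j}{b}{j}{a}$, whereas the right-hand side contributes $(-1)^b$ times $\trinomial{a+b+j-1}{b}{j}{a-1}+\trinomial{a+b+j-1}{b}{j-1}{a}+\trinomial{a+b+j-1}{b-1}{j}{a}$; the two agree by the trinomial Pascal rule $\trinomial{N}{p}{q}{r}=\trinomial{N-1}{p-1}{q}{r}+\trinomial{N-1}{p}{q-1}{r}+\trinomial{N-1}{p}{q}{r-1}$.

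I expect the only genuinely delicate point to be this last coefficient extraction, namely keeping straight which of the several trinomials appearing are forced to vanish by the negative-entry convention and checking that this dovetails with the convention $V_{a,0}=\{0\}$ for $a<0$, so that the small values of $m$ (where $p_{m-2}$ or $p_{m-3}$ would carry a negative index) need no separate treatment. Everything else — the $\delta$-splitting identity and the passage through the Grothendieck ring — is formal, which is presumably why the corollary is advertised as following from Proposition~\ref{prop:CGformula} and ``an easy recursion''. (An alternative route is a direct induction on $m+n$ using instead the first two relations of Proposition~\ref{prop:CGformula}, but the chosen one mirrors the structure of the formula, with the parameter $\delta$ accounting exactly for the subtracted term in the third relation.)
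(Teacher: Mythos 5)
Your proof is correct, and it is essentially the argument the paper intends: the paper offers no written proof beyond "proposition~\ref{prop:CGformula} and an easy recursion", and your route — the $\delta$-splitting matching the third relation $[V_{m,n}]=[V_{m,0}][V_{0,n}]-[V_{m-1,0}][V_{0,n-1}]$, then the recursion $[V_{m,0}]=x[V_{m-1,0}]+[V_{m-3,0}]-y[V_{m-2,0}]$ from the fourth relation (with the fifth, or duality, handling $[V_{0,n}]$), verified via the trinomial Pascal rule — is exactly that recursion carried out, with the vanishing conventions handled consistently.
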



\subsection{The colored $\sll_3$-invariant}
\label{sec:color-sll_3-invar-1}

In this paper, all links (and tangles) are oriented and otherwised specified endowed with a framing. All link diagrams are oriented. When a diagram represents a framed link, the framing is understood to be the blackboard framing. Diagrams are read from bottom to top. We consider as well colored link diagrams. The set of colors is  the set of iso-classes of finite dimensional representations of $U_q(\sll_3)$. 

The combinatorial data given by the orientations and the colors are not completely independent: If a component $l$ of a link $L$ (resp. link diagram $D$) is colored with $V$, and if $L'$ is the same link (resp. is the same link diagram) except that the orientation of $l$ is reversed and the color of $l$ is $V^*$ the dual representation of $V$, $L$ and $L'$ (resp. $D$ and $D'$) are considered to be the same.


We fix once for all a duality structure of $U_q(\sll_3)$-$\mathsf{mod}$ compatible with the maps $d$'s and $b$'s. We introduce now the graphical notation: an upward oriented vertical strand colored with $V$ an object of  $U_q(\sll_3)$-$\mathsf{mod}$ represents the identity of this objects. The same strand oriented downward represents the identity of the dual of this object. The diagrams
\[
\begin{tikzpicture}[scale = 0.8]
  \begin{scope}[decoration={markings, mark=at
     position 0.5 with {\arrow{>}}},postaction={decorate}]
   \begin{scope}
     \draw[postaction={decorate}] (-1,0) .. controls +(0, -1) and +(0,-1) .. (1, 0) node [midway, above] {$V$};
   \end{scope}
\begin{scope}[xshift=4cm]
     \draw[postaction={decorate}] (+1,0) .. controls +(0, -1) and +(0,-1) .. (-1, 0) node [midway, above] {$V$};
   \end{scope}
\begin{scope}[xshift=8cm]
     \draw[postaction={decorate}] (-1,-1) .. controls +(0, +1) and +(0,+1) .. (+1, -1) node [midway, below] {$V$};
   \end{scope}
\begin{scope}[xshift=12cm]
     \draw[postaction={decorate}] (+1,-1) .. controls +(0, +1) and +(0,+1) .. (-1, -1) node [midway, below] {$V$};
   \end{scope}
\end{scope}
\end{tikzpicture}
\]
represent the morphisms $\CC\left(q^{\frac16}\right) \to V\otimes V^*$, $\CC\left(q^{\frac16}\right) \to V^*\otimes V$, $V\otimes V^* \to \CC\left(q^{\frac16}\right)$ and $V^*\otimes V \to \CC\left(q^{\frac16}\right)$ given by the duality structure. We have: 
\[
\begin{tikzpicture}[scale = 0.8]
  \begin{scope}[decoration={markings, mark=at
     position 0.5 with {\arrow{>}}},postaction={decorate}]
   \begin{scope}
     \draw[postaction={decorate}] (-1,0) .. controls +(0, -1) and +(0,-1) .. (1, 0) node [midway, above] {$V$};
   \end{scope} 
\node at (2, -0.5) {$=$};
\begin{scope}[xshift=4cm]
     \draw[postaction={decorate}] (+1,0) .. controls +(0, -1) and +(0,-1) .. (-1, 0) node [midway, above] {$V^*$};
   \end{scope}
\node at (6.5, -0.5) {$\textrm{and}$};
\begin{scope}[xshift=9cm]
     \draw[postaction={decorate}] (-1,-1) .. controls +(0, +1) and +(0,+1) .. (+1, -1) node [midway, below] {$V$};
   \end{scope}
\node at (11, -0.5) {$=$};
\begin{scope}[xshift=13cm]
     \draw[postaction={decorate}] (+1,-1) .. controls +(0, +1) and +(0,+1) .. (-1, -1) node [midway, below] {$V^*$};
   \end{scope}
\node at (14.1, -0.5) {$\ .$};
\end{scope}
\end{tikzpicture}
\]

The diagrams 
\[
\begin{tikzpicture}[scale = 0.7]
  \begin{scope}[decoration={markings, mark=at
     position 0.5 with {\arrow{>}}},postaction={decorate}]
   \begin{scope}
     \draw[->] (-1,-1) .. controls +(0, 1) and +(0,-1) .. (1, 1) node [near start, left] {$V$};
     \fill [white] (0,0) circle (0.2);
     \draw[->] (1,-1) .. controls +(0, 1) and +(0,-1) .. (-1, 1) node [near start, right] {$W$};
   \end{scope} 
   \begin{scope}[xshift = 5cm]
     \draw[->] (1,-1) .. controls +(0, 1) and +(0,-1) .. (-1, 1) node [near start, right] {$W$};
     \fill [white] (0,0) circle (0.2);
     \draw[->] (-1,-1) .. controls +(0, 1) and +(0,-1) .. (1, 1) node [near start, left] {$V$};
   \end{scope} 
\end{scope}
\end{tikzpicture}
\]
represent the morphisms $V\otimes W \to W\otimes V$ induced by the braided structure.
The juxtaposition of two diagrams side by side represents the tensor product of the two associated morphisms. Stacking two (compatible) diagrams one onto the other, we obtain another diagram which is meant to be the composition of the two morphisms.
\begin{rmk}\label{rmk:tensorcable}
Note that the duality and the braiding being compatible with tensor product, we may replace a strand colored with $V\otimes W$ by two parallel strands, one colored with $V$, the other with $W$. 
For example we have:  
\[
\begin{tikzpicture}[scale = 0.8]
  \begin{scope}[decoration={markings, mark=at
     position 0.5 with {\arrow{>}}},postaction={decorate}]
   \begin{scope}
     \draw[->] (-1,-1) .. controls +(0, 1) and +(0,-1) .. (1, 1) node [near start, left] {$V$};
     \fill [white] (0,0) circle (0.2);
     \draw[->] (1,-1) .. controls +(0, 1) and +(0,-1) .. (-1, 1) node [near start, right] {$W_1\otimes W_2$};
   \end{scope} 
\node at  (2, 0) {$=$};
   \begin{scope}[xshift = 4cm]
     \draw[->] (-1,-1) .. controls +(0, 1) and +(0,-1) .. (1, 1) node [near start, left] {$V$};     
     \fill [white] (0,0) circle (0.3);
   \draw[->] (1.1,-1) .. controls +(0, 1.1) and +(0,-0.9) .. (-0.9, 1) node [near start, right] {$W_2$};
\draw[->] (0.9,-1) .. controls +(0, 0.9) and +(0,-1.1) .. (-1.1, 1) node [near start, left] {$W_1$};
\end{scope} 
\end{scope}
\node at (6.5,0) {$\textrm{and}$};
\begin{scope}[yshift = 0.5cm, xshift = 9cm, decoration={markings, mark=at
     position 0.5 with {\arrow{>}}},postaction={decorate}]
   \begin{scope}
     \draw[postaction={decorate}] (-1,0) .. controls +(0, -1) and +(0,-1) .. (1, 0) node [midway, above] {$V\otimes W$};
   \end{scope}
\node at  (2, -0.50) {$=$};
\begin{scope}[xshift=4cm]
     \draw[postaction={decorate}] (-1.1,0) .. controls +(0, -1.1) and +(0,-1.1) .. (1.1, 0) node [midway, below] {$W$};
     \draw[postaction={decorate}] (-0.9,0) .. controls +(0, -0.9) and +(0,-0.9) .. (0.9, 0) node [midway, above] {$V$};
   \end{scope}
\end{scope}
\node at (14.1, -0.1) {$.$};
\end{tikzpicture}
\]

\end{rmk}

If $D$ is a ``well-presented'' colored tangle diagram\footnote{In Ohtsuki \cite{ohtsuki02:_quant}, such diagrams are called \emph{sliced}.}, we denote by $\kup{D}$ the interpretation of $D$ as a morphism of $U_q(\sll_3)$-modules.

\begin{thm}[\cite{MR1036112}, see as well~\cite{ohtsuki02:_quant}]
  The map $\kup{D}$ depends only on the framed isotopy type of $D$. Furthermore, if $L$ is a framed link, $q^s\kup{L}$ is a Laurent polynomial (where $s$ is a certain\footnote{This number is computable, but we do not need its expression.} real number depending on $L$).
\end{thm}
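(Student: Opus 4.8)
The plan is to recognise $\kup{\,\cdot\,}$ as the Reshetikhin--Turaev functor attached to the ribbon category $\Usl$-$\mathsf{mod}$, and to check its defining properties by hand. I would fix a well-presented (sliced) diagram $D$: horizontal lines cut it into finitely many slices, each slice being a side-by-side juxtaposition of elementary pieces --- an identity strand, a positive or a negative crossing, or one of the four cup/cap diagrams --- and $\kup{D}$ is the composite, read from bottom to top, of the tensor products of the elementary morphisms of the successive slices. The first task is to show that $\kup{D}$ does not depend on the chosen slicing. Any two slicings of a fixed $D$ are related by a finite sequence of the moves ``merge two adjacent slices'', ``split a slice in two'', ``slide one elementary piece vertically past another piece placed beside it'', and ``insert or delete a slice consisting only of identity strands''; each of these leaves the composite unchanged because $\Usl$-$\mathsf{mod}$ is a monoidal category: bifunctoriality of $\otimes$ and the interchange law $(f\otimes g)\circ(f'\otimes g')=(f\circ f')\otimes(g\circ g')$ take care of merging, splitting and sliding, while the unit constraints take care of insertion and deletion (one works strictly after invoking Mac Lane coherence). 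So $\kup{D}$ is a well-defined morphism of $\Usl$-modules.

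Next I would prove invariance under framed oriented isotopy. Two well-presented diagrams of the same framed oriented link are connected by planar isotopies together with a finite list of local moves: the framed Reidemeister moves RII and RIII, the ``straightening'' moves that delete a local maximum immediately followed by a local minimum, and the moves that push a crossing around a cup or a cap (these last are forced because crossings are drawn only between strands whose orientations agree and must be reconciled with the duality cups and caps). By the previous paragraph it suffices to check each move locally, matching it piece by piece: the straightening moves are the snake (``zig-zag'') identities of the fixed duality structure --- for $V_\pm$ these are exactly the first two identities of Proposition~\ref{prop:easyrelUsl}, and for an arbitrary color they hold by construction of the compatible duality; RII is the invertibility of the braiding morphisms; RIII is the Yang--Baxter equation, that is the two hexagon axioms; and the crossing-past-a-cup/cap moves follow from naturality of the braiding together with the duality, their mutual consistency under a full $2\pi$ rotation of a strand being precisely what the ribbon (twist) element of $\Usl$-$\mathsf{mod}$ guarantees. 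Blackboard-framed diagrams are never subjected to RI, so this list is complete and $\kup{D}$ depends only on the framed oriented isotopy type of $D$.

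For the last assertion, consider first a link $L$ all of whose colors are $V_+$ or $V_-$. Along a well-presented diagram of $L$ the cups and caps have, in weight bases, matrix entries in $\ZZ[q^{\pm1}]$ (see the integer powers of $q$ in Definition~\ref{dfn:Uslmorphisms}), and each maximal cup--cap circle contributes the quantum dimension $[3]\in\ZZ[q^{\pm1}]$. Each crossing contributes the braiding, the composite of the tensor flip with the action of the universal $R$-matrix; on a product of two fundamental colors that action equals a fixed fractional power of $q$ (coming from the Cartan part) times an operator whose matrix in a weight basis has entries in $\ZZ[q^{\pm1}]$ (a finite sum of products of divided powers of the $E_i$ and $F_i$). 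Pulling the fractional power out of every crossing produces a single global scalar $q^{-s}$, depending only on the colors and the writhes of the components, so that $q^{s}\kup{L}$ is assembled by tensor products and compositions from matrices over $\ZZ[q^{\pm1}]$ and is therefore a Laurent polynomial. For arbitrary colors one reduces to this case: by Corollary~\ref{cor:explicitformula} each $V_{m,n}$ is a signed $\ZZ$-linear combination of tensor products $V_+^{\otimes a}\otimes V_-^{\otimes b}$ in the Grothendieck ring, cabling a strand by such a tensor product is compatible with $\kup{\,\cdot\,}$ (Remark~\ref{rmk:tensorcable}), hence $\kup{L}$ is a signed $\ZZ$-linear combination of the $\kup{\,\cdot\,}$ of finitely many $V_\pm$-cablings of $L$, each a Laurent polynomial up to its own framing factor, and collecting these factors gives the claim.

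The step I expect to be the real obstacle is the second paragraph, and within it the moves that create a new local extremum and so require commuting the braiding past a cup or a cap: there the mere existence of a braiding and of duals does not suffice, one genuinely uses the compatibility packaged in the ribbon structure, and this is exactly the place where a careless argument drops a sign or silently shifts the framing. Everything else --- slicing independence, RII, RIII, and the integrality count --- is bookkeeping; in particular the reduction of general colors to the fundamental ones is optional, since the functorial construction of $\kup{\,\cdot\,}$ works verbatim for any objects of $\Usl$-$\mathsf{mod}$ and only the polynomiality genuinely benefits from it.
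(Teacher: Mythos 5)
This theorem is quoted by the paper from Reshetikhin--Turaev and Ohtsuki; the paper itself offers no proof, so there is nothing internal to compare against, and your argument is precisely the standard ribbon-category proof given in those references (slicing independence from monoidality, invariance from the snake identities, RII/RIII, and the crossing-past-extremum moves whose consistency is the ribbon condition, then integrality of the $R$-matrix and duality morphisms in weight bases up to a Cartan-type fractional power). The only step I would tighten is your final ``collecting these factors'': when you expand $V_{m,n}$ via Corollary~\ref{cor:explicitformula}, each cabling comes with its own fractional power $q^{s_i}$, and the sum is a single $q^{-s}$ times a Laurent polynomial only because all the tensor factors appearing there have the same triality --- the difference (number of $V_+$ factors) minus (number of $V_-$ factors) is congruent to $m-n$ modulo $3$ --- so the exponents $s_i$ agree modulo $1$; with that observation the proof is complete and matches the cited sources.
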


In the sequence, we refer to this invariant to the \emph{colored $\sll_3$-invariant}.


\begin{prop} \label{prop:directsum}
  The colored $\sll_3$-invariant is multilinear with respect to direct sum of representations:
    Let $T_1$, $T_2$ and $T_3$ be the same colored oriented tangles except for the color of their $i$th connected components: it is colored with $V$ in $T_1$, with $W$ in $T_2$ and with $V\oplus W$ in $T_3$. Then we have: $\kup{T_1} + \kup{T_2} = \kup{T_3}$. 
 \end{prop}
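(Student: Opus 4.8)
The plan is to reduce the statement to the well-known fact that the Reshetikhin–Turaev functor is additive with respect to direct sums of the colouring objects, which in turn follows from the structure of the category $\Usl\text{-}\mathsf{mod}$ as a semi-simple monoidal category (proposition~\ref{prop:CGformula}). The key observation is that all the structural data entering the definition of $\kup{\cdot}$ — the duality maps (the $b$'s and $d$'s), the braiding isomorphisms, the evaluation and coevaluation on tensor products — are natural and additive: for objects $V$, $W$ the braiding $c_{V\oplus W, X}$ is the direct sum $c_{V,X}\oplus c_{W,X}$ under the canonical identification $(V\oplus W)\otimes X \simeq (V\otimes X)\oplus(W\otimes X)$, and similarly $c_{X, V\oplus W}$, the coevaluation $\CC(q^{1/6})\to (V\oplus W)\otimes(V\oplus W)^*$ has image in the summand $(V\otimes V^*)\oplus(W\otimes W^*)$ and decomposes as the sum of the two coevaluations (the cross terms $V\otimes W^*$ receiving $0$), and dually for the evaluation. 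These facts are immediate from the explicit formulas for the Hopf-algebra structure and the $R$-matrix acting on tensor products, together with the autonomy of the category.

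**Key steps.** First I would fix a ``well-presented'' (sliced) diagram $D$ representing $T_3$ in which the $i$th component carries the colour $V\oplus W$, and observe that $\kup{T_3}$ is, by definition, a composite of tensor products of the elementary pieces (identities, cups, caps, positive and negative crossings) attached to that component and to the other components. Second, I would decompose the ambient tensor factor coming from the $i$th component as $(V\oplus W)^{\otimes \text{(local multiplicity)}} \simeq \bigoplus (\text{terms})$ and track how each elementary piece acts on this decomposition; the content of the previous paragraph is precisely that every elementary piece respects the splitting into the ``pure $V$'' and ``pure $W$'' parts — i.e.\ no genuinely mixed term survives, because at each cup (coevaluation) along the $i$th strand one is forced to pair $V$ with $V^*$ or $W$ with $W^*$. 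Third, I would conclude that the whole composite $\kup{T_3}$ is block-diagonal with blocks exactly $\kup{T_1}$ and $\kup{T_2}$, so that evaluating on the ground ring $\CC(q^{1/6})$ gives $\kup{T_3} = \kup{T_1} + \kup{T_2}$. Since the invariant does not depend on the chosen presentation (by the theorem of~\cite{MR1036112}), this establishes the claim.

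**Main obstacle.** The only delicate point is the bookkeeping: one must be careful that the ``mixed'' contributions really vanish \emph{globally} and not just locally, which is where the semi-simplicity and the precise form of the duality pairing enter — a coevaluation $\CC(q^{1/6})\to (V\oplus W)\otimes (V\oplus W)^*$ could a priori hit the summand $V\otimes W^*$, and one needs that $\Hom{\CC(q^{1/6}), V\otimes W^*} = \Hom{W, V} = 0$ when $V$ and $W$ have no common simple summand, and handle the general case by linearity. Equivalently, and more cleanly, one can argue at the level of the Grothendieck ring / the trace: the Reshetikhin–Turaev invariant of a coloured link is multilinear in the colours because it is built from the braided trace, which is additive, and from the $R$-matrix, which acts diagonally on direct sums; I would present this as the streamlined argument and relegate the diagrammatic verification to a remark. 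In fact the cleanest route is simply to invoke that $\kup{\cdot}$ factors through the (semisimplified) Grothendieck ring in each colour variable, which is built into the construction recalled above.
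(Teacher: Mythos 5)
Your argument is correct, and in fact the paper gives no proof of this proposition at all: it is stated as a standard property of the Reshetikhin--Turaev construction, and the justification you spell out (fix a sliced diagram, note that every elementary piece --- identities, braidings, cups and caps --- is block-diagonal with respect to the splitting $V\oplus W$ along the distinguished component, hence so is the composite, and read off the two blocks) is exactly the intended one. One caveat: your ``main obstacle'' is not a genuine obstacle, and the way you propose to dispose of it is the only weak point of the write-up. If you compute the coevaluation of $V\oplus W$ with duality data adapted to the decomposition (a basis of $V\oplus W$ obtained as the union of bases of $V$ and of $W$, together with its dual basis), it is literally $b_V+b_W$, with zero component in the cross summands $V\otimes W^*$ and $W\otimes V^*$; since the invariant is insensitive to replacing a color and its duality data by canonically isomorphic ones, you may always work with this adapted choice, so no semisimplicity and no vanishing of $\Hom{W,V}$ is needed. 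Appealing instead to ``handle the general case by linearity'' comes dangerously close to assuming the multilinearity being proved, and the Grothendieck-ring/quantum-trace shortcut at the end only literally applies when the $i$th component is closed (so that it can be cut and traced); for an open component of a tangle you need the block-diagonality argument anyway. So keep the diagrammatic argument as the proof and drop the semisimplicity detour.
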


Note that remark~\ref{rmk:tensorcable} implies that $\kup{\cdot}$ relates tensor products and cablings. 

When a tangle or link diagram has no color, all components are meant to be colored with $V_+$ and we use the identifications of the duals of $V_+$ and $V_-$ with $V_-$ and $V_+$ given by proposition~\ref{prop:easyrelUsl}. This means in particular that we have:
\[
\begin{tikzpicture}[scale = 0.72]
  \begin{scope}[decoration={markings, mark=at
     position 0.5 with {\arrow{>}}},postaction={decorate}]
   \begin{scope}
     \draw[postaction={decorate}] (-1,0) .. controls +(0, -1) and +(0,-1) .. (1, 0);
   \end{scope} 
\node at (2, -0.5) {$= b_{-+},$};
\begin{scope}[xshift=4.5cm]
     \draw[postaction={decorate}] (+1,0) .. controls +(0, -1) and +(0,-1) .. (-1, 0);
   \end{scope}
\node at (6.5, -0.5) {$= b_{+-},$};
\begin{scope}[xshift=9cm]
     \draw[postaction={decorate}] (-1,-1) .. controls +(0, +1) and +(0,+1) .. (+1, -1);
   \end{scope}
\node at (11, -0.5) {$=d_{+-}$};
\begin{scope}[xshift=13.5cm]
     \draw[postaction={decorate}] (+1,-1) .. controls +(0, +1) and +(0,+1) .. (-1, -1);
   \end{scope}
\node at (15.5, -0.5) {$=d_{-+}.$};
\end{scope}
\end{tikzpicture}
\]

We represent the morphisms $h_{++}^-$ and $h_{--}^+$ with the following two diagrams:
\[
\begin{tikzpicture}[scale = 0.72]
  \begin{scope}[decoration={markings, mark=at
     position 0.5 with {\arrow{>}}},postaction={decorate}]
   \begin{scope}
     \draw[postaction={decorate}] (0,1) -- (0,0);
     \draw[postaction={decorate}] (-1,-1) .. controls +(0, +0.5) and +(-0.50,-0.5) .. (0, 0);
     \draw[postaction={decorate}] (+1,-1) .. controls +(0, +0.5) and +(0.5,-0.5) .. (0, 0);
\node at (2, 0) {$= h_{++}^-,$};
   \end{scope} 
\begin{scope}[xshift=6cm]
     \draw[postaction={decorate}] (0,0) -- (0,1);
     \draw[postaction={decorate}] (0,0) .. controls +(-0.50, -0.5) and +(0,0.5) .. (-1, -1);
     \draw[postaction={decorate}] (0,0) .. controls +(0.5, -0.5) and +(0,0.5) .. (+1, -1);
\node at (2, 0) {$= h_{--}^+.$};
   \end{scope} 
\end{scope}

\end{tikzpicture}
\]
\begin{rmk}
  \label{rmk:embbededfine}
Using the diagrams associated with the $b$'s the $d$'s and the $h$'s, we can construct all planar bipartite graph $w$ (the bipartition being given by sources and sinks) embedded in $\RR\times[0,1]$ with degree of degree 1 or 3 such that the intersection of $G$ with $\RR\times \{0,1\}$ is precisely the set of vertices of degree 1. To such an embedded graph one can associate a morphism of $\Usl$-modules. Proposition~\ref{prop:easyrelUsl} says that the associated morphisms depend only on the planar isotopy type of graphs.
  
\end{rmk}

\begin{prop}[\cite{MR1403861} (we have $-q_{\textrm{Kup}}^{\frac16} = q_{\textrm{Here}}^{\frac13}$)]
One can fix a normalization of the universal $R$-matrix such that the following relations hold:
  \begin{align*}
    & \begin{tikzpicture}[scale = 0.72]
  \begin{scope}[decoration={markings, mark=at
     position 0.5 with {\arrow{>}}},postaction={decorate}]
   \begin{scope}
     \draw[->] (-1,-1) .. controls +(0, 1) and +(0,-1) .. (1, 1);
     \fill [white] (0,0) circle (0.2);
     \draw[->] (1,-1) .. controls +(0, 1) and +(0,-1) .. (-1, 1);
   \end{scope} 
\node at (2, 0) {$= q^{-\frac23}\cdot$};
   \begin{scope}[xshift=4cm]
     \draw[->] (-1,-1) .. controls +(0.3, 1) and +(0.3,-1) .. (-1, 1);
     \fill [white] (0,0) circle (0.2);
     \draw[->] (1,-1) .. controls +(-0.3, 1) and +(-0.3,-1) .. (1, 1);{$$};
   \end{scope} 
\node at (6,0) {$- q^{\frac13}\cdot$};
   \begin{scope}[xshift = 8cm]
     \draw[postaction={decorate}] (1,-1) .. controls +(0, 0.5) and +(0.50,-0.5) .. (0, -0.3);
     \draw[postaction={decorate}] (-1,-1) .. controls +(0, 0.5) and +(-0.50,-0.5) .. (0, -0.3);
     \draw[postaction={decorate}] (0,0.3) .. controls +(0.5, 0.5) and +(0,-0.5) .. (1, 1);
     \draw[postaction={decorate}] (0,0.3) .. controls +(-0.5, 0.5) and +(0,-0.5) .. (-1,1);
     \draw[postaction={decorate}] (0,0.3) -- (0, -0.3);
   \end{scope} 
\end{scope}
\end{tikzpicture} \\
    & \begin{tikzpicture}[scale = 0.72]
  \begin{scope}[decoration={markings, mark=at
     position 0.5 with {\arrow{>}}},postaction={decorate}]
   \begin{scope}
     \draw[->] (1,-1) .. controls +(0, 1) and +(0,-1) .. (-1, 1);
     \fill [white] (0,0) circle (0.2);
     \draw[->] (-1,-1) .. controls +(0, 1) and +(0,-1) .. (1, 1);
   \end{scope} 
\node at (2, 0) {$= q^{\frac23}\cdot$};
   \begin{scope}[xshift=4cm]
     \draw[->] (-1,-1) .. controls +(0.3, 1) and +(0.3,-1) .. (-1, 1);
     \fill [white] (0,0) circle (0.2);
     \draw[->] (1,-1) .. controls +(-0.3, 1) and +(-0.3,-1) .. (1, 1);{$$};
   \end{scope} 
\node at (6,0) {$- q^{-\frac13}\cdot$};
   \begin{scope}[xshift = 8cm]
     \draw[postaction={decorate}] (1,-1) .. controls +(0, 0.5) and +(0.50,-0.5) .. (0, -0.3);
     \draw[postaction={decorate}] (-1,-1) .. controls +(0, 0.5) and +(-0.50,-0.5) .. (0, -0.3);
     \draw[postaction={decorate}] (0,0.3) .. controls +(0.5, 0.5) and +(0,-0.5) .. (1, 1);
     \draw[postaction={decorate}] (0,0.3) .. controls +(-0.5, 0.5) and +(0,-0.5) .. (-1,1);
     \draw[postaction={decorate}] (0,0.3) -- (0, -0.3);
   \end{scope} 
\end{scope}
\end{tikzpicture}
  \end{align*}
\end{prop}

We want to relate the colored $\sll_3$-invariant with the classical $\sll_3$-invariant (\ie the invariant for the representation $V_+$). 
\begin{notation}
  Let $k$ be a non-negative integer and  $\mathbf{a}$, $\mathbf{b}$ and $\mathbf{c}$ three $k$-tuples of integers. We define:
  \[\trinomial{\mathbf{a}+ \mathbf{b} + \mathbf{c} } {\mathbf{a}}{\mathbf{b}}{\mathbf{c}}
  =\prod_{i=1}^k \trinomial{a_i+ b_i + c_i}{a_i}{b_i}{c_i}.
\]
Similarly if $\mathbf{n}$, $\mathbf{a}$ and $\mathbf{b}$ are three $k$-tuples of integers, we define:
  \[\trinomial{\mathbf{n}}{\mathbf{a}}{\mathbf{b}}{\blacksquare}
  =\trinomial{\mathbf{n}}{\mathbf{a}}{\mathbf{b}}{\mathbf{n}-\mathbf{a}-\mathbf{b}}.
\]
If $\mathbf{a}$ is a $k$-tuple of integers, then we define $|\mathbf{a}|$ to be the sum of all its coordinate.
  If $L$ is a framed link with $k$ components, and if $(\mathbf{a},\mathbf{b})$ is a pair of $k$-tuples of integers, the link $L_{\mathbf{a}, \mathbf{b}}$ is obtained by cabling each component $l_i$ ($i\in\{1,k\}$) by $a_i+b_i$ strands, the first (leftmost) $a_i$ strands  orientated like $l_i$ and the last (rightmost) $b_i$ strands with the opposite orientation.
\end{notation}
 From remark~\ref{rmk:tensorcable}, proposition~\ref{prop:directsum} and corollary~\ref{cor:explicitformula}, we have:
 \begin{prop}\label{prop:formulakup} Let $L$ be an oriented framed link with $s$ components, and let $(\mathbf{m}, \mathbf{n})$ be a pair of $s$-tuples of non-negative integers. We denote by $(L, V_{\mathbf{m}, \mathbf{n}})$ the colored framed oriented link obtained by coloring each component $l_t$ of $L$ with $V_{m_t, n_t}$. We have the following equality:
   \begin{align*}
     \kup{(L, V_{\mathbf{m}, \mathbf{n}})} &= \sum_{\substack{(\mathbf{i},\mathbf{j},\mathbf{k},\mathbf{l}) \in (\NN^s)^4 \\ \pmb{\delta} \in \{0,1\}^s }} (-1)^{|\pmb{\delta}+\mathbf{i}+\mathbf{k}|}
\trinomial{\mathbf{m}-\pmb{\delta}-\mathbf{i} -2\mathbf{j}}{\mathbf{i}}{\mathbf{j}}{\blacksquare} 
\cdot\trinomial{\mathbf{n}-\pmb{\delta}-\mathbf{k} -2\mathbf{l}}{\mathbf{k}}{\mathbf{l}}{\blacksquare}  \\ &\hspace{3cm}
\cdot\kup{L_{\mathbf{m}-2\mathbf{i}+\mathbf{k}-3\mathbf{j}-\pmb{\delta}, \mathbf{n}+\mathbf{i}-2\mathbf{k}-3\mathbf{l}-\pmb{\delta}}}.\\
   \end{align*}
 \end{prop}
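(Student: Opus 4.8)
The plan is to derive Proposition~\ref{prop:formulakup} by reducing the colored invariant of a link to the uncolored invariant of its cablings, using the three structural facts that have already been established. First I would recall the shape of the argument: Corollary~\ref{cor:explicitformula} writes each simple module $V_{m,n}$ as an alternating $\ZZ$-linear combination (in the Grothendieck ring of $\Usl$-$\mathsf{mod}$) of tensor products $V_+^{\otimes a}\otimes V_-^{\otimes b}$, with the trinomial coefficients
\[
\trinomial{m-\delta-i-2j}{i}{j}{\blacksquare}\cdot\trinomial{n-\delta-k-2l}{k}{l}{\blacksquare}
\]
and signs $(-1)^{\delta+i+k}$, the exponents being $a=m-2i+k-3j-\delta$ and $b=n+i-2k-3l-\delta$. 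The statement to prove is just the ``multi-component'' upgrade of this, with $\kup{\cdot}$ replacing the class in the Grothendieck ring.

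The key steps, in order, are as follows. \textbf{Step 1.} For a single component, combine Remark~\ref{rmk:tensorcable} (a strand colored with a tensor product $V\otimes W$ may be replaced by two parallel strands colored $V$ and $W$, with the convention that $V_+$-strands are drawn upward and $V_-$-strands, being identified with $V_+^*$ via Remark~\ref{rmk:vpvmdual}, are drawn downward) with Proposition~\ref{prop:directsum} (multilinearity of $\kup{\cdot}$ under direct sums in each component). Applying these to the decomposition of Corollary~\ref{cor:explicitformula} in the $t$-th component shows that $\kup{(L,V_{m_t,n_t}\text{ in slot }t)}$ equals the corresponding alternating sum of $\kup{\cdot}$ of links obtained by replacing $l_t$ with $a$ parallel copies oriented like $l_t$ followed by $b$ copies oriented oppositely — that is, exactly $L_{\ldots}$ in the notation introduced just before the proposition, restricted to the $t$-th coordinate. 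One subtlety to check here is that ``direct sum with signs'', as in Corollary~\ref{cor:explicitformula}, is legitimately handled by Proposition~\ref{prop:directsum}: terms carrying a minus sign are moved to the other side of the equality without sign, so everything reduces to genuine direct sums, and then $\kup{\cdot}$ is additive. \textbf{Step 2.} Iterate over all $s$ components. Since the decomposition in each slot is independent of the others, the sums over $(\mathbf{i},\mathbf{j},\mathbf{k},\mathbf{l},\pmb\delta)$ factor coordinate-wise; the product of the per-coordinate trinomials is precisely the multi-index trinomial $\trinomial{\mathbf{m}-\pmb\delta-\mathbf{i}-2\mathbf{j}}{\mathbf{i}}{\mathbf{j}}{\blacksquare}\cdot\trinomial{\mathbf{n}-\pmb\delta-\mathbf{k}-2\mathbf{l}}{\mathbf{k}}{\mathbf{l}}{\blacksquare}$ defined in the preceding Notation, the product of the per-coordinate signs is $(-1)^{|\pmb\delta+\mathbf{i}+\mathbf{k}|}$, and the resulting cabled link is $L_{\mathbf{m}-2\mathbf{i}+\mathbf{k}-3\mathbf{j}-\pmb\delta,\,\mathbf{n}+\mathbf{i}-2\mathbf{k}-3\mathbf{l}-\pmb\delta}$. \textbf{Step 3.} Assemble: the multilinear expansion of $\kup{(L,V_{\mathbf{m},\mathbf{n}})}$ over all $s$ slots gives the claimed identity.

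The part requiring the most care — it is not deep, but it is where one can slip — is the bookkeeping of orientations and the passage from ``tensor power'' to ``parallel cabling''. One must be precise that a factor $V_+^{\otimes a}\otimes V_-^{\otimes b}$ coloring $l_t$ corresponds, via Remark~\ref{rmk:tensorcable} and the identification $V_-\cong V_+^*$, to $a+b$ parallel strands with the first $a$ oriented as $l_t$ and the last $b$ reversed — this is exactly the definition of $L_{\mathbf a,\mathbf b}$, so no mismatch arises, but the tensor order must be tracked through the cabling so that the exponents $a=m-2i+k-3j-\delta$ and $b=n+i-2k-3l-\delta$ land in the correct slots of $L_{\cdot,\cdot}$. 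Everything else is the purely formal observation that $\kup{\cdot}$, being multilinear (Proposition~\ref{prop:directsum}) and compatible with cabling (Remark~\ref{rmk:tensorcable}), transports the Grothendieck-ring identity of Corollary~\ref{cor:explicitformula} verbatim to an identity among link invariants. The proof therefore amounts to little more than ``apply Corollary~\ref{cor:explicitformula} in each variable and use Proposition~\ref{prop:directsum} and Remark~\ref{rmk:tensorcable}'', which is precisely what the sentence preceding the proposition already asserts.
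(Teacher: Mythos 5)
Your proposal is correct and follows exactly the route the paper itself takes: the paper derives Proposition~\ref{prop:formulakup} directly from Remark~\ref{rmk:tensorcable}, Proposition~\ref{prop:directsum} and Corollary~\ref{cor:explicitformula}, which is precisely your three-step argument (expand each color via Corollary~\ref{cor:explicitformula}, use multilinearity to pass to cablings, and factor the multi-index coefficients component-wise). Your added care about the sign convention and the orientation bookkeeping in the passage from tensor factors to cabled strands is consistent with the paper's conventions and fills in details the paper leaves implicit.
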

In this paper we give a categorification of this formula.


\subsection{Framed $\sll_3$-homology in a nutshell}
\label{sec:sll_3-homology}

In this section we recall briefly the $\sll_3$-homology introduced by \cite{MR2100691} (see  as well \cite{MR2457839}, \cite{MR2336253}, \cite{MR2443231}, \cite{MR2482322}, \cite{LewarkThese}, \cite{MR3248745}, \cite{LHR2} and \cite{LHRThese}). We adapt it to the context of framed links. 

\subsubsection{Webs and foams}
\label{sec:webs-foams}

\begin{dfn}
  \label{dfn:web}
  A \emph{closed web} $w$ is a planar, oriented, 3-regular multi-graph with eventually some oriented vertex-less loop components. The orientation satisfies that each vertex is either a source or a sink (this yields a bi-partition of $w$).
\end{dfn}
\begin{figure}[ht]
  \centering
  \begin{tikzpicture}[scale=0.7]
    \begin{scope}
   [yscale = {1}, xscale={1},decoration={markings, mark=at
     position 0.5 with {\arrow{>}}},postaction={decorate}]
\coordinate (A) at (0,0);
\coordinate (B) at (1,0);
\coordinate (C) at (2,-0.5);
\coordinate (D) at (3,0);
\coordinate (E) at (4,0);
\coordinate (F) at (5,0.5);
\coordinate (A1) at (0,1);
\coordinate (B1) at (1,1);
\coordinate (C1) at (2,1.5);
\coordinate (D1) at (3,1);
\coordinate (E1) at (4,1);
\coordinate (F1) at (4,2);
\coordinate (G) at (5,-0.5);
\coordinate (H) at (3.5,-1);

\draw[postaction=decorate] (A1) --(A);
\draw[postaction=decorate] (A1) .. controls +(-0.5,0)  and  +(-0.5,0).. (A);
\draw[postaction=decorate] (A1) -- (B1);
\draw[postaction=decorate] (B)-- (A);
\draw[postaction=decorate] (B)--(B1);
\draw[postaction=decorate] (B)--(C);
\draw[postaction=decorate] (C1)--(B1);
\draw[postaction=decorate] (C1)--(D1);
\draw[postaction=decorate] (C1)--(F1);
\draw[postaction=decorate] (D)--(C);
\draw[postaction=decorate] (D)--(D1);
\draw[postaction=decorate] (D)--(E);
\draw[postaction=decorate] (F) -- (G);
\draw[postaction=decorate] (F)-- (E);
\draw[postaction=decorate] (F) .. controls +(0,0.5) and  +(0.4,0.4) .. (F1);
\draw[postaction=decorate] (E1)--(F1);
\draw[postaction=decorate] (E1)--(D1);
\draw[postaction=decorate] (E1)--(E);
\draw[postaction=decorate] (H)--(C);
\draw[postaction=decorate] (H) .. controls +(0.5,0) and  +(0,-0.5) .. (G);
\draw[postaction=decorate] (H) .. controls +(0.2,0.3) and  +(-0.4,-0.1) .. (G);
\draw[postaction=decorate] (7,1) circle (0.5cm);
\end{scope}
  \end{tikzpicture}
  \caption{Example of a web}
  \label{fig:exweb}
\end{figure}
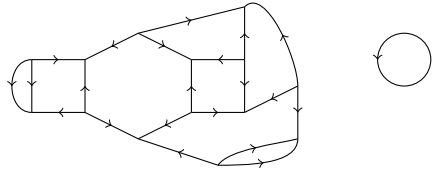

\begin{rmk}
  \label{rmk:web2sl3}
  Thanks to remark~\ref{rmk:embbededfine} any closed web may be interpreted as an endomorphism $\kup{w}$ of the $\Usl$-module $V_{0,0}$ which is simply an element of $\CC\left(q^{\frac16}\right)$. One can check that the following local relations hold:
\begin{align*}
   \kup{\websquare[0.4]} &= \kup{\webtwovert[0.4]} + \kup{\webtwohori[0.4]}, \\
   \kup{\webbigon[0.4]\,}  &= [2] \cdot \kup{\webvert[0.4]\,},\\
   \kup{\webcircle[0.4]} &= \kup{\webcirclereverse[0.4]} = [3],
  \end{align*}
where $[n]$ is by definition equal to $\frac{q^n-q^{-n}}{q- q\m}$.

The planarity of closed webs imposes that that every non-empty web contains at least one vertex-less loop, one digon or one square.
This shows that $\kup{w}$ can be computed completely combinatorialy  and that it is a Laurent polynomial symmetric in $q$ and $q\m$ with non-negative coefficients.
\end{rmk}

\begin{dfn}
  A \emph{pre-foam} is a smooth, oriented, compact surface $\Sigma$ (its connected components are called \emph{facets}) together with the following data~:
\begin{itemize}
\item A partition of the connected components of the boundary into cyclically ordered 3-sets and for each 3-set $(C_1,C_2,C_3)$, three orientation-preserving diffeomorphisms $\phi_1:C_2\to C_3$, $\phi_2:C_3\to C_1$ and $\phi_3:C_1\to C_2$ such that $\phi_3 \circ \phi_2 \circ \phi_1 = \mathrm{id}_{C_2}$.
\item A function from the set of facets to the set of non-negative integers (this gives the number of \emph{dots} on each facet).
\end{itemize}
The \emph{CW-complex associated with a pre-foam} is the 2-dimensional CW-complex $\Sigma$ quotiented by the diffeomorphisms so that the three circles of one 3-set are identified and become just one, called a \emph{singular circle}.
The \emph{degree} of a pre-foam $f$ is equal to $-2\chi(\Sigma')$ where $\chi$ is the Euler characteristic and $\Sigma'$ is the CW-complex associated with $f$ with the dots punctured out (a dot increases the degree by 2).
\end{dfn}
\begin{rmk}
  A CW-complex associated with a pre-foam has two local models. 
If a point $x$ is not on a singular circle, then it has a neighborhood diffeomorphic to a 2-dimensional disk, else it has a neighborhood diffeomorphic to a Y shape times an interval (see figure \ref{fig:yshape}).
  \begin{figure}[h]
    \centering
    \begin{tikzpicture}[scale=1]
      \input{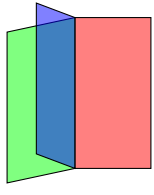}
    \end{tikzpicture}
    \caption{Singularities of a pre-foam}
    \label{fig:yshape}
  \end{figure}
\end{rmk}
\begin{dfn}
  A \emph{closed foam} is the image of an embedding of the CW-complex associated with a pre-foam such that the cyclic orders of the pre-foam are compatible with the left-hand rule in $\RR^3$ with respect to the orientations of the singular circles\footnote{We mean here that if, next to a singular circle, with the forefinger of the left hand we go from face 1 to face 2 to face 3 the thumb points to indicate the orientation of the singular circle (induced by orientations of facets). This is not quite canonical, physicists use more the right-hand rule, however this is the convention used in \cite{MR2100691}.}. The \emph{degree} of a closed foam is the degree of the underlying pre-foam. 
\end{dfn}

\begin{dfn}\label{dfn:wfoam}
  If $w$ is a closed web, a \emph{$w$-foam} $f$ is the intersection of a foam $f'$ with $\RR^2\times \RR_+$ such that there exits $\epsilon>0$, such that:
 \[\RR^2\times ]-\epsilon, 0]) \cap f = w\times ]-\epsilon, 0].\]  
The \emph{degree} of a $w$-foam $f$ is equal to $\chi(w)-2\chi(\Sigma)$ where $\Sigma$ is the underlying CW-complex associated with $f$ with the dots punctured out. 
\end{dfn}

\begin{propdfn}[\cite{MR2100691}]
  \label{pd:functor}
Let $w$ be a web. We define $\F(w)$ to be the $\ZZ$-module generated by isotopy classes (fixing $\RR^2 \times \{0\}$) of $w$-foams and subjected to the local relations:
\begin{align*}
  &\vcenter{\hbox{\begin{tikzpicture}[scale=0.3]
    \begin{scope}[xshift=0cm, yshift= 0cm, decoration={markings, mark=at
     position 0.5 with {\arrow{>}}},postaction={decorate}]
\draw (-2,0) -- +(1,0);
\draw (1,0) -- +(1,0);
\draw[dashed] (-1,0) .. controls +(1,0.5) and +(-1,0.5).. +(2,0);
\draw (-1,0) .. controls +(1,-0.5) and +(-1,-0.5).. +(2,0);
\draw (-2,3) -- +(1,0);
\draw (1,3) -- +(1,0);
\draw (-1,3) .. controls +(1,0.5) and +(-1,0.5).. +(2,0);
\draw (-1,3) .. controls +(1,-0.5) and +(-1,-0.5).. +(2,0);
\draw (-2,0) -- +(0,3);
\draw[postaction={decorate}] (-1,0) -- +(0,3);
\draw[postaction={decorate}] (1,3) -- +(0,-3);
\draw (2,0) -- +(0,3);
\end{scope}
\node at (3,1.5) {$=$};
\begin{scope}[xshift=6cm, yshift= 0cm, decoration={markings, mark=at
     position 0.5 with {\arrow{>}}},postaction={decorate}]
\draw (-2,0) -- +(1,0);
\draw (1,0) -- +(1,0);
\draw[dashed] (-1,0) ..controls +(1,0.5) and +(-1,0.5).. +(2,0);
\draw (-1,0) ..controls +(1,-0.5) and +(-1,-0.5).. +(2,0);
\draw (-2,3) -- +(1,0);
\draw (1,3) -- +(1,0);
\draw (-1,3) ..controls +(1,0.5) and +(-1,0.5).. +(2,0);
\draw (-1,3) ..controls +(1,-0.5) and +(-1,-0.5).. +(2,0);
\draw (-2,0) -- +(0,3);
\draw[postaction={decorate}] (-1,0) .. controls +(0,1.5) and +(0,1.5).. +(2,0);
\draw[postaction={decorate}] (1,3) .. controls +(0,-1.5) and +(0,-1.5).. +(-2,0);
\draw (2,0) -- +(0,3);
\fill (0,3) circle (4pt and 4pt);
\end{scope}
\node at (9,1.5) {$-$}; 
\begin{scope}[xshift=12cm, yshift= 0cm, decoration={markings, mark=at
     position 0.5 with {\arrow{>}}},postaction={decorate}]
\draw (-2,0) -- +(1,0);
\draw (1,0) -- +(1,0);
\draw[dashed] (-1,0).. controls +(1,0.5) and +(-1,0.5).. +(2,0);
\draw (-1,0) ..controls +(1,-0.5) and +(-1,-0.5).. +(2,0);
\draw (-2,3) -- +(1,0);
\draw (1,3) -- +(1,0);
\draw (-1,3).. controls +(1,0.5) and +(-1,0.5).. +(2,0);
\draw (-1,3).. controls +(1,-0.5) and +(-1,-0.5).. +(2,0);
\draw (-2,0) -- +(0,3);
\draw[postaction={decorate}] (-1,0) .. controls +(0,1.5) and +(0,1.5).. +(2,0);
\draw[postaction={decorate}] (1,3) .. controls +(0,-1.5) and +(0,-1.5).. +(-2,0);
\draw (2,0) -- +(0,3);
\fill (0,0) circle (4pt and 4pt);
\end{scope}
  \end{tikzpicture} }} & (\textrm{digon relation}) \\
  &\vcenter{\hbox{\begin{tikzpicture}[scale=0.3]
    \begin{scope}[xshift=0cm, yshift= 0cm]
\draw (0,0) -- (2,0);
\draw (2,0) -- (1,1);
\draw (1,1) -- (-1,1); 
\draw (-1,1) -- (0,0);
\draw (0,0) -- +(-0.5,-0.5);
\draw (2,0) -- +(0.7,-0.3);
\draw (1,1) -- +(0.5,0.5); 
\draw (-1,1) -- +(-0.7,0.3);
\draw (0,-4) -- (2,-4);
\draw[dashed] (2,-4) -- (1,-3);
\draw[dashed] (1,-3) -- (-1,-3); 
\draw (-1,-3) -- (-0.5,-3.5);
\draw[dashed] (-0.5,-3.5) -- (0,-4);
\draw (0,-4) -- ++(-0.5,-0.5) -- +(0,4);
\draw (2,-4) -- ++(0.7,-0.3)--+(0,4);
\draw[dashed] (1,-3) -- ++(0.5,0.5)--+(0,3);
\draw (1.5,0.5) -- (1.5,1.5); 
\draw (-1,-3) -- ++(-0.7,0.3)--+(0,4);
\draw (0,-4) -- +(0,4);
\draw (2,-4) -- +(0,4);
\draw[dashed] (1,-3) -- +(0,3);
\draw (1,0)-- +(0,1);
\draw (-1,-3) -- +(0,4);
\end{scope}
\node at (3.9,-2) {$=-$};
\begin{scope}[xshift=7cm, yshift= 0cm, decoration={markings, mark=at
     position 0.5 with {\arrow{>}}},postaction={decorate}]
\fill[gray, opacity =0.5] (-1,1) -- (0,0) arc (180:225:1) -- +(-1,1) arc (225:180:1)-- cycle;
\fill[gray,opacity =0.5] (1,1) -- (2,0) arc (0:-135:1) -- +(-1,1) arc(-135:0:1) --cycle;
\fill[gray,opacity=0.5] (-1,-3) -- (0,-4) arc (180:45:1) -- +(-1,1) arc (45:180:1)-- cycle;
\fill[gray,opacity=0.5] (1,-3) -- (2,-4) arc (0:45:1) -- +(-1,1) arc(45:0:1) --cycle;
\draw (0,0) -- (2,0);
\draw (2,0) -- (1,1);
\draw (1,1) -- (-1,1); 
\draw (-1,1) -- (0,0);
\draw (0,0) -- +(-0.5,-0.5);
\draw (2,0) -- +(0.7,-0.3);
\draw (1,1) -- +(0.5,0.5); 
\draw (-1,1) -- +(-0.7,0.3);
\draw (0,-4) -- (2,-4);
\draw[dashed] (2,-4) -- (1,-3);
\draw[dashed] (1,-3) -- (-1,-3); 
\draw (-1,-3) -- (-0.5,-3.5);
\draw[dashed] (-0.5,-3.5) -- (0,-4);
\draw (0,-4) -- ++(-0.5,-0.5) -- +(0,4);
\draw (2,-4) -- ++(0.7,-0.3)--+(0,4);
\draw[dashed] (1,-3) -- ++(0.15,0.15);
\draw (1,-3) + (0.15,0.15) --++(0.5,0.5) -- +(0,1.7);
\draw[dashed] (1.5,0.5) -- +(0,-1.3);
\draw (1.5,0.5) -- (1.5,1.5); 
\draw (-1,-3) -- ++(-0.7,0.3)--+(0,4);
\draw (0,-4) arc (180:0:1);
\draw (0,0) arc (-180:0:1);
\draw[dashed] (1,1) arc (0:-180:1);
\draw (-1,-3) arc (180:120:1);
\draw[dashed] (1,-3) arc (0:120:1);
\draw (0,-3)++(45:1) -- +(1,-1);
\draw (0,1)++(-135:1) -- +(1,-1);
\end{scope}
\node at (10.9,-2) {$-$};
\begin{scope}[xshift=14cm, yshift= 0cm, decoration={markings, mark=at
     position 0.5 with {\arrow{>}}},postaction={decorate}]
\fill[gray,opacity=0.5] (-1,1) ..controls +(0,-1.5) and +(-0.1,0).. (-0.5,-0.8) -- ++(2,0).. controls +(-0.1,0) and +(0,-1.5) .. (1,1);
\fill[gray,opacity=0.5] (0,0) ..controls +(0,-0.8) and +(0.1,0).. (-0.5,-0.8) -- ++(2,0).. controls +(0.1,0) and +(0,-0.8) .. (2,0);
\fill[gray,opacity=0.5] (-1,-3) ..controls +(0,0.8) and +(-0.1,0).. ++(0.5,0.8) -- ++(2,0).. controls +(-0.1,0) and +(0,0.8) .. (1,-3);
\fill[gray,opacity=0.5] (0,-4) ..controls +(0,1.5) and +(0.1,0).. (-0.5,-2.2) -- ++(2,0).. controls +(0.1,0) and +(0,1.5) .. (2,-4);
\draw (0,0) -- (2,0);
\draw (2,0) -- (1,1);
\draw (1,1) -- (-1,1); 
\draw (-1,1) -- (0,0);
\draw (0,0) -- +(-0.5,-0.5);
\draw (2,0) -- +(0.7,-0.3);
\draw (1,1) -- +(0.5,0.5); 
\draw (-1,1) -- +(-0.7,0.3);
\draw (0,-4) -- (2,-4);
\draw[dashed] (2,-4) -- (1,-3);
\draw[dashed] (1,-3) -- (-1,-3); 
\draw (-1,-3) -- (-0.5,-3.5);
\draw[dashed] (-0.5,-3.5) -- (0,-4);
\draw (0,-4) -- ++(-0.5,-0.5) -- +(0,4);
\draw (2,-4) -- ++(0.7,-0.3)--+(0,4);
\draw[dashed] (1,-3) -- ++(0.5,0.5)--+(0,0.3);
\draw (1.5,-2.2) -- (1.5,-0.8);
\draw[dashed] (1.5,-0.8) -- (1.5,0.5);
\draw (1.5,0.5)-- (1.5,1.5 );
\draw (1.5,0.5) -- (1.5,1.5); 
\draw (-1,-3) -- ++(-0.7,0.3)--+(0,4);
\draw[dashed] (1,-3) ..controls +(0,0.8) and +(-0.1,0) .. ++(0.5,0.8).. controls +(0.1,0) and +(0,1.5) .. (2,-4);
\draw (-1,-3) ..controls +(0,0.8) and +(-0.1,0) .. ++(0.5,0.8).. controls +(0.1,0) and +(0,1.5) .. (0,-4);
\draw (1,1) ..controls +(0,-1.5)  and +(-0.1,0) .. (1.5,-0.8).. controls +(0.1,0) and +(0,-0.8) .. (2,0);
\draw (-1,1) ..controls +(0,-1.5) and +(-0.1,0) .. (-0.5,-.8).. controls +(0.1,0) and +(0,-0.8) .. (0,0);
\draw (-0.5,-0.8)-- +(2,0);
\draw (-0.5,-2.2)-- +(2,0);
\end{scope}
  \end{tikzpicture}}}& (\textrm{square relation})\\
  &\vcenter{\hbox{\begin{tikzpicture}[scale=0.4]
    \begin{scope}[xshift=-0.5cm]
       \draw (0,-1) arc (270:90:0.5 and 1);
      \draw[dotted] (0,-1) arc (-90:90:0.5 and 1);
      \draw (3,0) ellipse (0.5 and 1);
      \draw (0,-1) -- (3,-1);
      \draw (0,1) -- (3,1);
\end{scope}
\node at (4.5,0) {$=-$};
\node at (10,0) {$-$};
\node at (15,0) {$-$};
\begin{scope}[xshift=6cm]
       \draw (0,-1) arc (270:90:0.5 and 1);
      \draw[dotted] (0,-1) arc (-90:90:0.5 and 1);
      \draw (3,0) ellipse (0.5 and 1);
      \draw (0,-1) .. controls +(1.5,0) and +(1.5,0) .. (0,1);
      \draw (3,-1) .. controls +(-1.5,0) and +(-1.5,0) .. (3,1);
      \filldraw (0.7,0.2) ellipse (2pt and 4pt);
      \filldraw (0.7,-0.2) ellipse (2pt and 4pt);
\end{scope}
\begin{scope}[xshift=11cm]
       \draw (0,-1) arc (270:90:0.5 and 1);
      \draw[dotted] (0,-1) arc (-90:90:0.5 and 1);
      \draw (3,0) ellipse (0.5 and 1);
      \draw (0,-1) .. controls +(1.5,0) and +(1.5,0) .. (0,1);
      \draw (3,-1) .. controls +(-1.5,0) and +(-1.5,0) .. (3,1);
      \filldraw (0.7,0) ellipse (2pt and 4pt);
      \filldraw (2.2,0) ellipse (2pt and 4pt);
\end{scope}
\begin{scope}[xshift=16cm]
       \draw (0,-1) arc (270:90:0.5 and 1);
      \draw[dotted] (0,-1) arc (-90:90:0.5 and 1);
      \draw (3,0) ellipse (0.5 and 1);
      \draw (0,-1) .. controls +(1.5,0) and +(1.5,0) .. (0,1);
      \draw (3,-1) .. controls +(-1.5,0) and +(-1.5,0) .. (3,1);
      \filldraw (2.2,-0.2) ellipse (2pt and 4pt);
      \filldraw (2.2,0.2) ellipse (2pt and 4pt);
\end{scope}

  \end{tikzpicture}}}\hspace{-2cm}& (\textrm{surgery})\\ &
\vcenter{\hbox{ \begin{tikzpicture}[scale=0.5]
     \begin{scope}
   [yscale = {1}, xscale={1},decoration={markings, mark=at
     position 0.5 with {\arrow{>}}},postaction={decorate}]
   \fill[color = gray] (0,0) ellipse (1 and 0.5);
  \draw (0,0) circle (1);
   \draw[postaction= decorate] (-1,0) arc (180:360:1 and 0.5);
   \draw[dotted] (-1,0) arc (180:0:1 and 0.5);
   \node (A) at (-2, 1.5) {$k \bullet$};
   \node (B) at (-2, 0.5) {$l \bullet$};
   \node (C) at (-2, -1.5) {$m \bullet$};
   \draw[very thin, ->]  (A) .. controls +(1, 0) and +(0,0.5) .. (0,1);
   \draw[very thin, ->]  (B) .. controls +(1, 0) and +(0,0.5) .. (0,0);
   \draw[very thin, ->]  (C) .. controls +(1, 0) and +(0,-0.5) .. (0,-1);
\end{scope}

  \end{tikzpicture}}}
=
   \begin{cases}
     1 &\textrm{if $(k,l,m) = (0,1,2), (1,2,0)$ or $(2,0,1)$, } \\
     -1 &\textrm{if $(k,l,m) = (0,2,1), (2,1,0)$ or $(1,0,2)$, } \\
     0 &\textrm{else.} \\
   \end{cases}\hspace{-6cm}
&     \begin{matrix}
      \\
       \\
      \textrm{(theta-foams evaluation)} \\
   \end{matrix}
\\ &
\vcenter{\hbox{
\begin{tikzpicture}[scale=0.5]
     \begin{scope}
   [xshift = 8cm, yscale = {1}, xscale={1},decoration={markings, mark=at
     position 0.5 with {\arrow{>}}},postaction={decorate}]
  \draw (0,0) circle (1);
   \draw[postaction= decorate] (-1,0) arc (180:360:1 and 0.5);
   \draw[dotted] (-1,0) arc (180:0:1 and 0.5);
   \node (C) at (-2, -1.5) {$k \bullet$};
 \draw[very thin, ->]  (C) .. controls +(1, 0) and +(0,-0.5) .. (0,-1);
 
\end{scope}

  \end{tikzpicture}
}}
  =
   \begin{cases}
     1 &\textrm{if $k=2$ } \\
         0 &\textrm{else. } \\
   \end{cases}& (\textrm{spheres evaluation})
\end{align*}
The $\ZZ$-module $\F(w)$ is free and its graded dimension is equal to $\kup{w}$. Furthermore, $\F$ extends to a functor from the category of foams (we believe that the definition of this category is clear from the context, see \cite{MR2100691} or \cite{LHRThese} for details) to the category of graded $\ZZ$-modules.
\end{propdfn}
\begin{rmk}
  \label{rmk:Frob}
  If we restrict $\F$ to collection of circles and surfaces, $\F$ becomes a 2-dimensional TQFT. The $\ZZ$-module associated to the circle has a natural structure of Frobenius algebra $\mathcal{A}$. It is isomorphic to $\ZZ[X]/X^3$.
\end{rmk}

\subsubsection{$\sll_3$-homology}
\label{sec:sll_3-homology-1}

\begin{dfn}
  \label{dfn:smoothings}
  Let $D$ be an oriented link diagram and $s$ a function from the set $X(D)$ of crossings of $D$ to $\{0,1\}$.
The \emph{$s$-smoothing of $D$}, denoted by $D^s$ is the web obtained by replacing each crossing of $D$ following the instructions:
\[
\begin{tikzpicture}[scale=0.7]
   \begin{scope}[yshift=2.75cm,scale=0.5, xshift = 1.5cm]
   \draw[->] (0.5,0.5) -- (2.5, 2.5);
   \draw[fill=white, color =white] (1.5,1.5) circle (3mm);
   \draw[dotted] (1.5,1.5) circle (1.414cm);
   \draw[->] (2.5,0.5) -- (0.5, 2.5);
   \node at (1.5, 3.6) {$x$ in $D$ is negative};
 \end{scope}
 \begin{scope}[yshift=-1.25cm, scale=0.5, xshift = 1.5cm,]
   \draw[->] (2.5,0.5) -- (0.5, 2.5);
   \draw[fill=white, color =white] (1.5,1.5) circle (3mm);
   \draw[dotted] (1.5,1.5) circle (1.414cm);
   \draw[->] (0.5,0.5) -- (2.5, 2.5);
   \node at (1.5, -0.6) {$x$ in $D$ is positive};
 \end{scope}
 \begin{scope}[yshift = 0.75cm, xshift=-5cm,decoration={markings, mark=at
     position 0.5 with {\arrow{>}}},postaction={decorate}, scale =0.5]
   \draw[dotted] (1.5,1.5) circle (1.414cm);
   \draw[->] (0.5,0.5) .. controls (1,1) and (1,2) .. (0.5, 2.5);
   \draw[->] (2.5,0.5) .. controls (2,1) and (2,2) .. (2.5, 2.5);
 \end{scope}
 \begin{scope}[yshift = 0.75cm, xshift = 6.5cm,decoration={markings, mark=at
     position 0.5 with {\arrow{>}}},postaction={decorate}, scale= 0.5]
   \draw[dotted] (1.5,1.5) circle (1.414cm);
      \draw[>-] (0.5,0.5) -- (1.5, 1);
      \draw[>-] (2.5,0.5) -- (1.5, 1);
      \draw[postaction={decorate}] (1.5,2) -- (1.5,1);
      \draw[->] (1.5,2)-- (0.5,2.5);
      \draw[->] (1.5,2)-- (2.5,2.5);
 \end{scope}
\draw[->] (0,3.5) -- (-3, 1.8) node [ midway, below, sloped] {$s(x)=0$};
\draw[->] (0,-0.5) -- (-3, 1.2) node [ midway, below, sloped] {$s(x)=1$};;
\draw[->] (3,3.5) -- (6, 1.7) node [ midway, below, sloped] {$s(x)=1$};;
\draw[->] (3,-0.5) -- (6, 1.3) node [ midway, below, sloped] {$s(x)=0$};;
\end{tikzpicture}
\]
Such a function $s$ is called a \emph{smoothing function for $D$}. We define $|s| = \#s\m(\{1\})$. Suppose additionally that $X(D)$ is endowed with a complete order.  If $x$ is a crossing, we set $k_x(s) = \#\{y\in X(D)\,|\, y<x \textrm{ and } s(y)=1 \}$.

If $x$ is a crossing of $D$ and $s$ and $s'$ two smoothing functions for $D$ such that $s(x)=0$ and $s'(x)=1$ and $s_{|X(D)\setminus\{x\}}=s'_{|X(D)\setminus\{x\}}$, we write $s\stackrel{x}{\To}s'$. We define a $(D^s, D^{s'})$-foam $f_{s\stackrel{x}{\To}s'}$ which is the identity foam outside a neighborhood of $x$ and around $x$ is given by the following pictures (this is to be read from bottom to top):
\[
\begin{tikzpicture}[scale= 0.5]
  \begin{scope}[scale =1, decoration={markings, mark=at
     position 0.5 with {\arrow{>}}},postaction={decorate}]
\begin{scope}[scale=0.5]
\coordinate (A2) at (1,5);
\coordinate (B2) at (4,3);
\coordinate (C2) at (-2,0);
\coordinate (D2) at (1,-2);
\coordinate (E2) at (0,-3);
\coordinate (F2) at (2,0);
\coordinate (A1) at (1,2);
\coordinate (B1) at (4,0);
\coordinate (C1) at (-2,-3);
\coordinate (D1) at (1,-5);
\coordinate (G2) at (1.5,1.8);
\node at (1, -6) {$f_{s\stackrel{x}{\longrightarrow}s'}$ when $x$ is positive.};
\draw (A1) -- (F2) -- (B1);
\draw (C1) -- (E2) -- (D1);
\draw (E2) -- (F2);
\draw (C2) .. controls +(1,1) and +(0,-1.5) .. (A2);
\draw (D2) .. controls +(0,1.5) and +(-1,-1) .. (B2);
\draw (A1) -- (A2);
\draw (B1) -- (B2);
\draw (C1) -- (C2);
\draw (D1) -- (D2);
\draw (E2) .. controls +(0,1) and  +(-0.4,0.3).. (G2).. controls +(0.4,-0.3) and +(0,0.5) .. (F2);
 \draw[densely dotted] (A2) .. controls +(0,0) and +( -0.5, +0.5) ..  (G2) .. controls +(0.5,-0.5) and +(0,0) .. (B2);
+(0,0).. (D2);
\fill[opacity=0.4, color= gray] (E2) .. controls +(0,1) and  +(-0.4,0.3).. (G2).. controls +(0.4,-0.3) and +(0,+0.5) .. (F2) --(E2);
\end{scope}
\end{scope}

\begin{scope}[scale =1, decoration={markings, mark=at
     position 0.5 with {\arrow{>}}},postaction={decorate}, xshift = 10cm]
\begin{scope}[xscale=-0.5, yscale=-0.5]
\coordinate (A2) at (1,5);
\coordinate (B2) at (4,3);
\coordinate (C2) at (-2,0);
\coordinate (D2) at (1,-2);
\coordinate (E2) at (0,-3);
\coordinate (F2) at (2,0);
\coordinate (A1) at (1,2);
\coordinate (B1) at (4,0);
\coordinate (C1) at (-2,-3);
\coordinate (D1) at (1,-5);
\coordinate (G2) at (1.5,1.8);
\draw (A1) -- (F2) -- (B1);
\draw (C1) -- (E2) -- (D1);
\draw (E2) -- (F2);
\draw (C2) .. controls +(1,1) and +(0,-1.5) .. (A2);
\draw (D2) .. controls +(0,1.5) and +(-1,-1) .. (B2);
\draw (A1) -- (A2);
\draw (B1) -- (B2);
\draw (C1) -- (C2);
\draw (D1) -- (D2);
\draw (E2) .. controls +(0,1) and  +(-0.4,0.3).. (G2).. controls +(0.4,-0.3) and +(0,0.5) .. (F2);
 \draw[densely dotted] (A2) .. controls +(0,0) and +( -0.5, +0.5) ..  (G2) .. controls +(0.5,-0.5) and +(0,0) .. (B2);
+(0,0).. (D2);
\fill[opacity=0.4, color= gray] (E2) .. controls +(0,1) and  +(-0.4,0.3).. (G2).. controls +(0.4,-0.3) and +(0,+0.5) .. (F2) --(E2);
\node at (-1, 6) {$f_{s\stackrel{x}{\longrightarrow}s'}$ when $x$ is negative.};
\end{scope}
\end{scope}
\end{tikzpicture}
\]
One can show that $\F(f_{s\stackrel{x}{\To}s'}): \F(D^s)\To\F(D^{s'})$ is an homogeneous map of degree $1$.
\end{dfn}

We are now ready to construct the $\sll_3$-homology for framed links:
\begin{dfn}
  \label{dfn:complexframed}
  Let $D$ be a link diagram with $n_+$ positive crossings and $n_-$ negative crossings. We fix a total order on $X(D)$. We define the complex $C(D)$ of graded $\ZZ$-module by:
\begin{center}  
\begin{align*}
    &C_i = \bigoplus_{\substack{\textrm{$s$ smoothing function for $D$} \\ i= |s| -\frac{1}{3}n_+-\frac{2}{3}n_-}} \F(D^s)q^{|s|-\frac{1}{3}n_+-\frac{2}{3}n_- }, \\
    &d_i: C_i \To C_{i+1}\quad \textrm{and} \quad d_i= \sum_{\substack{\textrm{$s$ smoothing function for $D$} \\i =  |s| -\frac{1}{3}n_+-\frac{2}{3}n_-}}\quad \sum _{s\stackrel{x}{\To}s'} (-1)^{k_x(s)}\F(f_{s\stackrel{x}{\To}s'}),
  \end{align*}
\end{center}
where $q^\bullet$ is the grading shift for the non-homological grading (the so called \emph{$q$-degree}).
\end{dfn}

\begin{rmk}
  \label{rmk:homdeginQ}
Two different orders on $X(D)$ yield isomorphic complexes.

  The homological grading of $C(D)$ is not in $\ZZ$ as usual but in $\QQ$ (the differential has degree 1). However, only finitely many objects $C_i(D)$ are not trivial. All the notions of chain maps, homotopy equivalence etc. remain exactly the same as in the classical case. The only thing which might need precision is the graded Euler characteristic. If $C= \bigoplus_{r\in \QQ} C_r$ is a such a complex we define:
\[
\chi_q(C)= \sum_{r\in \QQ} e^{i\pi r} \rk_q C_r.
\]
\end{rmk}

\begin{thm}[\cite{MR2100691}]
  \label{thm:sl3hom}
  The homotopy type of $C(D)$ depends only on the framed isotopy type of $D$.
\end{thm}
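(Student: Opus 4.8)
The plan is to follow the classical Khovanov-style invariance argument, adapted to the framed normalization. First I would recall that, by the work of Khovanov in \cite{MR2100691}, the \emph{unframed} version of $C(D)$ (with the homological and $q$-grading shifts chosen so that the graded Euler characteristic is literally $\kup{D}$ in the usual normalization) is invariant under the three Reidemeister moves $R2$, $R3$, and the \emph{framed} version of the first Reidemeister move (the so-called $R1'$ or ``curl-cancelling'' move that slides a kink along the strand without creating or destroying it). The only difference between our $C(D)$ and Khovanov's complex is a global shift of the homological and $q$-gradings by the rational number $-\tfrac13 n_+ - \tfrac23 n_-$; since this shift is determined entirely by $n_+$ and $n_-$, which are themselves framed-isotopy invariants of $D$, the homotopy equivalences produced in \cite{MR2100691} for $R2$, $R3$ and $R1'$ transport verbatim to our setting, up to composing with the appropriate grading-shift functor. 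So the heart of the matter is to check that the framed Reidemeister moves generate framed isotopy and that each move changes $(n_+, n_-)$ in a way compatible with the shift.

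Concretely, I would argue move by move. For $R2$ and $R3$ the numbers $n_+$ and $n_-$ are unchanged, so no adjustment to the shift is needed and Khovanov's homotopy equivalences apply directly. For $R1'$ (sliding a positive kink past, or equivalently removing a $+$/$-$ kink pair, which is the move allowed for framed links), one checks that $n_+ - n_-$ is preserved — a positive kink and a negative kink contribute $+1$ and $-1$ respectively — so $-\tfrac13 n_+ - \tfrac23 n_-$ changes only by an integer equal to the homological width of the relevant local complex; the Gauss-elimination / delooping computation from \cite{MR2100691} then identifies the local complex for a kink, after the shift, with the (shifted) identity complex on the single strand. The key local inputs are the web relations of Remark~\ref{rmk:web2sl3} together with the foam relations of Proposition-Definition~\ref{pd:functor} (digon, square, surgery, theta and sphere evaluations) — exactly the relations Khovanov uses to establish Reidemeister invariance — so no new computation is required beyond bookkeeping of the fractional shifts.

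The one genuinely delicate point is \emph{functoriality of the grading shift under composition of moves}: because our homological grading lives in $\QQ$ rather than $\ZZ$, I must make sure that when two diagrams $D$ and $D'$ are framed-isotopic the total shift $-\tfrac13 n_+(D) - \tfrac23 n_-(D)$ agrees with $-\tfrac13 n_+(D') - \tfrac23 n_-(D')$ \emph{before} we even speak of homotopy equivalence — otherwise "homotopy type" would not be well-posed. This holds because $n_+$ and $n_-$ are individually invariant under $R2$, $R3$ and under the \emph{framed} $R1'$ move (which preserves both separately, since it creates or destroys a $+$ kink and a $-$ kink simultaneously). Thus the shift is a framed-isotopy invariant, and the statement "$C(D)$ depends only on the framed isotopy type of $D$" is meaningful and reduces, as above, to the move-by-move checks. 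The main obstacle, then, is not any hard new mathematics but rather carefully separating the two roles of $n_\pm$: as the source of a well-defined grading shift, and as a quantity that must be tracked through each local homotopy equivalence; once this is organized, the proof is a direct transcription of the argument in \cite{MR2100691}.
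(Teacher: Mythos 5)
There is a genuine error in the key bookkeeping step of your argument. You assert that $n_+$ and $n_-$ are unchanged under $R2$ and are preserved separately by the framed kink move, and you conclude that the shift $-\tfrac13 n_+-\tfrac23 n_-$ is itself a framed-isotopy invariant, so that Khovanov's homotopy equivalences ``transport verbatim''. Both premises are false: an $R2$ move creates (or destroys) one positive \emph{and} one negative crossing, and the framed move cancelling an adjacent pair of opposite kinks likewise changes $n_+$ and $n_-$ each by $1$; only the writhe $n_+-n_-$ is a framed invariant (as the paper notes in a footnote to Theorem~\ref{thm:coloredhomwithcjc}), not $n_+$ and $n_-$ separately. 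Consequently the fractional shift is \emph{not} constant on a framed isotopy class --- it changes by $\mp(\tfrac13+\tfrac23)=\mp 1$ each time such a pair of crossings is introduced or removed --- and your final ``well-posedness'' paragraph, which rests on the constancy of the shift, collapses, as does the claim that no computation beyond transporting a fixed global shift is needed.

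The correct statement, and the actual content one must check when adapting \cite{MR2100691} to the normalization of Definition~\ref{dfn:complexframed}, is that under each framed move the \emph{integer} change of the shift $-\tfrac13 n_+-\tfrac23 n_-$ is exactly absorbed by the local $|s|$-degree of the resolution surviving the Gaussian elimination: e.g.\ for $R2$ the surviving local resolution has $|s|=1$, matching the shift change of $-1$, and similarly for the opposite-kink cancellation; for $R3$ nothing changes; for a single $R1$ the mismatch is $\pm\tfrac13$ or $\pm\tfrac23$, which is precisely why the complex is only a \emph{framed} invariant. This degree-matching is the whole point of the coefficients $\tfrac13$ and $\tfrac23$, and it is what your proof must verify move by move (the underlying local homotopy equivalences from the foam relations of Proposition-Definition~\ref{pd:functor} do carry over unchanged, as you say). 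Note also that the paper gives no proof of Theorem~\ref{thm:sl3hom} beyond the citation to Khovanov, so this renormalization bookkeeping is exactly the part a self-contained argument has to supply --- and it is the part your proposal gets wrong.
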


In the sequence the homology of $C(D)$ is denote by $H(D)$ or $H(L)$ if $L$ an oriented framed link represented by $D$.

\begin{thm}[\cite{MR2482322}]
  \label{thm:clarkeasy}
  $C$ extends to a functor from the category of framed link cobordisms to the category of complexes of graded $\ZZ$-modules up to homotopy.
\end{thm}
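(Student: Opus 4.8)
The plan is to adapt to the $\sll_3$ setting the movie-move strategy used by Jacobsson and by Bar-Natan to prove functoriality of Khovanov homology, using the foam functor $\F$ of Proposition-Definition~\ref{pd:functor} in place of the dotted-cobordism category of the $\sll_2$ theory. First I would invoke the Carter--Saito theorem: a cobordism $S$ between framed links, sitting in $\RR^3\times[0,1]$, can be presented by a \emph{movie} — a finite sequence of link diagrams $D=D_0,D_1,\dots,D_N=D'$ in which consecutive diagrams differ by one elementary event, namely a Reidemeister move R1, R2 or R3, or a Morse event (a birth or a death of a small circle, or a saddle/band move). Moreover two movies present ambient-isotopic cobordisms precisely when they are connected by a finite sequence of the fifteen \emph{movie moves} MM1--MM15 (together with far-commutativity of disjoint events).

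Next I would attach to each elementary event a chain map between the associated complexes of Definition~\ref{dfn:complexframed}. A Morse event is, locally, a foam between the relevant webs, so $\F$ sends it to a homogeneous map; the grading shifts of Definition~\ref{dfn:complexframed} — which also encode the framing through the writhe-dependent term $\tfrac13 n_+ + \tfrac23 n_-$, with $\QQ$-gradings handled as in Remark~\ref{rmk:homdeginQ} — absorb its Euler-characteristic degree, and one checks that band moves inside a movie are compatible with the framing of the boundary links. For the Reidemeister moves one must fix, once and for all, explicit homotopy equivalences realizing the invariance Theorem~\ref{thm:sl3hom}, together with explicit homotopy inverses; these are assembled from the elementary saddle and singular-saddle foams and from the inverses furnished by the digon and square relations of Proposition-Definition~\ref{pd:functor}, exactly as in Khovanov's invariance proof. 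Concatenating along a movie then produces a chain map $\phi_S\colon C(D)\to C(D')$, well-defined up to homotopy for each fixed movie, and functorial under gluing of movies by construction.

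The heart of the proof is to show that $\phi_S$ is, up to homotopy, independent of the chosen movie, i.e. unchanged under MM1--MM15. I would split the moves into two families. A movie move that exhibits an ambient isotopy of a foam with fixed top and bottom webs yields an equality of chain maps for free, since $\F$ is a genuine functor on foams. The remaining moves — the ``circular'' ones built from two or three Reidemeister moves, and those mixing a Reidemeister move with a Morse event — are reduced, after simplifying the (a priori large) complexes by Gaussian elimination, to a finite checklist of identities between foams, which one verifies using the local evaluations of Proposition-Definition~\ref{pd:functor} (digon, square, theta-foam and sphere relations) and Remark~\ref{rmk:Frob}. Throughout, the signs $(-1)^{k_x(s)}$ in the differential of Definition~\ref{dfn:complexframed} must be tracked through each move; where a movie move a priori gives only equality up to an overall sign, one pins the sign down by evaluating on a convenient generator, or by Bar-Natan's mapping-cone trick (\cite{MR2174270}), which promotes an identity-up-to-sign on a cone to an honest identity.

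I expect two places to be the real work. The first is setting up the explicit R1/R2/R3 chain maps and their inverses so that they compose coherently and so that all the grading and sign bookkeeping stays consistent; this is fiddly but essentially routine. The second, and the genuine obstacle, is the verification of the Reidemeister-heavy movie moves (notably those containing a triple point): each forces one to simplify sizeable complexes and then to match two differently assembled composites of foams, and the foam calculus here is strictly richer than in the $\sll_2$ case — singular circles, theta-foams — which simultaneously supplies more relations to exploit and more configurations to check. Pinning all the signs down so as to obtain functoriality \emph{on the nose}, rather than merely up to sign, is the last delicate point.
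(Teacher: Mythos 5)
The paper does not actually prove Theorem~\ref{thm:clarkeasy}: it is imported from Clark \cite{MR2482322} (with Remark~\ref{rmk:extension2web} explaining how the present normalization interacts with his grading shifts), and your outline is essentially the strategy Clark carries out — present the cobordism by a movie \`a la Carter--Saito, assign chain maps to elementary events using the foam functor of Proposition-Definition~\ref{pd:functor}, and check invariance under movie moves by simplifying complexes and evaluating foams, with Bar-Natan-type tricks \cite{MR2174270} to control signs. So at the level of strategy you have reconstructed the cited proof.

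There is, however, one concrete mismatch you need to repair: the statement here is about \emph{framed} link cobordisms, and with the normalization of Definition~\ref{dfn:complexframed} the complex $C(D)$ is invariant only under framed isotopy (Theorem~\ref{thm:sl3hom}). A single Reidemeister~1 event changes $n_+-n_-$ and therefore shifts both the homological and the $q$-grading by fractional amounts, so there is no degree-zero homotopy equivalence to attach to an R1 frame, and an isolated R1 is not an allowed event in a framed movie in the first place. Consequently you cannot take the unframed Carter--Saito calculus (R1 included, movie moves MM1--MM15) as your combinatorial model; you must either use the framed movie calculus, in which kinks occur only in cancelling pairs and the movie-move list is the framed one of Beliakova--Wehrli \cite{MR2462446} (see also \cite{MR1905687}), or first prove Clark's unframed statement in his original normalization and then translate to the present framed normalization, tracking the grading shifts exactly as in Remark~\ref{rmk:extension2web}. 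With that adjustment the rest of your plan (Morse events via $\F$, fixed R2/R3 equivalences, movie-move verification, sign pinning) is the right route.
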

\begin{rmk}
  \label{rmk:extension2web}
  In his proof of theorem~\ref{thm:clarkeasy}, Clark actually shows\footnote{He actually shows that the original definition of $C(D)$ does not extend to knotted webs, because of some grading shifts that he carefully computes, but our normalization corrects this defects.} that one can canonically extends this construction to framed knotted webs. However he does not prove that this extension yields a functor on the category of framed knotted webs.  
\end{rmk}


\section{Tensor resolutions of simple finite dimensional $\Usl$-modules}
\label{sec:tens-resol-os}
In this section, we  define a graph $\Gamma_{m,n}$ associated with every pair of non-negative integers. In a second part, we ``interpret'' this graph to define a complex. Finally, we prove that is a resolution of the $\Usl$-module $V_{m,n}$.
\begin{dfn}\label{dfn:partition}
  Let $m$ and $n$ be two non-negative integers. We denote by $\eps(m,n)$ the sequence of $(m+n)$ dots, where the first $m$ dots are colored with $+$ and the last $n$ dots are colored with $-$. A \emph{compatible partition of $\eps(m,n)$} is a partition $\bigsqcup_i M_i$ of $\eps(m,n)$ such that:
  \begin{itemize}
  \item Each $M_i$ consists of consecutive dots; 
  \item Each $M_i$ is non empty and has cardinality at most three;
  \item If $M_i$ has three elements, all its elements have the same color ($+$ or $-$).
  \end{itemize}
The \emph{canonical partition of $\eps_{m,n}$} is the partition by singletons. 
The oriented graph $\Gamma_{m,n}$ is defined by:
\begin{itemize}
\item The set $V(\Gamma_{m,n})$ of vertices of $\Gamma_{m,n}$ consists of  all compatible partitions of $\eps(m,n)$.
\item There is a directed edge from $P_1$ to $P_2$ if and only if $P_2$ is obtained from $P_1$ by merging two sets of $P_1$.
\end{itemize}
If $(\mathbf{m}, \mathbf{n})$ is a pair of $k$-tuples of integers, $\Gamma_{\mathbf{m}, \mathbf{n}}$ is the Cartesian product of the graphs $\Gamma_{m_i, n_i}$ for $i =1, \dots k$.

\end{dfn}

\begin{exa}\label{exa:graph32}
  It is convenient to represent the compatible partitions by connecting dots belonging to the same set. The following picture represents the graph $\Gamma_{3,2}$. 
\[ 
\begin{tikzpicture}
  \begin{scope}[yscale=-1]
\newcommand{\dop}[1]{\fill[red] #1 circle (0.2); \node[white] at #1 {$\pmb{+}$};}
\newcommand{\dom}[1]{\fill[blue!70!white] #1 circle (0.2); \node[white] at #1 {$\pmb{-}$};}
\newcommand{\design}{ 
\begin{scope}[opacity =1]
\dop{(0,0)}
\dop{(1,0)}
\dop{(2,0)}
\dom{(3,0)}
\dom{(4,0)}
\draw[densely dotted, rounded corners] (-0.3, 0.3) -- (4.3, 0.3) -- (4.3, -0.3) -- (-0.3, -0.3) -- cycle;
\end{scope}
}
    \begin{scope}[scale=0.6]
      \node[opacity=0] (A00) at (2, 0) {$WWWW$};
      \design
    \end{scope}
    \begin{scope}[scale=0.6, yshift = -3.6cm, xshift= -6cm]
      \draw [very thick, gray] (0,0) -- +(1,0);
      \node[opacity=0] (A10) at (2, 0) {$WWWW$};
      \design
    \end{scope}
    \begin{scope}[scale=0.6, yshift = -2.9cm, xshift= -2cm]
      \draw [very thick, gray] (1,0) -- +(1,0);
      \node[opacity=0] (A11) at (2, 0) {$WWWW$};
      \design
    \end{scope}
    \begin{scope}[scale=0.6, yshift = -3.6cm, xshift= 2cm]
      \draw [very thick, gray] (2,0) -- +(1,0);
      \node[opacity=0] (A12) at (2, 0) {$WWWW$};
      \design
    \end{scope}
    \begin{scope}[scale=0.6, yshift = -2.9cm, xshift= 6cm]
      \draw [very thick, gray] (3,0) -- +(1,0);
      \node[opacity=0] (A13) at (2, 0) {$WWWW$};
      \design
    \end{scope}
\draw[->] (A00) -- (A10);
\draw[->] (A00) -- (A11);
\draw[->] (A00) -- (A12);
\draw[->] (A00) -- (A13);
    \begin{scope}[scale=0.6, yshift = -6.4cm, xshift= -6cm]
      \draw [very thick, gray] (0,0) -- +(1,0);
      \draw [very thick, gray] (1,0) -- +(1,0);
      \node[opacity=0] (A20) at (2, 0) {$WWWW$};
      \design
    \end{scope}
    \begin{scope}[scale=0.6, yshift = -7.1cm, xshift= -2cm]
      \draw [very thick, gray] (0,0) -- +(1,0);
      \draw [very thick, gray] (2,0) -- +(1,0);
      \node[opacity=0] (A21) at (2, 0) {$WWWW$};
      \design
    \end{scope}
    \begin{scope}[scale=0.6, yshift = -6.4cm, xshift= 2cm]
      \draw [very thick, gray] (0,0) -- +(1,0);
      \draw [very thick, gray] (3,0) -- +(1,0);
      \node[opacity=0] (A22) at (2, 0) {$WWWW$};
      \design
    \end{scope}
\draw[->] (A10) -- (A20);
\draw[->] (A10) -- (A21);
\draw[->] (A10) -- (A22);
    \begin{scope}[scale=0.6, yshift = -7.1cm, xshift= 6cm]
      \draw [very thick, gray] (1,0) -- +(1,0);
      \draw [very thick, gray] (3,0) -- +(1,0);
      \node[opacity=0] (A23) at (2, 0) {$WWWW$};
      \design
    \end{scope}
\draw[->] (A11) -- (A20);
\draw[->] (A12) -- (A21);
\draw[->] (A13) -- (A22);
\draw[->] (A11) -- (A23);
\draw[->] (A13) -- (A23);
    \begin{scope}[scale=0.6, yshift = -10cm, xshift= 0cm]
      \draw [very thick, gray] (0,0) -- +(1,0);
      \draw [very thick, gray] (1,0) -- +(1,0);
      \draw [very thick, gray] (3,0) -- +(1,0);
      \node[opacity=0] (A30) at (2, 0) {$WWWW$};
      \design
    \end{scope}
\draw[->] (A20) -- (A30);
\draw[->] (A22) -- (A30);
\draw[->] (A23) -- (A30);
\end{scope}
\end{tikzpicture}
\]
\end{exa}
\begin{rmk}
  \label{rmk:gradinggraphetc}
  As we can see on the example~\ref{exa:graph32}, the graph $\Gamma_{m,n}$ is naturally graded by the number of connected strands. Let us precise the normalization: an admissible partition $\sqcup_{i=1}^k M_i$ of $\eps(m,n)$ has degree $m+n-k$. With this setting the edges increase the degree by 1. 
Every admissible partition consists of sets of the form (we use the notation of example~\ref{exa:graph32}): 
\[ 
\begin{tikzpicture}
   \newcommand{\dop}[1]{\fill[red] #1 circle (0.2); \node[white] at #1 {$\pmb{+}$};}
 \newcommand{\dom}[1]{\fill[blue!70!white] #1 circle (0.2); \node[white] at #1 {$\pmb{-}$};}
\begin{scope}[scale=0.6]
\dop{(-1,0)}
\dom{(1,0)}
\draw [very thick, gray] (3,0) -- +(1,0);
\dop{(3,0)}
\dom{(4,0)}
\draw [very thick, gray] (6,0) -- +(1,0);
\dop{(6,0)}
\dop{(7,0)}
\draw [very thick, gray] (9,0) -- +(1,0);
\dom{(9,0)}
\dom{(10,0)}
\draw [very thick, gray] (12,0) -- +(2,0);
\dop{(12,0)}
\dop{(13,0)}
\dop{(14,0)}
\draw [very thick, gray] (16,0) -- +(2,0);
\dom{(16,0)}
\dom{(17,0)}
\dom{(18,0)}
\end{scope}

\end{tikzpicture}
\]
Here is a list of all type of edges (where$\vcenter{\hbox{\tikz[scale=0.6]{\fill[green!50!black] (0,0) circle (0.2); \node[white] at (0,0) {$?$};}}}$stands for$\vcenter{\hbox{\tikz[scale=0.6]{\fill[red] (0,0) circle (0.2); \node[white] at (0,0) {$\pmb{+}$};}}}\!\!,$ $\!\!\vcenter{\hbox{\tikz[scale=0.6]{\fill[blue!70!white] (0,0) circle (0.2); \node[white] at (0,0) {$\pmb{-}$};}}}$or ``no dot''):
\[ 
\begin{tikzpicture}
   \newcommand{\dop}[1]{\fill[red] #1 circle (0.2); \node[white] at #1 {$\pmb{+}$};}
 \newcommand{\dom}[1]{\fill[blue!70!white] #1 circle (0.2); \node[white] at #1 {$\pmb{-}$};}
 \newcommand{\dob}[1]{\fill[green!50!black] #1 circle (0.2); \node[white] at #1 {$?$};}
\begin{scope}
    \begin{scope}[scale=0.6]
      \node[opacity=0] (A00) at (0, 0) {$WWWW$};
      \dob{(-1.5,0)}
      \dop{(-0.5,0)}
      \dom{(0.5,0)}
      \dob{(1.5,0)}
    \end{scope}
    \begin{scope}[scale=0.6, yshift=1.5cm]
      \node[opacity=0] (A01) at (0, 0) {$WWWW$};
      \draw [very thick, gray] (-0.5,0) -- +(1,0);
      \dob{(-1.5,0)}
      \dop{(-0.5,0)}
      \dom{(0.5,0)}
      \dob{(1.5,0)}
    \end{scope}
\end{scope}
\begin{scope}[xshift= 3.2cm]
    \begin{scope}[scale=0.6]
      \node[opacity=0] (A10) at (0, 0) {$WWWW$};
      \dob{(-1.5,0)}
      \dom{(-0.5,0)}
      \dom{(0.5,0)}
      \dob{(1.5,0)}
    \end{scope}
    \begin{scope}[scale=0.6, yshift=1.5cm]
      \node[opacity=0] (A11) at (0, 0) {$WWWW$};
      \draw [very thick, gray] (-0.5,0) -- +(1,0);
      \dob{(-1.5,0)}
      \dom{(-0.5,0)}
      \dom{(0.5,0)}
      \dob{(1.5,0)}
    \end{scope}
\end{scope}
\begin{scope}[xshift= 6.4cm]
    \begin{scope}[scale=0.6]
      \node[opacity=0] (A20) at (0, 0) {$WWWW$};
      \dob{(-1.5,0)}
      \dop{(-0.5,0)}
      \dop{(0.5,0)}
      \dob{(1.5,0)}
    \end{scope}
    \begin{scope}[scale=0.6, yshift=1.5cm]
      \node[opacity=0] (A21) at (0, 0) {$WWWW$};
      \draw [very thick, gray] (-0.5,0) -- +(1,0);
      \dob{(-1.5,0)}
      \dop{(-0.5,0)}
      \dop{(0.5,0)}
      \dob{(1.5,0)}
    \end{scope}
\end{scope}
\begin{scope}[yshift = -0cm, xshift= 9.9cm]
    \begin{scope}[scale=0.6]
      \node[opacity=0] (A30) at (0, 0) {$WWWW$};
      \draw [very thick, gray] (-1,0) -- +(1,0);
      \dob{(-2,0)}
      \dom{(-1,0)}
      \dom{(0,0)}
      \dom{(1,0)}
      \dob{(2,0)}
    \end{scope}
    \begin{scope}[scale=0.6, yshift=1.5cm]
      \node[opacity=0] (A31) at (0, 0) {$WWWW$};
      \draw [very thick, gray] (-1,0) -- +(1,0);
      \draw [very thick, gray] (0,0) -- +(1,0);
      \dob{(-2,0)}
      \dom{(-1,0)}
      \dom{(0,0)}
      \dom{(1,0)}
      \dob{(2,0)}
    \end{scope}
\end{scope}
\begin{scope}[yshift = -2cm, xshift= 1.35cm]
    \begin{scope}[scale=0.6]
      \node[opacity=0] (A40) at (0, 0) {$WWWW$};
      \draw [very thick, gray] (0,0) -- +(1,0);
      \dob{(-2,0)}
      \dom{(-1,0)}
      \dom{(0,0)}
      \dom{(1,0)}
      \dob{(2,0)}
    \end{scope}
    \begin{scope}[scale=0.6, yshift=1.5cm]
      \node[opacity=0] (A41) at (0, 0) {$WWWW$};
      \draw [very thick, gray] (-1,0) -- +(1,0);
      \draw [very thick, gray] (0,0) -- +(1,0);
      \dob{(-2,0)}
      \dom{(-1,0)}
      \dom{(0,0)}
      \dom{(1,0)}
      \dob{(2,0)}
    \end{scope}
\end{scope}
\begin{scope}[yshift = -2cm, xshift= 4.95cm]
    \begin{scope}[scale=0.6]
      \node[opacity=0] (A50) at (0, 0) {$WWWW$};
      \draw [very thick, gray] (0,0) -- +(1,0);
      \dob{(-2,0)}
      \dop{(-1,0)}
      \dop{(0,0)}
      \dop{(1,0)}
      \dob{(2,0)}
    \end{scope}
    \begin{scope}[scale=0.6, yshift=1.5cm]
      \node[opacity=0] (A51) at (0, 0) {$WWWW$};
      \draw [very thick, gray] (-1,0) -- +(1,0);
      \draw [very thick, gray] (0,0) -- +(1,0);
      \dob{(-2,0)}
      \dop{(-1,0)}
      \dop{(0,0)}
      \dop{(1,0)}
      \dob{(2,0)}
    \end{scope}
\end{scope}
\begin{scope}[yshift = -2cm, xshift= 8.75cm]
    \begin{scope}[scale=0.6]
      \node[opacity=0] (A60) at (0, 0) {$WWWW$};
      \draw [very thick, gray] (-1,0) -- +(1,0);
      \dob{(-2,0)}
      \dop{(-1,0)}
      \dop{(0,0)}
      \dop{(1,0)}
      \dob{(2,0)}
    \end{scope}
    \begin{scope}[scale=0.6, yshift=1.5cm]
      \node[opacity=0] (A61) at (0, 0) {$WWWW$};
      \draw [very thick, gray] (-1,0) -- +(1,0);
      \draw [very thick, gray] (0,0) -- +(1,0);
      \dob{(-2,0)}
      \dop{(-1,0)}
      \dop{(0,0)}
      \dop{(1,0)}
      \dob{(2,0)}
    \end{scope}
\end{scope}
\draw[->] (A00) -- (A01);
\draw[->] (A10) -- (A11);
\draw[->] (A20) -- (A21);
\draw[->] (A30) -- (A31);
\draw[->] (A40) -- (A41);
\draw[->] (A50) -- (A51);
\draw[->] (A60) -- (A61);

\end{tikzpicture}
\]
\end{rmk}

The edges induce a poset structure on $\Gamma_{m,n}$. We fix some notations which will be useful in section~\ref{sec:cobordisms}.

\begin{notation}
  \label{not:lup}
  Let $P_1$ and $P_2$ two admissible partitions of $\epsilon_{m,n}$, we write $P_1\pitchfork P_2$ when the following conditions are satisfied:
  \begin{enumerate}
  \item\label{item:disjoint} The canonical partition is an upper lower bound for $\{P_1, P_2\}$. In the dots-strands setting, this means that $P_1$ and $P_2$ have no common strand. 
  \item\label{item:compatible} $\{P_1, P_2\}$ admits a lower upper bound (in our case, one can check if exists it is unique) denoted by $P_1\vee P_2$. In the dots-strands setting, this means that the superposition of $P_1$ and $P_2$ still corresponds to an admissible partition.
  \end{enumerate}
\end{notation}
\begin{exa}
  \label{exa:lup}
  For $m=3$ and $n=1$, we consider $P_1=\vcenter{\hbox{\tikz[scale=0.6]{   \draw [very thick, gray] (2,0) -- +(1,0); \dops{(0, 0)} \dops{(1, 0)} \dops{(2, 0)} \doms{(3, 0)} }}}\!\!,$ $P_2=\vcenter{\hbox{\tikz[scale=0.6]{  \draw [very thick, gray] (0,0) -- +(1,0); \dops{(0, 0)} \dops{(1, 0)} \dops{(2, 0)} \doms{(3, 0)} }}}\!$ and $P_3=\vcenter{\hbox{\tikz[scale=0.6]{  \draw [very thick, gray] (0,0) -- +(2,0); \dops{(0, 0)} \dops{(1, 0)} \dops{(2, 0)} \doms{(3, 0)} }}}\!\!.$
 We have:
  \begin{itemize}
  \item $P_1 \pitchfork P_2$ and $P_1\vee P_2=\vcenter{\hbox{\tikz[scale=0.6]{  \draw [very thick, gray] (0,0) -- +(1,0);  \draw [very thick, gray] (2,0) -- +(1,0); \dops{(0, 0)} \dops{(1, 0)} \dops{(2, 0)} \doms{(3, 0)} }}}\!\!.$
  \item $\neg (P_1 \pitchfork P_3)$ (condition~(\ref{item:compatible}) is not fulfilled),
  \item $\neg (P_2 \pitchfork P_3)$ (condition~(\ref{item:disjoint}) is not fulfilled),
  \end{itemize}
\end{exa}

We now give an algebraic interpretation of $\Gamma_{m,n}$:

\begin{dfn}
  \label{dfn:ctilda}
Let $m$ and $n$ be two non-negative integers. 
With every vertex $P$ of $\Gamma_{m,n}$ is associated a $\Usl$-module $\widetilde{C}_{m,n}(P)$, and with every edge $e$ of $\Gamma_{m,n}$ is associated a morphism $d_e$ of $\Usl$-module using the following dictionary:
\begin{center}
\input{dictionary}
\end{center}
\end{dfn}

\begin{exa}
  \label{exa:Ctilda}
  Continuing example~\ref{exa:graph32}, we get the following diagram for $\left(\widetilde{C}_{3,2}, d_\bullet\right)$:
\[
\begin{tikzpicture}
  \begin{scope}[scale =1.33]
    \begin{scope}[scale=0.6]
      \node (A00) at (2, 0) {$ $};
      \node (B00) at (2, 0) {$V_+ \otimes V_+ \otimes V_+ \otimes V_-\otimes V_- $};
    \end{scope}
    \begin{scope}[scale=0.6, yshift = 3.5cm, xshift= -6cm]
      \node (A10) at (2, 0) {$ $};
      \node (B10) at (2, 0)  {$V_- \otimes V_+ \otimes V_-\otimes V_- $};
    \end{scope}
\draw[->] (B00) -- (B10) node [midway, below, sloped] {$\scriptstyle h_{++}^-\otimes \mathrm{id}_{V_+\otimes V_-^{\otimes 2}}$};
    \begin{scope}[scale=0.6, yshift = 3cm, xshift= -2cm]
      \node (A11) at (2, 0) {$ $};
      \node (B11) at (2, 0) {$V_+ \otimes V_- \otimes V_-\otimes V_- $};
    \end{scope}
    \begin{scope}[scale=0.6, yshift = 3.5cm, xshift= 2cm]
      \node (A12) at (2, 0) {$ $};
      \node (B12) at (2, 0) {$V_+ \otimes V_+ \otimes  V_- $};
    \end{scope}
    \begin{scope}[scale=0.6, yshift = 3cm, xshift= 6cm]
      \node (A13) at (2, 0) {$ $};
      \node (B13) at (2, 0) {$V_+ \otimes V_+ \otimes V_+ \otimes V_+ $};
    \end{scope}
\draw[->] (B00) -- (B11)node [midway, below, sloped] {$\scriptstyle\mathrm{id}_{V_+} \otimes h_{++}^-\otimes \mathrm{id}_{V_-^{\otimes 2}}\quad $};
\draw[->] (B00) -- (B12) node [midway, below, sloped] {$\scriptstyle\mathrm{id}_{V_+^{\otimes 2}}\otimes d_{+-}\otimes \mathrm{id}_{V_-} $};
\draw[->] (B00) -- (B13) node [midway, below, sloped] {$\scriptstyle\mathrm{id}_{V_+^{\otimes 3}}\otimes h_{--}^+$};
    \begin{scope}[scale=0.6, yshift = 6.5cm, xshift= -6cm]
      \node (A20) at (2, 0) {$ $};
      \node (B20) at (2, 0) {$ V_- \otimes V_-$};;
    \end{scope}
    \begin{scope}[scale=0.6, yshift = 7cm, xshift= -2cm]
      \node (A21) at (2, 0) {$ $};
      \node (B21) at (2, 0) {$V_- \otimes V_- $};
    \end{scope}
    \begin{scope}[scale=0.6, yshift = 6.5cm, xshift= 2cm]
      \node (A22) at (2, 0) {$ $};
      \node (B22) at (2, 0) {$V_-\otimes V_+ \otimes V_+ $};
    \end{scope}
    \begin{scope}[scale=0.6, yshift = 7cm, xshift= 6cm]
      \node (A23) at (2, 0) {$ $};
      \node (B23) at (2, 0) {$V_+ \otimes V_- \otimes V_+ $};;
        \end{scope}
\draw[->] (B10) -- (B20) node [midway,red] {$\bullet$};
\draw[->] (B10) -- (B21);
\draw[->] (B10) -- (B22);
\draw[->] (B11) -- (B20) ;
\draw[->] (B12) -- (B21)  node [midway,red] {$\bullet$};
\draw[->] (B13) -- (B22) node [midway,red] {$\bullet$};
\draw[->] (B11) -- (B23)  node [midway,red] {$\bullet$};
\draw[->] (B13) -- (B23);  

    \begin{scope}[scale=0.6, yshift = 10cm, xshift= 0cm]
      \node (B30) at (2, 0) {$V_+$};
    \end{scope}
\draw[->] (B20) -- (B30) node [midway, above, sloped] {$\scriptstyle h_{--}^+$};
\draw[->]   (B22) -- (B30) node [midway, above, sloped] {$\scriptstyle  d_{-+} \otimes \mathrm{id}_{V_+}$};
\draw[->] (B23) -- (B30) node [midway, above, sloped] {$\scriptstyle d_{+-} \otimes \mathrm{id}_{V_+} $}; 
  \end{scope}
\end{tikzpicture}
\]
For readability reasons, the middle arrows are not explicitly given, but one can of course obtain them from the dictionary. The meaning of the red dots will be explained later. 
\end{exa}

\begin{propdfn}
  \label{pd:Cmn}
  $\left(\widetilde{C}_{m,n}, d_\bullet\right)$ is a $S$-shaped pre-complex (all the squares commute, see appendix~\ref{sec:complexes-cubes} for details). Hence we can apply lemma~\ref{lem:precom2com} and obtain  
a complex $\left(C_{m,n}, d_\bullet\right)$ via a sign assignment (all of the possibilities yield isomorphic complexes). 
\end{propdfn}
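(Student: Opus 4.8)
The plan is to verify the two parts of the statement in turn: first that the combinatorial datum $\Gamma_{m,n}$, equipped with the grading of remark~\ref{rmk:gradinggraphetc}, is the underlying shape of an $S$-shaped pre-complex in the sense of appendix~\ref{sec:complexes-cubes}, and then that, with the assignment of modules and morphisms made in definition~\ref{dfn:ctilda}, every square of $\Gamma_{m,n}$ commutes. Proposition-Definition~\ref{pd:Cmn} then follows by applying lemma~\ref{lem:precom2com}, which also yields that the sign assignment is irrelevant up to isomorphism.

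For the shape, note that an edge of $\Gamma_{m,n}$ merges two blocks and hence drops the number of blocks by one, so it raises the degree $m+n-k$ by exactly $1$; it therefore suffices to understand the intervals $[P,R]$ with $\deg R = \deg P + 2$. Such an $R$ is obtained from $P$ by two successive merges, and since any length-two path from $P$ to $R$ can only rearrange the blocks in which $R$ differs from $P$, one finds exactly two possibilities: either the two merges involve two disjoint pairs of adjacent blocks of $P$ (a \emph{far} square), or they fuse three consecutive blocks of $P$ into a single one (a \emph{near} square). In the latter case the new block has three elements, hence is monochromatic, hence the three fused blocks are singletons of a common colour; the two intermediate vertices are obtained by first merging the left or the right pair, each completed by merging a singleton with a size-two block of the same colour. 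The one point deserving care is that a size-three block has no outgoing edge, which is precisely why every composable pair $P\to P_1\to R$ admits a mate $P\to P_1'\to R$ and no half-open bifurcation occurs; with this, $[P,R]$ is always a genuine square and $\widetilde{C}_{m,n}$ has the shape of an $S$-shaped pre-complex.

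It remains to check commutativity of the squares. For a far square the two composable morphisms act, up to the tensor factors left untouched on both sides, on disjoint tensor factors, so they are of the form $\id\otimes f\otimes\id$ and $\id\otimes\id\otimes g$ in some order and commute by bifunctoriality of $\otimes$; the square commutes on the nose. For a near square one reads off definition~\ref{dfn:ctilda}: discarding the untouched factors, the square on three consecutive $+$-dots is the identity
\[
d_{-+}\circ (h_{++}^- \otimes \id_{V_+}) = d_{+-}\circ (\id_{V_+}\otimes h_{++}^-)\colon V_+\otimes V_+\otimes V_+\longrightarrow V_{0,0},
\]
which is the third relation of proposition~\ref{prop:easyrelUsl}, and the square on three consecutive $-$-dots is the fourth relation of that proposition; since a monochromatic triple is all-$+$ or all-$-$, these exhaust the near squares. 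Hence all squares commute, $(\widetilde{C}_{m,n},d_\bullet)$ is an $S$-shaped pre-complex, and lemma~\ref{lem:precom2com} applies. The only real work is the (short) enumeration of square types above; there is no genuine obstacle, the two non-trivial commuting identities being furnished verbatim by proposition~\ref{prop:easyrelUsl}.
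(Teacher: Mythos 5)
Your classification of the squares of $\Gamma_{m,n}$ into far and near squares, and the commutativity check (far squares by bifunctoriality of $\otimes$, near squares by the third and fourth identities of proposition~\ref{prop:easyrelUsl}), are correct and agree with what the paper invokes. But there is a gap in how you get from this to the conclusion: you never exhibit the cubical set $S$, and ``being an $S$-shaped pre-complex'' is not a property of the graded graph $\Gamma_{m,n}$ alone. Lemma~\ref{lem:precom2com} is stated, and proved, for a \emph{strong-inductive cubical set} $S$: its proof runs over the three-dimensional cubes of $S$ and uses the vanishing $H^1(S)=H^2(S)=0$ of proposition~\ref{prop:cohomology-cubical-set}, so the existence (and essential uniqueness) of a sign assignment making all squares anti-commute is a cohomological statement about $S$, not a formal consequence of ``every composable pair of edges closes into a commuting square.'' The paper's proof consists essentially of supplying exactly this datum: an admissible partition $P$ is sent to the vertex $f_P$ of $B_{m+n-1}$ with $f_P(i)=1$ if and only if the dots $i$ and $i+1$ lie in the same block of $P$; the inductive cubical set generated by these vertices is strong-inductive (its maximal faces are the vertices coming from maximal admissible partitions), and its edges and two-dimensional faces are precisely the edges of $\Gamma_{m,n}$ and the far/near squares you describe. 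Without this embedding you cannot legitimately apply lemma~\ref{lem:precom2com}.

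Second, you omit the other defining condition of an $S$-shaped pre-complex: twice any (compatible) composition of three maps $d_\bullet$ must be non-zero. This hypothesis is genuinely used in the proof of lemma~\ref{lem:precom2com} --- it is what forces the commutation-defect cochain $\gamma$ to be a cocycle --- so it has to be verified here. It is easy, and the paper does it in one line: each of $d_{+-}$, $d_{-+}$, $h_{++}^-$, $h_{--}^+$ is surjective, hence so is every map $\id\otimes f\otimes\id$ appearing in $\widetilde{C}_{m,n}$ and every composition of such maps; the targets are non-zero vector spaces over $\CC\left(q^{\frac16}\right)$, which has characteristic zero, so twice such a composition is non-zero. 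Adding the cube embedding with its strong-inductivity and this non-nullity check would make your argument complete; the commutativity part then coincides with the paper's appeal to proposition~\ref{prop:easyrelUsl}.
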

\begin{proof}
  The natural hypercube to consider is $B_{m+n-1}$ with the following correspondence: if $P$ is an admissible partition we define the function $f_P$ (it is a face of dimension $0$):
\[
\begin{array}[!h]{rcrcl}
  f_P&:& [1,m+n-1]& \to& \ZZ/2\ZZ\\
&& i& \mapsto &
\begin{cases}
  1& \textrm{if $i$ and $i+1$ belong to the same set in $P$,} \\
  0& \textrm{else.}
\end{cases}
\end{array}
\]
The cubical set $S$ is clearly strong-inductive (the maximal faces are ``maximal'' admissible partition). The fact that all squares commute follows from proposition~\ref{prop:easyrelUsl}. All maps in $C_{m,n}$ being surjective the non-nullity condition is clearly fulfilled.
\end{proof}

An adequate sign assignement is depicted in example~\ref{exa:Ctilda}: the red dotted arrows are to be multiplied by $-1$ in order to construct the complex $C_{m,n}$.

\begin{thm}\label{mainthm}
The cohomology of the complex $C_{m,n}$ is  concentrated in degree 0 where it is isomorphic to the irreducible $\Usl$-module $V_{m,n}$.
\end{thm}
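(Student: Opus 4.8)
The plan is to prove the theorem by induction on $m+n$, exploiting the structure of $\Gamma_{m,n}$ as a product-like graph and the combinatorial identities of Proposition~\ref{prop:CGformula}. The base cases $(m,n) \in \{(0,0),(1,0),(0,1)\}$ are immediate: the graph $\Gamma_{m,n}$ has a single vertex (the canonical partition consisting of singletons), no edges, and the associated complex is concentrated in degree $0$ where it equals $V_{0,0}$, $V_+ = V_{1,0}$ or $V_- = V_{0,1}$ respectively. For the inductive step, I would isolate the first dot of $\eps(m,n)$ and partition the vertices of $\Gamma_{m,n}$ according to which set $M_1$ of the partition contains it: either $M_1$ is the singleton $\{1\}$, or $M_1$ has two elements $\{1,2\}$, or (when the first three dots share a color) $M_1 = \{1,2,3\}$. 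This decomposition exhibits $C_{m,n}$ as the total complex of a short (length-one or length-two) filtration whose graded pieces are, after the dictionary of Definition~\ref{dfn:ctilda}, tensor products of $V_\pm$ or $V_{0,0}$ with the complexes $C_{m',n'}$ for strictly smaller parameters, and whose connecting maps are built from $d_{+-}$, $d_{-+}$, $h_{++}^-$, $h_{--}^+$ tensored with identities.

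The second and main step is to compute the cohomology of this total complex using the inductive hypothesis. Once each graded piece of the filtration is known to have cohomology concentrated in degree $0$ (equal to $V_\pm \otimes V_{m',n'}$ or $V_{0,0}\otimes V_{m',n'}$), the spectral sequence of the filtration degenerates after the $E_1$ page to a short complex of semisimple $\Usl$-modules, and I must check that this short complex is exact except at the rightmost spot, where its cohomology is $V_{m,n}$. Concretely, when the first dot is a $+$ and $n \geq 1$ is not forced, the relevant short complex reads, at the level of the Grothendieck ring, like
\[
V_+ \otimes V_{m-1,n} \longrightarrow V_{0,0}\otimes V_{m-2,n} \oplus V_- \otimes V_{m-3,n} \oplus \cdots,
\]
and one recognizes the alternating sums appearing in Corollary~\ref{cor:explicitformula} and Proposition~\ref{prop:CGformula}. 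So the verification reduces to: (i) the Euler characteristic identity, which is exactly Corollary~\ref{cor:explicitformula} and hence automatic; and (ii) the stronger statement that the maps are as surjective/injective as the Euler characteristic forces — since every map in $C_{m,n}$ is a composition of $d$'s and $h$'s tensored with identities, and all of these are surjective, the differentials of the short complex are surjective, which together with semisimplicity and the Euler characteristic count pins down all the cohomology.

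I would organize the surjectivity bookkeeping as a separate lemma: in $\widetilde{C}_{m,n}$, every arrow $d_e$ is surjective (immediate from Definition~\ref{dfn:ctilda}, since $d_{\pm\mp}$, $h_{++}^-$, $h_{--}^+$ are surjective and tensoring with an identity preserves surjectivity), hence after the sign assignment of Proposition-Definition~\ref{pd:Cmn} the top differential $d_{m+n-2}\colon C_{m+n-2} \to C_{m+n-1}$ is surjective onto its (one-dimensional-in-$\Gamma$-degree) target, and more generally the maps out of the maximal faces are surjective. The delicate point — and what I expect to be the main obstacle — is not any single computation but the bookkeeping of the filtration: making sure the sign assignment restricts compatibly to the sub- and quotient-complexes so that the spectral sequence argument is legitimate, and handling the degenerate cases where $m$ or $n$ is small (so that some $V_{m',n'} = \{0\}$ and the corresponding vertices of $\Gamma_{m,n}$ simply do not exist). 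The convention $V_{m',n'} = \{0\}$ for negative indices, adopted in the Proposition-Definition after Corollary~\ref{cor:explicitformula}, is exactly what makes the inductive formula self-consistent, and I would lean on the general framework of Appendix~\ref{sec:complexes-cubes} to make the "total complex of a sub-cube / quotient-cube" manipulation rigorous rather than reproving it by hand.
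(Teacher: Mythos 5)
Your filtration by the size of the set $M_1$ containing the first dot is legitimate (partitions with $|M_1|\ge 2$, resp.\ $|M_1|=3$, do span subcomplexes), and the spectral sequence does collapse as you say, so the theorem reduces to showing that the three-term complex
\[
V_+\otimes V_{m-1,n}\longrightarrow V_-\otimes V_{m-2,n}\longrightarrow V_{0,0}\otimes V_{m-3,n}
\]
has cohomology $V_{m,n}$, $0$, $0$. The genuine gap is in how you propose to establish this. The maps in this complex are the maps \emph{induced on the cohomology of the graded pieces}, and surjectivity of the chain-level building blocks ($d_{\pm\mp}$, $h^{\mp}_{\pm\pm}$ tensored with identities) does not imply surjectivity of the induced maps on cohomology (a chain-level surjection between complexes need not be surjective on cohomology, even over a field); for the same reason your ``separate lemma'' that the top differential of $C_{m,n}$ is surjective is not immediate either, since a sum of surjections into a direct sum need not be surjective. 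Moreover, even granting surjectivity of the second map (killing $H^2$), the Euler-characteristic identity only gives $[H^0]-[H^1]=[V_{m,n}]$ in the Grothendieck ring, which is compatible with $H^0=V_{m,n}\oplus X$, $H^1\supseteq X$. To close the argument you must additionally show, e.g., that the first induced map is injective on the isotypic components $V_{m-2,n+1}$ and $V_{m-1,n-1}$ of $V_+\otimes V_{m-1,n}$ (equivalently, that its kernel is exactly $V_{m,n}$). This is a genuine representation-theoretic computation, not a formal consequence of semisimplicity plus counting, and it is precisely the ingredient your proposal never supplies.

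For comparison, the paper avoids analyzing induced maps altogether: it cuts $C_{m,n}$ along the $+/-$ junction, so that the subcomplex $C_{m-1,0}\otimes C_{0,n-1}\{+1\}$ and the quotient $C_{m,0}\otimes C_{0,n}$ are honest tensor products of smaller complexes, and it computes $H^0(C_{m,n})$ \emph{directly and separately} (as a subspace of $\mathrm{Sym}^m_q(V_+)\otimes\mathrm{Sym}^n_q(V_-)$ containing a highest weight vector of weight $(m,n)^\star$, using that $h^-_{++}$ is the $q$-antisymmetrizer); only then does the long exact sequence plus proposition~\ref{prop:CGformula} force $H^{\ge 1}=0$. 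An analogue of that direct $H^0$ computation (or a direct analysis of your first induced map) is what your plan is missing. Two smaller slips: by the dictionary of definition~\ref{dfn:ctilda} the degree-one and degree-two graded pieces are $V_-\otimes V_{m-2,n}$ and $V_{0,0}\otimes V_{m-3,n}$, not $V_{0,0}\otimes V_{m-2,n}$ and $V_-\otimes V_{m-3,n}$ as in your display; and the surviving cohomology sits at the \emph{leftmost} (degree $0$) spot, not the rightmost.
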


\begin{rmk}
  \label{rmk:thmfits} 
  The formula of corollary~\ref{cor:explicitformula} can be understood through the combinatoric of $\Gamma_{m,n}$.
\end{rmk}

\begin{proof}The proof is decomposed two main steps: first we compute $H^0(C_{m,n})$, and second we compute $H^i(C_{m,n})$ for $i>0$. For each of these steps we separate two cases: on the one hand, $m$ or $n$ equal $0$; on the other hand, $m$ and $n$ different from $0$. 

  \emph{1. Case $n=0$ (the case $m=0$ is similar).} The space $H^0(C_{m,0})$ is a subrepresentation of $V_+^{\otimes n}$. Remark that the map $h_{++}^-$ is the $q$-antisymetrization: $V_-$ is isomorphic as a $\Usl$-module to $V_+\wedge V_+$. Hence $H^0(C_{m,0})\simeq \mathrm{Sym}^m_q(V_+) \simeq V_{m,0}$.

\emph{Case $m, n \neq 0$.} We consider $d^0 = d'+ d''$ where $d'=\id_{V_+^{\otimes m-1}}\otimes d_{+-} \otimes \id_{V_-^{\otimes n-1}}$  and $d'$ is the sum of all the $\id_\bullet\otimes h_{\pm\pm}^{\mp}\otimes\id_\bullet $'s).  Since $d'$ and $d''$ have images in distinct direct summand, we have: $\Ker d^0 = \Ker(d'_{|\Ker d''})$. This implies (thanks to the previous discussion) that $H^0(C_{m,0})$ is a subrepresentation of $V_{m,0}\otimes V_{0,n}$. 
We know that $H^0(C_{m,n})$ is not trivial (it contains a highest vector of weight $(m,n)^\star$). Now, since $d'_{|\Ker d''}$ clearly maps $\mathrm{Sym}^m_q(V_+) \otimes \mathrm{Sym}^n_q(V_-)$ onto $\mathrm{Sym}^{m-1}_q(V_+) \otimes \mathrm{Sym}^{n-1}_q(V_-)$, we have $H^0(C_{m,n})\simeq V_{m,n}$ thanks to proposition~\ref{prop:CGformula}. 

\emph{2.} We prove that $H^{i}(C_{m,n})=0$ by induction on $m+n$. If $m+n$ is equal to $0$ or $1$, the statement is obvious. Let $N$ be an integer greater than or equal to $2$. We suppose that the statement holds for all pairs $(m,n)$ with $m+n<N$. We consider a pair $(m_0,n_0)$ of integers such that $m_0+n_0=N$. 

\emph{Case $m_0$ and $n_0$ different from $0$.} We have the following short exact sequence of complexes:
\[
0 \longrightarrow C_{m_0 -1, 0} \otimes C_{0, n_0 -1}\{+1\} \longrightarrow C_{m_0,n_0} \longrightarrow C_{m_0,0}\otimes C_{0,n_0} \longrightarrow 0,
\]
where $\{\bullet\}$ denotes an homological grading shift. This short exact sequence and the induction hypothesis implies that $H^i(C_{m_0,n_0})=0$ for $i>1$. We write the long exact sequence in small homological degrees:
\begin{align*}
&0 \longrightarrow H^0(C_{m_0,n_0}) \longrightarrow H^0(C_{m_0,0}\otimes C_{0,n_0})  \\ & \hspace{1.5cm}   \longrightarrow  
H^1(C_{m_0 -1, 0} \otimes C_{0, n_0 -1}\{+1\}) \longrightarrow H^1(C_{m_0,n_0}) \longrightarrow  0.
\end{align*}
The induction hypothesis gives:
\[ 
0\longrightarrow V_{m_0,n_0} \longrightarrow V_{m_0,0}\otimes V_{0,n_0} \longrightarrow V_{m_0 -1, 0} \otimes V_{0, n_0 -1} \longrightarrow H^1(C_{m_0,n_0}) \longrightarrow  0,
\]
and finally $H^1(C_{m_0,n_0})=0$ (see proposition~\ref{prop:CGformula}).

\emph{Case $n_0=0$ (the case $m_0=0$ is similar).} We have the following short exact sequence of complexes: 
\[
0\To C_{m_0-2,1}\{+1\} \longrightarrow C_{m_0,0} \longrightarrow C_{1,0}\otimes C_{m_0-1,0} \To 0.
\] 
This short exact sequence and the induction hypothesis implies that $H^i(C_{m_0,0})=0$ for $i>1$. Let us write the long exact sequence for small homological degrees:
\begin{align*}
   & 0 \longrightarrow H^0(C_{m_0,0}) \longrightarrow H^0(C_{1,0}\otimes C_{m_0-1,0}) \longrightarrow \\ &\hspace{1.5cm}  
H^1(C_{m_0-2,1}\{+1\}) \longrightarrow H^1(C_{m_0,0}) \longrightarrow 0.
\end{align*}
The induction hypothesis gives:
\[
0\To V_{m_0,0} \To V_{1,0} \otimes V_{m_0 -1,0} \To V_{m_0-2,1}  \To H^1(C_{m_0,0}) \longrightarrow 0.
\]
This gives $H^1(C_{m_0,0})=0$  thanks to  proposition~\ref{prop:CGformula}.
\end{proof}

\begin{rmk}
  \label{rmk:dequantifyversion}
  This result and its de-quantified proof are valid for $\sll_3$-modules. 
\end{rmk}


\section{Categorification of the colored $\sll_3$-invariant}
\label{sec:categ-color-sll_3}

\subsection{Overview in an ideal world}
Our strategy to categorify the colored $\sll_3$-invariant is to mimic the (first) categorification of the colored Jones polynomial due to Khovanov \cite{MR2124557}. We face a major issue: the required adequate cobordisms (foam-cobordisms) are not in the category of framed links, and there is no result of functoriality about them. To give the flavor and the ideas of our categorification, we suppose in this subsection that this not a problem: we give the construction assuming that the $\sll_3$-homology is functorial with respect to framed foams (see conjecture~\ref{cjc:foamycob}).  

\label{sec:overview}
\begin{dfn}
  \label{dfn:graph2cabling}
  Let $D$ be an oriented knot diagram, $m$ and $n$ two non-negative integers and $P$ an admissible partition of $\eps(m,n)$. Then $D_P$ is the oriented link diagram obtained by cabling $D$ with $k$ strands, $k$ being the number of sets of $P$ with $1$ or $2$ elements with the same sign. Each strand is naturally associated with such a set (see definition~\ref{dfn:partition}). Its orientation is the same as the original strand in $D$ if its corresponding set is of the form\NB{\dops{(0,0)}}or\NB{\draw [very thick, gray] (6,0) -- +(1,0); \doms{(6,0)} \doms{(7,0)}}and is the opposite orientation if the corresponding set is of the form\NB{\doms{(0,0)}}or\NB{\draw [very thick, gray] (6,0) -- +(1,0); \dops{(6,0)} \dops{(7,0)}}. We say that $D_P$ is the \emph{$P$-cabling of $D$}.
\end{dfn}
\begin{exa}
  \label{exa:Pcabling}
  If $P$ is equal to\,\NB{\draw [very thick, gray] (1,0) -- +(2,0); \draw [very thick, gray] (6,0) -- +(1,0); \dops{(0,0)} \dops{(1,0)} \dops{(2,0)} \dops{(3,0)} \dops{(4,0)} \doms{(5,0)} \doms{(6,0)} \doms{(7,0)} \doms{(8,0)}}\!\!, then we have:
\[
\begin{tikzpicture}[scale =0.7, decoration={markings, mark=at
     position 0.5 with {\arrow{>}}},postaction={decorate}]
  \draw[postaction={decorate}] (0,0) -- +(0,1);
\node at  (0, -0.5) {$D$};
  \begin{scope}[xshift= 4cm]
      \draw[postaction={decorate}] (0,0) -- +(0,1);
      \draw[postaction={decorate}] (1,0) -- +(0,1);
      \draw[postaction={decorate}] (2,1) -- +(0,-1);
      \draw[postaction={decorate}] (3,0) -- +(0,1);
      \draw[postaction={decorate}] (4,1) -- +(0,-1);
\node at  (2, -0.5) {$D_P$};
  \end{scope}
\end{tikzpicture}
\]
\end{exa}

\begin{cjc}
  \label{cjc:foamycob}
  The functor $\F$ defined in \ref{pd:functor} extends to knotted framed webs and framed foam-cobordisms.
\end{cjc}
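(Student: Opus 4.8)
Since a framed knotted web is by definition an ordinary web carrying finitely many crossings, the value of the sought functor on objects is already forced: I would resolve each crossing as in the Khovanov cube and take the total complex of the resulting finite cube of webs and foam-saddles exactly as in definition~\ref{dfn:complexframed}, which is precisely the extension of $C$ to framed knotted webs recalled in remark~\ref{rmk:extension2web}. So the real content of conjecture~\ref{cjc:foamycob} is the construction of $\F$ on \emph{morphisms} --- on framed foam-cobordisms --- together with functoriality.

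The plan is the usual ``movie'' strategy. First I would fix a precise model for the category $\mathbf{KFoam}$ of framed knotted webs and framed foam-cobordisms (properly embedded framed foams in $\RR^{2}\times[0,1]\times[0,1]$, composed by stacking in the last coordinate) and establish a Carter--Saito-type presentation of it: every framed foam-cobordism is a composite of elementary cobordisms --- births and deaths of circles and of digon- or square-bounding bubbles, ordinary and singular saddles, planar isotopies, Reidemeister-type moves of strands passing one another or a trivalent vertex, and the elementary moves in which a strand crosses a facet --- and two such decompositions of isotopic cobordisms differ by a finite explicit list of movie moves (the knotted-surface moves of Carter--Rieger--Saito, together with the moves forced by the singular structure of foams, the surgery relation, and the theta- and sphere-evaluations of propdfn~\ref{pd:functor}). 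To each elementary cobordism I would attach a chain map between the complexes of its boundary knotted webs: for the pieces not involving a crossing, by applying the foam functor $\F$ of propdfn~\ref{pd:functor} levelwise; for the Reidemeister-type and crossing-changing pieces, by transporting the explicit homotopy equivalences from Khovanov's invariance proof \cite{MR2100691} and from Clark's extension \cite{MR2482322}. Functoriality then reduces to checking that each movie move induces the same chain map up to homotopy.

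The hard part will be exactly this verification, and it is why the statement is only conjectured. Most moves follow from local foam relations and a short diagram chase, but the moves that already obstruct honest functoriality in the Khovanov ($\sll_2$) setting --- those creating a closed singular circle bounding a disc, or a circular saddle sweeping an annulus --- will here too yield, a priori, only an equality up to a sign. One then has two options: settle for a \emph{projective} functor, or remove the sign by passing to a sign-corrected/disoriented version of the foam category in the spirit of Clark--Morrison--Walker and of Blanchet, and verify that the framed normalization of definition~\ref{dfn:complexframed} --- which, as Clark observes in his proof of theorem~\ref{thm:clarkeasy}, already absorbs the grading-shift anomalies of knotted webs --- also neutralizes the relevant Euler-characteristic obstruction. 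A secondary difficulty is that I do not know a reference for the Carter--Saito movie-move theorem in precisely the form needed for foams with singular circles, so part of the work would be to prove that statement along Carter--Saito's lines (using the treatment of foam movies in \cite{LHRThese}). Until both points are settled, the best available option is the one the rest of the paper takes: bypass conjecture~\ref{cjc:foamycob} by the ad-hoc construction of section~\ref{sec:categ-color-sll_3}, using the Bar-Natan sign trick \cite{MR2174270}.
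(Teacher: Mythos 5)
The statement you are asked about is a \emph{conjecture}: the paper contains no proof of it, states explicitly that none will be given, and the entire construction of section~\ref{sec:categ-color-sll_3} is designed precisely to bypass it (via movies, the canopolis morphism $\G$, and Bar-Natan's sign trick \cite{MR2174270}). Your text, accordingly, is not a proof either, and you are right to say so: it is a research plan. Its two load-bearing steps are left entirely unverified, and they are exactly the points the paper itself isolates as open problems (items (3) and (4) of section~\ref{sec:problems}): first, one must actually \emph{define} the category of framed knotted webs and framed foam-cobordisms and establish a Carter--Saito-type presentation by elementary movies and movie moves --- the paper warns in remark~\ref{rmk:precisioncjc} that even the objects and morphisms of this category need a careful geometric definition, and no movie-move theorem is currently available in the form you would need for foams with singular circles; second, one must check that every movie move induces the same chain map up to homotopy, where (as you note) the known $\sll_2$ and $\sll_3$ obstructions suggest one may only get equality up to sign, so that a projective or sign-corrected (disoriented/Blanchet-style) resolution would have to be worked out and shown compatible with the framed normalization of definition~\ref{dfn:complexframed}. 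Until those are done, nothing is proved.

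So the honest assessment is: your sketch is aligned with the paper's own outlook --- it correctly uses remark~\ref{rmk:extension2web} (Clark's extension of $C$ to knotted webs under the framed normalization) for the object level, correctly identifies Clark's homotopy equivalences \cite{MR2482322} as the candidate maps for the Reidemeister-type elementary movies, and correctly concludes that the viable route today is the bypass the paper takes --- but it does not close the gap between ``plausible strategy'' and ``theorem''. Since the paper offers no proof to compare against, the only actionable criticism is that you should present this as a programme for settling conjecture~\ref{cjc:foamycob} (essentially problems (3)--(4) of section~\ref{sec:problems}), not as a proof sketch of an established statement.
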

\begin{rmk}
  \label{rmk:precisioncjc}
  One should carefully define what knotted framed webs and framed foam-cobordisms are.
\end{rmk}
In the rest of this subsection, we assume that the conjecture~\ref{cjc:foamycob} holds. We will give no proof here since, in the end, we want to bypass this conjecture.
\begin{dfn}
  \label{dfn:cobincomplex}
  Let $D$ be an oriented knot diagram, $m$ and $n$ be two negative integers and $e: P_1 \To P_2$ an edge in $\Gamma_{m,n}$. We associate with $e$ and $D$ a cobordism $f_D(e)$ between $D_{P_1}$ and $D_{P_2}$: It is the identity cobordism on all strands not concerned with the edge $e$ and on the strands concerned by $e$, we use the following table:
\begin{center}
  \begin{tabular}[ht]{|c||c|}
    \hline $e$ & $f_D(e)$ \\ \hline \hline
\NB{\dops{(0,0)} \doms{(1,0)} \draw[->](1.4,0) -- (2.6,0); \draw[very thick, gray](3,0) -- +(1,0); \dops{(3,0)} \doms{(4,0)} } & \NB{\dpm}$\times D: D_{P_1} \To D_{P_2}$ \\ \hline
\NB{\dops{(0,0)} \dops{(1,0)} \draw[->](1.4,0) -- (2.6,0); \draw[very thick, gray](3,0) -- +(1,0); \dops{(3,0)} \dops{(4,0)} } & \NB{\hppm}$\times D: D_{P_1} \To D_{P_2}$ \\ \hline
\NB{\doms{(0,0)} \doms{(1,0)} \draw[->](1.4,0) -- (2.6,0); \draw[very thick, gray](3,0) -- +(1,0); \doms{(3,0)} \doms{(4,0)} } & \NB{\hmmp}$\times D: D_{P_1} \To D_{P_2}$ \\ \hline
\NB{\draw[very thick, gray](0,0) -- +(1,0);\dops{(0,0)} \dops{(1,0)} \dops{(2,0)} \draw[->](2.4,0) -- (3.6,0); \draw[very thick, gray](4,0) -- +(2,0); \dops{(4,0)} \dops{(5,0)} \dops{(6,0)} } & \NB{\dmp} $\times D: D_{P_1} \To D_{P_2}$ \\ \hline
\NB{\draw[very thick, gray](1,0) -- +(1,0);\dops{(0,0)} \dops{(1,0)} \dops{(2,0)} \draw[->](2.4,0) -- (3.6,0); \draw[very thick, gray](4,0) -- +(2,0); \dops{(4,0)} \dops{(5,0)} \dops{(6,0)} } & \NB{\dpm}$\times D: D_{P_1} \To D_{P_2}$ \\ \hline
\NB{\draw[very thick, gray](0,0) -- +(1,0);\doms{(0,0)} \doms{(1,0)} \doms{(2,0)} \draw[->](2.4,0) -- (3.6,0); \draw[very thick, gray](4,0) -- +(2,0); \doms{(4,0)} \doms{(5,0)} \doms{(6,0)} } & \NB{\dpm}$\times D: D_{P_1} \To D_{P_2}$ \\ \hline
\NB{\draw[very thick, gray](1,0) -- +(1,0);\doms{(0,0)} \doms{(1,0)} \doms{(2,0)} \draw[->](2.4,0) -- (3.6,0); \draw[very thick, gray](4,0) -- +(2,0); \doms{(4,0)} \doms{(5,0)} \doms{(6,0)} } & \NB{\dmp}$\times D: D_{P_1} \To D_{P_2}$ \\ \hline
  \end{tabular}
\end{center}
\end{dfn}

\begin{rmk}
  Note that this is compatible with definition~\ref{dfn:graph2cabling}. These two definitions can be thought as a suspension of the graph $\Gamma_{m,n}$ along $D$.
\end{rmk}

Let us consider an oriented knot diagram $D$ and $m$ and $n$ two non-negative integers. We construct a complex $\C_\bullet(D, V_{m,n})$ using definitions~\ref{dfn:graph2cabling} and \ref{dfn:cobincomplex}: 
\begin{align*}
  \C_i(D, V_{m,n})&= \bigoplus_{\textrm{$P$ of degree $i$}} H(D_{P}) \quad \textrm{and}\\
  d_i&: \C_i\To \C_{i+1},\\
  d_i&=\sum_{\textrm{$P$ of degree $i$}} \sum_{P_1\stackrel{e}{\To} P_2}(-1)^\bullet \F(f_e(D))
\end{align*}
There is a sign indeterminacy in the definition of $d_i$: one should choose the signs to make sure that $d$ is indeed a differential. Before any modification, all squares commute. We want them to anti-commute. One can easily show that such a sign modification exists and that two adequate sign modifications yield isomorphic complexes (see appendix~\ref{sec:complexes-cubes}). The Euler characteristic of this complex is defined by:
\[
\chi(\C_\bullet(D, V_{m,n})) = \sum_{j\in \NN} (-1)^j \chi_q(\C_j) \in \CC[q, q\m]
\]
where $\chi_q(\C_j)$ is defined in remark~\ref{rmk:homdeginQ}.

One can of course play the same game with a link with $k$ components and a pair $(\mathbf{m}, \mathbf{n})$ of $k$-tuples of non-negative integers, by applying the same procedure with the graph $\Gamma_{\mathbf{m}, \mathbf{n}}$.

\begin{thm}[Assuming conjecture~\ref{cjc:foamycob} holds]
  \label{thm:coloredhomwithcjc}
  The isomorphism type of $\C_\bullet(D, V_{\mathbf{m},\mathbf{n}})$ depends only on $(\mathbf{m}, \mathbf{n})$ and on the framed oriented isotopy type of $D$ and we have:
\[
\chi(\C_\bullet(D, V_{\mathbf{m},\mathbf{n}})) = e^{i\pi\frac23(n_+-n_-)s(\mathbf{m}-\mathbf{n})}\kup{(D, V_{\mathbf{m},\mathbf{n}})},
\]
where $(D, V_{\mathbf{m},\mathbf{n}})$ denotes the diagram $D$ colored with the $\Usl$-modules $V_{\mathbf{m},\mathbf{n}}$; $n_+$ and $n_-$ are respectively the number of positive and negative crossings\footnote{Remark that $(n_+-n_-)$ is a framed link invariant.} in $D$; and  $s(\mathbf{m}- \mathbf{n})$ is the sum over all coordinate of $(\mathbf{m}-\mathbf{n})$.
\end{thm}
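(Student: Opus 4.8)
The plan is to reduce the colored invariant to the classical $\sll_3$-homology via the resolutions constructed in Section~\ref{sec:tens-resol-os} and then match Euler characteristics with the formula of Proposition~\ref{prop:formulakup}. First I would observe that, by Definition~\ref{dfn:graph2cabling} and Definition~\ref{dfn:cobincomplex}, the bicomplex $\C_\bullet(D,V_{\mathbf{m},\mathbf{n}})$ is obtained by ``suspending'' the diagram $\Gamma_{\mathbf{m},\mathbf{n}}$ along $D$: each vertex $P$ gives a cabled diagram $D_P$ and contributes the (classical, framed) $\sll_3$-homology $H(D_P)$, while each edge contributes the foam-cobordism $\F(f_D(e))$. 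Under Conjecture~\ref{cjc:foamycob}, $\F$ is a functor on framed knotted webs and framed foam-cobordisms, so every square of $\Gamma_{\mathbf{m},\mathbf{n}}$, being a product of the elementary relations of Proposition~\ref{prop:easyrelUsl} with the identity on $D$, is carried to a commuting square of homologies; the sign assignment of Appendix~\ref{sec:complexes-cubes} then turns this $S$-shaped precomplex into an honest complex, well-defined up to isomorphism. The key point is that the whole construction is natural in $D$: the functoriality of $C$ for framed link cobordisms (Theorem~\ref{thm:clarkeasy}) promoted to knotted webs (Remark~\ref{rmk:extension2web}), together with Conjecture~\ref{cjc:foamycob}, shows that a framed isotopy of $D$ induces, level by level on $\Gamma_{\mathbf{m},\mathbf{n}}$, a homotopy equivalence of the classical complexes $C(D_P)$ compatible with the edge maps, hence an isomorphism of $\C_\bullet(D,V_{\mathbf{m},\mathbf{n}})$; invariance under the choice of order and of sign assignment is handled exactly as in Remarks~\ref{rmk:homdeginQ} and the discussion before Theorem~\ref{thm:coloredhomwithcjc}.

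For the Euler characteristic I would argue as follows. Since $\chi$ is additive with respect to the spectral sequence / total complex filtration, $\chi(\C_\bullet(D,V_{\mathbf{m},\mathbf{n}}))$ is the alternating sum over vertices $P$ of $\Gamma_{\mathbf{m},\mathbf{n}}$, graded by the poset degree $|P|$, of $\chi_q(H(D_P)) = \chi_q(C(D_P))$, which by Theorem~\ref{thm:sl3hom} and Proposition-Definition~\ref{pd:functor} equals the (framed, renormalized) classical invariant $\kup{D_P}$ up to the exponential prefactor introduced by the $q$-degree shift by $-\tfrac13 n_+ -\tfrac23 n_-$ per strand. The combinatorics of $\Gamma_{m,n}$, by Remark~\ref{rmk:thmfits}, reproduces exactly the signs $(-1)^{\delta+i+k}$ and the trinomial coefficients of Corollary~\ref{cor:explicitformula}: a vertex of the form described in Remark~\ref{rmk:gradinggraphetc} with $i$ singletons of one color, $j$ triples of one color, etc., contributes with the multiplicity and sign prescribed there, and the associated cabled diagram $D_P$ is precisely one of the $L_{\mathbf{m}-2\mathbf{i}+\mathbf{k}-3\mathbf{j}-\pmb{\delta},\,\mathbf{n}+\mathbf{i}-2\mathbf{k}-3\mathbf{l}-\pmb{\delta}}$ appearing in Proposition~\ref{prop:formulakup}. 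Summing the geometric series in $q$ coming from the poset structure (each ``merge'' edge kills one strand and the web relations of Remark~\ref{rmk:web2sl3} collapse the extra $V_{0,0}$-components) collapses $\Gamma_{m,n}$ exactly to the right-hand side of that proposition; tracking the $q$-degree shifts converts the Laurent-polynomial identity into the stated identity with the prefactor $e^{i\pi\frac23(n_+-n_-)s(\mathbf{m}-\mathbf{n})}$, where the exponent is forced by the difference between the renormalization $-\tfrac13 n_+ -\tfrac23 n_-$ used for $C$ and the one implicit in $\kup{\cdot}$, accumulated over the net number of strands $s(\mathbf{m}-\mathbf{n})$.

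The main obstacle, and the only genuinely delicate point, is the \emph{well-definedness up to isomorphism independent of $D$'s diagram}: unlike the classical case, the cobordisms $f_D(e)$ are foam-cobordisms, and Conjecture~\ref{cjc:foamycob} — functoriality of $\F$ on framed foam-cobordisms — is not available. Concretely, the squares of $\Gamma_{\mathbf{m},\mathbf{n}}$ only commute \emph{up to sign} once we feed them through $\F$, and a framed Reidemeister move on $D$ only induces the homotopy equivalences on each $C(D_P)$ up to sign as well, so one must check that these sign ambiguities can be absorbed coherently. In this idealized subsection I would simply invoke the conjecture to dispose of this; the real work (done in the next subsection via the Bar-Natan trick of \cite{MR2174270} and the abstract framework of Appendix~\ref{sec:complexes-cubes}) is to show that the sign indeterminacies organize into a well-defined object regardless. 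The remaining steps — additivity of $\chi$, the purely combinatorial identification of $\Gamma_{m,n}$ with Corollary~\ref{cor:explicitformula}, and the bookkeeping of $q$-degree shifts — are routine.
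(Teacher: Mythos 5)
Your proposal is correct and follows essentially the same route as the paper's own (very short) sketch: invariance comes from the well-definedness of cabling on framed links together with Conjecture~\ref{cjc:foamycob} making the maps $\F(f_D(e))$ well-defined, and the Euler characteristic identity comes from Proposition~\ref{prop:formulakup} combined with the combinatorics of $\Gamma_{\mathbf{m},\mathbf{n}}$ (Remark~\ref{rmk:thmfits}); you simply flesh out the bookkeeping and flag the sign issue that the paper also defers to the later sections and the appendix.
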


\begin{proof}[Sketch of the proof]
  The links being framed, the operation of cabling is well-defined. This proves that each $\C_i(D, V_{m,n})$ is a link invariant. Thanks to conjecture~\ref{cjc:foamycob}, the morphisms $\F(f_e(D))$ are well-defined, hence  $\C_\bullet(D, V_{\mathbf{m},\mathbf{n}})$ depends only on $(\mathbf{m}$, $\mathbf{n})$ and on the framed oriented isotopy type of $D$.
The formula on the Euler characteristic follows directly from proposition~\ref{prop:formulakup} and on the definition of the graphs $\Gamma_{\bullet,\bullet}$. 
\end{proof}

We want to set a definition of  $\C_\bullet(D, V_{\mathbf{m},\mathbf{n}})$ bypassing conjecture~\ref{cjc:foamycob} in such a way that theorem~\ref{thm:coloredhomwithcjc} still holds. The main issue is of course to have a proper definition of the morphisms $\F(f_e(D))$. Here is a list of the main steps:
\begin{itemize}
\item Give a new definition of $f_e(D)$ in terms of movies (we have to make some arbitrary choices). See section~\ref{sec:two-foamy-cobordisms}. 
\item Extend $\F$ to movies. See section~\ref{sec:an-aval-categ}
\item Prove that the dependence of $\F(f_e(D))$ in the choices made in the first step is only a multiplication by a sign. See~\ref{sec:invariance-result}.
\item Prove some commutativity property (up to a sign) of some squares to replace isotopies of foams in the ideal world of foam-cobordism to some isotopies. See section~\ref{sec:invariance-result}
\item Prove some non-nullity property in order to apply lemma \ref{lem:precom2com} and get rid of the sign ambiguities. See section~\ref{sec:invariance-result}
\end{itemize}

We finally conclude in section~\ref{sec:color-sll_3-homol-1}.


\subsection{The framework of canopoleis}
\label{sec:canopoleis}

As Bar-Natan explains in \cite{MR2174270}, canoploeis is the right notion to deal with the ``locality'' of the definition of knot homology. It has been used by  Morrison and Nieh \cite{MR2457839}, Clark \cite{MR2482322} and Lewark\footnote{Lewark sees canopoleis as a special case of an extended notion of planar algebra.} \cite{LewarkThese} to present (and compute, see\cite{MR3248745}) the $\sll_3$-homology. We present shortly this notion in order to fix  notations.

\begin{dfn}
  We consider a 2-manifold $M= B_0 \setminus (\mathring{B}_1 \cup \dots \cup \mathring{B}_n)$ where $B_0$ is the standard closed disk in $\RR^2$ and the other $B_i$'s are small closed disks in the interior of $B_0$. The connected components of $\partial M$ are labelled from $0$ to $n$ (with the convention that the 0th connected component is $\partial B_0^2$), furthermore they all carry a base point that we may called $*$. Let $\gamma$ be a compact oriented smooth $1$-sub-manifold of $M$ which does not meet the base points (with the standard conventions that $\partial \gamma \subset \partial M$ and $\gamma\setminus \partial\gamma \subset M\setminus\partial M$). Then the data given by $M$, $\gamma$ and the base points is called an \emph{input-diagram with $n$ inputs} is denoted $(M,\gamma)$. The input-diagrams are regarded up to planar isotopies.  If $(M,\gamma)$ is an input-diagram with $n$ inputs, then the isotopy type of $\gamma\cap B_i$ is canonically encoded by a sequence of signs (start with the base point and scan $\partial B_i$ in the conterclockwise way). We denote $\epsilon_0,\dots,\epsilon_n$ these sequences of signs.
\end{dfn}

\begin{figure}[ht]
  \centering
  \begin{tikzpicture}[scale=0.5]
    \input{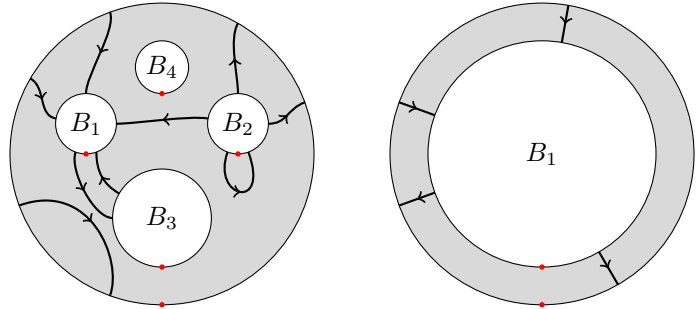}
  \end{tikzpicture}
  \caption{Two examples of input-diagrams. The base points are depicted by red dots. The input-diagram on the left-hand side has 4 inputs and we have: $\epsilon_0 = (+,+,-,-,-,+)$, $\epsilon_1= (-,-,-,-,+)$, $\epsilon_2=(-,+,+,+,+)$, $\epsilon_3=(+,-)$ and $\epsilon_4=()$. The input-diagram on the right-hand side is $\textrm{sun}_{(+,-,-,+)}$.
}
  \label{fig:exainput}
\end{figure}

\begin{dfn}
  If $(M,\gamma)$ and $(M',\gamma')$ are two input-diagrams with $n$ inputs and $n'$ inputs respectively, and if for some $i$, $\epsilon_i= \epsilon'_0$. Then one might glue $(M',\gamma')$ to $(M,\gamma)$ by identifying $\partial B_i$ and $\partial B'_0$. This yields a new input-diagram with $n+n'-1$ inputs  called the \emph{$i$th composed of $(M,\gamma)$ and $(M',\gamma')$} and denoted by $(M,\gamma)\circ_i (M',\gamma')$ (the labels of boundary components of this new input-diagram are obtained by inserting the components corresponding to $B'_1, \dots B'_{n'}$ between $\partial B_{i-1}$ and $\partial B_{i+1}$). See figure~\ref{fig:exainput}

Let $\epsilon$ be a finite sequence of signs, we denote by $\textrm{sun}_\epsilon$ the input-diagram with one input and $\epsilon_0=\epsilon_1=\epsilon$ consisting of an annulus with appropriately oriented radial strands (see figure~\ref{fig:exainput2}).
\end{dfn}

\begin{figure}[ht]
  \centering
  \begin{tikzpicture}[scale=0.6]
    \input{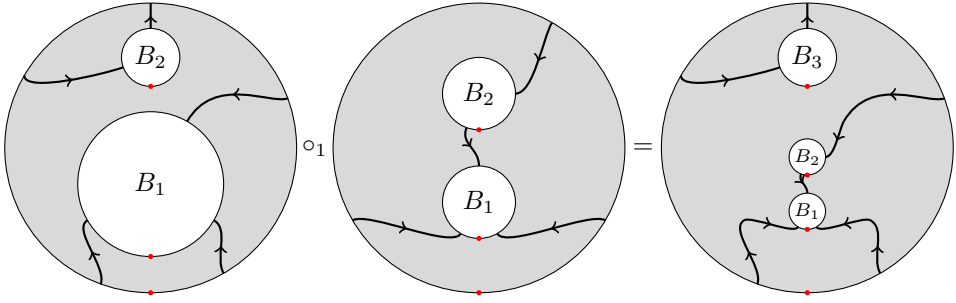}
  \end{tikzpicture}
  \caption{Composition of input-diagrams.}
  \label{fig:exainput2}
\end{figure}
\begin{rmk}
  The input-diagrams $\textrm{sun}_\bullet$ behave like identities:
for every input-diagram $(M,\gamma)$, we have:
\[(M,\gamma) \circ_i \textrm{sun}_{\epsilon_i} = (M,\gamma)= \textrm{sun}_{\epsilon_0}\circ_1 (M,\gamma).\]
\end{rmk}

\begin{dfn}
Let $I$ be a subset of the set of finite sequences of signs.
An \emph{$I$-canopolis $\mathcal{P}$} consists of:
\begin{itemize}
\item For each element $\epsilon$ in $I$, a category $\mathcal{P}_\epsilon$.
\item For each input-diagram $(M,\gamma)$ with $n$ inputs, a functor:
\[
\mathcal{P}_{M,\gamma}: \prod_{i=1}^n \mathcal{P}_{\epsilon_i} \to \mathcal{P}_{\epsilon_0}.
\]
\end{itemize}
Furthermore, this data should satisfy:
\begin{itemize}
\item $\mathcal{P}_{\textrm{sun}_\epsilon}$ is isomorphic to the identity functor of $\mathcal{P}_\epsilon$ for all $\epsilon$ in $I$.
\item For any compatible input-diagrams $(M,\gamma)$ and $(M,\gamma')$:
\[
\mathcal{P}_{(M,\gamma)\circ_i (M',\gamma')}= \mathcal{P}_{(M,\gamma)} \circ(\id_{\prod_{k=1}^{i-1} \mathcal{P}_{\epsilon_k}}, \mathcal{P}_{(M',\gamma')},\id_{\prod_{k=i+1}^{n} \mathcal{P}_{\epsilon_k}})
\]
\end{itemize}  
\end{dfn}

\begin{dfn}
Suppose that  $\mathcal{P}^1$ is an $I_1$-canopolis and  $\mathcal{P}^2$ is an $I_2$-canopolis and that $I_1$ is a subset of $I_2$. Then a \emph{morphism of canopoleis} from $\mathcal{P}^1$ to $\mathcal{P}^2$ is a collection of functors $(\phi_\epsilon: \mathcal{P}^1_\epsilon \to \mathcal{P}^2_\epsilon)_{\epsilon\in I_1}$ such that, for every input-diagram $(M,\gamma)$ with $n$ inputs, we have: 
\[\mathcal{P}^2_{M,\gamma} \circ (\phi_{\epsilon_1} ,\cdots , \phi_{\epsilon_n}) = \phi_{\epsilon_0} \circ \mathcal{P}^1_{M,\gamma}
\]
\end{dfn}

\begin{dfn}
  Let $\mathcal{P}$ be an $I$-canopolis and  $(D_j)_{j\in J}$ be a collection of objects in $\bigcup_{\epsilon \in I} \mathcal{P}_\epsilon$. We say that $(D_j)_{j\in J}$ \emph{objects-generates} $\mathcal{P}$ if for every $\epsilon$ in $I$ and every objects $D$ in $\mathcal{P}_\epsilon$, there exists an input-diagram $(M,\gamma)$ and an $n$-uple $(D_{j_1}, \dots, D_{j_n})$ of objects in $(D_j)_{j\in J}$ such that:
\[
D=\mathcal{P}_{M,\gamma}(D_{j_1}\times \dots \times D_{j_n}).
\]
If $(f_k)_{k\in K}$ is a collection of morphisms in $\bigcup_{\epsilon \in I} \mathcal{P}_\epsilon$. We say that $(f_k)_{k\in K}$ \emph{morphisms-generates} $\mathcal{P}$ if for every $\epsilon$ in $I$,
the morphisms of $\mathcal{P}_\epsilon$, are generated by morphisms of the form:
\[
\mathcal{P}_{M,\gamma}(f_{k_1}\times \dots \times f_{k_n}),
\]
where $(M,\gamma)$ is an input-diagram and $(f_{k_1}, \dots, f_{k_n})$ is an $n$-uple of elements in $(f_k)_{k\in K}$
\end{dfn}


\subsection{A morphism of canopoleis}
\label{sec:an-aval-categ}

In this subsection we describe a morphism of canopoleis. We first define the two canopoleis concerned with this morphism.

In what follows, we consider $I_3$ the set of finite sequences of signs which sum up to $0$ modulo $3$. 

\subsubsection{Two canopoleis}
\label{sec:two-canopoleis}

\begin{dfn}
Let $B_0$ be the standard disk in $\RR^2$ with a base point $*$ on its boundary and $\epsilon$ be an element of $I_3$ of length $k$. Fix $\mathbf{x}$  a set of $k$ distinct points on $\partial B_0 \setminus\{*\}$. With each point is associated a sign given by $\epsilon$ (scan $\partial B_0$ counterclockwise starting from $*$).
We define $\mathcal{W}(\epsilon)$ to be the following category:
  \begin{itemize}
  \item objects are pairs $(w,m)$ where $w$ is a web embedded in $B_0$ whose boundary is precisely $\mathbf{x}$ (out-going (resp. in-going) edges correspond to $+$ (resp. $-$)); and $m$ is an integer (the ``$q$-grading-shift of $(w,m)$''), it is denoted by $w\cdot q^m$.
  \item the space of  morphisms from $w_0\cdot q^{m_0}$ to $w_1\cdot q^{m_1}$ is the graded $\ZZ$ module generated by foams $f$ in $B_0\times [0,1]$ such that: 
    \begin{itemize}
    \item $f\cap B_0\times \{0\} = -w_0$ (the ``$-$'' means that the orientation of $w_0$ is reversed),
    \item $f\cap B_0\times \{1\} = w_1$,
    \item $f\cap \partial B_0\times [0,1] = \mathbf{x}\times [0,1]$;
\end{itemize}
subjected to ambient isotopy and the local relations of proposition~\ref{pd:functor}. 
\item The degree of a foam $f$ is given by: $\chi(w_0)+\chi(w_1)-2\chi(\Sigma) + m_1 -m_0$ where $\Sigma$ is the underlying $CW$-complex, where the dots are punctured out.
  \end{itemize}
This category is clearly pre-additive and graded. We denote by $\mathcal{KW}(\epsilon)$ the graded category of bounded complexes\footnote{We consider complexes with homological grading in $\QQ$, see remark \ref{rmk:homdeginQ}.} of $\mathrm{Mat}(\mathcal{W}(\epsilon))$, the additive completion of $\mathcal{W}(\epsilon)$. 
This yields an $I_3$-canopolis $\mathcal{KW}$, where $\mathcal{KW}_{\epsilon} = \mathcal{KW}(\epsilon)$. The functors associated with input-diagrams are given by gluing foams vertically. 
\end{dfn}

\begin{rmk}\label{rmk:dimHOM}
  From proposition~\ref{pd:functor} one can derive that\footnote{We use $\mathrm{HOM}$ instead of $\mathrm{Hom}$ to stress that we consider morphisms of arbitrary degrees.} $\mathrm{HOM}_{\mathcal{KW}(\epsilon)}(w_0\cdot q^{m_0}, w_1\cdot q^{m_1})$ is a free $\ZZ$-module of dimension $q^{|\epsilon|+m_1 -m_0}\kup{(-w_0)_\epsilon w_1}$, where $|\epsilon|$ is the number of element of $\epsilon$ and $(-w_0)_\epsilon w_1$ is the plane web obtained by gluing $(-w_0)$ and $w_1$ along $\epsilon$. This  canopolis already appears in \cite{MR2457839} and \cite{MR2482322}, where it is denoted by $\mathrm{\mathbf{Kob}}(\mathfrak{su}_3)$.
\end{rmk}

\begin{dfn}
  \label{dfn:movie}
  A \emph{movie} is a sequence of knotted web diagrams in the standard disk $B_0$ such that any two consecutive diagrams are the same except in a small ball $B$ where their difference is in the following list (these are called \emph{elementary movies}):
\[ 
      \begin{tikzpicture}[scale=0.4]
        \begin{scope}
\draw[draw=  gray, line width = 2mm] (0,0) --  (8,0); 
\draw [draw = gray, line width = 2mm] (8,4) -- (0,4); 
\draw[draw=  gray, very thick] (0,0) -- (0,4);
\draw[draw=  gray, very thick] (4,0) -- (4,4);
\draw[draw=  gray, very thick] (8,0) -- (8,4);
\draw[draw= white, dotted, line width =1.2mm] (0,0) -- (8,0);
\draw[draw= white, dotted, line width = 1.2mm] (0,4) -- (8,4);
\end{scope}
\begin{scope}[xshift= 2cm, yshift=2cm]
  \filldraw[dotted, fill=gray!50!white] (0,0) circle (1.5cm);
  \filldraw[dotted, fill = white] (0, 0) circle (1cm);
\end{scope}
\begin{scope}[xshift= 6cm, yshift=2cm]
  \filldraw[dotted, fill=gray!50!white] (0,0) circle (1.5cm);
  \filldraw[dotted, fill = white] (0, 0) circle (1cm);
  \draw (0, 0) circle (0.5cm);
\end{scope}

      \end{tikzpicture} \hspace{1cm}
      \begin{tikzpicture}[scale=0.4]
        \begin{scope}
\draw[draw=  gray, line width = 2mm] (0,0) --  (8,0); 
\draw [draw = gray, line width = 2mm] (8,4) -- (0,4); 
\draw[draw=  gray, very thick] (0,0) -- (0,4);
\draw[draw=  gray, very thick] (4,0) -- (4,4);
\draw[draw=  gray, very thick] (8,0) -- (8,4);
\draw[draw= white, dotted, line width =1.2mm] (0,0) -- (8,0);
\draw[draw= white, dotted, line width = 1.2mm] (0,4) -- (8,4);
\end{scope}
\begin{scope}[xshift= 2cm, yshift=2cm]
  \filldraw[dotted, fill=gray!50!white] (0,0) circle (1.5cm);
  \filldraw[dotted, fill = white] (0, 0) circle (1cm);
\end{scope}
\begin{scope}[xshift= 6cm, yshift=2cm]
  \filldraw[dotted, fill=gray!50!white] (0,0) circle (1.5cm);
  \filldraw[dotted, fill = white] (0, 0) circle (1cm);
  \draw (0, 0) circle (0.5cm);
\end{scope}

      \end{tikzpicture} \hspace{1cm}
      \begin{tikzpicture}[scale=0.4]
        \begin{scope}
\draw[draw=  gray, line width = 2mm] (0,0) --  (8,0); 
\draw [draw = gray, line width = 2mm] (8,4) -- (0,4); 
\draw[draw=  gray, very thick] (0,0) -- (0,4);
\draw[draw=  gray, very thick] (4,0) -- (4,4);
\draw[draw=  gray, very thick] (8,0) -- (8,4);
\draw[draw= white, dotted, line width =1.2mm] (0,0) -- (8,0);
\draw[draw= white, dotted, line width = 1.2mm] (0,4) -- (8,4);
\end{scope}
\begin{scope}[xshift= 6cm, yshift=2cm]
  \filldraw[dotted, fill=gray!50!white] (0,0) circle (1.5cm);
  \filldraw[dotted, fill = white] (0, 0) circle (1cm);
  \draw[->] (45:1) .. controls (0,0) and (0,0) .. (-45:1);
  \draw[->] (-135:1) .. controls (0,0) and (0,0) .. (135:1);
\end{scope}
\begin{scope}[xshift= 2cm, yshift=2cm]
  \filldraw[dotted, fill=gray!50!white] (0,0) circle (1.5cm);
  \filldraw[dotted, fill = white] (0, 0) circle (1cm);
  \draw[->] (45:1) .. controls (0,0) and (0,0) .. (135:1);
  \draw[->] (-135:1) .. controls (0,0) and (0,0) .. (-45:1);
\end{scope}
      \end{tikzpicture} \vspace{0.2cm} \] \[
      \begin{tikzpicture}[scale=0.4]
        \begin{scope}
\draw[draw=  gray, line width = 2mm] (0,0) --  (8,0); 
\draw [draw = gray, line width = 2mm] (8,4) -- (0,4); 
\draw[draw=  gray, very thick] (0,0) -- (0,4);
\draw[draw=  gray, very thick] (4,0) -- (4,4);
\draw[draw=  gray, very thick] (8,0) -- (8,4);
\draw[draw= white, dotted, line width =1.2mm] (0,0) -- (8,0);
\draw[draw= white, dotted, line width = 1.2mm] (0,4) -- (8,4);
\end{scope}
\begin{scope}[xshift= 2cm, yshift=2cm]
  \filldraw[dotted, fill=gray!50!white] (0,0) circle (1.5cm);
  \filldraw[dotted, fill = white] (0, 0) circle (1cm);
\draw[->] (60:1).. controls +(-120:0.5) and +(120:0.5) .. (-60:1);
\draw[->] (120:1).. controls +(-60:0.5) and +(60:0.5) .. (-120:1);
\end{scope}
\begin{scope}[xshift= 6cm, yshift=2cm]
  \filldraw[dotted, fill=gray!50!white] (0,0) circle (1.5cm);
  \filldraw[dotted, fill = white] (0, 0) circle (1cm);
\draw[->] (60:1) -- (90:0.5) -- (90:-0.5) -- (60:-1);
\draw (90:0.5) -- (120:1);
\draw[->] (90:-0.5) -- (120:-1);
\end{scope}
      \end{tikzpicture} \hspace{1cm}
      \begin{tikzpicture}[scale=0.4]
        \begin{scope}
\draw[draw=  gray, line width = 2mm] (0,0) --  (8,0); 
\draw [draw = gray, line width = 2mm] (8,4) -- (0,4); 
\draw[draw=  gray, very thick] (0,0) -- (0,4);
\draw[draw=  gray, very thick] (4,0) -- (4,4);
\draw[draw=  gray, very thick] (8,0) -- (8,4);
\draw[draw= white, dotted, line width =1.2mm] (0,0) -- (8,0);
\draw[draw= white, dotted, line width = 1.2mm] (0,4) -- (8,4);
\end{scope}
\begin{scope}[xshift= 6cm, yshift=2cm]
  \filldraw[dotted, fill=gray!50!white] (0,0) circle (1.5cm);
  \filldraw[dotted, fill = white] (0, 0) circle (1cm);
\draw[->] (60:1).. controls +(-120:0.5) and +(120:0.5) .. (-60:1);
\draw[->] (120:1).. controls +(-60:0.5) and +(60:0.5) .. (-120:1);
\end{scope}
\begin{scope}[xshift= 2cm, yshift=2cm]
  \filldraw[dotted, fill=gray!50!white] (0,0) circle (1.5cm);
  \filldraw[dotted, fill = white] (0, 0) circle (1cm);
\draw[->] (60:1) -- (90:0.5) -- (90:-0.5) -- (60:-1);
\draw (90:0.5) -- (120:1);
\draw[->] (90:-0.5) -- (120:-1);
\end{scope}
      \end{tikzpicture} \hspace{1cm}
      \begin{tikzpicture}[scale=0.4]
        \begin{scope}
\draw[draw=  gray, line width = 2mm] (0,0) --  (8,0); 
\draw [draw = gray, line width = 2mm] (8,4) -- (0,4); 
\draw[draw=  gray, very thick] (0,0) -- (0,4);
\draw[draw=  gray, very thick] (4,0) -- (4,4);
\draw[draw=  gray, very thick] (8,0) -- (8,4);
\draw[draw= white, dotted, line width =1.2mm] (0,0) -- (8,0);
\draw[draw= white, dotted, line width = 1.2mm] (0,4) -- (8,4);
\end{scope}
\begin{scope}[xshift= 2cm, yshift=2cm, rotate=90]
  \filldraw[dotted, fill=gray!50!white] (0,0) circle (1.5cm);
  \filldraw[dotted, fill = white] (0, 0) circle (1cm);
\draw (90:1) -- (-90:1);
\end{scope}
\begin{scope}[xshift= 6cm, yshift=2cm, rotate=90]
  \filldraw[dotted, fill=gray!50!white] (0,0) circle (1.5cm);
  \filldraw[dotted, fill = white] (0, 0) circle (1cm);
\draw (90:1) -- (90:0.5).. controls (-0.5, 0) and (-0.5, 0) .. (90: -0.5)-- (90:-1);
\draw (90:0.5).. controls (0.5, 0) and (0.5, 0).. (90: -0.5);
\end{scope}
      \end{tikzpicture}  \vspace{0.2cm} \] \[
      \begin{tikzpicture}[scale=0.4]
        \begin{scope}
\draw[draw=  gray, line width = 2mm] (0,0) --  (8,0); 
\draw [draw = gray, line width = 2mm] (8,4) -- (0,4); 
\draw[draw=  gray, very thick] (0,0) -- (0,4);
\draw[draw=  gray, very thick] (4,0) -- (4,4);
\draw[draw=  gray, very thick] (8,0) -- (8,4);
\draw[draw= white, dotted, line width =1.2mm] (0,0) -- (8,0);
\draw[draw= white, dotted, line width = 1.2mm] (0,4) -- (8,4);
\end{scope}
\begin{scope}[xshift= 6cm, yshift=2cm, rotate=90]
  \filldraw[dotted, fill=gray!50!white] (0,0) circle (1.5cm);
  \filldraw[dotted, fill = white] (0, 0) circle (1cm);
\draw (90:1) -- (-90:1);
\end{scope}
\begin{scope}[xshift= 2cm, yshift=2cm, rotate=90]
  \filldraw[dotted, fill=gray!50!white] (0,0) circle (1.5cm);
  \filldraw[dotted, fill = white] (0, 0) circle (1cm);
\draw (90:1) -- (90:0.5).. controls (-0.5, 0) and (-0.5, 0) .. (90: -0.5)-- (90:-1);
\draw (90:0.5).. controls (0.5, 0) and (0.5, 0).. (90: -0.5);
\end{scope}
      \end{tikzpicture} \hspace{1cm}
      \begin{tikzpicture}[scale=0.4]
        \begin{scope}
\draw[draw=  gray, line width = 2mm] (0,0) --  (8,0); 
\draw [draw = gray, line width = 2mm] (8,4) -- (0,4); 
\draw[draw=  gray, very thick] (0,0) -- (0,4);
\draw[draw=  gray, very thick] (4,0) -- (4,4);
\draw[draw=  gray, very thick] (8,0) -- (8,4);
\draw[draw= white, dotted, line width =1.2mm] (0,0) -- (8,0);
\draw[draw= white, dotted, line width = 1.2mm] (0,4) -- (8,4);
\end{scope}
\begin{scope}[xshift= 2cm, yshift=2cm]
  \filldraw[dotted, fill=gray!50!white] (0,0) circle (1.5cm);
  \filldraw[dotted, fill = white] (0, 0) circle (1cm);
  \draw (90:1) .. controls (0,0) and +(0,-0.50) .. (0.5,0.5).. controls +(-0,+0.2) and +(0.15,0.3) ..  (0.2,0.4);
  \draw (-90:1) .. controls (0,0) and +(0,+0.50) .. (0.5,-0.5).. controls +(-0,-0.2) and +(0.15,-0.3) ..  (0.2,-0.4);
  \draw (0.1,0.2) .. controls +(-0.1, -0.1) and +(-0.1, 0.1) .. (0.1, -0.2);
\end{scope}
\begin{scope}[xshift= 6cm, yshift=2cm]
  \filldraw[dotted, fill=gray!50!white] (0,0) circle (1.5cm);
  \filldraw[dotted, fill = white] (0, 0) circle (1cm);
  \draw (90:1) .. controls (0,0) and (0,0) .. (-90:1);
\end{scope}
      \end{tikzpicture} \hspace{1cm}
      \begin{tikzpicture}[scale=0.4]
        \begin{scope}
\draw[draw=  gray, line width = 2mm] (0,0) --  (8,0); 
\draw [draw = gray, line width = 2mm] (8,4) -- (0,4); 
\draw[draw=  gray, very thick] (0,0) -- (0,4);
\draw[draw=  gray, very thick] (4,0) -- (4,4);
\draw[draw=  gray, very thick] (8,0) -- (8,4);
\draw[draw= white, dotted, line width =1.2mm] (0,0) -- (8,0);
\draw[draw= white, dotted, line width = 1.2mm] (0,4) -- (8,4);
\end{scope}
\begin{scope}[xshift= 2cm, yshift=2cm]
  \filldraw[dotted, fill=gray!50!white] (0,0) circle (1.5cm);
  \filldraw[dotted, fill = white] (0, 0) circle (1cm);
  \draw (90:1) .. controls (0,0) and +(0,-0.50) .. (0.5,0.5).. controls +(-0,+0.2) and +(0.15,0.3) ..  (0.2,0.4);
  \draw (-90:1) .. controls (0,0) and +(0,+0.50) .. (0.5,-0.5).. controls +(-0,-0.2) and +(0.15,-0.3) ..  (0.2,-0.4);
  \draw (0.1,0.2) .. controls +(-0.1, -0.1) and +(-0.1, 0.1) .. (0.1, -0.2);
\end{scope}
\begin{scope}[xshift= 6cm, yshift=2cm]
  \filldraw[dotted, fill=gray!50!white] (0,0) circle (1.5cm);
  \filldraw[dotted, fill = white] (0, 0) circle (1cm);
  \draw (90:1) .. controls (0,0) and (0,0) .. (-90:1);
\end{scope}
      \end{tikzpicture}  \vspace{0.2cm} \] \[
      \begin{tikzpicture}[scale=0.4]
        \begin{scope}
\draw[draw=  gray, line width = 2mm] (0,0) --  (8,0); 
\draw [draw = gray, line width = 2mm] (8,4) -- (0,4); 
\draw[draw=  gray, very thick] (0,0) -- (0,4);
\draw[draw=  gray, very thick] (4,0) -- (4,4);
\draw[draw=  gray, very thick] (8,0) -- (8,4);
\draw[draw= white, dotted, line width =1.2mm] (0,0) -- (8,0);
\draw[draw= white, dotted, line width = 1.2mm] (0,4) -- (8,4);
\end{scope}
\begin{scope}[xshift= 2cm, yshift=2cm]
  \filldraw[dotted, fill=gray!50!white] (0,0) circle (1.5cm);
  \filldraw[dotted, fill = white] (0, 0) circle (1cm);
  \draw (90:1) .. controls (0,0.5) and +(-0.1,+0.3) .. (0.1,0.4);
  \draw (0.2, 0.2) .. controls +(0.1, -0.2) and +(0, -0.2) .. (0.5, 0.5) .. controls +(0, 0.4) and +(0, 0.2).. (0,0);
  \draw (0.2, -0.2) .. controls +(0.1, 0.2) and +(0, 0.2) .. (0.5, -0.5) .. controls +(0, -0.4) and +(0, -0.2).. (0,0);
  \draw (-90:1) .. controls (0,-0.5) and +(-0.1,-0.3) .. (0.1,-0.4);
\end{scope}
\begin{scope}[xshift= 6cm, yshift=2cm]
  \filldraw[dotted, fill=gray!50!white] (0,0) circle (1.5cm);
  \filldraw[dotted, fill = white] (0, 0) circle (1cm);
  \draw (90:1) .. controls (0,0) and (0,0) .. (-90:1);
\end{scope}
      \end{tikzpicture} \hspace{1cm}
      \begin{tikzpicture}[scale=0.4]
        \begin{scope}
\draw[draw=  gray, line width = 2mm] (0,0) --  (8,0); 
\draw [draw = gray, line width = 2mm] (8,4) -- (0,4); 
\draw[draw=  gray, very thick] (0,0) -- (0,4);
\draw[draw=  gray, very thick] (4,0) -- (4,4);
\draw[draw=  gray, very thick] (8,0) -- (8,4);
\draw[draw= white, dotted, line width =1.2mm] (0,0) -- (8,0);
\draw[draw= white, dotted, line width = 1.2mm] (0,4) -- (8,4);
\end{scope}
\begin{scope}[xshift= 6cm, yshift=2cm]
  \filldraw[dotted, fill=gray!50!white] (0,0) circle (1.5cm);
  \filldraw[dotted, fill = white] (0, 0) circle (1cm);
  \draw (90:1) .. controls (0,0.5) and +(-0.1,+0.3) .. (0.1,0.4);
  \draw (0.2, 0.2) .. controls +(0.1, -0.2) and +(0, -0.2) .. (0.5, 0.5) .. controls +(0, 0.4) and +(0, 0.2).. (0,0);
  \draw (0.2, -0.2) .. controls +(0.1, 0.2) and +(0, 0.2) .. (0.5, -0.5) .. controls +(0, -0.4) and +(0, -0.2).. (0,0);
  \draw (-90:1) .. controls (0,-0.5) and +(-0.1,-0.3) .. (0.1,-0.4);
\end{scope}
\begin{scope}[xshift= 2cm, yshift=2cm]
  \filldraw[dotted, fill=gray!50!white] (0,0) circle (1.5cm);
  \filldraw[dotted, fill = white] (0, 0) circle (1cm);
  \draw (90:1) .. controls (0,0) and (0,0) .. (-90:1);
\end{scope}
      \end{tikzpicture} \hspace{1cm}
      \begin{tikzpicture}[scale=0.4]
        \begin{scope}
\draw[draw=  gray, line width = 2mm] (0,0) --  (8,0); 
\draw [draw = gray, line width = 2mm] (8,4) -- (0,4); 
\draw[draw=  gray, very thick] (0,0) -- (0,4);
\draw[draw=  gray, very thick] (4,0) -- (4,4);
\draw[draw=  gray, very thick] (8,0) -- (8,4);
\draw[draw= white, dotted, line width =1.2mm] (0,0) -- (8,0);
\draw[draw= white, dotted, line width = 1.2mm] (0,4) -- (8,4);
\end{scope}
\begin{scope}[xshift= 2cm, yshift=2cm]
  \filldraw[dotted, fill=gray!50!white] (0,0) circle (1.5cm);
  \filldraw[dotted, fill = white] (0, 0) circle (1cm);
\draw (60:1).. controls +(-120:0.5) and +(120:0.5) .. (-60:1);
\draw (120:1).. controls +(-60:0.5) and +(60:0.5) .. (-120:1);
\end{scope}
\begin{scope}[xshift= 6cm, yshift=2cm]
  \filldraw[dotted, fill=gray!50!white] (0,0) circle (1.5cm);
  \filldraw[dotted, fill = white] (0, 0) circle (1cm);
\draw (60:1).. controls +(-135:1.5) and +(135:1.5) .. (-60:1);
\fill[white] (0, 0.36) circle (0.1);
\fill[white] (0, -0.36) circle (0.1);
\draw (120:1).. controls +(-45:1.5) and +(45:1.5) .. (-120:1);
\end{scope}
      \end{tikzpicture}  \vspace{0.2cm} \] \[
      \begin{tikzpicture}[scale=0.4]
        \begin{scope}
\draw[draw=  gray, line width = 2mm] (0,0) --  (8,0); 
\draw [draw = gray, line width = 2mm] (8,4) -- (0,4); 
\draw[draw=  gray, very thick] (0,0) -- (0,4);
\draw[draw=  gray, very thick] (4,0) -- (4,4);
\draw[draw=  gray, very thick] (8,0) -- (8,4);
\draw[draw= white, dotted, line width =1.2mm] (0,0) -- (8,0);
\draw[draw= white, dotted, line width = 1.2mm] (0,4) -- (8,4);
\end{scope}
\begin{scope}[xshift= 6cm, yshift=2cm]
  \filldraw[dotted, fill=gray!50!white] (0,0) circle (1.5cm);
  \filldraw[dotted, fill = white] (0, 0) circle (1cm);
\draw (60:1).. controls +(-120:0.5) and +(120:0.5) .. (-60:1);
\draw (120:1).. controls +(-60:0.5) and +(60:0.5) .. (-120:1);
\end{scope}
\begin{scope}[xshift= 2cm, yshift=2cm]
  \filldraw[dotted, fill=gray!50!white] (0,0) circle (1.5cm);
  \filldraw[dotted, fill = white] (0, 0) circle (1cm);
\draw (60:1).. controls +(-135:1.5) and +(135:1.5) .. (-60:1);
\fill[white] (0, 0.36) circle (0.1);
\fill[white] (0, -0.36) circle (0.1);
\draw (120:1).. controls +(-45:1.5) and +(45:1.5) .. (-120:1);
\end{scope}
      \end{tikzpicture} \hspace{1cm}
      \begin{tikzpicture}[scale=0.4]
        \begin{scope}
\draw[draw=  gray, line width = 2mm] (0,0) --  (8,0); 
\draw [draw = gray, line width = 2mm] (8,4) -- (0,4); 
\draw[draw=  gray, very thick] (0,0) -- (0,4);
\draw[draw=  gray, very thick] (4,0) -- (4,4);
\draw[draw=  gray, very thick] (8,0) -- (8,4);
\draw[draw= white, dotted, line width =1.2mm] (0,0) -- (8,0);
\draw[draw= white, dotted, line width = 1.2mm] (0,4) -- (8,4);
\end{scope}
\begin{scope}[xshift= 2cm, yshift=2cm]
  \filldraw[dotted, fill=gray!50!white] (0,0) circle (1.5cm);
  \filldraw[dotted, fill = white] (0, 0) circle (1cm);
\draw (60:1) -- (-120:1);
\fill[white] (0, 0) circle (0.1);
\draw (-60:1) -- (120:1);
\fill[white] (-0.2, -0.33) circle (0.1);
\fill[white] (0.2, -0.33) circle (0.1);
\draw (180:1).. controls +(-40:0.7) and +(-140:0.7) .. (0:1);
\end{scope}
\begin{scope}[xshift= 6cm, yshift=2cm]
  \filldraw[dotted, fill=gray!50!white] (0,0) circle (1.5cm);
  \filldraw[dotted, fill = white] (0, 0) circle (1cm);
\draw (60:1) -- (-120:1);
\fill[white] (0, 0) circle (0.1);
\draw (-60:1) -- (120:1);
\fill[white] (-0.2, +0.33) circle (0.1);
\fill[white] (0.2, +0.33) circle (0.1);
\draw (180:1).. controls +(+40:0.7) and +(+140:0.7) .. (0:1);
\end{scope}
      \end{tikzpicture} \hspace{1cm}
      \begin{tikzpicture}[scale=0.4]
        \begin{scope}
\draw[draw=  gray, line width = 2mm] (0,0) --  (8,0); 
\draw [draw = gray, line width = 2mm] (8,4) -- (0,4); 
\draw[draw=  gray, very thick] (0,0) -- (0,4);
\draw[draw=  gray, very thick] (4,0) -- (4,4);
\draw[draw=  gray, very thick] (8,0) -- (8,4);
\draw[draw= white, dotted, line width =1.2mm] (0,0) -- (8,0);
\draw[draw= white, dotted, line width = 1.2mm] (0,4) -- (8,4);
\end{scope}
\begin{scope}[xshift= 2cm, yshift=2cm]
  \filldraw[dotted, fill=gray!50!white] (0,0) circle (1.5cm);
  \filldraw[dotted, fill = white] (0, 0) circle (1cm);
\draw (30:1) -- (00,0);
\draw (150:1) -- (00,0);
\draw (-90:1) -- (00,0);
\fill[white] (-0, -0.35) circle (0.1);
\draw (180:1).. controls +(-40:0.7) and +(-140:0.7) .. (0:1);
\end{scope}
\begin{scope}[xshift= 6cm, yshift=2cm]
  \filldraw[dotted, fill=gray!50!white] (0,0) circle (1.5cm);
  \filldraw[dotted, fill = white] (0, 0) circle (1cm);
\draw (30:1) -- (00,0);
\draw (150:1) -- (00,0);
\draw (-90:1) -- (00,0);
\fill[white] (0.45, 0.25) circle (0.1);
\fill[white] (-0.45, 0.25) circle (0.1);
\draw (180:1).. controls +(+40:0.7) and +(+140:0.7) .. (0:1);
\end{scope}
      \end{tikzpicture}  \vspace{0.2cm} \] \[
      \begin{tikzpicture}[scale=0.4]
        \begin{scope}
\draw[draw=  gray, line width = 2mm] (0,0) --  (8,0); 
\draw [draw = gray, line width = 2mm] (8,4) -- (0,4); 
\draw[draw=  gray, very thick] (0,0) -- (0,4);
\draw[draw=  gray, very thick] (4,0) -- (4,4);
\draw[draw=  gray, very thick] (8,0) -- (8,4);
\draw[draw= white, dotted, line width =1.2mm] (0,0) -- (8,0);
\draw[draw= white, dotted, line width = 1.2mm] (0,4) -- (8,4);
\end{scope}
\begin{scope}[xshift= 6cm, yshift=2cm]
  \filldraw[dotted, fill=gray!50!white] (0,0) circle (1.5cm);
  \filldraw[dotted, fill = white] (0, 0) circle (1cm);
\draw (30:1) -- (00,0);
\draw (150:1) -- (00,0);
\draw (-90:1) -- (00,0);
\fill[white] (-0, -0.35) circle (0.1);
\draw (180:1).. controls +(-40:0.7) and +(-140:0.7) .. (0:1);
\end{scope}
\begin{scope}[xshift= 2cm, yshift=2cm]
  \filldraw[dotted, fill=gray!50!white] (0,0) circle (1.5cm);
  \filldraw[dotted, fill = white] (0, 0) circle (1cm);
\draw (30:1) -- (00,0);
\draw (150:1) -- (00,0);
\draw (-90:1) -- (00,0);
\fill[white] (0.45, 0.25) circle (0.1);
\fill[white] (-0.45, 0.25) circle (0.1);
\draw (180:1).. controls +(+40:0.7) and +(+140:0.7) .. (0:1);
\end{scope}
      \end{tikzpicture} \hspace{1cm}
      \begin{tikzpicture}[scale=0.4]
        \begin{scope}
\draw[draw=  gray, line width = 2mm] (0,0) --  (8,0); 
\draw [draw = gray, line width = 2mm] (8,4) -- (0,4); 
\draw[draw=  gray, very thick] (0,0) -- (0,4);
\draw[draw=  gray, very thick] (4,0) -- (4,4);
\draw[draw=  gray, very thick] (8,0) -- (8,4);
\draw[draw= white, dotted, line width =1.2mm] (0,0) -- (8,0);
\draw[draw= white, dotted, line width = 1.2mm] (0,4) -- (8,4);
\end{scope}
\begin{scope}[xshift= 2cm, yshift=2cm]
  \filldraw[dotted, fill=gray!50!white] (0,0) circle (1.5cm);
  \filldraw[dotted, fill = white] (0, 0) circle (1cm);
\draw (180:1).. controls +(-40:0.7) and +(-140:0.7) .. (0:1);
\fill[white] (-0, -0.35) circle (0.1);
\draw (30:1) -- (00,0);
\draw (150:1) -- (00,0);
\draw (-90:1) -- (00,0);
\end{scope}
\begin{scope}[xshift= 6cm, yshift=2cm]
  \filldraw[dotted, fill=gray!50!white] (0,0) circle (1.5cm);
  \filldraw[dotted, fill = white] (0, 0) circle (1cm);
\draw (180:1).. controls +(+40:0.7) and +(+140:0.7) .. (0:1);
\fill[white] (0.45, 0.25) circle (0.1);
\fill[white] (-0.45, 0.25) circle (0.1);
\draw (30:1) -- (00,0);
\draw (150:1) -- (00,0);
\draw (-90:1) -- (00,0);
\end{scope}
      \end{tikzpicture} \hspace{1cm}
      \begin{tikzpicture}[scale=0.4]
        \begin{scope}
\draw[draw=  gray, line width = 2mm] (0,0) --  (8,0); 
\draw [draw = gray, line width = 2mm] (8,4) -- (0,4); 
\draw[draw=  gray, very thick] (0,0) -- (0,4);
\draw[draw=  gray, very thick] (4,0) -- (4,4);
\draw[draw=  gray, very thick] (8,0) -- (8,4);
\draw[draw= white, dotted, line width =1.2mm] (0,0) -- (8,0);
\draw[draw= white, dotted, line width = 1.2mm] (0,4) -- (8,4);
\end{scope}
\begin{scope}[xshift= 6cm, yshift=2cm]
  \filldraw[dotted, fill=gray!50!white] (0,0) circle (1.5cm);
  \filldraw[dotted, fill = white] (0, 0) circle (1cm);
\draw (180:1).. controls +(-40:0.7) and +(-140:0.7) .. (0:1);
\fill[white] (-0, -0.35) circle (0.1);
\draw (30:1) -- (00,0);
\draw (150:1) -- (00,0);
\draw (-90:1) -- (00,0);
\end{scope}
\begin{scope}[xshift= 2cm, yshift=2cm]
  \filldraw[dotted, fill=gray!50!white] (0,0) circle (1.5cm);
  \filldraw[dotted, fill = white] (0, 0) circle (1cm);
\draw (180:1).. controls +(+40:0.7) and +(+140:0.7) .. (0:1);
\fill[white] (0.45, 0.25) circle (0.1);
\fill[white] (-0.45, 0.25) circle (0.1);
\draw (30:1) -- (00,0);
\draw (150:1) -- (00,0);
\draw (-90:1) -- (00,0);
\end{scope}
      \end{tikzpicture} \]
Movies are considered up to the relation ``far modifications commute'':  
\[
 \vspace{0.2cm}
 \begin{tikzpicture}[scale= 0.6]
\begin{scope}
\draw[draw=  gray, line width = 2mm] (0,0) --  (12,0); 
\draw [draw = gray, line width = 2mm] (12,4) -- (0,4); 
\draw[draw=  gray, very thick] (0,0) -- (0,4);
\draw[draw=  gray, very thick] (4,0) -- (4,4);
\draw[draw=  gray, very thick] (8,0) -- (8,4);
\draw[draw=  gray, very thick] (12,0) -- (12,4);
\draw[draw= white, dotted, line width =1.2mm] (0,0) -- (12,0);
\draw[draw= white, dotted, line width = 1.2mm] (0,4) -- (12,4);
\end{scope}
\begin{scope}[xshift= 2cm, yshift=2cm]
  \filldraw[dotted, fill=gray!50!white] (0,0) circle (1.5cm);
  \filldraw[dotted, fill = white] (-0.75, 0) circle (0.5cm);
  \filldraw[dotted, fill = white] (0.75, 0) circle (0.5cm);
  \node at (-0.75,0) {$D_1$};
  \node at (0.75,0) {$D_2$};
\end{scope}
\begin{scope}[xshift= 6cm, yshift=2cm]
  \filldraw[dotted, fill=gray!50!white] (0,0) circle (1.5cm);
  \filldraw[dotted, fill = white] (-0.75, 0) circle (0.5cm);
  \filldraw[dotted, fill = white] (0.75, 0) circle (0.5cm);
  \node at (-0.75,0) {$D'_1$};
  \node at (0.75,0) {$D_2$};
\end{scope}
\begin{scope}[xshift= 10cm, yshift=2cm]
  \filldraw[dotted, fill=gray!50!white] (0,0) circle (1.5cm);
  \filldraw[dotted, fill = white] (-0.75, 0) circle (0.5cm);
  \filldraw[dotted, fill = white] (0.75, 0) circle (0.5cm);
  \node at (-0.75,0) {$D'_1$};
  \node at (0.75,0) {$D'_2$};
\end{scope}
\node at (6, -1) {$\simeq$};
\begin{scope}[yshift=-6cm]
\begin{scope}
\draw[draw=  gray, line width = 2mm] (0,0) --  (12,0); 
\draw[draw = gray, line width = 2mm] (12,4) -- (0,4); 
\draw[draw=  gray, very thick] (0,0) -- (0,4);
\draw[draw=  gray, very thick] (4,0) -- (4,4);
\draw[draw=  gray, very thick] (8,0) -- (8,4);
\draw[draw=  gray, very thick] (12,0) -- (12,4);
\draw[draw= white, dotted, line width =1.2mm] (0,0) -- (12,0);
\draw[draw= white, dotted, line width = 1.2mm] (0,4) -- (12,4);
\end{scope}
\begin{scope}[xshift= 2cm, yshift=2cm]
  \filldraw[dotted, fill=gray!50!white] (0,0) circle (1.5cm);
  \filldraw[dotted, fill = white] (-0.75, 0) circle (0.5cm);
  \filldraw[dotted, fill = white] (0.75, 0) circle (0.5cm);
  \node at (-0.75,0) {$D_1$};
  \node at (0.75,0) {$D_2$};
\end{scope}
\begin{scope}[xshift= 6cm, yshift=2cm]
  \filldraw[dotted, fill=gray!50!white] (0,0) circle (1.5cm);
  \filldraw[dotted, fill = white] (-0.75, 0) circle (0.5cm);
  \filldraw[dotted, fill = white] (0.75, 0) circle (0.5cm);
  \node at (-0.75,0) {$D_1$};
  \node at (0.75,0) {$D'_2$};
\end{scope}
\begin{scope}[xshift= 10cm, yshift=2cm]
  \filldraw[dotted, fill=gray!50!white] (0,0) circle (1.5cm);
  \filldraw[dotted, fill = white] (-0.75, 0) circle (0.5cm);
  \filldraw[dotted, fill = white] (0.75, 0) circle (0.5cm);
  \node at (-0.75,0) {$D'_1$};
  \node at (0.75,0) {$D'_2$};
\end{scope}
\end{scope}

 \end{tikzpicture}
 \]
\end{dfn}

\begin{rmk}
  Links cobordisms can be presented via movies and movie moves (see \cite{MR1905687} and \cite{MR2462446} for the framed case).
\end{rmk}

\begin{dfn}
  \label{dfn:canomovie}
Let $B_0$ be the standard disk in $\RR^2$ with a base point $*$ on its boundary and $\epsilon$ be an element of $I_3$ of length $k$. Fix $\mathbf{x}$  a set of distinct $k$ points on $\partial B_0\setminus\{*\}$. With each point is associated a sign given by $\epsilon$ (scan $\partial B_0$ counterclockwise starting from $*$).
We define $\mathcal{TD}(\epsilon)$ to be the following category:
\begin{itemize}
\item Objects are knotted web diagrams in $B_0$ with boundary precisely equal to $\mathbf{x}$.
\item A morphism from $w_0$ to $w_1$ is a movie starting with $w_0$ and ending with $w_1$.
\item The composition is the concatenation of movies. The movie of length one is the identity.
\end{itemize}
This yields an $I_3$-canopolis $\mathcal{TD}$, where $\mathcal{TD}_{\epsilon} = \mathcal{TD}(\epsilon)$. The functors associated with input-diagrams are given by gluing movies (as we allow only one local transformation between two consecutive frames, one should actually proceed carefully but the relation ``far modifications commutes'' ensures that it is well-defined).
\end{dfn}

\begin{rmk}
  \label{rmk:generators}
  The canopolis $\mathcal{TD}$ is clearly objects-generated by:
\[
\begin{tikzpicture}[scale=0.5]
  \begin{scope}[decoration={markings, mark=at
     position 0.5 with {\arrow{>}}},postaction={decorate}]
\draw[very thin] (0,0) circle (1cm);
\draw[postaction={decorate}] (0,0) --(90:1);
\draw[postaction={decorate}] (0,0) --(-30:1);
\draw[postaction={decorate}] (0,0) --(210:1);
\fill[red] (-90:1) circle (2pt);
\node at (1,-0.2) {$\,\ ,$};
\end{scope}

\begin{scope}[xshift= 3cm, decoration={markings, mark=at
     position 0.5 with {\arrow{<}}},postaction={decorate}]
\draw[very thin] (0,0) circle (1cm);
\draw[postaction={decorate}] (0,0) --(90:1);
\draw[postaction={decorate}] (0,0) --(-30:1);
\draw[postaction={decorate}] (0,0) --(210:1);
\fill[red] (-90:1) circle (2pt);
\node at (1, -0.20) {$\ \,,$};
\end{scope}

\begin{scope}[xshift= 6cm, decoration={markings, mark=at
     position 0.5 with {\arrow{<}}},postaction={decorate}]
\draw[very thin] (0,0) circle (1cm);
\draw[->] (-135:1) --(45:1);
\fill[white] (0,0) circle (0.2);
\draw[->] (-45:1) --( 135:1);
\fill[red] (-90:1) circle (2pt);
\end{scope}

\node at (8,0) {\textrm{and}};

\begin{scope}[xshift= 10cm, decoration={markings, mark=at
     position 0.5 with {\arrow{<}}},postaction={decorate}]
\draw[very thin] (0,0) circle (1cm);
\draw[->] (-45:1) --(135:1);
\fill[white] (0,0) circle (0.2);
\draw[->] (-135:1) --( 45:1);
\fill[red] (-90:1) circle (2pt);
\node at (1, -0.2) {$\ \,.$};
\end{scope}
\end{tikzpicture}
\]
It is morphisms-generated by the elementary movies movie given in definition~\ref{dfn:movie}.
\end{rmk}

\subsubsection{The morphism}
\label{sec:morphism}

We now want to define a morphism $\mathcal{G}:\mathcal{TD} \To \mathcal{KW}$. All the ingredients are in \cite{MR2482322}, we do not repeat all the definitions, but rather give the flavor of this morphism and explain the needed modifications. 

\emph{The morphism $\mathcal{G}$ on objects.} Thanks to remark~\ref{rmk:generators}, we only need to give the value of $\G$ on four objects. We set:
\[
\G\left(\linkposcrosscirclebase[0.4]\right)=  \linktwovertcirclebase[0.4]\cdot q^{-\frac23}\left[-\frac23\right]\stackrel{\foamzip[0.2]}{\xrightarrow{\hspace{1.5cm}}} \webIvertcirclebase[0.4]\cdot q^{\frac{1}3}\left[\frac13\right], \]\[
\G\left(\linknegcrosscirclebase[0.4]\right)=  \webIvertcirclebase[0.4]\cdot q^{-\frac13}\left[-\frac13\right]\stackrel{\foamunzip[0.2]}{\xrightarrow{\hspace{1.5cm}}} \linktwovertcirclebase[0.4]\cdot q^{\frac23}\left[\frac23\right], \] \[
\G\left(\websinkbase[0.56]\right) =\websinkbase[0.56] [0] \quad  \textrm{and} \quad
\G\left(\websourcebase[0.56]\right) =\websourcebase[0.56] [0],
\]
where $[\bullet]$ indicates the homological degree. 
\emph{Definition of $\mathcal{G}$ on morphism.} A brief description of the image of morphisms by $\G$ is given in figure~\ref{fig:description-g}.
\input{morphismG}

\begin{rmk}
  \label{rmk:freecano} The canopolis $\mathcal{TD}$ does not see the topology of diagrams (for example we did not impose that movies associated with Reidemeister moves correspond to isomorphisms). The topology will be recovered by saying that the morphism $\G$ is invariant under certain moves. This is the purpose of the next section.
\end{rmk}


\subsection{Invariance results}
\label{sec:invariance-result}

\begin{dfn}\label{dfn:framed-equivalent}
  Two knotted web  diagrams $w_1$ and $w_2$ of $\mathcal{KW}_\epsilon$ are \emph{framed equivalent} if there exists a morphism from $w_1$ to $w_2$ which is a composition of elementary movies of type A5-A9 and B4-B9.
\end{dfn}

\begin{rmk}
  We do not claim that this is a ``good'' definition of framed equivalence for webs. The ``good'' definition should be geometric and deal with normal vector field and ambient isotopy equivalence. We do not claim that this ``cheap'' definition is equivalent to the geometric one. 
\end{rmk}
From the definition of $\G$  we have the obvious lemma:
\begin{lem}\label{lem:framedequivalence}
  Let $w_1$ and $w_2$ are two knotted web diagrams. If $w_1$ and $w_2$ are framed equivalent then the two complexes $\G(w_1)$ and $\G(w_2)$ are homotopy equivalent. 
\end{lem}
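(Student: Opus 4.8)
The plan is to unwind Definition~\ref{dfn:framed-equivalent} and reduce everything to the functoriality of $\G$ together with the content of Figure~\ref{fig:description-g}. First I would use the hypothesis to produce a morphism $m\colon w_1\to w_2$ in $\mathcal{TD}_\epsilon$ of the form $m=m_N\circ\cdots\circ m_1$, where each $m_i$ is an elementary movie of type A5--A9 or B4--B9. Since $\G$ is in particular a functor, $\G(m)=\G(m_N)\circ\cdots\circ\G(m_1)$; and because a composite of homotopy equivalences is again a homotopy equivalence, it suffices to check that each $\G(m_i)$ is a homotopy equivalence.

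Next I would localise. Each elementary movie $m_i$ agrees with the identity movie outside a small ball $B$, so in canopolis language $m_i=\mathcal{TD}_{(M,\gamma)}(\widetilde m_i)$ for a one-input input-diagram $(M,\gamma)$ (with $M=B_0\setminus\mathring B$ and $\gamma$ the part of the diagram lying outside $B$) and a ``local'' version $\widetilde m_i$ of the elementary movie, which is precisely one of the morphisms-generators of $\mathcal{TD}$ from Remark~\ref{rmk:generators}. Since $\G$ is a morphism of canopoleis, $\G(m_i)=\mathcal{KW}_{(M,\gamma)}(\G(\widetilde m_i))$, and Figure~\ref{fig:description-g} records that $\G(\widetilde m_i)$ is a homotopy equivalence exactly for $\widetilde m_i$ of type A5--A9 or B4--B9 (the rows labelled ``Homotopy equivalence''). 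Thus the lemma reduces to the single assertion that the canopolis operations $\mathcal{KW}_{(M,\gamma)}$ send a homotopy equivalence placed in an input slot to a homotopy equivalence.

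That last assertion is the only step that needs an actual argument, and it is routine: the functors $\mathcal{KW}_{(M,\gamma)}$ are built by gluing foams, hence are $\ZZ$-multilinear on morphism spaces and compatible with passing to totalised (multi)complexes, so they carry chain homotopies to chain homotopies. Explicitly, if $g$ is a homotopy inverse of $\G(\widetilde m_i)$ with homotopies $h_0,h_1$ (so that $g\circ\G(\widetilde m_i)-\id=d h_0+h_0 d$ and symmetrically for the other composite), then $\mathcal{KW}_{(M,\gamma)}(g)$ is a homotopy inverse of $\G(m_i)$, with homotopies $\mathcal{KW}_{(M,\gamma)}(h_0)$ and $\mathcal{KW}_{(M,\gamma)}(h_1)$. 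Chaining these over $i=1,\dots,N$ shows $\G(m)$ is a homotopy equivalence, whence $\G(w_1)\simeq\G(w_2)$. The ``hard part'', such as it is, is thus only this bookkeeping about canopolis functors preserving homotopies; no topological input beyond what is already tabulated in Figure~\ref{fig:description-g} is used, which is why the lemma is labelled obvious.
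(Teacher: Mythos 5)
Your proposal is correct and is exactly the argument the paper has in mind: the paper states this lemma without proof as an immediate consequence of the definition of $\G$, since each elementary movie of type A5--A9 and B4--B9 is sent by $\G$ to a homotopy equivalence (the rows so labelled in figure~\ref{fig:description-g}), and planar (canopolis) composition together with ordinary composition preserves homotopy equivalences. Your write-up just makes explicit the routine bookkeeping (localising the move via an input-diagram and noting that the gluing functors carry homotopies to homotopies) that the paper leaves to the reader.
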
 

Let $w_1$ and $w_2$ be two tangle diagrams and $\Sigma$ a framed cobordism (in the classical sense, i. e. $\Sigma$ is a surface) between $w_1$ and $w_2$. Then $\Sigma$ (or a surface isotopic to $\Sigma$) can be  presented by a movie. We have the following theorem:

\begin{thm}[Clark's functoriality theorem, \cite{MR2482322}]
  \label{thm:clark} With the same notations, $\G(\Sigma)$ depends only on the isotopy type of $\Sigma$.
\end{thm}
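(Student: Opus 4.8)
The statement is precisely Clark's functoriality theorem for the $\sll_3$-homology \cite{MR2482322}, transported into the language of the canopolis $\mathcal{KW}$ and adapted to our normalization; so the plan is to explain why nothing beyond Clark's work is needed. First I would recall that a framed surface cobordism $\Sigma$ between fixed tangle diagrams $w_1$ and $w_2$ is presented by a movie (Definition~\ref{dfn:movie}), and that two movies presenting isotopic framed cobordisms are connected by a finite sequence of \emph{movie moves}: the Carter--Saito movie moves, in the blackboard-framed version used for Khovanov-type theories (see \cite{MR1905687}, and \cite{MR2462446} for the framed bookkeeping). Since $\G\colon \mathcal{TD}\to\mathcal{KW}$ is a morphism of canopoleis it is determined by its values on the elementary movies, and by Figure~\ref{fig:description-g} these values coincide with Clark's, up to the overall $q$- and homological-degree shifts dictated by the boundary webs (footnotes of Figure~\ref{fig:description-g}). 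Hence it suffices to check that, for every movie move relating movies $M$ and $M'$, one has $\G(M)=\G(M')$ in $\mathcal{KW}_\epsilon$.

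I would group the movie moves into three families. The moves built only from Morse-type frames (cap, cup, saddle, zip, unzip, digon birth and death) become, after applying $\G$, identities between foams that hold in $\mathcal{W}(\epsilon)$ by the local relations of Proposition-Definition~\ref{pd:functor}; this is exactly the movie-move calculus of the $\sll_3$-foam TQFT $\F$. The moves built only from Reidemeister-type frames reduce to the statements that the homotopy equivalences assigned to R1--R4 are mutually inverse (which is how they are defined, cf.\ the pairings A5/B4, A6/B5, and so on in Figure~\ref{fig:description-g}) and that these equivalences are compatible with each other and with crossing changes; those compatibilities are the cube- and prism-type computations of \cite[Lemmas~3.13--3.22 and Lemmas~3--5]{MR2482322}. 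The remaining, mixed moves slide a Reidemeister frame past a Morse frame, or transpose two distant frames: the locality of $\G$ together with the relation ``far modifications commute'' of Definition~\ref{dfn:movie} reduces these to a bounded list of genuinely local identities, again verified in \cite{MR2482322}.

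The observation that makes all of this go through for \emph{our} $\G$ is that the passage from Clark's normalization to the one of Section~\ref{sec:sll_3-homology} only multiplies each complex by a global shift of the $q$- and homological gradings which is a function of the boundary data alone; such a shift moves both sides of every movie move equally and alters neither the underlying foams nor any homotopy. I therefore do not expect a new computation to be the main obstacle; the real work is the bookkeeping: checking that $\G$ is well defined on movies (composition of the local pieces being independent of the order in which far-apart elementary movies are performed --- the ``far modifications commute'' relation), and checking that the individual degree shifts of the entries A5--A9 and B4--B9 do compose, two at a time, into degree-preserving homotopy equivalences, as asserted in the footnotes of Figure~\ref{fig:description-g}. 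Granting these points, $\G(\Sigma)$ is unchanged by every movie move and hence depends only on the isotopy type of $\Sigma$.
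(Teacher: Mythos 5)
The paper gives no proof of this statement at all: it is imported directly from Clark \cite{MR2482322}, and your proposal does essentially what the paper intends — defer all movie-move verifications to Clark and note that the renormalization of section~\ref{sec:sll_3-homology} only introduces grading shifts determined by the boundary data, which affect both sides of every movie move equally (the point the paper itself makes in the footnote to remark~\ref{rmk:extension2web} and in the footnotes of figure~\ref{fig:description-g}). So your outline is correct and takes the same route as the paper; the substantive work lives in \cite{MR2482322} in both cases.
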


\begin{rmk}
 \label{rmk:noncanonical}
  The isomorphism between $H(\G(w_1))$ and $H(\G(w_2))$ in lemma~\ref{lem:framedequivalence} is \emph{a priori} not canonical, however we can interpret the Clark's functoriality theorem by saying that if $w_1$ and $w_2$ are link diagrams, then $\G(w_1)$ and $\G(w_2)$ are canonically homotopy equivalent.
\end{rmk} 

\begin{lem}[{\cite[Lemma 3.12]{MR2482322}}]\label{lem:zipcrossing}
  The images by $\G$ of the following two morphisms are equal:
\begin{center}
\vspace{0.5cm}
$\vcenter{\hbox{\begin{tikzpicture}[scale=0.42]
      \begin{scope}
\draw[draw=  gray, line width = 2mm] (0,0) --  (12,0); 
\draw [draw = gray, line width = 2mm] (0,4) -- (12,4); 
\draw[draw=  gray, very thick] (0,0) -- (0,4);
\draw[draw=  gray, very thick] (4,0) -- (4,4);
\draw[draw=  gray, very thick] (8,0) -- (8,4);
\draw[draw=  gray, very thick] (12,0) -- (12,4);
\draw[draw= white, dotted, line width =1.2mm] (0,0) -- (12,0);
\draw[draw= white, dotted, line width = 1.2mm] (0,4) -- (12,4);
\end{scope}
\begin{scope}[xshift= 2cm, yshift=2cm]
  \filldraw[dotted, fill=gray!50!white] (0,0) circle (1.5cm);
  \filldraw[dotted, fill = white] (0, 0) circle (1cm);
\draw (180:1) .. controls (0,1) and (0,1) .. (0:1);
\fill[white] (-0.38, 0.56) circle (0.1);
\fill[white] (0.38, 0.56) circle (0.1);
\draw (60:1).. controls +(-120:0.5) and +(120:0.5) .. (-60:1);
\draw (120:1).. controls +(-60:0.5) and +(60:0.5) .. (-120:1);
\end{scope}
\begin{scope}[xshift= 6cm, yshift=2cm]
  \filldraw[dotted, fill=gray!50!white] (0,0) circle (1.5cm);
  \filldraw[dotted, fill = white] (0, 0) circle (1cm);
\draw (180:1) .. controls (0,1) and (0,1) .. (0:1);
\fill[white] (-0.25, 0.67) circle (0.1);
\fill[white] (0.25, 0.67) circle (0.1);
\draw (60:1) -- (90:0.5) -- (90:-0.5) -- (60:-1);
\draw (90:0.5) -- (120:1);
\draw (90:-0.5) -- (120:-1);
\end{scope}
\begin{scope}[xshift= 10cm, yshift=2cm]
  \filldraw[dotted, fill=gray!50!white] (0,0) circle (1.5cm);
  \filldraw[dotted, fill = white] (0, 0) circle (1cm);
\draw (180:1) .. controls (0,0) and (0,0) .. (0:1);
\fill[white] (0, 0) circle (0.1);
\draw (60:1) -- (90:0.5) -- (90:-0.5) -- (60:-1);
\draw (90:0.5) -- (120:1);
\draw (90:-0.5) -- (120:-1);
\end{scope} 
    \end{tikzpicture}}}$\quad and \quad
$\vcenter{\hbox{ \begin{tikzpicture}[scale=0.42]
      \begin{scope}
\draw[draw=  gray, line width = 2mm] (0,0) --  (12,0); 
\draw [draw = gray, line width = 2mm] (0,4) -- (12,4); 
\draw[draw=  gray, very thick] (0,0) -- (0,4);
\draw[draw=  gray, very thick] (4,0) -- (4,4);
\draw[draw=  gray, very thick] (8,0) -- (8,4);
\draw[draw=  gray, very thick] (12,0) -- (12,4);
\draw[draw= white, dotted, line width =1.2mm] (0,0) -- (12,0);
\draw[draw= white, dotted, line width = 1.2mm] (0,4) -- (12,4);
\end{scope}
\begin{scope}[xshift= 2cm, yshift=2cm]
  \filldraw[dotted, fill=gray!50!white] (0,0) circle (1.5cm);
  \filldraw[dotted, fill = white] (0, 0) circle (1cm);
\draw (180:1) .. controls (0,-1) and (0,-1) .. (0:1);
\fill[white] (-0.38, -0.56) circle (0.1);
\fill[white] (0.38, -0.56) circle (0.1);
\draw (60:1).. controls +(-120:0.5) and +(120:0.5) .. (-60:1);
\draw (120:1).. controls +(-60:0.5) and +(60:0.5) .. (-120:1);
\end{scope}
\begin{scope}[xshift= 6cm, yshift=2cm]
  \filldraw[dotted, fill=gray!50!white] (0,0) circle (1.5cm);
  \filldraw[dotted, fill = white] (0, 0) circle (1cm);
\draw (180:1) .. controls (0,-1) and (0,-1) .. (0:1);
\fill[white] (-0.25, -0.67) circle (0.1);
\fill[white] (0.25, -0.67) circle (0.1);
\draw (60:1) -- (90:0.5) -- (90:-0.5) -- (60:-1);
\draw (90:0.5) -- (120:1);
\draw (90:-0.5) -- (120:-1);
\end{scope}
\begin{scope}[xshift= 10cm, yshift=2cm]
  \filldraw[dotted, fill=gray!50!white] (0,0) circle (1.5cm);
  \filldraw[dotted, fill = white] (0, 0) circle (1cm);
\draw (180:1) .. controls (0,0) and (0,0) .. (0:1);
\fill[white] (0, 0) circle (0.1);
\draw (60:1) -- (90:0.5) -- (90:-0.5) -- (60:-1);
\draw (90:0.5) -- (120:1);
\draw (90:-0.5) -- (120:-1);
\end{scope}
    \end{tikzpicture}}}$.
\vspace{0.5cm}
  \end{center}
The images by $\G$ of the following two morphisms are equal:
  \begin{center}
\vspace{0.5cm}
$\vcenter{\hbox{ \begin{tikzpicture}[scale=0.42]
      \begin{scope}
\draw[draw=  gray, line width = 2mm] (0,0) --  (12,0); 
\draw [draw = gray, line width = 2mm] (0,4) -- (12,4); 
\draw[draw=  gray, very thick] (0,0) -- (0,4);
\draw[draw=  gray, very thick] (4,0) -- (4,4);
\draw[draw=  gray, very thick] (8,0) -- (8,4);
\draw[draw=  gray, very thick] (12,0) -- (12,4);
\draw[draw= white, dotted, line width =1.2mm] (0,0) -- (12,0);
\draw[draw= white, dotted, line width = 1.2mm] (0,4) -- (12,4);
\end{scope}
\begin{scope}[xshift= 2cm, yshift=2cm]
  \filldraw[dotted, fill=gray!50!white] (0,0) circle (1.5cm);
  \filldraw[dotted, fill = white] (0, 0) circle (1cm);
\draw (60:1).. controls +(-120:0.5) and +(120:0.5) .. (-60:1);
\draw (120:1).. controls +(-60:0.5) and +(60:0.5) .. (-120:1);
\fill[white] (-0.38, 0.56) circle (0.1);
\fill[white] (0.38, 0.56) circle (0.1);
\draw (180:1) .. controls (0,1) and (0,1) .. (0:1);
\end{scope}
\begin{scope}[xshift= 6cm, yshift=2cm]
  \filldraw[dotted, fill=gray!50!white] (0,0) circle (1.5cm);
  \filldraw[dotted, fill = white] (0, 0) circle (1cm);
\draw (60:1) -- (90:0.5) -- (90:-0.5) -- (60:-1);
\draw (90:0.5) -- (120:1);
\draw (90:-0.5) -- (120:-1);
\fill[white] (-0.25, 0.67) circle (0.1);
\fill[white] (0.25, 0.67) circle (0.1);
\draw (180:1) .. controls (0,1) and (0,1) .. (0:1);
\end{scope}
\begin{scope}[xshift= 10cm, yshift=2cm]
  \filldraw[dotted, fill=gray!50!white] (0,0) circle (1.5cm);
  \filldraw[dotted, fill = white] (0, 0) circle (1cm);
\draw (60:1) -- (90:0.5) -- (90:-0.5) -- (60:-1);
\draw (90:0.5) -- (120:1);
\draw (90:-0.5) -- (120:-1);
\fill[white] (0, 0) circle (0.1);
\draw (180:1) .. controls (0,0) and (0,0) .. (0:1);
\end{scope} 
    \end{tikzpicture}}}$\quad and \quad
$\vcenter{\hbox{\begin{tikzpicture}[scale=0.42]
      \begin{scope}
\draw[draw=  gray, line width = 2mm] (0,0) --  (12,0); 
\draw [draw = gray, line width = 2mm] (0,4) -- (12,4); 
\draw[draw=  gray, very thick] (0,0) -- (0,4);
\draw[draw=  gray, very thick] (4,0) -- (4,4);
\draw[draw=  gray, very thick] (8,0) -- (8,4);
\draw[draw=  gray, very thick] (12,0) -- (12,4);
\draw[draw= white, dotted, line width =1.2mm] (0,0) -- (12,0);
\draw[draw= white, dotted, line width = 1.2mm] (0,4) -- (12,4);
\end{scope}
\begin{scope}[xshift= 2cm, yshift=2cm]
  \filldraw[dotted, fill=gray!50!white] (0,0) circle (1.5cm);
  \filldraw[dotted, fill = white] (0, 0) circle (1cm);
\draw (60:1).. controls +(-120:0.5) and +(120:0.5) .. (-60:1);
\draw (120:1).. controls +(-60:0.5) and +(60:0.5) .. (-120:1);
\fill[white] (-0.38, -0.56) circle (0.1);
\fill[white] (0.38, -0.56) circle (0.1);
\draw (180:1) .. controls (0,-1) and (0,-1) .. (0:1);
\end{scope}
\begin{scope}[xshift= 6cm, yshift=2cm]
  \filldraw[dotted, fill=gray!50!white] (0,0) circle (1.5cm);
  \filldraw[dotted, fill = white] (0, 0) circle (1cm);
\draw (60:1) -- (90:0.5) -- (90:-0.5) -- (60:-1);
\draw (90:0.5) -- (120:1);
\draw (90:-0.5) -- (120:-1);
\fill[white] (-0.25, -0.67) circle (0.1);
\fill[white] (0.25, -0.67) circle (0.1);
\draw (180:1) .. controls (0,-1) and (0,-1) .. (0:1);
\end{scope}
\begin{scope}[xshift= 10cm, yshift=2cm]
  \filldraw[dotted, fill=gray!50!white] (0,0) circle (1.5cm);
  \filldraw[dotted, fill = white] (0, 0) circle (1cm);
\draw (60:1) -- (90:0.5) -- (90:-0.5) -- (60:-1);
\draw (90:0.5) -- (120:1);
\draw (90:-0.5) -- (120:-1);
\fill[white] (0, 0) circle (0.1);
\draw (180:1) .. controls (0,0) and (0,0) .. (0:1);
\end{scope}
    \end{tikzpicture}}}$.
\vspace{0.5cm}
  \end{center}
\end{lem}

Exactly the same argument provides the following lemma:

\begin{lem}\label{lem:zipdigon}
  The images by $\G$ of the following two morphisms are equal:
\begin{center}
\vspace{0.5cm}
$\vcenter{\hbox{\begin{tikzpicture}[scale=0.42]
      \begin{scope}
\draw[draw=  gray, line width = 2mm] (0,0) --  (12,0); 
\draw [draw = gray, line width = 2mm] (0,4) -- (12,4); 
\draw[draw=  gray, very thick] (0,0) -- (0,4);
\draw[draw=  gray, very thick] (4,0) -- (4,4);
\draw[draw=  gray, very thick] (8,0) -- (8,4);
\draw[draw=  gray, very thick] (12,0) -- (12,4);
\draw[draw= white, dotted, line width =1.2mm] (0,0) -- (12,0);
\draw[draw= white, dotted, line width = 1.2mm] (0,4) -- (12,4);
\end{scope}
\begin{scope}[xshift= 2cm, yshift=2cm]
  \filldraw[dotted, fill=gray!50!white] (0,0) circle (1.5cm);
  \filldraw[dotted, fill = white] (0, 0) circle (1cm);
\draw (180:1) --(0:1);
\fill[white] (-0.38, 0) circle (0.1);
\fill[white] (0.38, 0) circle (0.1);
\draw (90:1) -- (90:0.5).. controls (180:0.5) and (180:0.5) .. (90:-0.5) -- (90:-1);
\draw (90:0.5).. controls (180:-0.5) and (180:-0.5) .. (90:-0.5);

\end{scope}
\begin{scope}[xshift= 6cm, yshift=2cm]
  \filldraw[dotted, fill=gray!50!white] (0,0) circle (1.5cm);
  \filldraw[dotted, fill = white] (0, 0) circle (1cm);
\draw (180:1) .. controls (0,1) and (0,1) .. (0:1);
\fill[white] (0, 0.8) circle (0.1);
\draw (90:1) -- (90:0.5).. controls (180:0.5) and (180:0.5) .. (90:-0.5) -- (90:-1);
\draw (90:0.5).. controls (180:-0.5) and (180:-0.5) .. (90:-0.5);
\end{scope}
\begin{scope}[xshift= 10cm, yshift=2cm]
  \filldraw[dotted, fill=gray!50!white] (0,0) circle (1.5cm);
  \filldraw[dotted, fill = white] (0, 0) circle (1cm);
\draw (180:1) .. controls (0,0) and (0,0) .. (0:1);
\fill[white] (0, 0) circle (0.1);
\draw (90:-1) -- (90:1);
\end{scope} 
    \end{tikzpicture}}}$\quad and \quad
$\vcenter{\hbox{ \begin{tikzpicture}[scale=0.42]
      \begin{scope}
\draw[draw=  gray, line width = 2mm] (0,0) --  (12,0); 
\draw [draw = gray, line width = 2mm] (0,4) -- (12,4); 
\draw[draw=  gray, very thick] (0,0) -- (0,4);
\draw[draw=  gray, very thick] (4,0) -- (4,4);
\draw[draw=  gray, very thick] (8,0) -- (8,4);
\draw[draw=  gray, very thick] (12,0) -- (12,4);
\draw[draw= white, dotted, line width =1.2mm] (0,0) -- (12,0);
\draw[draw= white, dotted, line width = 1.2mm] (0,4) -- (12,4);
\end{scope}
\begin{scope}[xshift= 2cm, yshift=2cm]
  \filldraw[dotted, fill=gray!50!white] (0,0) circle (1.5cm);
  \filldraw[dotted, fill = white] (0, 0) circle (1cm);
\draw (180:1) --(0:1);
\fill[white] (-0.38, 0) circle (0.1);
\fill[white] (0.38, 0) circle (0.1);
\draw (90:1) -- (90:0.5).. controls (180:0.5) and (180:0.5) .. (90:-0.5) -- (90:-1);
\draw (90:0.5).. controls (180:-0.5) and (180:-0.5) .. (90:-0.5);
\end{scope}
\begin{scope}[xshift= 6cm, yshift=2cm]
  \filldraw[dotted, fill=gray!50!white] (0,0) circle (1.5cm);
  \filldraw[dotted, fill = white] (0, 0) circle (1cm);
\draw (180:1) .. controls (0,-1) and (0,-1) .. (0:1);
\fill[white] (0, -0.75) circle (0.1);
\draw (90:1) -- (90:0.5).. controls (180:0.5) and (180:0.5) .. (90:-0.5) -- (90:-1);
\draw (90:0.5).. controls (180:-0.5) and (180:-0.5) .. (90:-0.5);
\end{scope}
\begin{scope}[xshift= 10cm, yshift=2cm]
  \filldraw[dotted, fill=gray!50!white] (0,0) circle (1.5cm);
  \filldraw[dotted, fill = white] (0, 0) circle (1cm);
\draw (180:1) .. controls (0,0) and (0,0) .. (0:1);
\fill[white] (0, 0) circle (0.1);
\draw (90:-1) -- (90:1);

\end{scope}
    \end{tikzpicture}}}$.
\vspace{0.5cm}
  \end{center}
The images by $\G$ of the following two morphisms are equal:
  \begin{center}
\vspace{0.5cm}
$\vcenter{\hbox{ \begin{tikzpicture}[scale=0.42]
      \begin{scope}
\draw[draw=  gray, line width = 2mm] (0,0) --  (12,0); 
\draw [draw = gray, line width = 2mm] (0,4) -- (12,4); 
\draw[draw=  gray, very thick] (0,0) -- (0,4);
\draw[draw=  gray, very thick] (4,0) -- (4,4);
\draw[draw=  gray, very thick] (8,0) -- (8,4);
\draw[draw=  gray, very thick] (12,0) -- (12,4);
\draw[draw= white, dotted, line width =1.2mm] (0,0) -- (12,0);
\draw[draw= white, dotted, line width = 1.2mm] (0,4) -- (12,4);
\end{scope}
\begin{scope}[xshift= 2cm, yshift=2cm]
  \filldraw[dotted, fill=gray!50!white] (0,0) circle (1.5cm);
  \filldraw[dotted, fill = white] (0, 0) circle (1cm);
\draw (90:1) -- (90:0.5).. controls (180:0.5) and (180:0.5) .. (90:-0.5) -- (90:-1);
\draw (90:0.5).. controls (180:-0.5) and (180:-0.5) .. (90:-0.5);
\fill[white] (-0.38, 0) circle (0.1);
\fill[white] (0.38, 0) circle (0.1);
\draw (180:1) --(0:1);
\end{scope}
\begin{scope}[xshift= 6cm, yshift=2cm]
  \filldraw[dotted, fill=gray!50!white] (0,0) circle (1.5cm);
  \filldraw[dotted, fill = white] (0, 0) circle (1cm);
\draw (90:1) -- (90:0.5).. controls (180:0.5) and (180:0.5) .. (90:-0.5) -- (90:-1);
\draw (90:0.5).. controls (180:-0.5) and (180:-0.5) .. (90:-0.5);
\fill[white] (0, 0.75) circle (0.1);
\draw (180:1) .. controls (0,1) and (0,1) .. (0:1);
\end{scope}
\begin{scope}[xshift= 10cm, yshift=2cm]
  \filldraw[dotted, fill=gray!50!white] (0,0) circle (1.5cm);
  \filldraw[dotted, fill = white] (0, 0) circle (1cm);
\draw (90:-1) -- (90:1);
\fill[white] (0, 0) circle (0.1);
\draw (180:1) .. controls (0,0) and (0,0) .. (0:1);
\end{scope} 
    \end{tikzpicture}}}$\quad and \quad
$\vcenter{\hbox{\begin{tikzpicture}[scale=0.42]
      \begin{scope}
\draw[draw=  gray, line width = 2mm] (0,0) --  (12,0); 
\draw [draw = gray, line width = 2mm] (0,4) -- (12,4); 
\draw[draw=  gray, very thick] (0,0) -- (0,4);
\draw[draw=  gray, very thick] (4,0) -- (4,4);
\draw[draw=  gray, very thick] (8,0) -- (8,4);
\draw[draw=  gray, very thick] (12,0) -- (12,4);
\draw[draw= white, dotted, line width =1.2mm] (0,0) -- (12,0);
\draw[draw= white, dotted, line width = 1.2mm] (0,4) -- (12,4);
\end{scope}
\begin{scope}[xshift= 2cm, yshift=2cm]
  \filldraw[dotted, fill=gray!50!white] (0,0) circle (1.5cm);
  \filldraw[dotted, fill = white] (0, 0) circle (1cm);
\draw (90:1) -- (90:0.5).. controls (180:0.5) and (180:0.5) .. (90:-0.5) -- (90:-1);
\draw (90:0.5).. controls (180:-0.5) and (180:-0.5) .. (90:-0.5);
\fill[white] (-0.38, 0) circle (0.1);
\fill[white] (0.38, 0) circle (0.1);
\draw (180:1) --(0:1);
\end{scope}
\begin{scope}[xshift= 6cm, yshift=2cm]
  \filldraw[dotted, fill=gray!50!white] (0,0) circle (1.5cm);
  \filldraw[dotted, fill = white] (0, 0) circle (1cm);
\draw (90:1) -- (90:0.5).. controls (180:0.5) and (180:0.5) .. (90:-0.5) -- (90:-1);
\draw (90:0.5).. controls (180:-0.5) and (180:-0.5) .. (90:-0.5);
\fill[white] (0, -0.75) circle (0.1);
\draw (180:1) .. controls (0,-1) and (0,-1) .. (0:1);
\end{scope}
\begin{scope}[xshift= 10cm, yshift=2cm]
  \filldraw[dotted, fill=gray!50!white] (0,0) circle (1.5cm);
  \filldraw[dotted, fill = white] (0, 0) circle (1cm);
\draw (90:-1) -- (90:1);
\fill[white] (0, 0) circle (0.1);
\draw (180:1) .. controls (0,0) and (0,0) .. (0:1);
\end{scope}
    \end{tikzpicture}}}$.
\vspace{0.5cm}
  \end{center}
\end{lem}

To prove more invariance results, we will use Bar-Natan's trick \cite{MR2174270} in order to avoid long and complicated computations. This trick yields ``up-to-a-sign'' invariance results, but this is enough for our purposes.

\begin{dfn}[Bar-Natan, adapted]
A knotted web diagram $w$ is \emph{BN-simple} if $\G(w)$ admits only two automorphisms (\ie homotopy equivalences): $+\id$ and $-\id$.
\end{dfn}

\begin{lem}\label{lem:treesimple}
  If a knotted web diagram $w$ is an unknotted collection of trees, then $w$ is BN-simple
\end{lem}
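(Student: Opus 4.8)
The plan is to reduce the statement to a rank-one computation in the web category and then to an elementary recursion using the local relations of Remark~\ref{rmk:web2sl3}. Since $w$ is unknotted it contains no crossing, so by Remark~\ref{rmk:generators} it is obtained by an input-diagram from crossingless generators of $\mathcal{TD}$ (strands, cups, caps, trivalent vertices), each of which $\G$ sends to itself concentrated in homological degree $0$; as $\G$ is a morphism of canopoleis, $\G(w)$ is the complex having the single term $w\cdot q^{0}$ in homological degree $0$. Hence a chain map $\G(w)\to\G(w)$ is just an element of $R:=\mathrm{End}^{0}_{\mathcal{W}(\epsilon)}(w)$, the degree-$0$ part of the endomorphism ring of the web $w$, and there are no non-zero homotopies between such maps (the complex lives in a single homological degree). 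So the homotopy automorphisms of $\G(w)$ are exactly the units of the ring $R$, and it is enough to prove $R=\ZZ\cdot\id_{w}$, since a ring that is free of rank one over $\ZZ$ and contains the nonzero element $\id_{w}$ has only $\pm\id_{w}$ as units.

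First I would compute the rank of $R$. By Remark~\ref{rmk:dimHOM} (with both $q$-shifts equal to $0$), $\mathrm{HOM}_{\mathcal{W}(\epsilon)}(w,w)$ is a free graded $\ZZ$-module of graded rank $q^{|\epsilon|}\kup{(-w)_{\epsilon}w}$, where $(-w)_{\epsilon}w$ is the closed web obtained by gluing $w$ to its orientation-reversed mirror image along the boundary. Thus $\mathrm{rk}_{\ZZ}R$ is the coefficient of $q^{0}$ in $q^{|\epsilon|}\kup{(-w)_{\epsilon}w}$, and the statement reduces to the claim that $q^{|\epsilon|}\kup{(-w)_{\epsilon}w}\in 1+q\,\NN[q]$ (so its constant term is $1$) for every unknotted collection of trees $w$. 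As this quantity is multiplicative under disjoint union of webs, I may assume $w$ is a single tree.

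I would prove the claim by induction on the number of trivalent vertices of $w$. If $w$ is a single edge, then $(-w)_{\epsilon}w$ is a circle, so $q^{2}\kup{(-w)_{\epsilon}w}=q^{2}[3]=1+q^{2}+q^{4}$; if $w$ is a single trivalent vertex, then $(-w)_{\epsilon}w$ is a theta-web, which the digon relation of Remark~\ref{rmk:web2sl3} evaluates to $[2][3]$, so $q^{3}\kup{(-w)_{\epsilon}w}=q^{3}[2][3]=1+2q^{2}+2q^{4}+q^{6}$. For the inductive step, a trivalent tree with at least two trivalent vertices always contains a \emph{cherry}, i.e.\ a trivalent vertex $v$ with two leaf neighbours whose third edge runs to a trivalent vertex $u$ (pick $v$ trivalent of maximal distance from a fixed trivalent root and distinct from it; its neighbour towards the root is then forced to be trivalent). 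In $(-w)_{\epsilon}w$ the two edges of $w$ at $v$ that end on leaves, together with their mirror images, bound a bigon joining $v$ to the mirror vertex $\bar v$; collapsing it with $\kup{\webbigon[0.4]}=[2]\kup{\webvert[0.4]}$ produces a second bigon (between the images at $u$ and $\bar u$), which collapses similarly, giving $\kup{(-w)_{\epsilon}w}=[2]^{2}\kup{(-w')_{\epsilon'}w'}$, where $w'$ is $w$ with $v$ and its two leaves deleted and the now bivalent vertex $u$ smoothed; $w'$ is again a collection of trees and $|\epsilon'|=|\epsilon|-2$. Hence $q^{|\epsilon|}\kup{(-w)_{\epsilon}w}=(q^{2}[2]^{2})\cdot q^{|\epsilon'|}\kup{(-w')_{\epsilon'}w'}$, and since $q^{2}[2]^{2}=1+2q^{2}+q^{4}\in 1+q\,\NN[q]$, multiplying by the inductive hypothesis stays in $1+q\,\NN[q]$ with constant term $1$. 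Consequently $R$ is free of rank one over $\ZZ$, equals $\ZZ\cdot\id_{w}$, and $w$ is BN-simple.

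The step I expect to require the most care, and to write out in detail, is the bookkeeping in the inductive step: verifying that doubling a two-leaf cherry really produces two successive applications of the digon relation (a single one does not suffice: the double of a two-leaf cherry genuinely contributes a factor $[2]^{2}$, not $[2]$) and that the boundary sequence drops by exactly two symbols, so that the prefactor is the palindromic polynomial $q^{2}[2]^{2}$. The reduction to $R$, the use of Remark~\ref{rmk:dimHOM}, and the two base cases are all routine.
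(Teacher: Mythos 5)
Your overall strategy coincides with the paper's: since $w$ has no crossings, $\G(w)$ is a one--term complex, so its homotopy equivalences are exactly the degree--zero units of $\END{\G(w)}$, and by Remark~\ref{rmk:dimHOM} it suffices to check that the evaluation of the doubled closed web, renormalized by $q^{|\epsilon|}$, has constant term $1$. The paper simply asserts the closed formula ($\END{\G(w)}$ has graded rank $q^{2t+N}[3]^{t}[2]^{N}$ for $t$ trees with $N$ trivalent vertices), while you prove the weaker constant--term statement by induction; your reduction to the endomorphism ring, the two base cases, and the multiplicativity under disjoint union are all fine.

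However, the inductive step as you justify it fails. After collapsing the bigon between $v$ and $\bar{v}$ in $(-w)_{\epsilon}w$, the vertices $v,\bar{v}$ disappear and you are left with a \emph{single} edge joining $u$ to $\bar{u}$; a second bigon at $u,\bar{u}$ exists only if $u$ is adjacent in $w$ to a leaf other than through $v$, and nothing in your choice of cherry guarantees this. Concretely, let $w$ be the tree with a central trivalent vertex $u$ adjacent to three trivalent vertices $v_1,v_2,v_3$, each carrying two leaves: rooting at $v_1$, your selection rule picks $v=v_2$, and after the first collapse $u$ is joined to $\bar{u}$ by one edge and otherwise to $v_1,v_3$, so no bigon appears at $u,\bar{u}$ and your two local moves do not produce the asserted identity $\kup{(-w)_{\epsilon}w}=[2]^{2}\kup{(-w')_{\epsilon'}w'}$ (the identity itself is true, but not for the reason you give). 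The repair is easy and gives a cleaner induction: collapse one bigon at a time. A single collapse replaces the cherry by one new leaf attached to $u$, giving $\kup{(-w)_{\epsilon}w}=[2]\,\kup{(-w'')_{\epsilon''}w''}$ with $|\epsilon''|=|\epsilon|-1$ and one fewer trivalent vertex, so the prefactor is $q[2]=1+q^{2}$ and the induction on the number of trivalent vertices goes through; iterating this recursion in fact yields the paper's closed formula quoted above, of which your constant--term claim is an immediate consequence.
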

Of course we only consider trivalent trees. Note that any tree can be oriented in such a manner that it becomes a web.
\begin{proof}
The knotted web diagram $w$ having no crossing, $\G(w)$ is concentrated in degree zero. If $w$ consists of $t$ trees, and has $v$ vertices. One easily shows (thanks to remarks~\ref{rmk:web2sl3} and \ref{rmk:dimHOM}) that $\END{\G(w)}\simeq \ZZ\cdot(q^{2t+v}[3]^t[2]^v)$. 
This proves that $\mathrm{END}_0{\G(w)}\simeq \ZZ$, and finally that the only possible automorphisms of $\G(w)$ are $+\id$ and $-\id$.
\end{proof}

\begin{lem}[Bar-Natan, adapted]\label{lem:fe2simple}
  If $w_1$ and $w_2$ are framed equivalent, then $w_1$ is BN-simple if and only if $w_2$ is BN-simple.
\end{lem}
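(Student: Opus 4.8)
The plan is to deduce the statement from Lemma~\ref{lem:framedequivalence} together with the purely formal fact that an isomorphism of objects transports automorphism groups. First I would use Lemma~\ref{lem:framedequivalence}: since $w_1$ and $w_2$ are framed equivalent, there is a homotopy equivalence $\phi\colon \G(w_1)\to \G(w_2)$ in $\mathcal{KW}_\epsilon$. Fix a homotopy inverse $\psi\colon\G(w_2)\to\G(w_1)$, so that $\psi\circ\phi\simeq \id_{\G(w_1)}$ and $\phi\circ\psi\simeq\id_{\G(w_2)}$; equivalently, $\phi$ and $\psi$ become mutually inverse isomorphisms in the homotopy category, and I will work there throughout, where ``automorphism'' means ``homotopy self-equivalence up to homotopy''.

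Next I would check that conjugation $\alpha\mapsto \phi\circ\alpha\circ\psi$ is a group isomorphism from $\Aut{\G(w_1)}$ onto $\Aut{\G(w_2)}$, with inverse $\beta\mapsto\psi\circ\beta\circ\phi$; this is immediate from $\psi\phi\simeq\id$, $\phi\psi\simeq\id$ and the compatibility of composition with homotopy. In particular $\Aut{\G(w_1)}$ and $\Aut{\G(w_2)}$ have the same cardinality, and this conjugation sends $\pm\id_{\G(w_1)}$ to $\pm\id_{\G(w_2)}$.

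Finally, recall that $\mathcal{KW}_\epsilon$ is $\ZZ$-linear, so $+\id$ and $-\id$ are always automorphisms of each $\G(w_i)$, and they are distinct as soon as $\G(w_i)\neq 0$. Assume $w_1$ is BN-simple, i.e.\ $\Aut{\G(w_1)}=\{+\id,-\id\}$ has exactly two elements; then in particular $\G(w_1)\neq 0$, hence $\G(w_2)\neq 0$ as an object isomorphic to it, hence $+\id$ and $-\id$ are two distinct automorphisms of $\G(w_2)$. Since $\Aut{\G(w_2)}$ has exactly two elements by the cardinality count, it must equal $\{+\id,-\id\}$, so $w_2$ is BN-simple; the reverse implication follows by exchanging the roles of $w_1$ and $w_2$. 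There is no real obstacle in this argument; the only point worth stating carefully is the nondegeneracy $\G(w_i)\neq0$, needed to ensure $+\id\neq-\id$, which is automatic here since one of the two diagrams is assumed BN-simple and hence has a nonzero associated complex.
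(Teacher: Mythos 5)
Your proof is correct and takes essentially the same route as the paper, which simply notes that framed equivalence gives a homotopy equivalence $\G(w_1)\simeq\G(w_2)$ (lemma~\ref{lem:framedequivalence}) and hence an isomorphism $\END{\G(w_1)}\simeq\END{\G(w_2)}$ — exactly your conjugation argument spelled out. One cosmetic remark: the distinctness of $+\id$ and $-\id$ on $\G(w_2)$ is cleaner to get from the injectivity of conjugation (which carries the two distinct automorphisms $\pm\id_{\G(w_1)}$ to $\pm\id_{\G(w_2)}$) than from the claim that $\G(w_2)\neq 0$ alone, since a nonzero complex could in principle have $2\,\id$ nullhomotopic.
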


\begin{proof}
  If $w_1$ and $w_2$ are framed equivalent, then $\G(w_1)$ and $\G(w_2)$ are homotopy equivalent. 
Hence $\END{\G{w_1}}$ and $\END{\G{w_2}}$ are isomorphic, this proves the statement.
\end{proof}

\begin{cor}\label{cor:eqBNsimple} Let $w_1$ and $w_2$ two BN-simple framed equivalent web diagrams. Suppose that $h_1$ and $h_2$ two homotopy equivalences between $\G(w_1)$ and $\G(w_2)$. Then $h_1=\pm h_2$.
\end{cor}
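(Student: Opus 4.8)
The plan is a short formal argument that exploits \textbf{BN}-simplicity by turning the two homotopy equivalences into an automorphism of a single complex. Since $h_2$ is a homotopy equivalence $\G(w_1)\to\G(w_2)$, it admits a homotopy inverse $g\colon\G(w_2)\to\G(w_1)$, so that $g\circ h_2\simeq\id_{\G(w_1)}$ and $h_2\circ g\simeq\id_{\G(w_2)}$. First I would form the composite $g\circ h_1\colon\G(w_1)\to\G(w_1)$; being a composite of homotopy equivalences it is again a homotopy equivalence, hence an automorphism of $\G(w_1)$ in the homotopy category.

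\textbf{Using BN-simplicity.} Next I would invoke the hypothesis that $w_1$ is \textbf{BN}-simple: by definition the only such automorphisms are $+\id_{\G(w_1)}$ and $-\id_{\G(w_1)}$, so $g\circ h_1\simeq\varepsilon\,\id_{\G(w_1)}$ with $\varepsilon\in\{+1,-1\}$. Composing on the left with $h_2$ and using $h_2\circ g\simeq\id_{\G(w_2)}$ yields $h_1\simeq h_2\circ(g\circ h_1)\simeq\varepsilon\,h_2$, which is exactly the desired conclusion $h_1=\pm h_2$ (all equalities understood up to homotopy, as throughout this section). Note that the framed-equivalence hypothesis and the \textbf{BN}-simplicity of $w_2$ play no role beyond ensuring (via Lemma~\ref{lem:framedequivalence}) that homotopy equivalences $\G(w_1)\to\G(w_2)$ exist at all; the argument only uses that $w_1$ is \textbf{BN}-simple.

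\textbf{Main obstacle.} I do not anticipate any real obstacle. The single point requiring a little care is that ``homotopy equivalence'' here refers to chain complexes with homological grading valued in $\QQ$ inside $\mathrm{Mat}(\mathcal{W}(\epsilon))$ (see Remark~\ref{rmk:homdeginQ}); but the two-out-of-three property, the existence of homotopy inverses, and composition of homotopy equivalences are all formal and insensitive to the grading group, so nothing new is needed. Thus the corollary reduces to the one-line identity $h_1=h_2\circ(h_2^{-1}\circ h_1)$ together with $h_2^{-1}\circ h_1\in\{\pm\id_{\G(w_1)}\}$.
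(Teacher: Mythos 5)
Your argument is correct and is exactly the standard deduction the paper intends (it states the corollary without proof as an immediate consequence of BN-simplicity): compose $h_1$ with a homotopy inverse of $h_2$ to get an automorphism of $\G(w_1)$, which must be $\pm\id$, whence $h_1=\pm h_2$ up to homotopy. Your remark that only the BN-simplicity of $w_1$ is actually used, the other hypotheses merely guaranteeing that such equivalences exist, is consistent with the paper's setup.
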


\begin{lem}[Bar-Natan, {\cite[Lemmas 8.8 and 8.9]{MR2174270}}] \label{lem:BNX}
  Let $w$ be a knotted web diagram, and let $wX$ be a knotted web diagram obtained from $w$ by adding a crossing $X$ (positive or negative) somewhere on the boundary of $w$ so that exactly two (adjacent) legs of $X$ are connected to $w$ and two remain free. Then $w$ is BN-simple if and only if $wX$ is BN-simple.
\end{lem}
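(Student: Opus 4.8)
The plan is to transpose the argument of \cite[Lemmas~8.8 and~8.9]{MR2174270} to the foam canopolis $\mathcal{KW}$. First I would reduce the biconditional to a single implication. Let $X'$ be the crossing of the sign opposite to $X$, placed on the two free legs of $X$ so that $X$ and $X'$ together form a Reidemeister~II pair; then $(wX)X'$ is obtained from $wX$ by adding the boundary crossing $X'$ in the sense of the statement, and the movie that collapses the Reidemeister~II pair is, by Figure~\ref{fig:description-g}, a homotopy equivalence, hence $\G\bigl((wX)X'\bigr)\simeq \G(w)$ and $(wX)X'$ is BN-simple if and only if $w$ is. So it is enough to prove the implication ``$v$ BN-simple $\implies vY$ BN-simple'' for an arbitrary knotted web diagram $v$ and boundary crossing $Y$: feeding it $(w,X)$ and then $(wX,X')$ gives $w$ BN-simple $\implies wX$ BN-simple $\implies (wX)X'\simeq w$ BN-simple, so all these conditions agree.

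For this implication I would use the mapping-cone description of $\G(wX)$. Unfolding the two-term complex that $\G$ assigns to $X$ and using the functors attached to input-diagrams, $\G(wX)$ is the total complex of the resulting bicomplex, i.e. (up to an overall $q$- and homological shift) the cone of a map $\zeta\colon \G(w_0)\to \G(w_1)$, where $w_0$ is the oriented smoothing of $X$ glued into $w$ --- which is $w$ itself up to planar isotopy, so $\G(w_0)=\G(w)$ --- and $w_1$ is $w$ with a small I-web (a zip) attached along the two adjacent legs, $\zeta$ being $\G$ of the corresponding zip movie of Definition~\ref{dfn:movie} (for $X$ negative one gets the cone of the unzip, and the argument is symmetric). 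I would then compute $\mathrm{HOM}$ in the homotopy category of $\mathcal{KW}$: applying $\mathrm{HOM}_{\mathcal{KW}}(-,\G(wX))$ and $\mathrm{HOM}_{\mathcal{KW}}(\G(w_i),-)$ to the exact triangle $\G(w_0)\to \G(w_1)\to \G(wX)\to \G(w_0)[1]$ squeezes the degree-$0$ endomorphisms of $\G(wX)$ between those of $\G(w_0)$, those of $\G(w_1)$, and low-degree $\mathrm{HOM}$-spaces between $\G(w_0)$ and $\G(w_1)$. By Remark~\ref{rmk:dimHOM} each of these spaces is governed by the bracket of the corresponding glued-up closed web, and every such web picks up a digon or a square from the extra I-web, so its bracket reduces through the relations of Remark~\ref{rmk:web2sl3}; exactly as in \cite{MR2174270}, this lets one deloop the I-web and Gaussian-eliminate the part of $\zeta$ that is an isomorphism, identifying $\G(wX)$ up to homotopy with a complex built from diagrams having no more crossings than $w$, and propagating the hypothesis $\mathrm{END}^0(\G(w))=\ZZ\cdot\id$ to $\mathrm{END}^0(\G(wX))=\ZZ\cdot\id$. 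Since a homotopy self-equivalence of a bounded complex lies in degree $0$, its only self-equivalences are then $\pm\id$, i.e. $wX$ is BN-simple; an induction on the number of crossings, based on Lemmas~\ref{lem:treesimple} and~\ref{lem:fe2simple}, closes the argument.

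The main obstacle is the last step: carrying the $q$- and homological-degree shifts of our framed normalization consistently through the cone, the triangle and the delooping, and verifying that the constant terms of the relevant brackets are exactly $1$, so that all these degree-$0$ endomorphism rings are genuinely $\ZZ$ (and not some larger ring with extra units). This amounts to redoing Bar-Natan's $\sll_2$-cobordism bookkeeping in the $\sll_3$-foam category, using Proposition-Definition~\ref{pd:functor}; everything else is formal or can be quoted from \cite{MR2174270} and \cite{MR2482322}.
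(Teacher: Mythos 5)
Your second paragraph has a genuine gap: its engine is the condition $\mathrm{END}_0(\G(v))=\ZZ\cdot\id$, which you propose to propagate through the cone of the zip, but the hypothesis of the lemma is only BN-simplicity of $w$, i.e.\ that the homotopy self-equivalences of $\G(w)$ are $\pm\id$. Nothing gives you $\mathrm{END}_0(\G(w))\simeq\ZZ\cdot\id$ for an arbitrary knotted web diagram $w$: that identification is special to trees (it is exactly how Lemma~\ref{lem:treesimple} is proved), and it already fails for diagrams with closed components, whose degree-zero endomorphism ring has rank $>1$. So what you outline is an implication with a strictly stronger hypothesis, and together with the closing ``induction on the number of crossings'' it is really a proof of the downstream application (unknotted trees with boundary crossings attached are BN-simple) rather than of the lemma as stated, which is invoked for arbitrary $w$ in combination with Lemma~\ref{lem:fe2simple} and Corollary~\ref{cor:eqBNsimple}. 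There are also problems inside the computation itself: Remark~\ref{rmk:dimHOM} computes $\mathrm{HOM}$ between objects of $\mathcal{W}(\epsilon)$ (webs with shifts), not between the complexes $\G(w_0)$, $\G(w_1)$ when $w$ carries crossings, so ``each of these spaces is governed by the bracket of the glued-up closed web'' is unavailable precisely where you need it; the long exact sequences attached to the cone only bound graded ranks and say nothing about which degree-zero endomorphisms are invertible, which is what BN-simplicity is about; and the claim that the extra I-web always creates a digon or square to deloop and Gaussian-eliminate is unjustified, since it is glued along two boundary legs of $w$ and need not bound a digon in $w_1$.

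The proof the paper is quoting (Bar-Natan's) is in fact already contained in your first paragraph, pushed one step further, and it makes the whole cone/HOM apparatus unnecessary. Attaching $X$ is a canopolis operation, hence a functor $F_X$ between the relevant homotopy categories $\mathcal{KW}_{/h}(\epsilon)$, and attaching the opposite crossing $X'$ gives a functor $F_{X'}$; the local R2 homotopy equivalence of Figure~\ref{fig:description-g} (moves A7/B6), planar-composed with identities, yields homotopy equivalences $F_{X'}\circ F_X\simeq \mathrm{Id}$ and $F_X\circ F_{X'}\simeq \mathrm{Id}$ which are natural in the $w$-input because the canopolis operations are functorial in each input. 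Hence $F_X$ is an equivalence up to homotopy, so it is fully faithful and $\ZZ$-linear on morphisms, giving an isomorphism $\END{\G(w)}\simeq\END{\G(wX)}$ modulo homotopy that sends $\id$ to $\id$, $-\id$ to $-\id$, and homotopy equivalences to homotopy equivalences in both directions; the biconditional follows immediately, with no cones, no rank counts and no induction. Your reduction of the ``only if'' direction via $(wX)X'$ is correct but becomes superfluous once this equivalence is in place.
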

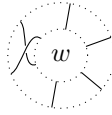
\begin{figure}[ht]
  \centering
  \begin{tikzpicture}[scale=0.7]
    \begin{scope}
  \draw (20: 1) -- (0,0);
  \draw (80: 1) -- (0,0);
  \draw (140: 1) .. controls (140:0.8) and (200:0.8).. (200:0.5);
  \fill[white] (170: 0.67) circle (2pt);
  \draw (140: 0.5) .. controls (140:0.8) and (200:0.8).. (200:1);
  \draw (260: 1) -- (0,0);
  \draw (320: 1) -- (0,0);
  \filldraw[fill =white, dotted] (0,0) circle (0.5cm);
  \draw[dotted] (0,0) circle (1cm);
  \node at (0,0) {$w$};
\end{scope}
  \end{tikzpicture}
  \caption{The web $wX$ is obtained by gluing a crossing on two ends of $w$. }
  \label{fig:wX}
\end{figure}

From lemmas \ref{lem:treesimple}, \ref{lem:fe2simple}, \ref{lem:BNX} and corollary~\ref{cor:eqBNsimple}, we deduce:

\begin{prop}
  \label{prop:importantmoves}
The images of the following pairs of morphisms are equal up to a sign two by two:
\[
\vspace{0.3cm}
\begin{tikzpicture}[scale=0.4] 
   \begin{scope}
\begin{scope}
\draw[draw=  gray, line width = 2mm] (0,0) --  (12,0); 
\draw [draw = gray, line width = 2mm] (12,3) -- (0,3); 
\draw[draw=  gray, very thick] (0,0) -- (0,3);
\draw[draw=  gray, very thick] (4,0) -- (4,3);
\draw[draw=  gray, very thick] (8,0) -- (8,3);
\draw[draw=  gray, very thick] (12,0) -- (12,3);
\draw[draw= white, dotted, line width =1.2mm] (0,0) -- (12,0);
\draw[draw= white, dotted, line width = 1.2mm] (0,3) -- (12,3);
\end{scope}
\begin{scope}
\coordinate (D) at (3.5, 1.5);
\coordinate (A) at (0.5, 2.5);
\coordinate (B) at (0.5, 1.5);
\coordinate (C) at (0.5, 0.5);
 \draw (A) .. controls +( 0.5,0) and +(0,0.5) .. (D);
 \draw (B) .. controls +( 0.5,0) and +(-0.5,0) .. (D);
 \draw (C) .. controls +( 0.5,0) and +(0,-0.5) .. (D);
 \draw[draw= white, line width=1mm] (2, 2.5) .. controls +(0, 0)  and +(0,0) .. (2, 0.5);
 \draw[draw= black] (2, 2.5) .. controls +(0, 0)  and +(0,0) .. (2, 0.5);
\end{scope}
\begin{scope}[xshift= 4cm]
\coordinate (D) at (1.5, 1);
\coordinate (A) at (0.5, 2.5);
\coordinate (B) at (0.5, 1.5);
\coordinate (C) at (0.5, 0.5);
 \draw (D) .. controls +(2.5, 0) and +(0.5,0) .. (A);
 \draw (D) .. controls +(-0.5, 0.5) and +(0.5,0) .. (B);
 \draw (D) .. controls +(-0.5, -0.5) and +(0.5,0) .. (C);
 \draw[draw= white, line width=1mm] (2, 2.5) .. controls +(0, 0)  and +(0,0) .. (2, 0.5);
 \draw[draw= black, ] (2, 2.5) .. controls +(0, 0)  and +(0,0) .. (2, 0.5);
\end{scope}
\begin{scope}[xshift=8cm]
\coordinate (D) at (1.5, 1.5);
\coordinate (A) at (0.5, 2.5);
\coordinate (B) at (0.5, 1.5);
\coordinate (C) at (0.5, 0.5);
 \draw (A) .. controls +( 0.5,0) and +(0,0.5) .. (D);
 \draw (B) .. controls +( 0.5,0) and +(-0.5,0) .. (D);
 \draw (C) .. controls +( 0.5,0) and +(0,-0.5) .. (D);
 \draw[draw= white, line width=1mm] (2, 2.5) .. controls  +(0, 0)  and +(0,0) .. (2, 0.5);
 \draw[draw= black] (2, 2.5) .. controls +(0, 0)  and +(0,0) .. (2, 0.5);
\end{scope}
\end{scope}
\node at (14, 1.5) {$\leftrightsquigarrow$};
\begin{scope}[xshift= 16cm]
\begin{scope}
\draw[draw=  gray, line width = 2mm] (0,0) --  (12,0); 
\draw [draw = gray, line width = 2mm] (12,3) -- (0,3); 
\draw[draw=  gray, very thick] (0,0) -- (0,3);
\draw[draw=  gray, very thick] (4,0) -- (4,3);
\draw[draw=  gray, very thick] (8,0) -- (8,3);
\draw[draw=  gray, very thick] (12,0) -- (12,3);
\draw[draw= white, dotted, line width =1.2mm] (0,0) -- (12,0);
\draw[draw= white, dotted, line width = 1.2mm] (0,3) -- (12,3);
\end{scope}
\begin{scope}
\coordinate (D) at (3.5, 1.5);
\coordinate (A) at (0.5, 2.5);
\coordinate (B) at (0.5, 1.5);
\coordinate (C) at (0.5, 0.5);
 \draw (A) .. controls +( 0.5,0) and +(0,0.5) .. (D);
 \draw (B) .. controls +( 0.5,0) and +(-0.5,0) .. (D);
 \draw (C) .. controls +( 0.5,0) and +(0,-0.5) .. (D);
 \draw[draw= white, line width=1mm] (2, 2.5) .. controls +(0, 0)  and +(0,0) .. (2, 0.5);
 \draw[draw= black] (2, 2.5) .. controls +(0, 0)  and +(0,0) .. (2, 0.5);
\end{scope}
\begin{scope}[xshift= 4cm]
\coordinate (D) at (1.5, 2);
\coordinate (A) at (0.5, 2.5);
\coordinate (B) at (0.5, 1.5);
\coordinate (C) at (0.5, 0.5);
 \draw (D) .. controls +(2.5, 0) and +(0.5,0) .. (C);
 \draw (D) .. controls +(-0.5, -0.5) and +(0.5,0) .. (B);
 \draw (D) .. controls +(-0.5, +0.5) and +(0.5,0) .. (A);
 \draw[draw= white, line width=1mm] (2, 2.5) .. controls +(0, 0)  and +(0,0) .. (2, 0.5);
 \draw[draw= black, ] (2, 2.5) .. controls +(0, 0)  and +(0,0) .. (2, 0.5);
\end{scope}
\begin{scope}[xshift=8cm]
\coordinate (D) at (1.5, 1.5);
\coordinate (A) at (0.5, 2.5);
\coordinate (B) at (0.5, 1.5);
\coordinate (C) at (0.5, 0.5);
 \draw (A) .. controls +( 0.5,0) and +(0,0.5) .. (D);
 \draw (B) .. controls +( 0.5,0) and +(-0.5,0) .. (D);
 \draw (C) .. controls +( 0.5,0) and +(0,-0.5) .. (D);
 \draw[draw= white, line width=1mm] (2, 2.5) .. controls  +(0, 0)  and +(0,0) .. (2, 0.5);
 \draw[draw= black] (2, 2.5) .. controls +(0, 0)  and +(0,0) .. (2, 0.5);
\end{scope}
\end{scope}
 \end{tikzpicture}\]
\[
\vspace{0.3cm}
\begin{tikzpicture}[scale=0.4] 
   \begin{scope}
\begin{scope}
\draw[draw=  gray, line width = 2mm] (0,0) --  (12,0); 
\draw [draw = gray, line width = 2mm] (12,3) -- (0,3); 
\draw[draw=  gray, very thick] (0,0) -- (0,3);
\draw[draw=  gray, very thick] (4,0) -- (4,3);
\draw[draw=  gray, very thick] (8,0) -- (8,3);
\draw[draw=  gray, very thick] (12,0) -- (12,3);
\draw[draw= white, dotted, line width =1.2mm] (0,0) -- (12,0);
\draw[draw= white, dotted, line width = 1.2mm] (0,3) -- (12,3);
\end{scope}
\begin{scope}
\coordinate (D) at (3.5, 1.5);
\coordinate (A) at (0.5, 2.5);
\coordinate (B) at (0.5, 1.5);
\coordinate (C) at (0.5, 0.5);
 \draw[draw= black] (2, 2.5) .. controls +(0, 0)  and +(0,0) .. (2, 0.5);
 \draw[draw= white, line width=1mm] (A) .. controls +( 0.5,0) and +(0,0.5) .. (D);
 \draw[draw= white, line width=1mm] (B) .. controls +( 0.5,0) and +(-0.5,0) .. (D);
 \draw[draw= white, line width=1mm] (C) .. controls +( 0.5,0) and +(0,-0.5) .. (D);
 \draw (A) .. controls +( 0.5,0) and +(0,0.5) .. (D);
 \draw (B) .. controls +( 0.5,0) and +(-0.5,0) .. (D);
 \draw (C) .. controls +( 0.5,0) and +(0,-0.5) .. (D);
\end{scope}
\begin{scope}[xshift= 4cm]
\coordinate (D) at (1.5, 1);
\coordinate (A) at (0.5, 2.5);
\coordinate (B) at (0.5, 1.5);
\coordinate (C) at (0.5, 0.5);
 \draw[draw= black, ] (2, 2.5) .. controls +(0, 0)  and +(0,0) .. (2, 0.5);
 \draw[draw= white, line width=1mm] (D) .. controls +(2.5, 0) and +(0.5,0) .. (A);
 \draw[draw= white, line width=1mm] (D) .. controls +(-0.5, 0.5) and +(0.5,0) .. (B);
 \draw[draw= white, line width=1mm] (D) .. controls +(-0.5, -0.5) and +(0.5,0) .. (C);
 \draw (D) .. controls +(2.5, 0) and +(0.5,0) .. (A);
 \draw (D) .. controls +(-0.5, 0.5) and +(0.5,0) .. (B);
 \draw (D) .. controls +(-0.5, -0.5) and +(0.5,0) .. (C);
\end{scope}
\begin{scope}[xshift=8cm]
\coordinate (D) at (1.5, 1.5);
\coordinate (A) at (0.5, 2.5);
\coordinate (B) at (0.5, 1.5);
\coordinate (C) at (0.5, 0.5);
 \draw[draw= black] (2, 2.5) .. controls +(0, 0)  and +(0,0) .. (2, 0.5);
 \draw[draw= white, line width=1mm] (A) .. controls +( 0.5,0) and +(0,0.5) .. (D);
 \draw[draw= white, line width=1mm] (B) .. controls +( 0.5,0) and +(-0.5,0) .. (D);
 \draw[draw= white, line width=1mm] (C) .. controls +( 0.5,0) and +(0,-0.5) .. (D);
 \draw (A) .. controls +( 0.5,0) and +(0,0.5) .. (D);
 \draw (B) .. controls +( 0.5,0) and +(-0.5,0) .. (D);
 \draw (C) .. controls +( 0.5,0) and +(0,-0.5) .. (D);
\end{scope}
\end{scope}
\node at (14, 1.5) {$\leftrightsquigarrow$};
\begin{scope}[xshift= 16cm]
\begin{scope}
\draw[draw=  gray, line width = 2mm] (0,0) --  (12,0); 
\draw [draw = gray, line width = 2mm] (12,3) -- (0,3); 
\draw[draw=  gray, very thick] (0,0) -- (0,3);
\draw[draw=  gray, very thick] (4,0) -- (4,3);
\draw[draw=  gray, very thick] (8,0) -- (8,3);
\draw[draw=  gray, very thick] (12,0) -- (12,3);
\draw[draw= white, dotted, line width =1.2mm] (0,0) -- (12,0);
\draw[draw= white, dotted, line width = 1.2mm] (0,3) -- (12,3);
\end{scope}
\begin{scope}
\coordinate (D) at (3.5, 1.5);
\coordinate (A) at (0.5, 2.5);
\coordinate (B) at (0.5, 1.5);
\coordinate (C) at (0.5, 0.5);
 \draw[draw= black] (2, 2.5) .. controls +(0, 0)  and +(0,0) .. (2, 0.5);
 \draw[draw= white, line width=1mm] (A) .. controls +( 0.5,0) and +(0,0.5) .. (D);
 \draw[draw= white, line width=1mm] (B) .. controls +( 0.5,0) and +(-0.5,0) .. (D);
 \draw[draw= white, line width=1mm] (C) .. controls +( 0.5,0) and +(0,-0.5) .. (D);
 \draw (A) .. controls +( 0.5,0) and +(0,0.5) .. (D);
 \draw (B) .. controls +( 0.5,0) and +(-0.5,0) .. (D);
 \draw (C) .. controls +( 0.5,0) and +(0,-0.5) .. (D);
\end{scope}
\begin{scope}[xshift= 4cm]
\coordinate (D) at (1.5, 2);
\coordinate (A) at (0.5, 2.5);
\coordinate (B) at (0.5, 1.5);
\coordinate (C) at (0.5, 0.5);
 \draw[draw= black, ] (2, 2.5) .. controls +(0, 0)  and +(0,0) .. (2, 0.5);
\draw[draw= white, line width=1mm] (D) .. controls +(2.5, 0) and +(0.5,0) .. (C);
\draw[draw= white, line width=1mm] (D) .. controls +(-0.5, -0.5) and +(0.5,0) .. (B);
\draw[draw= white, line width=1mm] (D) .. controls +(-0.5, +0.5) and +(0.5,0) .. (A);
\draw (D) .. controls +(2.5, 0) and +(0.5,0) .. (C);
\draw (D) .. controls +(-0.5, -0.5) and +(0.5,0) .. (B);
\draw (D) .. controls +(-0.5, +0.5) and +(0.5,0) .. (A);
\end{scope}
\begin{scope}[xshift=8cm]
\coordinate (D) at (1.5, 1.5);
\coordinate (A) at (0.5, 2.5);
\coordinate (B) at (0.5, 1.5);
\coordinate (C) at (0.5, 0.5);
 \draw[draw= black] (2, 2.5) .. controls +(0, 0)  and +(0,0) .. (2, 0.5);
 \draw[draw= white, line width=1mm] (A) .. controls +( 0.5,0) and +(0,0.5) .. (D);
 \draw[draw= white, line width=1mm] (B) .. controls +( 0.5,0) and +(-0.5,0) .. (D);
 \draw[draw= white, line width=1mm] (C) .. controls +( 0.5,0) and +(0,-0.5) .. (D);
 \draw (A) .. controls +( 0.5,0) and +(0,0.5) .. (D);
 \draw (B) .. controls +( 0.5,0) and +(-0.5,0) .. (D);
 \draw (C) .. controls +( 0.5,0) and +(0,-0.5) .. (D);
\end{scope}
\end{scope}
 \end{tikzpicture}\]
\[
\vspace{0.3cm}
\begin{tikzpicture}[scale=0.35] 
   \begin{scope}
\begin{scope}
\draw[draw=  gray, line width = 2mm] (0,0) --  (16,0); 
\draw [draw = gray, line width = 2mm] (16,3) -- (0,3); 
\draw[draw=  gray, very thick] (0,0) -- (0,3);
\draw[draw=  gray, very thick] (4,0) -- (4,3);
\draw[draw=  gray, very thick] (8,0) -- (8,3);
\draw[draw=  gray, very thick] (12,0) -- (12,3);
\draw[draw=  gray, very thick] (16,0) -- (16,3);
\draw[draw= white, dotted, line width =1.2mm] (0,0) -- (16,0);
\draw[draw= white, dotted, line width = 1.2mm] (0,3) -- (16,3);
\end{scope}
\begin{scope}
\coordinate (D1) at (3, 2);
\coordinate (A1) at (3.5, 2.5);
\coordinate (B1) at (0.5, 0.5);
\coordinate (C1) at (1, 0.5);
\coordinate (D2) at (3, 1);
\coordinate (A2) at (3.5, 0.5);
\coordinate (B2) at (0.5, 2.5);
\coordinate (C2) at (1, 2.5);
 \draw (A1) .. controls +( 0,0) and +(0.5,0.5) .. (D1);
 \draw (B1) .. controls +( 0,0) and +(-0.5,0) .. (D1);
 \draw (C1) .. controls +( 0,0) and +(0,-0.5) .. (D1);
 \draw[draw= white, line width=1mm] (A2) .. controls +( 0,0) and +(0.5, - 0.5) .. (D2);
 \draw[draw= white, line width=1mm] (B2) .. controls +( 0,0) and +(-0.5,0) .. (D2);
 \draw[draw= white, line width=1mm] (C2) .. controls +( 0,0) and +(0,0.5) .. (D2);
 \draw (A2) .. controls +( 0,0) and +(0.5, -0.5) .. (D2);
 \draw (B2) .. controls +( 0,0) and +(-0.5,0) .. (D2);
 \draw (C2) .. controls +( 0,0) and +(0,0.5) .. (D2);
\end{scope}
\begin{scope}[xshift= 4cm]
\coordinate (D1) at (2.15, 1.65);
\coordinate (A1) at (3.5, 2.5);
\coordinate (B1) at (0.5, 0.5);
\coordinate (C1) at (1, 0.5);
\coordinate (D2) at (3, 1);
\coordinate (A2) at (3.5, 0.5);
\coordinate (B2) at (0.5, 2.5);
\coordinate (C2) at (1, 2.5);
 \draw (A1) .. controls +( 0,0) and +(0.5,0.5) .. (D1);
 \draw (B1) .. controls +( 0,0) and +(-0.5,0) .. (D1);
 \draw (C1) .. controls +( 0,0) and +(0,-0.5) .. (D1);
 \draw[draw= white, line width=1mm] (A2) .. controls +( 0,0) and +(0.5, - 0.5) .. (D2);
 \draw[draw= white, line width=1mm] (B2) .. controls +( 0,0) and +(-0.5,0) .. (D2);
 \draw[draw= white, line width=1mm] (C2) .. controls +( 0,0) and +(0,0.5) .. (D2);
 \draw (A2) .. controls +( 0,0) and +(0.5, -0.5) .. (D2);
 \draw (B2) .. controls +( 0,0) and +(-0.5,0) .. (D2);
 \draw (C2) .. controls +( 0,0) and +(0,0.5) .. (D2);
\end{scope}
\begin{scope}[xshift=8cm]
\coordinate (D1) at (1.1, 1);
\coordinate (A1) at (3.5, 2.5);
\coordinate (B1) at (0.5, 0.5);
\coordinate (C1) at (1, 0.5);
\coordinate (D2) at (3, 1);
\coordinate (A2) at (3.5, 0.5);
\coordinate (B2) at (0.5, 2.5);
\coordinate (C2) at (1, 2.5);
 \draw (A1) .. controls +( 0,0) and +(0.5,0.5) .. (D1);
 \draw (B1) .. controls +( 0,0) and +(-0.5,0) .. (D1);
 \draw (C1) .. controls +( 0,0) and +(0,-0.5) .. (D1);
 \draw[draw= white, line width=1mm] (A2) .. controls +( 0,0) and +(0.5, - 0.5) .. (D2);
 \draw[draw= white, line width=1mm] (B2) .. controls +( 0,0) and +(-0.5,0) .. (D2);
 \draw[draw= white, line width=1mm] (C2) .. controls +( 0,0) and +(0,0.5) .. (D2);
 \draw (A2) .. controls +( 0,0) and +(0.5, -0.5) .. (D2);
 \draw (B2) .. controls +( 0,0) and +(-0.5,0) .. (D2);
 \draw (C2) .. controls +( 0,0) and +(0,0.5) .. (D2);
\end{scope}
\begin{scope}[xshift=12cm]
\coordinate (D1) at (1.1, 1);
\coordinate (A1) at (3.5, 2.5);
\coordinate (B1) at (0.5, 0.5);
\coordinate (C1) at (1, 0.5);
\coordinate (D2) at (1.1, 2);
\coordinate (A2) at (3.5, 0.5);
\coordinate (B2) at (0.5, 2.5);
\coordinate (C2) at (1, 2.5);
 \draw (A1) .. controls +( 0,0) and +(0.5,0.5) .. (D1);
 \draw (B1) .. controls +( 0,0) and +(-0.5,0) .. (D1);
 \draw (C1) .. controls +( 0,0) and +(0,-0.5) .. (D1);
 \draw[draw= white, line width=1mm] (A2) .. controls +( 0,0) and +(0.5, - 0.5) .. (D2);
 \draw[draw= white, line width=1mm] (B2) .. controls +( 0,0) and +(-0.5,0) .. (D2);
 \draw[draw= white, line width=1mm] (C2) .. controls +( 0,0) and +(0,0.5) .. (D2);
 \draw (A2) .. controls +( 0,0) and +(0.5, -0.5) .. (D2);
 \draw (B2) .. controls +( 0,0) and +(-0.5,0) .. (D2);
 \draw (C2) .. controls +( 0,0) and +(0,0.5) .. (D2);
\end{scope}
\end{scope}


\node at (18, 1.5) {$\leftrightsquigarrow$};
\begin{scope}[xshift= 20cm]

\begin{scope}
\draw[draw=  gray, line width = 2mm] (0,0) --  (16,0); 
\draw [draw = gray, line width = 2mm] (16,3) -- (0,3); 
\draw[draw=  gray, very thick] (0,0) -- (0,3);
\draw[draw=  gray, very thick] (4,0) -- (4,3);
\draw[draw=  gray, very thick] (8,0) -- (8,3);
\draw[draw=  gray, very thick] (12,0) -- (12,3);
\draw[draw=  gray, very thick] (16,0) -- (16,3);
\draw[draw= white, dotted, line width =1.2mm] (0,0) -- (16,0);
\draw[draw= white, dotted, line width = 1.2mm] (0,3) -- (16,3);
\end{scope}
\begin{scope}
\coordinate (D1) at (3, 2);
\coordinate (A1) at (3.5, 2.5);
\coordinate (B1) at (0.5, 0.5);
\coordinate (C1) at (1, 0.5);
\coordinate (D2) at (3, 1);
\coordinate (A2) at (3.5, 0.5);
\coordinate (B2) at (0.5, 2.5);
\coordinate (C2) at (1, 2.5);
 \draw (A1) .. controls +( 0,0) and +(0.5,0.5) .. (D1);
 \draw (B1) .. controls +( 0,0) and +(-0.5,0) .. (D1);
 \draw (C1) .. controls +( 0,0) and +(0,-0.5) .. (D1);
 \draw[draw= white, line width=1mm] (A2) .. controls +( 0,0) and +(0.5, - 0.5) .. (D2);
 \draw[draw= white, line width=1mm] (B2) .. controls +( 0,0) and +(-0.5,0) .. (D2);
 \draw[draw= white, line width=1mm] (C2) .. controls +( 0,0) and +(0,0.5) .. (D2);
 \draw (A2) .. controls +( 0,0) and +(0.5, -0.5) .. (D2);
 \draw (B2) .. controls +( 0,0) and +(-0.5,0) .. (D2);
 \draw (C2) .. controls +( 0,0) and +(0,0.5) .. (D2);
\end{scope}
\begin{scope}[xshift= 4cm]
\coordinate (D1) at (3, 2);
\coordinate (A1) at (3.5, 2.5);
\coordinate (B1) at (0.5, 0.5);
\coordinate (C1) at (1, 0.5);
\coordinate (D2) at (2.15, 1.35);
\coordinate (A2) at (3.5, 0.5);
\coordinate (B2) at (0.5, 2.5);
\coordinate (C2) at (1, 2.5);
 \draw (A1) .. controls +( 0,0) and +(0.5,0.5) .. (D1);
 \draw (B1) .. controls +( 0,0) and +(-0.5,0) .. (D1);
 \draw (C1) .. controls +( 0,0) and +(0,-0.5) .. (D1);
 \draw[draw= white, line width=1mm] (A2) .. controls +( 0,0) and +(0.5, - 0.5) .. (D2);
 \draw[draw= white, line width=1mm] (B2) .. controls +( 0,0) and +(-0.5,0) .. (D2);
 \draw[draw= white, line width=1mm] (C2) .. controls +( 0,0) and +(0,0.5) .. (D2);
 \draw (A2) .. controls +( 0,0) and +(0.5, -0.5) .. (D2);
 \draw (B2) .. controls +( 0,0) and +(-0.5,0) .. (D2);
 \draw (C2) .. controls +( 0,0) and +(0,0.5) .. (D2);
\end{scope}
\begin{scope}[xshift=8cm]
\coordinate (D1) at (3, 2);
\coordinate (A1) at (3.5, 2.5);
\coordinate (B1) at (0.5, 0.5);
\coordinate (C1) at (1, 0.5);
\coordinate (D2) at (1.1, 2);
\coordinate (A2) at (3.5, 0.5);
\coordinate (B2) at (0.5, 2.5);
\coordinate (C2) at (1, 2.5);
 \draw (A1) .. controls +( 0,0) and +(0.5,0.5) .. (D1);
 \draw (B1) .. controls +( 0,0) and +(-0.5,0) .. (D1);
 \draw (C1) .. controls +( 0,0) and +(0,-0.5) .. (D1);
 \draw[draw= white, line width=1mm] (A2) .. controls +( 0,0) and +(0.5, - 0.5) .. (D2);
 \draw[draw= white, line width=1mm] (B2) .. controls +( 0,0) and +(-0.5,0) .. (D2);
 \draw[draw= white, line width=1mm] (C2) .. controls +( 0,0) and +(0,0.5) .. (D2);
 \draw (A2) .. controls +( 0,0) and +(0.5, -0.5) .. (D2);
 \draw (B2) .. controls +( 0,0) and +(-0.5,0) .. (D2);
 \draw (C2) .. controls +( 0,0) and +(0,0.5) .. (D2);
\end{scope}
\begin{scope}[xshift=12cm]
\coordinate (D1) at (1.1, 1);
\coordinate (A1) at (3.5, 2.5);
\coordinate (B1) at (0.5, 0.5);
\coordinate (C1) at (1, 0.5);
\coordinate (D2) at (1.1, 2);
\coordinate (A2) at (3.5, 0.5);
\coordinate (B2) at (0.5, 2.5);
\coordinate (C2) at (1, 2.5);
 \draw (A1) .. controls +( 0,0) and +(0.5,0.5) .. (D1);
 \draw (B1) .. controls +( 0,0) and +(-0.5,0) .. (D1);
 \draw (C1) .. controls +( 0,0) and +(0,-0.5) .. (D1);
 \draw[draw= white, line width=1mm] (A2) .. controls +( 0,0) and +(0.5, - 0.5) .. (D2);
 \draw[draw= white, line width=1mm] (B2) .. controls +( 0,0) and +(-0.5,0) .. (D2);
 \draw[draw= white, line width=1mm] (C2) .. controls +( 0,0) and +(0,0.5) .. (D2);
 \draw (A2) .. controls +( 0,0) and +(0.5, -0.5) .. (D2);
 \draw (B2) .. controls +( 0,0) and +(-0.5,0) .. (D2);
 \draw (C2) .. controls +( 0,0) and +(0,0.5) .. (D2);
\end{scope}
\end{scope}
 \end{tikzpicture}
\]
\[
\vspace{0.3cm}
\begin{tikzpicture}[scale=0.4] 
   \begin{scope}
\begin{scope}
\draw[draw=  gray, line width = 2mm] (0,0) --  (16,0); 
\draw [draw = gray, line width = 2mm] (16,3) -- (0,3); 
\draw[draw=  gray, very thick] (0,0) -- (0,3);
\draw[draw=  gray, very thick] (4,0) -- (4,3);
\draw[draw=  gray, very thick] (8,0) -- (8,3);
\draw[draw=  gray, very thick] (12,0) -- (12,3);
\draw[draw=  gray, very thick] (16,0) -- (16,3);
\draw[draw= white, dotted, line width =1.2mm] (0,0) -- (16,0);
\draw[draw= white, dotted, line width = 1.2mm] (0,3) -- (16,3);
\end{scope}
\begin{scope}
\coordinate (A1) at (3.5, 2.5);
\coordinate (D1) at (3, 2);
\coordinate (B1) at (0.5, 0.5);
\coordinate (C1) at (1, 0.5);
\coordinate (A2) at (0.5, 2);
\coordinate (B2) at (3.5, 1.5);
\coordinate (A3) at (0.5, 2.5); 
\coordinate (B3) at (3.5, 0.5);
 \draw (A1) .. controls +( -0.2,-0.2) and +(0.2,0.2) .. (D1);
 \draw (B1) .. controls +( 0.5,0.3) and +(-0.4,-0.2) .. (D1);
 \draw (C1) .. controls +( 0.5,0.3) and +(-0.2,-0.4) .. (D1);
 \draw[draw= white, line width=1mm] (A2) -- (B2);
 \draw (A2) -- (B2);
 \draw[draw= white, line width=1mm] (A3) -- (B3);
 \draw (A3) -- (B3);
 \end{scope}
\begin{scope}[xshift=4cm]
\coordinate (A1) at (3.5, 2.5);
\coordinate (D1) at (2.5, 1.5);
\coordinate (B1) at (0.5, 0.5);
\coordinate (C1) at (1, 0.5);
\coordinate (A2) at (0.5, 2);
\coordinate (B2) at (3.5, 1.5);
\coordinate (A3) at (0.5, 2.5); 
\coordinate (B3) at (3.5, 0.5);
 \draw (A1) .. controls +( -0.2,-0.2) and +(0.2,0.2) .. (D1);
 \draw (B1) .. controls +( 0.5,0.3) and +(-0.4,-0.2) .. (D1);
 \draw (C1) .. controls +( 0.5,0.3) and +(-0.2,-0.4) .. (D1);
 \draw[draw= white, line width=1mm] (A2) -- (B2);
 \draw (A2) -- (B2);
 \draw[draw= white, line width=1mm] (A3) -- (B3);
 \draw (A3) -- (B3);
\end{scope}
\begin{scope}[xshift=8cm]
\coordinate (A1) at (3.5, 2.5);
\coordinate (D1) at (1.3, 1);
\coordinate (B1) at (0.5, 0.5);
\coordinate (C1) at (1, 0.5);
\coordinate (A2) at (0.5, 2);
\coordinate (B2) at (3.5, 1.5);
\coordinate (A3) at (0.5, 2.5); 
\coordinate (B3) at (3.5, 0.5);
 \draw (A1) .. controls +( -0.2,-0.2) and +(0.2,0.2) .. (D1);
 \draw (B1) .. controls +( 0,0) and +(-0.4,-0.2) .. (D1);
 \draw (C1) .. controls +( 0,0) and +(-0.2,-0.4) .. (D1);
 \draw[draw= white, line width=1mm] (A2) -- (B2);
 \draw (A2) -- (B2);
 \draw[draw= white, line width=1mm] (A3) -- (B3);
 \draw (A3) -- (B3);
\end{scope}
\begin{scope}[xshift=12cm]
\coordinate (A1) at (3.5, 2.5);
\coordinate (D1) at (1.3, 1);
\coordinate (B1) at (0.5, 0.5);
\coordinate (C1) at (1, 0.5);
\coordinate (A2) at (0.5, 2);
\coordinate (B2) at (3.5, 1.5);
\coordinate (A3) at (0.5, 2.5); 
\coordinate (B3) at (3.5, 0.5);
 \draw (A1) .. controls +( -0.2,-0.2) and +(0.2,0.2) .. (D1);
 \draw (B1) .. controls +( 0,0) and +(-0.4,-0.2) .. (D1);
 \draw (C1) .. controls +( 0,0) and +(-0.2,-0.4) .. (D1);
 \draw[draw= white, line width=1mm] (A2) .. controls +(0.5, -1) and +(-1,-0.5 ) .. (B2);
 \draw (A2) .. controls +(0.5, -1) and +(-1,-0.5 ) .. (B2);
 \draw[draw= white, line width=1mm] (A3) -- (B3);
 \draw (A3) -- (B3);
\end{scope}
\end{scope}
\node[rotate= 90] at (8, -1.5) {${\leftrightsquigarrow}$};

\begin{scope}[yshift =-6cm, xshift = -2cm]
\begin{scope}
\draw[draw=  gray, line width = 2mm] (0,0) --  (20,0); 
\draw [draw = gray, line width = 2mm] (20,3) -- (0,3); 
\draw[draw=  gray, very thick] (0,0) -- (0,3);
\draw[draw=  gray, very thick] (4,0) -- (4,3);
\draw[draw=  gray, very thick] (8,0) -- (8,3);
\draw[draw=  gray, very thick] (12,0) -- (12,3);
\draw[draw=  gray, very thick] (16,0) -- (16,3);
\draw[draw=  gray, very thick] (20,0) -- (20,3);
\draw[draw= white, dotted, line width =1.2mm] (0,0) -- (20,0);
\draw[draw= white, dotted, line width = 1.2mm] (0,3) -- (20,3);
\end{scope}
\begin{scope}
\coordinate (A1) at (3.5, 2.5);
\coordinate (D1) at (3, 2);
\coordinate (B1) at (0.5, 0.5);
\coordinate (C1) at (1, 0.5);
\coordinate (A2) at (0.5, 2);
\coordinate (B2) at (3.5, 1.5);
\coordinate (A3) at (0.5, 2.5); 
\coordinate (B3) at (3.5, 0.5);
 \draw (A1) .. controls +( -0.2,-0.2) and +(0.2,0.2) .. (D1);
 \draw (B1) .. controls +( 0.5,0.3) and +(-0.4,-0.2) .. (D1);
 \draw (C1) .. controls +( 0.5,0.3) and +(-0.2,-0.4) .. (D1);
 \draw[draw= white, line width=1mm] (A2) -- (B2);
 \draw (A2) -- (B2);
 \draw[draw= white, line width=1mm] (A3) -- (B3);
 \draw (A3) -- (B3);
\end{scope}
\begin{scope}[xshift=4cm]
\coordinate (A1) at (3.5, 2.5);
\coordinate (D1) at (3, 2);
\coordinate (B1) at (0.5, 0.5);
\coordinate (C1) at (1, 0.5);
\coordinate (A2) at (0.5, 2);
\coordinate (B2) at (3.5, 1.5);
\coordinate (A3) at (0.5, 2.5); 
\coordinate (B3) at (3.5, 0.5);
 \draw (A1) .. controls +( -0.2,-0.2) and +(0.2,0.2) .. (D1);
 \draw (B1) .. controls +(0.1,1.5) and +(-2.5,+0.3) .. (D1);
 \draw (C1) .. controls +( 0.5,0.3) and +(-0.2,-0.4) .. (D1);
 \draw[draw= white, line width=1mm] (A2) -- (B2);
 \draw (A2) -- (B2);
 \draw[draw= white, line width=1mm] (A3) -- (B3);
 \draw (A3) -- (B3);
 \end{scope}
\begin{scope}[xshift=8cm]
\coordinate (A1) at (3.5, 2.5);
\coordinate (D1) at (3, 2);
\coordinate (B1) at (0.5, 0.5);
\coordinate (C1) at (1, 0.5);
\coordinate (A2) at (0.5, 2);
\coordinate (B2) at (3.5, 1.5);
\coordinate (A3) at (0.5, 2.5); 
\coordinate (B3) at (3.5, 0.5);
 \draw (A1) .. controls +( -0.2,-0.2) and +(0.2,0.2) .. (D1);
 \draw (B1) .. controls +(0.1,1.5) and +(-2.5,+0.3) .. (D1);
 \draw (C1) .. controls +( 0,1.5) and +(-0.2,-0.4) .. (D1);
 \draw[draw= white, line width=1mm] (A2) ..controls +(0.5,-0.5) and +(-0.5,-0.5).. (B2);
 \draw (A2)  ..controls +(0.5,-0.5) and +(-0.5,-0.5).. (B2);
 \draw[draw= white, line width=1mm] (A3) ..controls +(1, 0) and  +(-0,1).. (B3);
 \draw (A3) ..controls +(1, 0) and  +(-0,1).. (B3);
\end{scope}
\begin{scope}[xshift=12cm]
\coordinate (A1) at (3.5, 2.5);
\coordinate (D1) at (1.8, 1.9);
\coordinate (B1) at (0.5, 0.5);
\coordinate (C1) at (1, 0.5);
\coordinate (A2) at (0.5, 2);
\coordinate (B2) at (3.5, 1.5);
\coordinate (A3) at (0.5, 2.5); 
\coordinate (B3) at (3.5, 0.5);
 \draw (A1) .. controls +( -0.2,-0.2) and +(0.2,0.2) .. (D1);
 \draw (B1) .. controls +(0.1,1.5) and +(-0.5,+0) .. (D1);
 \draw (C1) .. controls +( 0,1.5) and +(-0.2,-0.4) .. (D1);
 \draw[draw= white, line width=1mm] (A2) ..controls +(0.5,-0.5) and +(-0.5,-0.5).. (B2);
 \draw (A2)  ..controls +(0.5,-0.5) and +(-0.5,-0.5).. (B2);
 \draw[draw= white, line width=1mm] (A3) ..controls +(1, 0) and  +(-0,1).. (B3);
 \draw (A3) ..controls +(1, 0) and  +(-0,1).. (B3);
\end{scope}
\begin{scope}[xshift=16cm]
\coordinate (A1) at (3.5, 2.5);
\coordinate (D1) at (1.3, 1);
\coordinate (B1) at (0.5, 0.5);
\coordinate (C1) at (1, 0.5);
\coordinate (A2) at (0.5, 2);
\coordinate (B2) at (3.5, 1.5);
\coordinate (A3) at (0.5, 2.5); 
\coordinate (B3) at (3.5, 0.5);
 \draw (A1) .. controls +( -0.2,-0.2) and +(0.2,0.2) .. (D1);
 \draw (B1) .. controls +( 0,0) and +(-0.4,-0.2) .. (D1);
 \draw (C1) .. controls +( 0,0) and +(-0.2,-0.4) .. (D1);
 \draw[draw= white, line width=1mm] (A2) .. controls +(0.5, -1) and +(-1,-0.5 ) .. (B2);
 \draw (A2) .. controls +(0.5, -1) and +(-1,-0.5 ) .. (B2);
 \draw[draw= white, line width=1mm] (A3) -- (B3);
 \draw (A3) -- (B3);
\end{scope}
\end{scope}

 \end{tikzpicture}
\]
\[
\vspace{0.3cm}
\begin{tikzpicture}[scale=0.4] 
   \begin{scope}
\begin{scope}
\draw[draw=  gray, line width = 2mm] (0,0) --  (16,0); 
\draw [draw = gray, line width = 2mm] (16,3) -- (0,3); 
\draw[draw=  gray, very thick] (0,0) -- (0,3);
\draw[draw=  gray, very thick] (4,0) -- (4,3);
\draw[draw=  gray, very thick] (8,0) -- (8,3);
\draw[draw=  gray, very thick] (12,0) -- (12,3);
\draw[draw=  gray, very thick] (16,0) -- (16,3);
\draw[draw= white, dotted, line width =1.2mm] (0,0) -- (16,0);
\draw[draw= white, dotted, line width = 1.2mm] (0,3) -- (16,3);
\end{scope}
\begin{scope}
\coordinate (A1) at (3.5, 2.5);
\coordinate (D1) at (3, 2);
\coordinate (B1) at (0.5, 0.5);
\coordinate (C1) at (1, 0.5);
\coordinate (A2) at (0.5, 2);
\coordinate (B2) at (3.5, 1.5);
\coordinate (A3) at (0.5, 2.5); 
\coordinate (B3) at (3.5, 0.5);
 \draw (A1) .. controls +( -0.2,-0.2) and +(0.2,0.2) .. (D1);
 \draw (B1) .. controls +( 0.5,0.3) and +(-0.4,-0.2) .. (D1);
 \draw (C1) .. controls +( 0.5,0.3) and +(-0.2,-0.4) .. (D1);
 \draw[draw= white, line width=1mm] (A3) -- (B3);
 \draw (A3) -- (B3);
 \draw[draw= white, line width=1mm] (A2) -- (B2);
 \draw (A2) -- (B2);
 \end{scope}
\begin{scope}[xshift=4cm]
\coordinate (A1) at (3.5, 2.5);
\coordinate (D1) at (2.5, 1.5);
\coordinate (B1) at (0.5, 0.5);
\coordinate (C1) at (1, 0.5);
\coordinate (A2) at (0.5, 2);
\coordinate (B2) at (3.5, 1.5);
\coordinate (A3) at (0.5, 2.5); 
\coordinate (B3) at (3.5, 0.5);
 \draw (A1) .. controls +( -0.2,-0.2) and +(0.2,0.2) .. (D1);
 \draw (B1) .. controls +( 0.5,0.3) and +(-0.4,-0.2) .. (D1);
 \draw (C1) .. controls +( 0.5,0.3) and +(-0.2,-0.4) .. (D1);
 \draw[draw= white, line width=1mm] (A3) -- (B3);
 \draw (A3) -- (B3);
 \draw[draw= white, line width=1mm] (A2) -- (B2);
 \draw (A2) -- (B2);
\end{scope}
\begin{scope}[xshift=8cm]
\coordinate (A1) at (3.5, 2.5);
\coordinate (D1) at (1.3, 1);
\coordinate (B1) at (0.5, 0.5);
\coordinate (C1) at (1, 0.5);
\coordinate (A2) at (0.5, 2);
\coordinate (B2) at (3.5, 1.5);
\coordinate (A3) at (0.5, 2.5); 
\coordinate (B3) at (3.5, 0.5);
 \draw (A1) .. controls +( -0.2,-0.2) and +(0.2,0.2) .. (D1);
 \draw (B1) .. controls +( 0,0) and +(-0.4,-0.2) .. (D1);
 \draw (C1) .. controls +( 0,0) and +(-0.2,-0.4) .. (D1);
 \draw[draw= white, line width=1mm] (A3) -- (B3);
 \draw (A3) -- (B3);
 \draw[draw= white, line width=1mm] (A2) -- (B2);
 \draw (A2) -- (B2);
\end{scope}
\begin{scope}[xshift=12cm]
\coordinate (A1) at (3.5, 2.5);
\coordinate (D1) at (1.3, 1);
\coordinate (B1) at (0.5, 0.5);
\coordinate (C1) at (1, 0.5);
\coordinate (A2) at (0.5, 2);
\coordinate (B2) at (3.5, 1.5);
\coordinate (A3) at (0.5, 2.5); 
\coordinate (B3) at (3.5, 0.5);
 \draw (A1) .. controls +( -0.2,-0.2) and +(0.2,0.2) .. (D1);
 \draw (B1) .. controls +( 0,0) and +(-0.4,-0.2) .. (D1);
 \draw (C1) .. controls +( 0,0) and +(-0.2,-0.4) .. (D1);
 \draw[draw= white, line width=1mm] (A3) -- (B3);
 \draw (A3) -- (B3);
 \draw[draw= white, line width=1mm] (A2) .. controls +(0.5, -1) and +(-1,-0.5 ) .. (B2);
 \draw (A2) .. controls +(0.5, -1) and +(-1,-0.5 ) .. (B2);
\end{scope}
\end{scope}
\node[rotate= 90] at (8, -1.5) {${\leftrightsquigarrow}$};
\begin{scope}[yshift =-6cm, xshift = -2cm]
\begin{scope}
\draw[draw=  gray, line width = 2mm] (0,0) --  (20,0); 
\draw [draw = gray, line width = 2mm] (20,3) -- (0,3); 
\draw[draw=  gray, very thick] (0,0) -- (0,3);
\draw[draw=  gray, very thick] (4,0) -- (4,3);
\draw[draw=  gray, very thick] (8,0) -- (8,3);
\draw[draw=  gray, very thick] (12,0) -- (12,3);
\draw[draw=  gray, very thick] (16,0) -- (16,3);
\draw[draw=  gray, very thick] (20,0) -- (20,3);
\draw[draw= white, dotted, line width =1.2mm] (0,0) -- (20,0);
\draw[draw= white, dotted, line width = 1.2mm] (0,3) -- (20,3);
\end{scope}
\begin{scope}
\coordinate (A1) at (3.5, 2.5);
\coordinate (D1) at (3, 2);
\coordinate (B1) at (0.5, 0.5);
\coordinate (C1) at (1, 0.5);
\coordinate (A2) at (0.5, 2);
\coordinate (B2) at (3.5, 1.5);
\coordinate (A3) at (0.5, 2.5); 
\coordinate (B3) at (3.5, 0.5);
 \draw (A1) .. controls +( -0.2,-0.2) and +(0.2,0.2) .. (D1);
 \draw (B1) .. controls +( 0.5,0.3) and +(-0.4,-0.2) .. (D1);
 \draw (C1) .. controls +( 0.5,0.3) and +(-0.2,-0.4) .. (D1);
 \draw[draw= white, line width=1mm] (A3) -- (B3);
 \draw (A3) -- (B3);
 \draw[draw= white, line width=1mm] (A2) -- (B2);
 \draw (A2) -- (B2);
\end{scope}
\begin{scope}[xshift=4cm]
\coordinate (A1) at (3.5, 2.5);
\coordinate (D1) at (3, 2);
\coordinate (B1) at (0.5, 0.5);
\coordinate (C1) at (1, 0.5);
\coordinate (A2) at (0.5, 2);
\coordinate (B2) at (3.5, 1.5);
\coordinate (A3) at (0.5, 2.5); 
\coordinate (B3) at (3.5, 0.5);
 \draw (A1) .. controls +( -0.2,-0.2) and +(0.2,0.2) .. (D1);
 \draw (B1) .. controls +(0.1,1.5) and +(-2.5,+0.3) .. (D1);
 \draw (C1) .. controls +( 0.5,0.3) and +(-0.2,-0.4) .. (D1);
 \draw[draw= white, line width=1mm] (A3) -- (B3);
 \draw (A3) -- (B3);
 \draw[draw= white, line width=1mm] (A2) -- (B2);
 \draw (A2) -- (B2);
 \end{scope}
\begin{scope}[xshift=8cm]
\coordinate (A1) at (3.5, 2.5);
\coordinate (D1) at (3, 2);
\coordinate (B1) at (0.5, 0.5);
\coordinate (C1) at (1, 0.5);
\coordinate (A2) at (0.5, 2);
\coordinate (B2) at (3.5, 1.5);
\coordinate (A3) at (0.5, 2.5); 
\coordinate (B3) at (3.5, 0.5);
 \draw (A1) .. controls +( -0.2,-0.2) and +(0.2,0.2) .. (D1);
 \draw (B1) .. controls +(0.1,1.5) and +(-2.5,+0.3) .. (D1);
 \draw (C1) .. controls +( 0,1.5) and +(-0.2,-0.4) .. (D1);
 \draw[draw= white, line width=1mm] (A3) ..controls +(1, 0) and  +(-0,1).. (B3);
 \draw (A3) ..controls +(1, 0) and  +(-0,1).. (B3);
 \draw[draw= white, line width=1mm] (A2) ..controls +(0.5,-0.5) and +(-0.5,-0.5).. (B2);
 \draw (A2)  ..controls +(0.5,-0.5) and +(-0.5,-0.5).. (B2);
\end{scope}
\begin{scope}[xshift=12cm]
\coordinate (A1) at (3.5, 2.5);
\coordinate (D1) at (1.8, 1.9);
\coordinate (B1) at (0.5, 0.5);
\coordinate (C1) at (1, 0.5);
\coordinate (A2) at (0.5, 2);
\coordinate (B2) at (3.5, 1.5);
\coordinate (A3) at (0.5, 2.5); 
\coordinate (B3) at (3.5, 0.5);
 \draw (A1) .. controls +( -0.2,-0.2) and +(0.2,0.2) .. (D1);
 \draw (B1) .. controls +(0.1,1.5) and +(-0.5,+0) .. (D1);
 \draw (C1) .. controls +( 0,1.5) and +(-0.2,-0.4) .. (D1);
 \draw[draw= white, line width=1mm] (A3) ..controls +(1, 0) and  +(-0,1).. (B3);
 \draw (A3) ..controls +(1, 0) and  +(-0,1).. (B3);
 \draw[draw= white, line width=1mm] (A2) ..controls +(0.5,-0.5) and +(-0.5,-0.5).. (B2);
 \draw (A2)  ..controls +(0.5,-0.5) and +(-0.5,-0.5).. (B2);
\end{scope}
\begin{scope}[xshift=16cm]
\coordinate (A1) at (3.5, 2.5);
\coordinate (D1) at (1.3, 1);
\coordinate (B1) at (0.5, 0.5);
\coordinate (C1) at (1, 0.5);
\coordinate (A2) at (0.5, 2);
\coordinate (B2) at (3.5, 1.5);
\coordinate (A3) at (0.5, 2.5); 
\coordinate (B3) at (3.5, 0.5);
 \draw (A1) .. controls +( -0.2,-0.2) and +(0.2,0.2) .. (D1);
 \draw (B1) .. controls +( 0,0) and +(-0.4,-0.2) .. (D1);
 \draw (C1) .. controls +( 0,0) and +(-0.2,-0.4) .. (D1);
 \draw[draw= white, line width=1mm] (A3) -- (B3);
 \draw (A3) -- (B3);
 \draw[draw= white, line width=1mm] (A2) .. controls +(0.5, -1) and +(-1,-0.5 ) .. (B2);
 \draw (A2) .. controls +(0.5, -1) and +(-1,-0.5 ) .. (B2);
\end{scope}
\end{scope}

 \end{tikzpicture}
\]
\[
\vspace{0.3cm}
\begin{tikzpicture}[scale=0.4] 
   \begin{scope}
\begin{scope}
\draw[draw=  gray, line width = 2mm] (0,0) --  (16,0); 
\draw [draw = gray, line width = 2mm] (16,3) -- (0,3); 
\draw[draw=  gray, very thick] (0,0) -- (0,3);
\draw[draw=  gray, very thick] (4,0) -- (4,3);
\draw[draw=  gray, very thick] (8,0) -- (8,3);
\draw[draw=  gray, very thick] (12,0) -- (12,3);
\draw[draw=  gray, very thick] (16,0) -- (16,3);
\draw[draw= white, dotted, line width =1.2mm] (0,0) -- (16,0);
\draw[draw= white, dotted, line width = 1.2mm] (0,3) -- (16,3);
\end{scope}
\begin{scope}
\coordinate (A1) at (3.5, 2.5);
\coordinate (D1) at (3, 2);
\coordinate (B1) at (0.5, 0.5);
\coordinate (C1) at (1, 0.5);
\coordinate (A2) at (0.5, 2);
\coordinate (B2) at (3.5, 1.5);
\coordinate (A3) at (0.5, 2.5); 
\coordinate (B3) at (3.5, 0.5);
 \draw[draw=white, line width=1mm] (A1) .. controls +( -0.2,-0.2) and +(0.2,0.2) .. (D1);
 \draw[draw=white, line width=1mm] (B1) .. controls +( 0.5,0.3) and +(-0.4,-0.2) .. (D1);
 \draw[draw=white, line width=1mm] (C1) .. controls +( 0.5,0.3) and +(-0.2,-0.4) .. (D1);
 \draw (A1) .. controls +( -0.2,-0.2) and +(0.2,0.2) .. (D1);
 \draw (B1) .. controls +( 0.5,0.3) and +(-0.4,-0.2) .. (D1);
 \draw (C1) .. controls +( 0.5,0.3) and +(-0.2,-0.4) .. (D1);
 \draw[draw= white, line width=1mm] (A2) -- (B2);
 \draw (A2) -- (B2);
 \draw[draw= white, line width=1mm] (A3) -- (B3);
 \draw (A3) -- (B3);
 \end{scope}
\begin{scope}[xshift=4cm]
\coordinate (A1) at (3.5, 2.5);
\coordinate (D1) at (2.5, 1.5);
\coordinate (B1) at (0.5, 0.5);
\coordinate (C1) at (1, 0.5);
\coordinate (A2) at (0.5, 2);
\coordinate (B2) at (3.5, 1.5);
\coordinate (A3) at (0.5, 2.5); 
\coordinate (B3) at (3.5, 0.5);
 \draw[draw=white, line width=1mm] (A1) .. controls +( -0.2,-0.2) and +(0.2,0.2) .. (D1);
 \draw[draw=white, line width=1mm] (B1) .. controls +( 0.5,0.3) and +(-0.4,-0.2) .. (D1);
 \draw[draw=white, line width=1mm] (C1) .. controls +( 0.5,0.3) and +(-0.2,-0.4) .. (D1);
 \draw (A1) .. controls +( -0.2,-0.2) and +(0.2,0.2) .. (D1);
 \draw (B1) .. controls +( 0.5,0.3) and +(-0.4,-0.2) .. (D1);
 \draw (C1) .. controls +( 0.5,0.3) and +(-0.2,-0.4) .. (D1);
 \draw[draw= white, line width=1mm] (A2) -- (B2);
 \draw (A2) -- (B2);
 \draw[draw= white, line width=1mm] (A3) -- (B3);
 \draw (A3) -- (B3);
\end{scope}
\begin{scope}[xshift=8cm]
\coordinate (A1) at (3.5, 2.5);
\coordinate (D1) at (1.3, 1);
\coordinate (B1) at (0.5, 0.5);
\coordinate (C1) at (1, 0.5);
\coordinate (A2) at (0.5, 2);
\coordinate (B2) at (3.5, 1.5);
\coordinate (A3) at (0.5, 2.5); 
\coordinate (B3) at (3.5, 0.5);
 \draw[draw=white, line width=1mm] (A1) .. controls +( -0.2,-0.2) and +(0.2,0.2) .. (D1);
 \draw[draw=white, line width=1mm] (B1) .. controls +( 0,0) and +(-0.4,-0.2) .. (D1);
 \draw[draw=white, line width=1mm] (C1) .. controls +( 0,0) and +(-0.2,-0.4) .. (D1);
 \draw (A1) .. controls +( -0.2,-0.2) and +(0.2,0.2) .. (D1);
 \draw (B1) .. controls +( 0,0) and +(-0.4,-0.2) .. (D1);
 \draw (C1) .. controls +( 0,0) and +(-0.2,-0.4) .. (D1);
 \draw[draw= white, line width=1mm] (A2) -- (B2);
 \draw (A2) -- (B2);
 \draw[draw= white, line width=1mm] (A3) -- (B3);
 \draw (A3) -- (B3);
\end{scope}
\begin{scope}[xshift=12cm]
\coordinate (A1) at (3.5, 2.5);
\coordinate (D1) at (1.3, 1);
\coordinate (B1) at (0.5, 0.5);
\coordinate (C1) at (1, 0.5);
\coordinate (A2) at (0.5, 2);
\coordinate (B2) at (3.5, 1.5);
\coordinate (A3) at (0.5, 2.5); 
\coordinate (B3) at (3.5, 0.5);
 \draw[draw=white, line width=1mm] (A1) .. controls +( -0.2,-0.2) and +(0.2,0.2) .. (D1);
 \draw[draw=white, line width=1mm] (B1) .. controls +( 0,0) and +(-0.4,-0.2) .. (D1);
 \draw[draw=white, line width=1mm] (C1) .. controls +( 0,0) and +(-0.2,-0.4) .. (D1); 
 \draw (A1) .. controls +( -0.2,-0.2) and +(0.2,0.2) .. (D1);
 \draw (B1) .. controls +( 0,0) and +(-0.4,-0.2) .. (D1);
 \draw (C1) .. controls +( 0,0) and +(-0.2,-0.4) .. (D1);
 \draw[draw= white, line width=1mm] (A2) .. controls +(0.5, -1) and +(-1,-0.5 ) .. (B2);
 \draw (A2) .. controls +(0.5, -1) and +(-1,-0.5 ) .. (B2);
 \draw[draw= white, line width=1mm] (A3) -- (B3);
 \draw (A3) -- (B3);
\end{scope}
\end{scope}
\node[rotate= 90] at (8, -1.5) {${\leftrightsquigarrow}$};
\begin{scope}[yshift =-6cm, xshift = -2cm]
\begin{scope}
\draw[draw=  gray, line width = 2mm] (0,0) --  (20,0); 
\draw [draw = gray, line width = 2mm] (20,3) -- (0,3); 
\draw[draw=  gray, very thick] (0,0) -- (0,3);
\draw[draw=  gray, very thick] (4,0) -- (4,3);
\draw[draw=  gray, very thick] (8,0) -- (8,3);
\draw[draw=  gray, very thick] (12,0) -- (12,3);
\draw[draw=  gray, very thick] (16,0) -- (16,3);
\draw[draw=  gray, very thick] (20,0) -- (20,3);
\draw[draw= white, dotted, line width =1.2mm] (0,0) -- (20,0);
\draw[draw= white, dotted, line width = 1.2mm] (0,3) -- (20,3);
\end{scope}
\begin{scope}
\coordinate (A1) at (3.5, 2.5);
\coordinate (D1) at (3, 2);
\coordinate (B1) at (0.5, 0.5);
\coordinate (C1) at (1, 0.5);
\coordinate (A2) at (0.5, 2);
\coordinate (B2) at (3.5, 1.5);
\coordinate (A3) at (0.5, 2.5); 
\coordinate (B3) at (3.5, 0.5);
 \draw[draw=white, line width=1mm] (A1) .. controls +( -0.2,-0.2) and +(0.2,0.2) .. (D1);
 \draw[draw=white, line width=1mm] (B1) .. controls +( 0.5,0.3) and +(-0.4,-0.2) .. (D1);
 \draw[draw=white, line width=1mm] (C1) .. controls +( 0.5,0.3) and +(-0.2,-0.4) .. (D1);
 \draw (A1) .. controls +( -0.2,-0.2) and +(0.2,0.2) .. (D1);
 \draw (B1) .. controls +( 0.5,0.3) and +(-0.4,-0.2) .. (D1);
 \draw (C1) .. controls +( 0.5,0.3) and +(-0.2,-0.4) .. (D1);
 \draw[draw= white, line width=1mm] (A2) -- (B2);
 \draw (A2) -- (B2);
 \draw[draw= white, line width=1mm] (A3) -- (B3);
 \draw (A3) -- (B3);
\end{scope}
\begin{scope}[xshift=4cm]
\coordinate (A1) at (3.5, 2.5);
\coordinate (D1) at (3, 2);
\coordinate (B1) at (0.5, 0.5);
\coordinate (C1) at (1, 0.5);
\coordinate (A2) at (0.5, 2);
\coordinate (B2) at (3.5, 1.5);
\coordinate (A3) at (0.5, 2.5); 
\coordinate (B3) at (3.5, 0.5);
 \draw[draw=white, line width=1mm] (A1) .. controls +( -0.2,-0.2) and +(0.2,0.2) .. (D1);
 \draw[draw=white, line width=1mm] (B1) .. controls +(0.1,1.5) and +(-2.5,+0.3) .. (D1);
 \draw[draw=white, line width=1mm] (C1) .. controls +( 0.5,0.3) and +(-0.2,-0.4) .. (D1);
 \draw (A1) .. controls +( -0.2,-0.2) and +(0.2,0.2) .. (D1);
 \draw (B1) .. controls +(0.1,1.5) and +(-2.5,+0.3) .. (D1);
 \draw (C1) .. controls +( 0.5,0.3) and +(-0.2,-0.4) .. (D1);
 \draw[draw= white, line width=1mm] (A2) -- (B2);
 \draw (A2) -- (B2);
 \draw[draw= white, line width=1mm] (A3) -- (B3);
 \draw (A3) -- (B3);
 \end{scope}
\begin{scope}[xshift=8cm]
\coordinate (A1) at (3.5, 2.5);
\coordinate (D1) at (3, 2);
\coordinate (B1) at (0.5, 0.5);
\coordinate (C1) at (1, 0.5);
\coordinate (A2) at (0.5, 2);
\coordinate (B2) at (3.5, 1.5);
\coordinate (A3) at (0.5, 2.5); 
\coordinate (B3) at (3.5, 0.5);
 \draw[draw=white, line width=1mm] (A1) .. controls +( -0.2,-0.2) and +(0.2,0.2) .. (D1);
 \draw[draw=white, line width=1mm] (B1) .. controls +(0.1,1.5) and +(-2.5,+0.3) .. (D1);
 \draw[draw=white, line width=1mm] (C1) .. controls +( 0,1.5) and +(-0.2,-0.4) .. (D1);
 \draw (A1) .. controls +( -0.2,-0.2) and +(0.2,0.2) .. (D1);
 \draw (B1) .. controls +(0.1,1.5) and +(-2.5,+0.3) .. (D1);
 \draw (C1) .. controls +( 0,1.5) and +(-0.2,-0.4) .. (D1);
 \draw[draw= white, line width=1mm] (A2) ..controls +(0.5,-0.5) and +(-0.5,-0.5).. (B2);
 \draw (A2)  ..controls +(0.5,-0.5) and +(-0.5,-0.5).. (B2);
 \draw[draw= white, line width=1mm] (A3) ..controls +(1, 0) and  +(-0,1).. (B3);
 \draw (A3) ..controls +(1, 0) and  +(-0,1).. (B3);
\end{scope}
\begin{scope}[xshift=12cm]
\coordinate (A1) at (3.5, 2.5);
\coordinate (D1) at (1.8, 1.9);
\coordinate (B1) at (0.5, 0.5);
\coordinate (C1) at (1, 0.5);
\coordinate (A2) at (0.5, 2);
\coordinate (B2) at (3.5, 1.5);
\coordinate (A3) at (0.5, 2.5); 
\coordinate (B3) at (3.5, 0.5);
 \draw[draw=white, line width=1mm] (A1) .. controls +( -0.2,-0.2) and +(0.2,0.2) .. (D1);
 \draw[draw=white, line width=1mm] (B1) .. controls +(0.1,1.5) and +(-0.5,+0) .. (D1);
 \draw[draw=white, line width=1mm] (C1) .. controls +( 0,1.5) and +(-0.2,-0.4) .. (D1);
 \draw (A1) .. controls +( -0.2,-0.2) and +(0.2,0.2) .. (D1);
 \draw (B1) .. controls +(0.1,1.5) and +(-0.5,+0) .. (D1);
 \draw (C1) .. controls +( 0,1.5) and +(-0.2,-0.4) .. (D1);
 \draw[draw= white, line width=1mm] (A2) ..controls +(0.5,-0.5) and +(-0.5,-0.5).. (B2);
 \draw (A2)  ..controls +(0.5,-0.5) and +(-0.5,-0.5).. (B2);
 \draw[draw= white, line width=1mm] (A3) ..controls +(1, 0) and  +(-0,1).. (B3);
 \draw (A3) ..controls +(1, 0) and  +(-0,1).. (B3);
\end{scope}
\begin{scope}[xshift=16cm]
\coordinate (A1) at (3.5, 2.5);
\coordinate (D1) at (1.3, 1);
\coordinate (B1) at (0.5, 0.5);
\coordinate (C1) at (1, 0.5);
\coordinate (A2) at (0.5, 2);
\coordinate (B2) at (3.5, 1.5);
\coordinate (A3) at (0.5, 2.5); 
\coordinate (B3) at (3.5, 0.5);
 \draw[draw=white, line width=1mm] (A1) .. controls +( -0.2,-0.2) and +(0.2,0.2) .. (D1);
 \draw[draw=white, line width=1mm] (B1) .. controls +( 0,0) and +(-0.4,-0.2) .. (D1);
 \draw[draw=white, line width=1mm] (C1) .. controls +( 0,0) and +(-0.2,-0.4) .. (D1);
 \draw (A1) .. controls +( -0.2,-0.2) and +(0.2,0.2) .. (D1);
 \draw (B1) .. controls +( 0,0) and +(-0.4,-0.2) .. (D1);
 \draw (C1) .. controls +( 0,0) and +(-0.2,-0.4) .. (D1);
 \draw[draw= white, line width=1mm] (A2) .. controls +(0.5, -1) and +(-1,-0.5 ) .. (B2);
 \draw (A2) .. controls +(0.5, -1) and +(-1,-0.5 ) .. (B2);
 \draw[draw= white, line width=1mm] (A3) -- (B3);
 \draw (A3) -- (B3);
\end{scope}
\end{scope}
 \end{tikzpicture}
\]
\[
\vspace{0.3cm}
\begin{tikzpicture}[scale=0.4] 
   \input{\imagesfolder/c3_movieWR5}
 \end{tikzpicture}
\]
\end{prop}


\subsection{Two movies}
\label{sec:two-foamy-cobordisms}

\subsubsection{The movie $\lambda$ }
\label{sec:cobordism-y}
We consider an oriented link diagram $L$ in the standard disk $B_0$ and $L_{//}$ a cabling of $L$ along one component $l$ with two parallel copies $l_1$ and $l_2$  both oriented conversely to the orientation of $l$. The links $L$ and $L_{//}$ are considered as object of $\mathcal{TD}_{\emptyset}$. Let $x$ be a regular (not a crossing) point of $l$.

\begin{dfn}
  \label{dfn:Ymorphism}
  The movie $\lambda(L,x, l_1, l_2)$ in $\mathrm{HOM}_{\mathcal{TD_{\emptyset}}}(L_{//}, L)$ is defined by the following sequence:
\begin{itemize}
\item A zip (B2) at $x$. We obtain a knotted web diagram $w$,
\item A sequence of moves of type B8 and B9 obtained by pushing one of the two singularities of $w$ along the strands following the orientation of $l$.
\item A digon cap (A4) in a neighborhood of $x$.
\end{itemize}
An example is given in figure~\ref{fig:examp7trefoil}
\begin{figure}[ht]
  \centering
\includegraphics{\imagesfolder/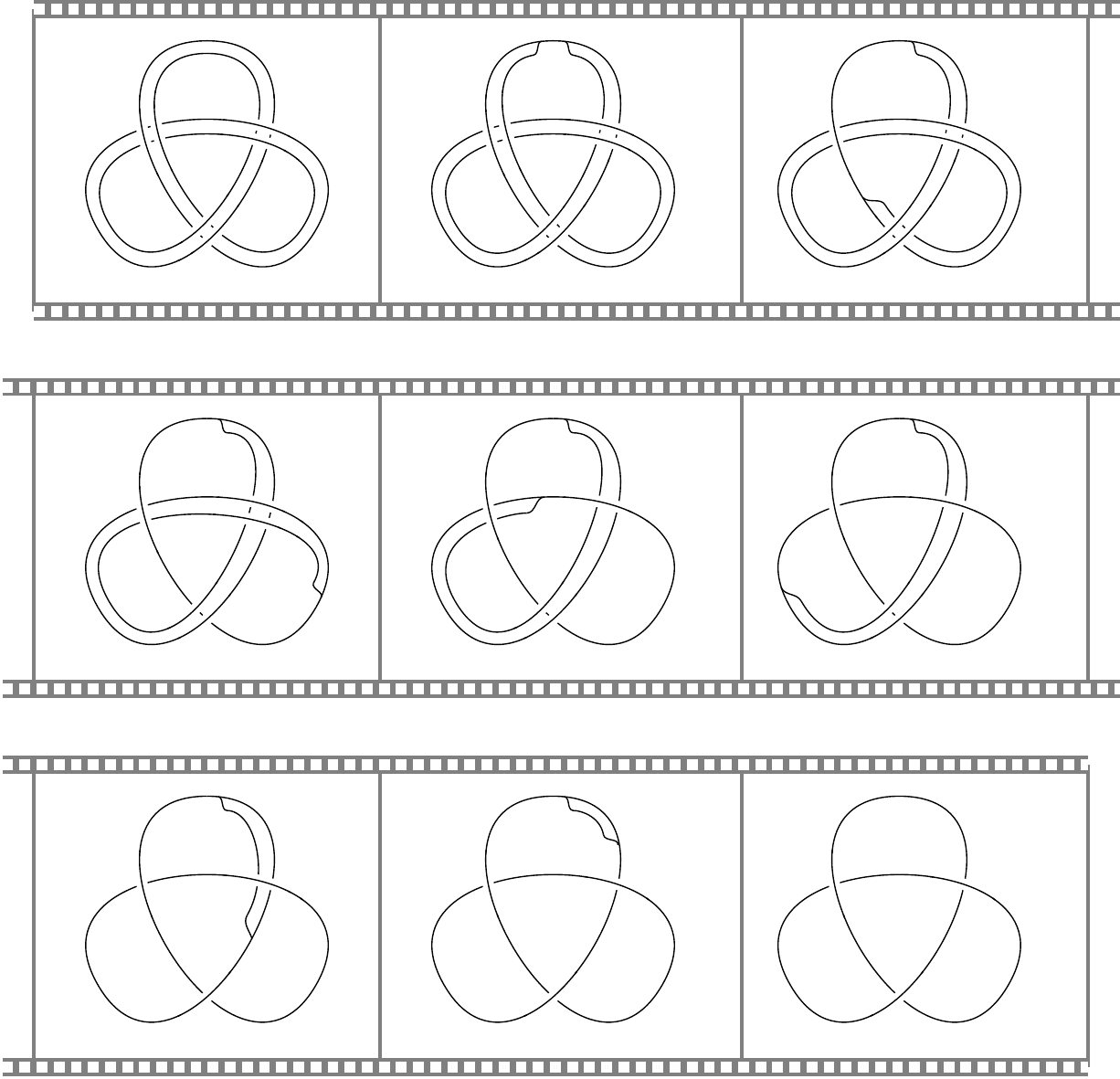}
  \caption{The morphism $\lambda(L,x)$ performed on the cabled trefoil. The picture is to be read from left to right and from top to bottom. Note that for simplicity, between the second and the fifth frames, there are two elementary movies instead of one between two frames. }
  \label{fig:examp7trefoil}
\end{figure}
\end{dfn}

\begin{lem}\label{lem:startpointinvariance}
  With the same notations and with $y$ another regular point of $l$, we have:
  \[\G(\lambda(L,x, l_1, l_2)) = \pm \G(\lambda(L,y, l_1, l_2)).\]
\end{lem}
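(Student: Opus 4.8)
The plan is to move the base point one ``elementary feature'' of $L$ at a time along the component $l$, and to check that each such move changes the movie $\lambda$ only by transformations that $\G$ respects up to sign. Choose an oriented sub-arc of $l$ running from $x$ to $y$ and subdivide it by those crossings of $L$ that lie on $l$; it then suffices to prove $\G(\lambda(L,x_i,l_1,l_2)) = \pm\,\G(\lambda(L,x_{i+1},l_1,l_2))$ for consecutive subdivision points $x_i,x_{i+1}$, between which there is either no crossing at all or exactly one crossing $c$.

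When $x$ and $y$ lie on a common embedded arc of $l$ free of crossings, the two movies $\lambda(L,x,l_1,l_2)$ and $\lambda(L,y,l_1,l_2)$ are assembled from the very same list of elementary movies (one zip, a string of type B8--B9 moves, one digon cap); only the location of the zip and the digon cap, and hence the length of the initial part of the slide of the travelling singularity, differ. As all these elementary movies are then supported in disjoint balls strung along a planar arc, passing from one movie to the other is a composition of ``far modifications commute'' relations together with a planar ambient isotopy of the movie, both of which are respected by $\G$. Hence in this case $\G(\lambda(L,x,l_1,l_2)) = \G(\lambda(L,y,l_1,l_2))$, a fortiori up to sign.

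When a single crossing $c$ of $L$ separates $x$ from $y$ along $l$, the genuine content is the interaction of the construction with a crossing. In $\lambda(L,x,l_1,l_2)$ the zip is performed on the $x$-side of $c$, and the travelling singularity is pushed across $c$ by moves of type B8--B9 inside the definition of $\lambda$. I would first use Lemma~\ref{lem:zipcrossing} to slide the zip to the far side of $c$ under $\G$; this turns ``zip at $x$'' into ``zip at $y$'' while lengthening (resp. shortening) the portion of the travelling vertex's path on each side of $c$. The $R4$-type moves of Proposition~\ref{prop:importantmoves} --- which let a trivalent vertex together with its adjacent edges be pushed past a crossing at the cost of at most a sign --- then reorganise the remaining B8--B9 slides so that the movie becomes precisely $\lambda(L,y,l_1,l_2)$; if a stray digon is produced in the process, Lemma~\ref{lem:zipdigon} or a framed equivalence (Lemma~\ref{lem:framedequivalence}) disposes of it. Since $\G$ is a morphism of canopoleis, a local equality glues to a global equality and a local sign ambiguity glues to a single global sign ambiguity, so this case yields $\G(\lambda(L,x,l_1,l_2)) = \pm\,\G(\lambda(L,y,l_1,l_2))$. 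Composing over the subdivision of the arc from $x$ to $y$ gives the lemma.

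The \emph{main obstacle} is bookkeeping rather than a single hard idea: one must follow, frame by frame, the simultaneous presence near $c$ of the zip seam, the travelling singularity and the auxiliary strands $l_1,l_2$, and confirm that the moves actually at our disposal --- Lemma~\ref{lem:zipcrossing}, Lemma~\ref{lem:zipdigon}, Proposition~\ref{prop:importantmoves} and Lemma~\ref{lem:framedequivalence} --- genuinely suffice to carry $\lambda(L,x,l_1,l_2)$ to $\lambda(L,y,l_1,l_2)$. It is exactly here that Bar-Natan's trick is indispensable: several of these intermediate moves are only known to preserve $\G$ up to sign, so there is no hope of an on-the-nose equality in the crossing case, and an inductive sign-tracking argument is the natural way to package the ambiguity. (One should also remark that the choice of which of the two singularities produced by the zip is pushed around $l$ affects $\lambda$ at most by a sign, by the same kind of reasoning, so that choice does not enter the statement.)
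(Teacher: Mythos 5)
Your proposal is correct and follows essentially the same route as the paper: reduce to the case where $x$ and $y$ are separated by at most one crossing, then conclude from Lemma~\ref{lem:zipcrossing}, Lemma~\ref{lem:zipdigon} and Proposition~\ref{prop:importantmoves} (the paper leaves the crossing-free case and the frame-by-frame bookkeeping implicit, which you spell out). No gap.
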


\begin{proof}
  It is enough to consider the case where $x$ and $y$ are separated by only one crossing. In this case,
  this is an easy consequence of lemmas \ref{lem:zipcrossing}, \ref{lem:zipdigon} and proposition \ref{prop:importantmoves}.
\end{proof}

\begin{lem}\label{lem:ReidemeisterYinvariance}
With the same notations, if $L$ and $L'$ are two oriented link diagrams which are framed equivalent via a movie $m$, then the following diagram homotopy-commutes up to a sign:
\[
\begin{tikzpicture}[scale=2]
  \node (A) at (0,0) {$\G(L)$};
  \node (B) at (2,0) {$\G(L')$};
  \node (C) at (0,1) {$\G(L_{//})$};
  \node (D) at (2,1) {$\G(L'_{//})$};
\draw [->] (A) -- (B) node [midway, below] {$\G(m)$};
\draw [->] (C) -- (D) node [midway, above] {$\G(m_{//})$};
\draw [->] (C) -- (A) node [midway, left] {$\G(\lambda(L,x, l_1, l_2))$};
\draw [->] (D) -- (B) node [midway, right] {$\G(\lambda(L', x', l_1, l_2))$};
\end{tikzpicture}
\]
Where $m_{//}$ is the movie $m$ adapted to the cabling. 
\end{lem}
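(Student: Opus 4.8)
The plan is to reduce the square to local commutation relations that are already available: the ``far modifications commute'' relation of Definition~\ref{dfn:movie}, Lemmas~\ref{lem:zipcrossing} and~\ref{lem:zipdigon}, and Proposition~\ref{prop:importantmoves} (the latter being exactly the place where Bar-Natan's BN-simple trick has been used). First I would reduce to the case where $m$ is a single elementary movie of type A5--A9 or B4--B9: if $m=m_2\circ m_1$ factors through an intermediate diagram $L''$, the square for $m$ is the horizontal pasting of the squares for $m_1$ (between $L$ and $L''$) and $m_2$ (between $L''$ and $L'$), with $m_{//}=(m_2)_{//}\circ(m_1)_{//}$; since a pasting of two squares that each homotopy-commute up to a sign again homotopy-commutes up to a sign, and the length-one movie gives the trivial base case, it suffices to treat a single elementary $m$, which is supported in a small ball $B$.

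\textbf{Normalising the base points.} The square involves $\G(\lambda(L,x,l_1,l_2))$ and $\G(\lambda(L',x',l_1,l_2))$ for \emph{a priori} unrelated regular points $x$ and $x'$. By Lemma~\ref{lem:startpointinvariance}, at the cost of one sign on each vertical arrow, I may move $x$ and $x'$ off of the ball $B$ and take $x'=x$ (this is legitimate because the elementary movie $m$ fixes a neighbourhood of $x$, so $x$ is still a regular point of the corresponding component of $L'$). After this normalisation, $\lambda(L,x,l_1,l_2)$ consists of a zip (B2) at $x$, followed by a sequence of B8/B9 slides pushing one of the two singular circles once around the component $l$, followed by a digon cap (A4) at $x$; the zip and the digon cap are now supported in a ball disjoint from $B$, and similarly for $\lambda(L',x,l_1,l_2)$.

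\textbf{Splitting the square.} I would then decompose the square into an ``outer'' part and a ``local'' part. Away from $B$ each step of $\lambda$ (the zip, the digon cap, and every slide whose supporting ball misses $B$) is performed in a ball disjoint from the support of $m$, and $m_{//}$ coincides with $m$ outside a neighbourhood of $l$; by ``far modifications commute'' all these sub-squares commute on the nose, so the only possible failure of commutativity is concentrated where the travelling singular circle passes through $B$. If $B$ does not meet $l$, this never happens and we are done (up to the signs already collected). If $B$ meets $l$, the remaining sub-square asserts that ``slide the travelling singular circle past the local picture of $m_{//}$'' equals ``perform $m$, then slide'': since $m$ is one of the framed-equivalence moves (an $R1$-, $R2$-, $R3$- or $R4$-type move) and its cabling $m_{//}$ is the same move performed alongside the strands that carry the singular circle, this is — up to an isotopy of movies that $\G$ does not see — precisely one of the equalities of Proposition~\ref{prop:importantmoves}, the situations where the singular circle runs next to the zip or the digon cap being covered instead by Lemmas~\ref{lem:zipcrossing} and~\ref{lem:zipdigon}. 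Multiplying the finitely many signs produced along the way gives the claimed homotopy-commutation up to a sign.

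\textbf{Main obstacle.} The delicate point is the last step: one must check that every configuration ``cabled elementary framed-equivalence move $m_{//}$ with a singular circle sliding past it'' actually occurs (after a harmless re-routing of the movie) in the list of Proposition~\ref{prop:importantmoves}, and — for the cases where $B$ meets $l$ — that all the balls in the decomposition can genuinely be chosen pairwise disjoint so that ``far modifications commute'' applies cleanly; this is a finite but somewhat tedious case analysis over the moves A5--A9, B4--B9 and the possible positions of the singular circle. The accumulation of signs is immaterial, since the statement is only asserted up to a sign, and the same remark shows we never need to pin any of them down.
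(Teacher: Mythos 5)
Your proposal is correct and follows essentially the same route as the paper: reduce to a single elementary move, use Lemma~\ref{lem:startpointinvariance} to move the base points $x$, $x'$ away from the ball where the move takes place, and then conclude from ``far modifications commute'' together with Lemmas~\ref{lem:zipcrossing}, \ref{lem:zipdigon} and Proposition~\ref{prop:importantmoves}. The paper states this more tersely (and only needs the Reidemeister-type moves A5--A7, B4--B7, since link diagrams have no trivalent vertices), but your more detailed splitting of the square is the same argument spelled out.
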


\begin{proof}
  This is enough to consider the case where $L$ and $L'$ are related by a Reidemeister move (A5-A7 and B4-B7). Thanks to lemma~\ref{lem:startpointinvariance}, we can suppose that the points $x$ and $x'$ are disjoint from the ball where the Reidemeister move takes place. With this setting, the result is an easy consequence of lemmas \ref{lem:zipcrossing}, \ref{lem:zipdigon} and proposition \ref{prop:importantmoves}.
\end{proof}
 
Consider $\lambda(L,x, l_1, l_2)$ in the other direction: it becomes a movie from $L$ to $L_{//}$ and let us denote it by $\mu(L, x, l_1, l_2)$. Composing these two movies, we obtain a movie $\alpha(L,x, l_1, l_2)$ from $L$ to $L$, 

\begin{lem}
  \label{lem:lambda2timesid}
   The $\G(\alpha(L,x, l_1, l_2))$ is homotopic to $2\cdot\id_{\G(L)}$.
\end{lem}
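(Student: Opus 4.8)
The plan is to simplify $\G(\alpha)$ step by step, using the relations defining $\F$ and the inverse pairs of elementary movies, until it is controlled by the digon relation. Unravelling Definition~\ref{dfn:Ymorphism}, the movie $\alpha=\lambda\circ\mu$ is the concatenation
\[
\underbrace{(\text{digon cup at }x)\,(s_{b})\,(\text{unzip at }x)}_{\mu}\ \underbrace{(\text{zip at }x)\,(s_{f})\,(\text{digon cap at }x)}_{\lambda},
\]
where $s_{b}$ and $s_{f}$ are the two sequences of moves of type B8, B9 pushing one chosen singular point once around $l$, against and along the orientation of $l$ respectively. Writing $\mathrm{dcup}$, $\mathrm{dcap}$ for the digon cup and digon cap, one has under $\G$:
\[
\G(\alpha)=\G(\mathrm{dcap})\circ\G(s_f)\circ\G(\mathrm{zip})\circ\G(\mathrm{unzip})\circ\G(s_b)\circ\G(\mathrm{dcup}).
\]

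First I would collapse the inner block $\G(\mathrm{zip})\circ\G(\mathrm{unzip})$: by figure~\ref{fig:description-g} this is a cosingular saddle followed by a singular saddle, i.e.\ a foam that births and then kills a small piece of the rung facet created by the zip, which the relations of Proposition-Definition~\ref{pd:functor} (surgery and spheres evaluation) reduce to the identity of the object $s_b$ lands in. Next, by construction $s_b$ and $s_f$ push the \emph{same} singular point around $l$ through the \emph{same} crossings but in opposite directions, so $s_f$ is move by move the reverse of $s_b$; since each move of type B8, B9 is paired with its own inverse in figure~\ref{fig:description-g} and the ``far modifications commute'' relation lets these cancellations be performed one crossing at a time, $\G(s_f)\circ\G(s_b)$ is homotopic to the identity. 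Feeding these two simplifications back in, $\G(\alpha)$ is homotopic to $\G(\mathrm{dcap})\circ\G(\mathrm{dcup})$, the endomorphism of $\G(L)$ obtained by gluing a single digon bubble onto the strand $l$ near $x$.

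This last composite is local, and it is computed by the digon relation of Proposition-Definition~\ref{pd:functor} --- the categorification of the identity $\kup{\text{digon}}=[2]\,\kup{\text{edge}}$ of remark~\ref{rmk:web2sl3}: that relation identifies $\F$ of a digon with two shifted copies of $\F$ of an edge, with the digon cup and the digon cap as the structure maps, so that $\G(\mathrm{dcap})\circ\G(\mathrm{dcup})$ is the sum over the two copies of the identity, namely $2\cdot\id_{\G(L)}$. Since all the relations used (surgery, spheres, digon) and all the move cancellations are sign-definite, one indeed obtains $+2\cdot\id$ and not $-2\cdot\id$. The step I expect to be the main obstacle is the cancellation $\G(s_f)\circ\G(s_b)\simeq\id$: one has to verify that $\mu$ and $\lambda$ push the same singular point in opposite directions, that the intermediate webs produced by $s_b$ and consumed by $s_f$ coincide, and then run the telescoping of the B8/B9 moves around all the crossings of $l$ carefully via the ``far modifications commute'' relation.
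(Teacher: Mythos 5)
Your overall skeleton (decompose $\alpha$ into digon cup, pushes, unzip, zip, pushes, digon cap; cancel the pushes using that $\G$ sends the A8/B8 and A9/B9 moves to mutually inverse maps; reduce to a local computation near $x$) is the same as the paper's, but both of your local foam evaluations are wrong, and they are exactly where the content of the lemma lies. First, $\G(\mathrm{zip})\circ\G(\mathrm{unzip})$ is \emph{not} the identity: each singular saddle is a degree-$1$ map, so this composite is a degree-$2$ endomorphism, and no amount of surgery and sphere evaluation will turn it into $\id$. What actually happens is that the two rung half-discs created by the unzip and the zip glue into a \emph{vertical digon facet}, and it is the digon relation of Proposition-Definition~\ref{pd:functor} (not surgery/spheres) applied to this facet that is the key step: it writes $\G(\alpha)$ as a difference of two morphisms, each carrying a dot near $x$. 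Only after this splitting do the two halves of the movie decouple, so that the inverse B8/B9 pairs telescope away; your plan cancels $\G(s_f)\circ\G(s_b)$ \emph{after} having removed the middle block, which rests on the false identity above.

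Second, your terminal evaluation is also incorrect: the undotted composite (digon cap)$\circ$(digon cup) is not $2\cdot\id$ --- it has degree $-2$ and is in fact zero (an undotted bubble). The digon relation decomposes $\id$ of the \emph{digon web} as a sum of two composites through the single edge, each involving a dot; it does not say the reverse composite without dots is a sum of two identities. The factor $2$ in the lemma arises as $1-(-1)$: after the digon relation and the cancellation of the pushes, one is left near $x$ with (cap)$\circ$(dotted cup) minus (dotted cap)$\circ$(cup), and these two dotted bubbles are evaluated via the surgery relation together with the dotted theta-foam and dotted sphere evaluations to $+\id$ and $-\id$ respectively. So although your sketch lands on $2\cdot\id$, it does so through two false foam identities; the genuine argument --- digon relation on the vertical digon in the middle, then dotted-bubble evaluations --- is missing.
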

\begin{proof}[Sketch of the proof]
In a neighborhood of $x$, the movie $\alpha(L,x, l_1, l_2)$  reads as follows:
\[
\begin{tikzpicture}[scale=0.4]
  \begin{scope}[scale=1]
\begin{scope}
\draw[draw=  gray, line width = 2mm] (-0.50,0) --  (12.5,0); 
\draw [draw = gray, line width = 2mm] (-0.50,4) -- (12.5,4); 
\draw[draw=  gray, very thick] (0,0) -- (0,4);
\draw[draw=  gray, very thick] (4,0) -- (4,4);
\draw[draw=  gray, very thick] (8,0) -- (8,4);
\draw[draw=  gray, very thick] (12,0) -- (12,4);
\draw[draw= white, dotted, line width =1.2mm] (-0.50,0) -- (12.5,0);
\draw[draw= white, dotted, line width = 1.2mm] (-0.50,4) -- (12.5,4);
\draw[draw=  gray, line width = 2mm] (-10,0) --  (-1.5,0); 
\draw [draw = gray, line width = 2mm] (-10,4) -- (-1.5,4); 
\draw[draw=  gray, very thick] (-10,0) -- (-10,4);
\draw[draw=  gray, very thick] (-6,0) -- (-6,4);
\draw[draw=  gray, very thick] (-2,0) -- (-2,4);
\draw[draw= white, dotted, line width =1.2mm] (-10,0) -- (-1.5,0);
\draw[draw= white, dotted, line width = 1.2mm] (-10,4) -- (-1.5,4);
\draw[draw=  gray, line width = 2mm] (13.5,0) --  (22,0); 
\draw [draw = gray, line width = 2mm] (13.5,4) -- (22,4); 
\draw[draw=  gray, very thick] (22,0) -- (22,4);
\draw[draw=  gray, very thick] (18,0) -- (18,4);
\draw[draw=  gray, very thick] (14,0) -- (14,4);
\draw[draw= white, dotted, line width =1.2mm] (13.5,0) -- (22,0);
\draw[draw= white, dotted, line width = 1.2mm] (13.5,4) -- (22,4);

\node at ( -1, 2) {$\dots$};
\node at ( 13,2) {$\dots$};

\end{scope}
\begin{scope}[xshift= -8cm, yshift=2cm]
  \filldraw[dotted, fill=gray!50!white] (0,0) circle (1.5cm);
  \filldraw[dotted, fill = white] (0, 0) circle (1cm);
\draw (90:-1) -- (90:1);
\end{scope}
\begin{scope}[xshift= -4cm, yshift=2cm]
  \filldraw[dotted, fill=gray!50!white] (0,0) circle (1.5cm);
  \filldraw[dotted, fill = white] (0, 0) circle (1cm);
\draw (90:1) -- (90:0);
\draw (90:-1) -- (90:-0.6);
\draw (90:0) .. controls +(-0.3,-0.3) and +(-0.3, 0.3) .. (90: -0.6);
\draw (90:0) .. controls +(0.3,-0.3) and +(0.3, 0.3) .. (90: -0.6);
\end{scope}

\begin{scope}[xshift= 2cm, yshift=2cm]
  \filldraw[dotted, fill=gray!50!white] (0,0) circle (1.5cm);
  \filldraw[dotted, fill = white] (0, 0) circle (1cm);
\draw (60:1) -- (90:0.6) -- (90:0) -- (60:-1);
\draw (90:0.6) -- (120:1);
\draw (90:0) -- (120:-1);
\fill[white] (-0.38, 0.56) circle (0.1);
\fill[white] (0.38, 0.56) circle (0.1);
\end{scope}
\begin{scope}[xshift= 6cm, yshift=2cm]
  \filldraw[dotted, fill=gray!50!white] (0,0) circle (1.5cm);
  \filldraw[dotted, fill = white] (0, 0) circle (1cm);
\draw (60:1).. controls +(-120:0.5) and +(120:0.5) .. (-60:1);
\draw (120:1).. controls +(-60:0.5) and +(60:0.5) .. (-120:1);
\end{scope}
\begin{scope}[xshift= 10cm, yshift=2cm]
  \filldraw[dotted, fill=gray!50!white] (0,0) circle (1.5cm);
  \filldraw[dotted, fill = white] (0, 0) circle (1cm);
\draw (60:1) -- (90:0.6) -- (90:0) -- (60:-1);
\draw (90:0.6) -- (120:1);
\draw (90:0) -- (120:-1);
\end{scope}

\begin{scope}[xshift= 20cm, yshift=2cm]
  \filldraw[dotted, fill=gray!50!white] (0,0) circle (1.5cm);
  \filldraw[dotted, fill = white] (0, 0) circle (1cm);
\draw (90:-1) -- (90:1);
\end{scope}
\begin{scope}[xshift= 16cm, yshift=2cm]
  \filldraw[dotted, fill=gray!50!white] (0,0) circle (1.5cm);
  \filldraw[dotted, fill = white] (0, 0) circle (1cm);
\draw (90:1) -- (90:0);
\draw (90:-1) -- (90:-0.6);
\draw (90:0) .. controls +(-0.3,-0.3) and +(-0.3, 0.3) .. (90: -0.6);
\draw (90:0) .. controls +(0.3,-0.3) and +(0.3, 0.3) .. (90: -0.6);
\end{scope}

\end{scope} 
\end{tikzpicture}
\]

In the middle of the movie, there is a vertical digon in $\G(\alpha(L,x, l_1, l_2))$, we now use the digon relation (see definition~\ref{pd:functor}). Hence, $\G(\alpha(L,x, l_1, l_2))$ can be seen as the difference of two morphisms. Using the fact that the images by $\G$ of the elementary moves A$i$ and B$i$ (for $i$ equal to $8$ or $9$) are inverse one from the other, we obtain that $\G(\alpha(L,x, l_1, l_2))$ is homotopic to the identity everywhere  but in a neighborhood of $x$ where we have:
\[
 \G(\alpha(L,x, l_1, l_2)) = \foamgid[0.3] \circ \foamdigd[0.3] - \foamgidd[0.3] \circ \foamdig[0.3]. 
\]

Using the surgery relation, the evaluation of dotted theta-foams and dotted sphere, we obtain:
\[
\foamgid[0.3] \circ \foamdigd[0.3] =\vcenter{\hbox{\! \tikz[scale=0.3]{\draw(-2,0) --(2,0) --(2,3) --(-2,3) --(-2,0); }}}  \quad \textrm{and}\quad \foamgidd[0.3] \circ \foamdig[0.3]=- \vcenter{\hbox{\! \tikz[scale=0.3]{\draw(-2,0) --(2,0) --(2,3) --(-2,3) --(-2,0); }}}  .
\]
Hence $\G(\alpha(L,x, l_1, l_2))$ is homotopic to $2\cdot\id_{\G(L)}$.
\end{proof}

\subsubsection{The movie $\nu$}
\label{sec:cobordism-nu}
We consider an oriented  link diagram $L$ in the standard disk $B_0$ and $L_{//}$ a cabling of $L$ along one component $l$ with two parallel copies $l_1$ and $l_2$  one oriented likes $l$ the other one with the opposite orientation. We consider as well $L_0$, this is the link diagram obtained by removing $l$ from $L$.
Both $L_0$ and $L_{//}$ are considered as objects of $\mathcal{TD}_{\emptyset}$. Let $x$ be a regular point of $l$.

\begin{dfn}
  \label{dfn:numorphism}
  The movie $\nu(L,x, l_1, l_2)$ in $\mathrm{HOM}_{\mathcal{TD_{\emptyset}}}(L_{//}, L_0)$ is defined by the following sequence:
\begin{itemize}
\item A saddle (B1) at $x$.
\item A sequence of moves of type B6 by pushing the strand along itself.
\item A cap (A1) in a neighborhood of $x$. 
\end{itemize}
An example is given in figure~\ref{fig:examp7trefoil2}
\begin{figure}[ht]
  \centering
\includegraphics{\imagesfolder/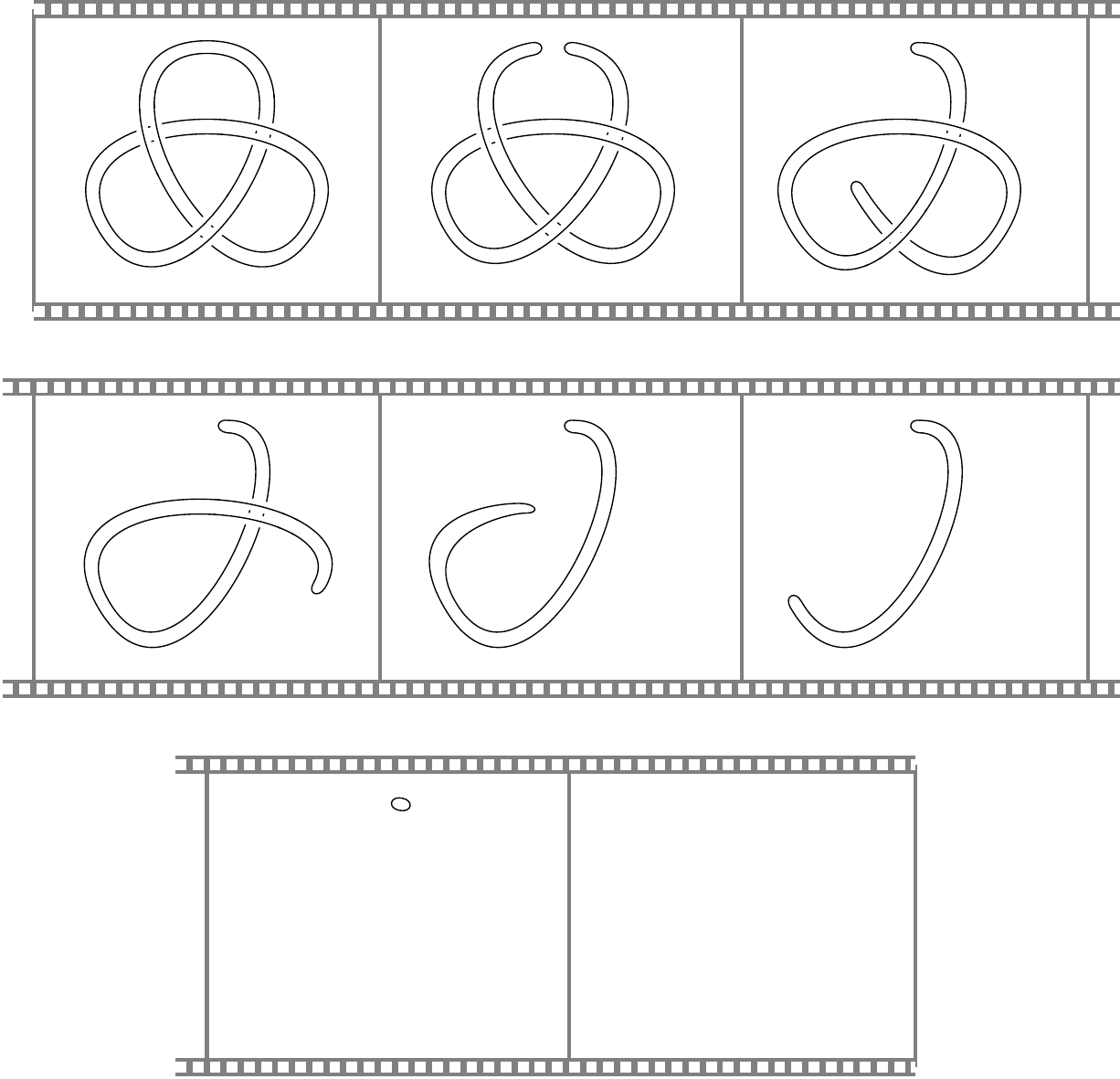} 
  \caption{The morphism $\nu(L,x)$ performed on the cabled trefoil.}
  \label{fig:examp7trefoil2}
\end{figure}
\end{dfn}

The same arguments as for the movie $\lambda$ gives the following two lemmas:

\begin{lem}\label{lem:startpointinvariancenu}
  With the same notations and with $y$ another regular point of $l$, we have:
  \[\G(\nu(L,x, l_1, l_2)) = \pm \G(\nu(L,y, l_1, l_2)).\]
\end{lem}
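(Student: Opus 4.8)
The plan is to follow closely the strategy already used for Lemma~\ref{lem:startpointinvariance}. First I would reduce to the case where $x$ and $y$ are regular points of $l$ separated by exactly one crossing $c$ of the underlying diagram: any two regular points of $l$ are joined by an arc of $l$ meeting finitely many crossings, so the general statement follows by composing finitely many such elementary comparisons and multiplying the resulting signs. Hence from now on assume $x$ and $y$ lie on either side of a single crossing $c$, all the other crossings of $l$ being in the ``far'' region which the two movies $\nu(L,x,l_1,l_2)$ and $\nu(L,y,l_1,l_2)$ treat identically.

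Next I would unpack the two movies locally. By Definition~\ref{dfn:numorphism}, $\nu(L,x,l_1,l_2)$ creates a saddle (B1) at $x$, then carries the resulting free end along the strand by a sequence of B6 moves, and finishes with a cap (A1) near $x$; the movie $\nu(L,y,l_1,l_2)$ does the same starting at $y$. The discrepancy between the two is therefore concentrated around $c$: in one movie the saddle is created on the $x$--side of $c$ and the B6--sequence later carries the free end past $c$, whereas in the other the saddle is created on the $y$--side and the push--along either never meets $c$ or meets it in the opposite order. To identify the two I would: (i) slide the initial saddle and the terminal cap past $c$, which by the ``far modifications commute'' relation of Definition~\ref{dfn:movie} is an honest equality when the event and $c$ are disjoint, and which holds up to a sign, by the saddle--past--crossing and cap--past--crossing commutations proved exactly as in Lemmas~\ref{lem:zipcrossing} and \ref{lem:zipdigon} together with Proposition~\ref{prop:importantmoves}, when they are adjacent; and (ii) commute the intervening B6 moves past $c$, again up to a sign, using the Reidemeister--type movie relations collected in Proposition~\ref{prop:importantmoves}. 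After these rewritings the sequence of elementary movies defining $\nu(L,x,l_1,l_2)$ is transformed into the one defining $\nu(L,y,l_1,l_2)$, so applying $\G$ yields $\G(\nu(L,x,l_1,l_2)) = \pm\,\G(\nu(L,y,l_1,l_2))$, which is the claim.

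The main obstacle I expect is the purely organisational one of checking that \emph{every} way the B6--sequence and the saddle/cap can thread around the crossing $c$ is already covered by a relation appearing in Proposition~\ref{prop:importantmoves} (and in the arguments of Lemmas~\ref{lem:zipcrossing}, \ref{lem:zipdigon}), rather than forcing a genuinely new movie move. What makes this manageable is that we only ever need the equality \emph{up to a sign}: this is exactly the point of Bar--Natan's trick, so at no stage does one have to evaluate a foam or track a precise coefficient, and consequently no lemma beyond those already established is required.
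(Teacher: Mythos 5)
Your proposal is correct and follows essentially the same route as the paper: the paper proves this lemma by declaring it ``the same argument as for $\lambda$'', namely reducing to the case where $x$ and $y$ are separated by a single crossing and then commuting the local events past that crossing up to sign via the established commutation results (Lemmas~\ref{lem:zipcrossing}, \ref{lem:zipdigon} and Proposition~\ref{prop:importantmoves}). The only thing worth adding is the paper's Remark~\ref{rmk:clarkwouldhavedoneit}: since the movie $\nu$ involves only saddles, caps and Reidemeister-type moves it presents an honest surface cobordism, so Clark's functoriality theorem (Theorem~\ref{thm:clark}) already yields the statement, even without the sign ambiguity.
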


\begin{lem}\label{lem:Reidemeisternuinvariance}
With the same notations, if $L$ and $L'$ are two oriented link diagrams which are framed equivalent via a movie $m$, then the following diagram homotopy-commutes up to a sign:
\[
\begin{tikzpicture}[scale=2]
  \node (A) at (0,0) {$\G(L_{0})$};
  \node (B) at (2,0) {$\G(L'_{0})$};
  \node (C) at (0,1) {$\G(L_{//})$};
  \node (D) at (2,1) {$\G(L'_{//})$};
\draw [->] (A) -- (B) node [midway, below] {$\G(m_0)$};
\draw [->] (C) -- (D) node [midway, above] {$\G(m_{//})$};
\draw [->] (C) -- (A) node [midway, left] {$\G(\nu(L,x, l_1, l_2))$};
\draw [->] (D) -- (B) node [midway, right] {$\G(\nu(L', x', l_1, l_2))$};
\end{tikzpicture}
\]

Where $m_{//}$ is the movie $m$ adapted to $L_{//}$ and $m_{0}$, is the movie $m$ adapted to $L_{0}$. 
\end{lem}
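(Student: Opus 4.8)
The plan is to transcribe, nearly verbatim, the argument that established Lemma~\ref{lem:ReidemeisterYinvariance} for the movie $\lambda$, replacing the use of Lemma~\ref{lem:startpointinvariance} by its $\nu$--analogue, Lemma~\ref{lem:startpointinvariancenu}.

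First I would reduce to the case where the framed equivalence $m$ is a single elementary Reidemeister movie, one of A5--A7 or B4--B7. By Definition~\ref{dfn:framed-equivalent} a framed equivalence is a finite composite $m = m_r\circ\cdots\circ m_1$ of such movies; letting $L = L^{(0)}, L^{(1)},\dots,L^{(r)} = L'$ be the intermediate diagrams and choosing a regular base point of the relevant component at each stage, the square of the statement for $m$ decomposes as the vertical stacking of the $r$ squares attached to the $m_i$, with intermediate vertical arrows $\G(\nu(L^{(i)},x^{(i)},l_1,l_2))$. Homotopies compose and signs multiply, so if each of these $r$ squares homotopy--commutes up to a sign, so does their composite. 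Hence it suffices to treat a single elementary $m$, supported in a small ball $B\subset B_0$.

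Next, by Lemma~\ref{lem:startpointinvariancenu} I would move the base point $x$ (and likewise $x'$) to a regular point of $l$ lying outside $B$ and agreeing for $L$ and $L'$; this is possible since $l$ always meets the complement of the little ball $B$. With this normalization the saddle (B1) at the start and the cap (A1) at the end of $\nu(\cdot,x,l_1,l_2)$ (Definition~\ref{dfn:numorphism}) occur in a ball disjoint from $B$, hence by the ``far modifications commute'' relation of Definition~\ref{dfn:movie} they commute on the nose with $\G(m_{//})$ and $\G(m_0)$ inside the two composites $\G(m_0)\circ\G(\nu(L,x,l_1,l_2))$ and $\G(\nu(L',x',l_1,l_2))\circ\G(m_{//})$. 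The only place where these composites can differ is the stretch of $\nu$ in which the moves of type B6 sweep the auxiliary strand once around $l$, a sweep that necessarily crosses $B$. Thus everything reduces to the assertion that an elementary B6 move carried out in $B$ commutes, up to a global sign and up to homotopy, with the local model of $m$ (namely $m_{//}$ before the strand has passed, $m_0$ afterwards). For $m$ a Reidemeister or web move this is exactly one of the commutation--up--to--a--sign statements collected in Proposition~\ref{prop:importantmoves}; in the remaining configurations, where the sweep merely slides a cup, a cap or a trivalent vertex past the strand involved in $m$, the required equality is given by Lemmas~\ref{lem:zipcrossing} and \ref{lem:zipdigon} or their evident analogues for the saddle and the cap, proved by the same argument. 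Stitching these local identities along the sweep composes finitely many homotopies and multiplies finitely many signs, which gives the desired homotopy--commutativity of the square up to a sign.

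What actually requires care here is not a conceptual difficulty but the bookkeeping in the last step: one must check that every way in which the B6 sweep of $\nu$ can interact with the local picture of each elementary move $m$ in A5--A7, B4--B7 (including the passage through trivalent vertices produced by $\nu$ itself) is matched with an entry of Proposition~\ref{prop:importantmoves} or with the saddle/cap version of Lemmas~\ref{lem:zipcrossing} and \ref{lem:zipdigon}; once this catalogue is assembled, each individual case is a short diagram chase, just as in the proof of Lemma~\ref{lem:ReidemeisterYinvariance}.
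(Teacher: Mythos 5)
Your proposal is correct and follows essentially the same route as the paper, which simply asserts that ``the same arguments as for the movie $\lambda$'' apply, i.e.\ the proof of Lemma~\ref{lem:ReidemeisterYinvariance}: reduce to a single Reidemeister move, relocate the base point using Lemma~\ref{lem:startpointinvariancenu}, then combine far commutation with Lemmas~\ref{lem:zipcrossing}, \ref{lem:zipdigon} and Proposition~\ref{prop:importantmoves}. The paper also notes (Remark~\ref{rmk:clarkwouldhavedoneit}) that since $\nu$ describes an honest surface cobordism, Clark's functoriality Theorem~\ref{thm:clark} already implies the statement, so the case-by-case bookkeeping you flag at the end can be bypassed entirely.
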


\begin{rmk}
  \label{rmk:clarkwouldhavedoneit}
Note that theorem~\ref{thm:clark} implies the two previous lemma since the movie $\nu$ describe a surface.
\end{rmk}

We have an analogue to lemma~\ref{lem:lambda2timesid}: we consider $\nu(L,x,l_1,l_2)$ read in the other direction and denote this movie $\oldnu(L,x, l_1,l_2)$.  Composing these two movies, we obtain a movie $\beta(L,x, l_1, l_2)$ from $L_0$ to $L_0$, 
\begin{lem}
  \label{lem:nu3timesid}
   This endomorphism $\G(\beta(L_{0},x, l_1, l_2))$ is homotopic to $3\cdot\id_{\G(L_{0})}$.
\end{lem}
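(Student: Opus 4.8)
The plan is to mirror the proof of Lemma~\ref{lem:lambda2timesid}, but to exploit a feature that the movie $\nu$ has and the movie $\lambda$ does not: $\nu$ — and hence $\oldnu$ and $\beta=\nu\circ\oldnu$ — describes a genuine cobordism, i.e.\ a surface without foam–singularities, since it is assembled only from cups, caps and moves of type B6 (see Remark~\ref{rmk:clarkwouldhavedoneit}). So instead of a local foam computation I would first identify the cobordism that $\beta$ presents, and then conclude by invoking Clark's functoriality theorem together with the fact that $\F$ restricted to surfaces is the $2$-dimensional TQFT attached to the Frobenius algebra $\A\cong\ZZ[X]/X^3$ (Remark~\ref{rmk:Frob}).

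First I would describe the cobordism $\Sigma$ from $L_0$ to $L_0$ presented by $\beta=\nu\circ\oldnu$. Away from the cabled component it is the identity cylinder over $L_0$. On the cabled part, $\oldnu$ sweeps a surface from $\emptyset$ to the two antiparallel parallel copies $l_1\sqcup l_2$: one birth, a chain of B6 moves, one reverse saddle, so a surface of Euler characteristic $0$ with boundary $l_1\sqcup l_2$, i.e.\ an annulus (a ``pair of pants'' with one leg capped). The movie $\nu$ then sweeps the complementary annulus from $l_1\sqcup l_2$ back down to $\emptyset$, and it is the reverse of the one $\oldnu$ just produced. Glued along $l_1\sqcup l_2$, these two annuli form the double of an annulus, which is a torus $T$. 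Hence $\Sigma\cong (L_0\times[0,1])\sqcup T$, the (possibly knotted) embedding of $T$ being immaterial for what follows.

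By Clark's functoriality theorem~\ref{thm:clark} (and Theorem~\ref{thm:clarkeasy}), $\G(\beta)=\G(\Sigma)$ depends only on the isotopy type of $\Sigma$; a disjoint union goes to a product, so $\G(\beta)=\id_{\G(L_0)}\cdot\F(T)$. It then remains to evaluate $\F(T)$. Using the sphere evaluation of Proposition-Definition~\ref{pd:functor} — equivalently the counit of $\A$, which sends $X^k$ to $\delta_{k,2}$ — the basis $1,X,X^2$ of $\A$ has dual basis $X^2,X,1$ for the Frobenius pairing, and $T$, being the categorical trace of the identity cylinder on a circle, evaluates to $\sum_i\langle b_i,b^i\rangle=\dim_{\ZZ}\A=3$. (The value of $\F$ on a closed surface depends only on its genus, since the local relations of~\ref{pd:functor} are phrased purely in terms of Euler characteristics and dots; in particular a knotted torus can be neck-cut down to standard spheres and gives the same value.) Thus $\G(\beta)$ is homotopic to $3\cdot\id_{\G(L_0)}$. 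Alternatively, one can run the hands-on argument of Lemma~\ref{lem:lambda2timesid} verbatim: localise near $x$ using that $\G$ sends the B6 moves and their reverses to mutually homotopy-inverse maps, so the parts of $\beta$ travelling around $l$ cancel up to homotopy; then apply the surgery (neck-cutting) relation of~\ref{pd:functor} to the small neck remaining near $x$, which produces three terms, and evaluate each — via the theta-foam and sphere evaluations of~\ref{pd:functor} and the $A_i$/$B_i$ ($i=8,9$) cancellations — to $+\id_{\G(L_0)}$, for a total of $3\cdot\id_{\G(L_0)}$.

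I expect the main obstacle, in the first approach, to be making the identification $\Sigma\cong(L_0\times[0,1])\sqcup T$ fully rigorous: one must check that the concatenation of $\oldnu$ and $\nu$ really leaves no foam-singularity behind, and that the two ``travel around $l$'' parts fit together into a torus and not into a more complicated surface — this is precisely the point where it is essential that $\nu$ uses only cups, caps and surface-type moves, in contrast to $\lambda$. In the second approach the obstacle is instead the sign and dot bookkeeping needed to see that each of the three local terms contributes exactly $+\id$ (rather than $-\id$), so that they add up to $+3$.
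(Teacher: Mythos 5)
Your proposal is correct and follows essentially the same route as the paper, which likewise offers both arguments: the local computation mirroring Lemma~\ref{lem:lambda2timesid} with the surgery relation in place of the digon relation, and the alternative via Clark's functoriality theorem identifying $\beta$ with the identity of $L_0$ disjoint union a (knotted) torus evaluating to $3$. Your write-up merely fleshes out the details (annulus-doubling, the Frobenius computation $\F(T)=3$) that the paper leaves implicit.
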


The proof is similar. We use the surgery relation instead of the digon relation. One could as well use theorem~\ref{thm:clark}: in terms of surface $\beta(L_{0},x, l_1, l_2)$ is the disjoint union of the identity of $L_0$ with a knotted torus, hence  we clearly have that $\G(\beta(L_{0},x, l_1, l_2))$ is homotopic to $3\cdot\id_{\G(L_{0})}$.


\subsection{The colored $\sll_3$-homology in the real world}
\label{sec:color-sll_3-homol-1}

We now work with the canopolis $\mathcal{KW}_{/h}$, which define just like $\mathcal{KW}$ except that morphisms are considered up to homotopy. We now consider $\G$ as a morphism between $\mathcal{FT}$ and $\mathcal{KW}_{/h}$ and the complexes $C(D)$ (defined in section~\ref{sec:sll_3-homology})  are seen as objects of $\mathcal{KW}_{/h}(\emptyset)$.  The aim of this part is to redefine the complex $\C_\bullet(D, V_{m,n})$ introduced in section~\ref{sec:overview} and to re-state the theorem~\ref{thm:coloredhomwithcjc}.

\begin{dfn}
 \label{dfn:cobincomplex2}
Let $D$ be an oriented link diagram with $k$ components, $\mathbf{m}$ and $\mathbf{n}$ be two $k$-tuples of non-negative integers and $e: P_1 \To P_2$ an edge in $\Gamma_{\mathbf{m},\mathbf{n}}$. We associate with $e$ and $D$ a movie $f_D(e)$ between $D_{P_1}$ and $D_{P_2}$: It is the identity on all strands not concerned with the edge $e$ and on the strands concerned by $e$, we use the following table:
\begin{center}
  \begin{tabular}[ht]{|c||c|}
    \hline $e$ & $f_D(e)$ \\ \hline \hline
\NB{\dops{(0,0)} \doms{(1,0)} \draw[->](1.4,0) -- (2.6,0); \draw[very thick, gray](3,0) -- +(1,0); \dops{(3,0)} \doms{(4,0)} } & $\nu$ \\ \hline
\NB{\dops{(0,0)} \dops{(1,0)} \draw[->](1.4,0) -- (2.6,0); \draw[very thick, gray](3,0) -- +(1,0); \dops{(3,0)} \dops{(4,0)} } & $\lambda $ \\ \hline
\NB{\doms{(0,0)} \doms{(1,0)} \draw[->](1.4,0) -- (2.6,0); \draw[very thick, gray](3,0) -- +(1,0); \doms{(3,0)} \doms{(4,0)} } & $\lambda$ \\ \hline
\NB{\draw[very thick, gray](0,0) -- +(1,0);\dops{(0,0)} \dops{(1,0)} \dops{(2,0)} \draw[->](2.4,0) -- (3.6,0); \draw[very thick, gray](4,0) -- +(2,0); \dops{(4,0)} \dops{(5,0)} \dops{(6,0)} } & $\nu$ \\ \hline
\NB{\draw[very thick, gray](1,0) -- +(1,0);\dops{(0,0)} \dops{(1,0)} \dops{(2,0)} \draw[->](2.4,0) -- (3.6,0); \draw[very thick, gray](4,0) -- +(2,0); \dops{(4,0)} \dops{(5,0)} \dops{(6,0)} } & $\nu$ \\ \hline
\NB{\draw[very thick, gray](0,0) -- +(1,0);\doms{(0,0)} \doms{(1,0)} \doms{(2,0)} \draw[->](2.4,0) -- (3.6,0); \draw[very thick, gray](4,0) -- +(2,0); \doms{(4,0)} \doms{(5,0)} \doms{(6,0)} } & $\nu$ \\ \hline
\NB{\draw[very thick, gray](1,0) -- +(1,0);\doms{(0,0)} \doms{(1,0)} \doms{(2,0)} \draw[->](2.4,0) -- (3.6,0); \draw[very thick, gray](4,0) -- +(2,0); \doms{(4,0)} \doms{(5,0)} \doms{(6,0)} } & $\nu$ \\ \hline
  \end{tabular}
\end{center}
\end{dfn}

\begin{propdfn}
  \label{pd:complex2}
Let $\mathbf{m}$ and $\mathbf{n}$ be two $k$-tuples of non-negative integers.
We consider an oriented link diagram $D$ colored with $V_{\mathbf{m}, \mathbf{n}}$. 
We can find some adequate signs such that the following formula define a complex $\C_\bullet(D, V_{m,n})$: 
\begin{align*}
  \C_i(D, V_{m,n})&= \bigoplus_{\textrm{$P$ of degree $i$}} C(D_{P}) \quad \textrm{and}\\
  d_i&: \C_i\To \C_{i+1},\\
  d_i&=\sum_{\textrm{$P$ of degree $i$}} \sum_{P_1\stackrel{e}{\To} P_2} d_e \quad \textrm{with}\quad d_e=(-1)^\bullet \G(f_e(D)).
\end{align*}
Furthermore, two satisfactory choices of signs yield isomorphic complexes.
\end{propdfn}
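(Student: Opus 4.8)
The plan is to run the argument of Proposition-Definition~\ref{pd:Cmn} in the homotopy category $\mathcal{KW}_{/h}(\emptyset)$, using the invariance results of Section~\ref{sec:invariance-result} in place of Proposition~\ref{prop:easyrelUsl}. First I would check that the data is well defined: by Definitions~\ref{dfn:graph2cabling} and~\ref{dfn:cobincomplex2}, for each edge $e\colon P_1\To P_2$ of $\Gamma_{\mathbf{m},\mathbf{n}}$ the movie $f_D(e)$ goes from $D_{P_1}$ to $D_{P_2}$, so that $\G(f_D(e))$ is a morphism $C(D_{P_1})\To C(D_{P_2})$ in $\mathcal{KW}_{/h}(\emptyset)$; by Lemmas~\ref{lem:startpointinvariance} and~\ref{lem:startpointinvariancenu} it does not depend, up to a sign, on the chosen regular point, and by Lemmas~\ref{lem:ReidemeisterYinvariance} and~\ref{lem:Reidemeisternuinvariance} it is compatible with framed isotopy of $D$. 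Since an edge of $\Gamma_{\mathbf{m},\mathbf{n}}$ raises the grading of admissible partitions by exactly one (Remark~\ref{rmk:gradinggraphetc}), each $d_e$ raises the homological degree of $\C_\bullet$ by one and $d_i\colon\C_i\To\C_{i+1}$ makes sense, so that $\C_\bullet$ is at least a diagram in $\mathcal{KW}_{/h}(\emptyset)$.

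The heart of the proof is to show that every square of the poset $\Gamma_{\mathbf{m},\mathbf{n}}$ commutes up to a sign in $\mathcal{KW}_{/h}(\emptyset)$. As in Proposition-Definition~\ref{pd:Cmn}, I would view $\Gamma_{\mathbf{m},\mathbf{n}}$ as a strong-inductive cubical subset of the hypercube $B_{|\mathbf{m}|+|\mathbf{n}|-k}$, being the product of the strong-inductive cubical sets attached to the factors $\Gamma_{m_t,n_t}$. Its $2$-faces then fall into two families. A $2$-face coming from two merges with disjoint supports --- in particular any $2$-face mixing an edge of $\Gamma_{m_s,n_s}$ with an edge of $\Gamma_{m_t,n_t}$ for $s\neq t$ --- is realized by two movies supported in disjoint balls, hence its two composites are literally equal by the relation ``far modifications commute'' of Definition~\ref{dfn:movie}. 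The remaining $2$-faces are the ``triple-building'' squares inside a single factor $\Gamma_{m_t,n_t}$, in which three consecutive identically colored singletons $\{i\},\{i+1\},\{i+2\}$ are merged into the triple $\{i,i+1,i+2\}$ in the two possible orders; these are the foam-theoretic counterparts of the last two relations of Proposition~\ref{prop:easyrelUsl}. For such a square both composites are $\G$ of movies that differ only inside a small ball, and I would establish their equality up to a sign exactly as in the proofs of Lemmas~\ref{lem:startpointinvariance} and~\ref{lem:ReidemeisterYinvariance}: push the two singular circles involved into a standard local position using the elementary moves appearing in the definitions of $\lambda$ and $\nu$ together with Lemmas~\ref{lem:zipcrossing}, \ref{lem:zipdigon} and Proposition~\ref{prop:importantmoves}, reducing the claim to an equality between two homotopy equivalences of $\G$ of a BN-simple local web, which therefore holds up to a sign. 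This local verification is the one step I expect to demand genuine care; everything else is formal.

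Once all squares commute up to a sign, I would conclude with the framework of Appendix~\ref{sec:complexes-cubes}. The non-nullity hypothesis of Lemma~\ref{lem:precom2com} holds because each $\G(f_D(e))$ is non-zero in $\mathcal{KW}_{/h}(\emptyset)$: by Lemmas~\ref{lem:lambda2timesid} and~\ref{lem:nu3timesid} it admits a one-sided inverse up to multiplication by $2$ or $3$, and a null-homotopic morphism cannot have this property, the $\sll_3$-homology of a link being non-trivial; the same remark applies to the composites of $\lambda$- and $\nu$-movies occurring along the $3$-faces, so the discrepancy signs of the $2$-faces assemble into a well-defined $\ZZ/2\ZZ$-valued $2$-cocycle on the contractible cube, hence a coboundary --- this is exactly the kind of sign indeterminacy the framework of Appendix~\ref{sec:complexes-cubes} is built to absorb. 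Lemma~\ref{lem:precom2com} then supplies a sign assignment $(-1)^\bullet$ turning $(\C_\bullet,d_\bullet)$ into a complex (in $\mathcal{KW}_{/h}(\emptyset)$, i.e.\ with differential squaring to zero up to homotopy), and guarantees that two admissible sign assignments give isomorphic complexes. The argument is insensitive to the number $k$ of components, which disposes of the case of a link.
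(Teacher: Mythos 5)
Your proposal follows essentially the same route as the paper: regard $\C_\bullet(D,V_{\mathbf{m},\mathbf{n}})$ as an $S$-shaped pre-complex over the strong-inductive cubical set underlying $\Gamma_{\mathbf{m},\mathbf{n}}$, verify that every square commutes up to a sign using Lemmas~\ref{lem:zipcrossing}, \ref{lem:zipdigon} and Proposition~\ref{prop:importantmoves}, verify the non-nullity condition using Lemmas~\ref{lem:lambda2timesid} and~\ref{lem:nu3timesid}, and then apply Lemma~\ref{lem:precom2com}. One small caveat: for squares coming from edges with disjoint supports the two composite movies are \emph{not} literally equal via ``far modifications commute'' (the parallel cables meet at every crossing of $D$, so the intermediate frames genuinely differ), but they still agree up to sign by the same local lemmas, which is all that Lemma~\ref{lem:precom2com} requires.
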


\begin{proof} 
We want to use lemma~\ref{lem:precom2com}.
 $\C_\bullet(D, V_{m,n})$ can  be seen as $S$-shaped space with $S$ strongly inductive. Hence, we only need to show that:
  \begin{enumerate}
  \item\label{it:commute} every square either commutes or anti-commutes,
  \item\label{it:nonzero} the product by $2$ of any composition of 3 maps $d_\bullet$ is not equal to zero.
  \end{enumerate}
Property~(\ref{it:commute}) follows from proposition~\ref{prop:importantmoves} and property~(\ref{it:nonzero}) follows from lemmas~\ref{lem:lambda2timesid} and \ref{lem:nu3timesid}.
\end{proof}

\begin{thm}
  \label{thm:main}
   The isomorphism type of $\C_\bullet(D, V_{\mathbf{m},\mathbf{n}})$ depends only on $(\mathbf{m}$, $\mathbf{n})$ and on the framed oriented isotopy type of $D$ and we have:
\[
\chi(\C_\bullet(D, V_{\mathbf{m},\mathbf{n}})) = e^{i\pi\frac23(n_+-n_-)s(\mathbf{m}-\mathbf{n})}\kup{(D, V_{\mathbf{m},\mathbf{n}})},
\]
where: 
\begin{itemize}
\item $(D, V_{\mathbf{m},\mathbf{n}})$ denotes the diagram $D$ colored with the $\Usl$-modules $V_{\mathbf{m},\mathbf{n}}$;
\item $n_+$ and $n_-$ are respectively the number of positive and negative crossings
in $D$;
\item $s(\mathbf{m}- \mathbf{n})$ is the sum over all coordinate of $(\mathbf{m}-\mathbf{n})$. 
\end{itemize}
\end{thm}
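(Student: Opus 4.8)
The statement has two halves, and the Euler characteristic identity is the soft one. Granting that $\C_\bullet(D,V_{\mathbf{m},\mathbf{n}})$ is a well-defined complex (Proposition-Definition~\ref{pd:complex2}), I would compute its graded Euler characteristic columnwise. For each vertex $P$ of $\Gamma_{\mathbf{m},\mathbf{n}}$ the column $C(D_P)$ is the framed $\sll_3$-homology of the ordinary link diagram $D_P$, so by the very normalization of Definition~\ref{dfn:complexframed} and the convention of remark~\ref{rmk:homdeginQ} the quantity $\chi_q(C(D_P))$ equals $\kup{D_P}$ up to a unit monomial $e^{i\pi r}q^{r'}$ coming from the grading shifts. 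Summing over the vertices of $\Gamma_{\mathbf{m},\mathbf{n}}$ and matching its edges with the merges recorded in Proposition~\ref{prop:formulakup}, the signs prescribed by the homological grading of $\C_\bullet$ reproduce term by term the alternating de-cabling sum of that proposition; this is exactly the bookkeeping already carried out for Theorem~\ref{thm:coloredhomwithcjc}, and it yields the displayed formula. Since $\Gamma_{\mathbf{m},\mathbf{n}}$ and the movies of Definition~\ref{dfn:cobincomplex2} depend only on $(\mathbf{m},\mathbf{n})$, there is nothing further to check for the dependence on the coloring.

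The real content is framed isotopy invariance, and I would reduce it to the case of a single framed Reidemeister move $m$ relating two diagrams $D$ and $D'$ (planar isotopies only relabel summands). Such a move induces for every vertex $P$ a cabled movie $m_P$ from $D_P$ to $D'_P$, and $\G(m_P)$ --- equivalently the functoriality of the framed $\sll_3$-homology, Theorems~\ref{thm:sl3hom} and~\ref{thm:clark} --- is a homotopy equivalence $C(D_P)\to C(D'_P)$. The collection of these column maps is the candidate morphism $\C_\bullet(D,V_{\mathbf{m},\mathbf{n}})\to\C_\bullet(D',V_{\mathbf{m},\mathbf{n}})$; to make it a morphism of complexes one must see that it intertwines the edge differentials, i.e.\ that for every edge $e\colon P_1\to P_2$ the square with vertical sides $\G(f_D(e))$, $\G(f_{D'}(e))$ and horizontal sides $\G(m_{P_1})$, $\G(m_{P_2})$ commutes up to homotopy. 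Because every $f_D(e)$ is one of the movies $\lambda$ or $\nu$, this is exactly Lemmas~\ref{lem:ReidemeisterYinvariance} and~\ref{lem:Reidemeisternuinvariance}, together with the point-independence Lemmas~\ref{lem:startpointinvariance} and~\ref{lem:startpointinvariancenu} and the commutations of Proposition~\ref{prop:importantmoves}.

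The obstacle --- and the reason for all the machinery of the preceding subsections --- is that every such square commutes up to homotopy only \emph{and up to a sign}, so the column maps do not automatically glue into a chain map of the double complex $\C_\bullet$. I would package the column maps together with the homotopies witnessing the two routes around each square as an $S$-shaped, strongly inductive datum over the same hypercube that underlies $\Gamma_{\mathbf{m},\mathbf{n}}$ (cf.\ the proof of Proposition-Definition~\ref{pd:Cmn}), and then invoke the sign-fixing framework of Appendix~\ref{sec:complexes-cubes} --- the morphism analogue of Lemma~\ref{lem:precom2com} --- to adjust the signs so that the glued map becomes an honest chain map up to homotopy, two choices of signs giving homotopic maps. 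The non-nullity input required by that framework is supplied once more by Lemmas~\ref{lem:lambda2timesid} and~\ref{lem:nu3timesid}: doubling a composite of two edge-movies $\lambda$ (resp.\ $\nu$) is homotopic to $2\cdot\id$ (resp.\ $3\cdot\id$), hence nonzero. Running the same construction with the reverse of $m$ produces a columnwise homotopy inverse; since the assembled maps are block-diagonal with one block $C(D_P)\to C(D'_P)$ per vertex $P$, each block a homotopy equivalence, and since only finitely many columns are nonzero, the total map is a homotopy equivalence $\C_\bullet(D,V_{\mathbf{m},\mathbf{n}})\simeq\C_\bullet(D',V_{\mathbf{m},\mathbf{n}})$. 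Concatenating over the Reidemeister moves of a framed isotopy yields the invariance. I expect the genuinely delicate step to be verifying that the hypotheses of the Appendix~\ref{sec:complexes-cubes} sign-fixing lemma carry over verbatim from the ``complex of complexes'' setting of Proposition-Definition~\ref{pd:complex2} to the ``morphism between complexes of complexes'' setting needed here; the existence of the individual column equivalences is already in hand.
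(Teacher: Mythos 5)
Your proposal is correct and follows essentially the same route as the paper, whose own proof of Theorem~\ref{thm:main} simply invokes the well-definedness of cabling on framed links together with Lemmas~\ref{lem:ReidemeisterYinvariance} and~\ref{lem:Reidemeisternuinvariance} for invariance, and Proposition~\ref{prop:formulakup} for the Euler characteristic. The only difference is that you explicitly spell out the sign-correction needed to assemble the columnwise homotopy equivalences into a map of the total complexes, using the Appendix~\ref{sec:complexes-cubes} machinery and the non-nullity supplied by Lemmas~\ref{lem:lambda2timesid} and~\ref{lem:nu3timesid}, a bookkeeping step that the paper's terse proof leaves implicit.
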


\begin{proof}
  The invariance follows directly from the fact that cabling is a well-defined operation on framed links and from lemmas~\ref{lem:ReidemeisterYinvariance} and \ref{lem:ReidemeisterYinvariance}. The Euler characteristic part is a consequence of proposition~\ref{prop:formulakup}.
\end{proof}

\begin{rmk}
  \label{rmk:diffmatters}
  One could define another complex by declaring all the differential to be zero. This would be a substantial simplification. However this would lead to an invariant completely determine by the $\sll_3$-homology of cables of links ignoring the relation between the different cables, this does not seem very satifactory.
\end{rmk}


\subsection{Cobordisms}
\label{sec:cobordisms}

A cobordism in $\RR^4$ between two colored framed links $L_1$ and $L_2$ can be
presented by a movie (the link diagrams in the movie should be colored). We can decompose this movie into elementary movies: Reidemeister moves (A5, A6, A7, B4, B5, B6 and B7) or Morse moves (A1, A2 or B1, note that for the saddle move (A2) the involved strands should have the same color).  One may wonder if like in the classical $\sll_3$-homology, the colored $\sll_3$-homology would extend to cobordisms. To address this question, one should define  morphisms associated with each elementary movies and prove that the movies move hold (see \cite{MR1905687}, \cite{MR2482322} and \cite{MR2462446} for the framed case). In this section we define some morphisms (we leave all the proofs to the reader) associated with Morse moves (this is very similar to \cite{MR2124557} and \cite{MR2462446}). We do not pretend to have any statement regarding the movie moves.

For colored cups and caps it is pretty simple: the associated morphism is induced by the unit and the counit of the Frobenius algebra $\mathcal{A}$ associated to the circle. Let us be a little more explicit. Let $D$ be a colored link diagram and $D_o$ with the same colored link diagram with an additional disjoint circle colored with $V_{m,n}$. If $P$ be an admissible multi-partition for $D$, we denote by $P_o$ is the multi-partition for $D_o$, which is determined by $P$ and by the canonical partition on the additional circle. Then the morphism associated with a the colored cup from $D$ to $D_o$ is $\phi=\sum_P \phi_P$, where $\phi_P: C(D_P) \to C((D_o)_{P_o})$ is the image by $\G$ of a succession of $m+n$ cups (oriented in the appropriate way). Here is what it looks like for $m=2$ and $n=1$:
\[
\begin{tikzpicture}[scale=0.45]
  \begin{scope}
\begin{scope}
\draw[draw=  gray, line width = 2mm] (0,0) --  (8,0); 
\draw [draw = gray, line width = 2mm] (8,4) -- (0,4); 
\draw[draw=  gray, very thick] (0,0) -- +(0,4);
\draw[draw=  gray, very thick] (4,0) -- +(0,4);
\draw[draw=  gray, very thick] (8,0) -- +(0,4);
\draw[draw= white, dotted, line width =1.2mm] (0,0) -- (8,0);
\draw[draw= white, dotted, line width = 1.2mm] (0,4) -- (8,4);
\end{scope}
\begin{scope}[xshift = 6cm, yshift = 2cm, decoration={markings, mark=at
     position 0.5 with {\arrow{>}}},postaction={decorate}]
  \draw[postaction = {decorate}] (-0.3, 0) circle (-1cm);
  \node at (1.2, 1) {$V_{2,1}$};
\end{scope}
\end{scope}
\node at (9,2) {$\mapsto$};
\begin{scope}[xshift = 10cm]
\begin{scope}
\draw[draw=  gray, line width = 2mm] (0,0) --  (16,0); 
\draw[draw = gray, line width = 2mm] (16,4) -- (0,4); 
\draw[draw=  gray, very thick] (0,0) -- +(0,4);
\draw[draw=  gray, very thick] (4,0) -- +(0,4);
\draw[draw=  gray, very thick] (8,0) -- +(0,4);
\draw[draw=  gray, very thick] (12,0) -- +(0,4);
\draw[draw=  gray, very thick] (16,0) -- +(0,4);
\draw[draw= white, dotted, line width =1.2mm] (0,0) -- (16,0);
\draw[draw= white, dotted, line width = 1.2mm] (0,4) -- (16,4);
\end{scope}
\begin{scope}[decoration={markings, mark=at
     position 0.5 with {\arrow{>}}},postaction={decorate}]
  \draw[postaction={decorate}] (10,2) circle (-1.3cm);
  \draw[postaction={decorate}] (14,2) circle (-1cm);
  \draw[postaction={decorate}] (14,2) circle (-1.3cm); 
\end{scope}

\begin{scope}[decoration={markings, mark=at
     position 0.5 with {\arrow{<}}},postaction={decorate}]
  \draw[postaction={decorate}] (6,2) circle (-1.6cm);
  \draw[postaction={decorate}] (10,2) circle (-1.6cm);
  \draw[postaction={decorate}] (14,2) circle (-1.6cm); 
\end{scope}
\end{scope}
\end{tikzpicture}
\]
For the cap this is the same read in the other direction, in particular the map induced by a cap is equal to equal to zero on every space $C((D_o)_{P})$ when $P$ does not induce the canonical partition on the additional circle.
 \marginpar{introduce multipartition}

\begin{claim}
   \label{cl:capandcup}
   The maps associated to the  cup and the cap colored with $V_{m,n}$ is a chain map. The $q$-degrees of these maps are both equal to $-2(m+n)$.
\end{claim}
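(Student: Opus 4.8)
The plan is to reduce both assertions to a local picture. Recall that $\phi=\sum_{P}\phi_{P}$, where $\phi_{P}\colon C(D_{P})\to C((D_{o})_{P_{o}})$ is the image under $\G$ of a succession of $m+n$ elementary cup (birth) movies, all performed inside a ball $B$ of $B_{0}$ which is disjoint from $D$ and hence from all cablings $D_{P}$; since the extra circle of $D_{o}$ sits in a separate region of $B_{0}$, everything becomes a statement about a morphism of the $2$-dimensional TQFT $\F$ carried out ``far'' from the rest of the construction. The cap is treated identically, with births replaced by deaths.

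For the degree, I would first identify $\phi_{P}$ on the cabled extra circle with a composite of $m+n$ births. By remark~\ref{rmk:Frob} one has $\F(S^{1})\simeq\mathcal A\simeq\ZZ[X]/X^{3}$ with $X$ in degree $2$ (so that the graded rank is $[3]$), and a birth is the unit $\ZZ\to\mathcal A$, $1\mapsto 1$, which is homogeneous of degree $-2$; equivalently, a birth-foam is a disk $\Sigma\cong D^{2}$ whose degree is $\chi(\emptyset)+\chi(S^{1})-2\chi(\Sigma)=-2$. Composing $m+n$ births and tensoring with $\id_{C(D_{P})}$ gives $\deg\phi=-2(m+n)$. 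The cap gives the same number, the counit ($X^{2}\mapsto 1$, $X,1\mapsto 0$) again being of degree $-2$.

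For the chain-map property I would keep in mind that $\C_{\bullet}(D,V_{m,n})$ carries two commuting differentials, the internal differential of each $C(D_{P})$ (coming from the crossings of $D_{P}$) and the edge maps $d_{e}=(-1)^{\bullet}\G(f_{e}(D))$ of proposition-definition~\ref{pd:complex2} (the movies $f_{e}(D)$ being those of definition~\ref{dfn:cobincomplex2}), and check compatibility with each. Compatibility with the internal differential is automatic, since $\phi_{P}$ is the image under $\G$ of a movie and hence a chain map; concretely, as the births occur in $B$, which avoids all crossings, one has $\phi_{P}=\id_{C(D_{P})}\otimes u^{\otimes(m+n)}$ with $u$ the unit of $\mathcal A$, which commutes with $d_{C(D_{P})}\otimes\id$. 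For the edge maps I would split into two cases. If an edge $e\colon P_{1}\to P_{2}$ only merges sets supported on the components of $D$, then the corresponding edge $e_{o}$ on $D_{o}$ has $f_{e_{o}}(D_{o})$ equal to $f_{e}(D)$ away from $B$ and the identity over the $m+n$ parallel copies of the extra circle; as the supports of $f_{e}(D)$ and of the birth movie are disjoint, the relation ``far modifications commute'' of definition~\ref{dfn:movie}, together with $\G$ being a morphism of canopoleis, yields $\phi_{P_{2}}\circ\G(f_{e}(D))=\G(f_{e_{o}}(D_{o}))\circ\phi_{P_{1}}$ up to homotopy. If instead $e$ merges two sets on the extra circle, the target $P_{2}$ is still non-canonical there (merging moves away from the all-singletons partition), so $\phi_{P_{2}}=0$ and the square commutes trivially; for the cap this is exactly the already noted fact that the induced map kills every $C((D_{o})_{P})$ with $P$ non-canonical on the extra circle.

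The remaining step, which I expect to be mildly annoying rather than genuinely hard, is the sign bookkeeping: using the formalism of appendix~\ref{sec:complexes-cubes} one fixes sign assignments on $\C_{\bullet}(D,V_{m,n})$ and $\C_{\bullet}(D_{o},V_{m,n})$ compatible with $\phi$, so that the commutations above become the required (anti)commutations, and notes that any two compatible choices give the same $\phi$ up to isomorphism. The geometric content, namely that the cup or cap is supported away from everything else, is entirely absorbed into ``far modifications commute'', so there is no real computation; the only point to watch is keeping track of the two homological gradings, the $\Gamma_{m,n}$-grading and the $\sll_{3}$-homology grading, simultaneously.
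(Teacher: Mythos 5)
The paper itself offers no proof of this claim (the section announces that all proofs are left to the reader), so your argument has to stand on its own. Your degree computation does: a birth or death foam is a disk of degree $\chi(\emptyset)+\chi(S^1)-2\chi(D^2)=-2$ in the paper's normalization (equivalently, the unit and counit of $\mathcal{A}\simeq\ZZ[X]/X^3$ have degree $-2$), and composing $m+n$ of them gives $-2(m+n)$. Your case (a) (edges of $\Gamma_{\mathbf{m},\mathbf{n}}$ supported on the components of $D$) is also fine in spirit: the extra circle and its cables are disjoint from everything, the movies $f_{e_o}(D_o)$ are the identity over them, and ``far modifications commute'' plus the canopolis property of $\G$ give the required commutation, up to the sign-assignment bookkeeping you rightly defer to the appendix. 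The cap is likewise unproblematic, exactly for the reason you give: it kills every summand which is non-canonical on the extra circle, and no differential maps such a summand back to a canonical one, so only case (a) is relevant there.

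The gap is in your case (b) for the \emph{cup}. For an edge $e$ of the circle factor of $\Gamma_{\mathbf{m},\mathbf{n}}\times\Gamma_{m,n}$ starting at the partition which is canonical on the extra circle, there is no corresponding differential in $\C_\bullet(D,V_{\mathbf{m},\mathbf{n}})$ and no component of $\phi$ into the target summand; hence the chain-map condition is not a square that ``commutes trivially'' because $\phi_{P_2}=0$ --- that observation only says the \emph{other} path around the degenerate square vanishes. What must be proved is that the composite $d_e\circ\phi_P$ itself is null-homotopic, and this is a genuine vanishing statement about foams, not a formality. It does hold, but via the local evaluations: for the edge merging the $+$ and $-$ singletons, the movie $\nu$ composed with the two births produces, on the relevant tensor factor, two births followed by a saddle and a death, i.e.\ a closed undotted sphere, which is $0$ by the sphere evaluation; for an edge merging two $+$'s (or two $-$'s), the movie $\lambda$ composed with the two births is a foam from the empty web to a single circle, and pairing it with a disk carrying $k$ dots ($k=0,1,2$) yields a theta-foam with dot distribution $(k,0,0)$, all of whose evaluations are $0$, so the corresponding element of $\F(\text{circle})\simeq\mathcal{A}$ vanishes by nondegeneracy of the pairing. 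Some computation of this kind (parallel to the use of the digon, surgery and theta/sphere relations in lemmas~\ref{lem:lambda2timesid} and \ref{lem:nu3timesid}) is indispensable; as written, your proposal omits it and the cup is not shown to be a chain map.
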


A saddle can either merge two link components or split one link component. We define two different maps associated to these two different situations.


Let $D$ be a colored link diagram and $D'$ obtain from $D$ by a merging saddle move. Let us denote by $l_1$ and $l_2$ the two components of $D$ concerned by the saddle, and $l_0$ the component of $D'$ concerned with the saddle.
Let $P$ be a multi-partition for $D$, it induces a partition $P_1$ of $l_1$ and a partition $P_2$ for $l_2$. The morphism $\sigma$ is defined to be zero on $C(D_P)$ unless $P_1\pitchfork P_2$ and in these cases
it maps  $C(D_P)$ into $C(D'_{P'})$, where $P'$ is induced by $P$ for all components of $D'$ except $l_0$. The partition $P_0$ of $l_0$ is $P_1 \vee P_2$.

Suppose $P_1\pitchfork P_2$, and let us choose
\begin{align*}
&  P_1=P_1^1 \stackrel{e_1^1}{\To} P_1^2 \stackrel{e_1^2}{\To}  \dots \stackrel{e_1^{k-2}}{\To}P_1^{k-1} \stackrel{e_1^{k-1}}{\To} P_1^k= P_1\vee P_2,\quad \textrm{and} \\
&  P_2=P_2^1 \stackrel{e_2^1}{\To} P_2^2 \stackrel{e_2^2}{\To}  \dots \stackrel{e_2^{r-2}}{\To}P_2^{r-1} \stackrel{e_2^{r-1}}{\To} P_2^r= P_1\vee P_2. \\
\end{align*}
We define
\[\sigma_{|C(D_P)} = (-1)^\bullet\left(j_{P_1^k \stackrel{e_1^{k-1}}{\To} P_1^{k-1}}\!\! \circ\, 
\dots\, \circ j_{P_1^{1} \stackrel{e_1^{1}}{\To} P_1^2}\right) \circ  \left(j_{P_2^k \stackrel{e_2^{k-1}}{\To} P_2^{k-1}} \!\!\circ 
\,\dots\, \circ j_{P_2 \stackrel{e_2^{1}}{\To} P_2^2}\right) \circ s,
\]
where $s$ is the image by $\G$ of a composition of saddles, and $j_{P_a \stackrel{e}{\To} P_b}$ is the image by $\G$ of $f_D(e)$ precomposed with an endomorphism of $C(D_{P_a})$ which add dots on the strands concerned with $e$:
\begin{itemize}
\item We add one dot on each strand if $f_D(e)$ is a movie of type $\nu$,
\item We add a dot on the left strand (with respect to the orientation of the strands) if $f_D(e)$ is a movie of type $\lambda$,
\end{itemize}
On figure~\ref{fig:exsaddle}, we schematize\footnote{The second movie has no rigorous meaning in the canopolis $\mathcal{FT}$.} this construction for $m=3$ and $n=1$ when $P_1$ and $P_2$ are both the canonical partition and when they are equal to the partitions $P_1$ and $P_2$ of example~\ref{exa:lup}.
\begin{figure}
\centering
\begin{tikzpicture}[scale=0.6, yscale = -1]
  \begin{scope}
\draw[draw=  gray, line width = 2mm] (0,0) --  (20,0); 
\draw[draw = gray, line width = 2mm] (20,4) -- (0,4); 
\draw[draw=  gray, very thick] (0,0) -- +(0,4);
\draw[draw=  gray, very thick] (4,0) -- +(0,4);
\draw[draw=  gray, very thick] (8,0) -- +(0,4);
\draw[draw=  gray, very thick] (12,0) -- +(0,4);
\draw[draw=  gray, very thick] (16,0) -- +(0,4);
\draw[draw=  gray, very thick] (20,0) -- +(0,4);
\draw[draw= white, dotted, line width =1.2mm] (0,0) -- (20,0);
\draw[draw= white, dotted, line width = 1.2mm] (0,4) -- (20,4);
\end{scope}

\begin{scope}[xshift=2cm, yshift =2cm, decoration={markings, mark=at
     position 0.5 with {\arrow{>}}},postaction={decorate}]
  \draw[postaction={decorate}, xshift = 0.0cm] (-1.5, 1.5) .. controls +(-70:1)  and   +(70:1)   ..(-1.5, -1.5);
  \draw[postaction={decorate}, xshift = 0.3cm] (-1.5, 1.5) .. controls +(-70:1)  and  +(70:1)   ..(-1.5, -1.5);
  \draw[postaction={decorate}, xshift = 0.6cm] (-1.5, 1.5) .. controls +(-70:1)  and  +(70:1)   ..(-1.5, -1.5);
  \draw[postaction={decorate}, xshift = 0.9cm, yscale=-1] (-1.5, 1.5) .. controls +(-70:1)  and  +(70:1)   ..(-1.5, -1.5);
  \begin{scope}[xscale= -1, yscale= -1]
    \draw[postaction={decorate}, xshift = 0.0cm] (-1.5, 1.5) .. controls +(-70:1)  and   +(70:1)   ..(-1.5, -1.5);
  \draw[postaction={decorate}, xshift = 0.3cm] (-1.5, 1.5) .. controls +(-70:1)  and  +(70:1)   ..(-1.5, -1.5);
  \draw[postaction={decorate}, xshift = 0.6cm] (-1.5, 1.5) .. controls +(-70:1)  and  +(70:1)   ..(-1.5, -1.5);
  \draw[postaction={decorate}, xshift = 0.9cm, yscale = -1] (-1.5, 1.5) .. controls +(-70:1)  and  +(70:1)   ..(-1.5, -1.5);
\end{scope}
\end{scope}

\begin{scope}[xshift=6cm, yshift =2cm, decoration={markings, mark=at
     position 0.5 with {\arrow{>}}},postaction={decorate}]
  \draw[postaction={decorate}, xshift = 0.0cm] (-1.5, 1.5) .. controls +(-70:1)  and   +(70:1)   ..(-1.5, -1.5);
  \draw[postaction={decorate}, xshift = 0.3cm] (-1.5, 1.5) .. controls +(-70:1)  and  +(70:1)   ..(-1.5, -1.5);
  \draw[postaction={decorate}, xshift = 0.6cm] (-1.5, 1.5) .. controls +(-70:1)  and  +(70:1)   ..(-1.5, -1.5);
  \draw[postaction={decorate}, xshift = 0.cm, yscale=-1] (-0.6, 1.5) .. controls +(-70:1)  and  +(-110:1)   ..(0.6, 1.5);
  \draw[postaction={decorate}, xshift = 0.cm, xscale = -1] (-0.6, 1.5) .. controls +(-70:1)  and  +(-110:1)   ..(0.6, 1.5);
  \begin{scope}[xscale= -1, yscale= -1]
    \draw[postaction={decorate}, xshift = 0.0cm] (-1.5, 1.5) .. controls +(-70:1)  and   +(70:1)   ..(-1.5, -1.5);
  \draw[postaction={decorate}, xshift = 0.3cm] (-1.5, 1.5) .. controls +(-70:1)  and  +(70:1)   ..(-1.5, -1.5);
  \draw[postaction={decorate}, xshift = 0.6cm] (-1.5, 1.5) .. controls +(-70:1)  and  +(70:1)   ..(-1.5, -1.5);
\end{scope}
\end{scope}
\begin{scope}[xshift=10cm, yshift =2cm, decoration={markings, mark=at
     position 0.5 with {\arrow{>}}},postaction={decorate}]
  \draw[postaction={decorate}, xshift = 0.0cm] (-1.5, 1.5) .. controls +(-70:1)  and   +(70:1)   ..(-1.5, -1.5);
  \draw[postaction={decorate}, xshift = 0.3cm] (-1.5, 1.5) .. controls +(-70:1)  and  +(70:1)   ..(-1.5, -1.5);
  \draw[postaction={decorate}, xshift = 0.cm, yscale=1] (-0.9, 1.5) .. controls +(-70:1.3)  and  +(-110:1.3)   ..(0.9, 1.5);
  \draw[postaction={decorate}, xshift = 0.cm, rotate=180] (-0.9, 1.5) .. controls +(-70:1.3)  and  +(-110:1.3)   ..(0.9, 1.5);
  \draw[postaction={decorate}, xshift = 0.cm, yscale=-1] (-0.6, 1.5) .. controls +(-70:1)  and  +(-110:1)   ..(0.6, 1.5);
  \draw[postaction={decorate}, xshift = 0.cm, xscale = -1] (-0.6, 1.5) .. controls +(-70:1)  and  +(-110:1)   ..(0.6, 1.5);
  \begin{scope}[xscale= -1, yscale= -1]
    \draw[postaction={decorate}, xshift = 0.0cm] (-1.5, 1.5) .. controls +(-70:1)  and   +(70:1)   ..(-1.5, -1.5);
  \draw[postaction={decorate}, xshift = 0.3cm] (-1.5, 1.5) .. controls +(-70:1)  and  +(70:1)   ..(-1.5, -1.5);
\end{scope}
\end{scope}
\begin{scope}[xshift=14cm, yshift =2cm, decoration={markings, mark=at
     position 0.5 with {\arrow{>}}},postaction={decorate}]
  \draw[postaction={decorate}, xshift = 0.0cm] (-1.5, 1.5) .. controls +(-70:1)  and   +(70:1)   ..(-1.5, -1.5);
  \draw[postaction={decorate}, xshift = 0.cm, yscale=1] (-1.2, 1.5) .. controls +(-70:1.6)  and  +(-110:1.6)   ..(1.2, 1.5);
  \draw[postaction={decorate}, xshift = 0.cm, rotate=180] (-1.2, 1.5) .. controls +(-70:1.6)  and  +(-110:1.6)   ..(1.2, 1.5);
  \draw[postaction={decorate}, xshift = 0.cm, yscale=1] (-0.9, 1.5) .. controls +(-70:1.3)  and  +(-110:1.3)   ..(0.9, 1.5);
  \draw[postaction={decorate}, xshift = 0.cm, rotate=180] (-0.9, 1.5) .. controls +(-70:1.3)  and  +(-110:1.3)   ..(0.9, 1.5);
  \draw[postaction={decorate}, xshift = 0.cm, yscale=-1] (-0.6, 1.5) .. controls +(-70:1)  and  +(-110:1)   ..(0.6, 1.5);
  \draw[postaction={decorate}, xshift = 0.cm, xscale = -1] (-0.6, 1.5) .. controls +(-70:1)  and  +(-110:1)   ..(0.6, 1.5);
  \begin{scope}[xscale= -1, yscale= -1]
    \draw[postaction={decorate}, xshift = 0.0cm] (-1.5, 1.5) .. controls +(-70:1)  and   +(70:1)   ..(-1.5, -1.5);
\end{scope}
\end{scope}
\begin{scope}[xshift=18cm, yshift =2cm, decoration={markings, mark=at
     position 0.5 with {\arrow{>}}},postaction={decorate}]
  \draw[postaction={decorate}, xshift = 0.cm, yscale=1] (-1.5, 1.5) .. controls +(-70:1.9)  and  +(-110:1.9)   ..(1.5, 1.5);
  \draw[postaction={decorate}, xshift = 0.cm, rotate=180] (-1.5, 1.5) .. controls +(-70:1.9)  and  +(-110:1.9)   ..(1.5, 1.5);
  \draw[postaction={decorate}, xshift = 0.cm, yscale=1] (-1.2, 1.5) .. controls +(-70:1.6)  and  +(-110:1.6)   ..(1.2, 1.5);
  \draw[postaction={decorate}, xshift = 0.cm, rotate=180] (-1.2, 1.5) .. controls +(-70:1.6)  and  +(-110:1.6)   ..(1.2, 1.5);
  \draw[postaction={decorate}, xshift = 0.cm, yscale=1] (-0.9, 1.5) .. controls +(-70:1.3)  and  +(-110:1.3)   ..(0.9, 1.5);
  \draw[postaction={decorate}, xshift = 0.cm, rotate=180] (-0.9, 1.5) .. controls +(-70:1.3)  and  +(-110:1.3)   ..(0.9, 1.5);
  \draw[postaction={decorate}, xshift = 0.cm, yscale=-1] (-0.6, 1.5) .. controls +(-70:1)  and  +(-110:1)   ..(0.6, 1.5);
  \draw[postaction={decorate}, xshift = 0.cm, xscale = -1] (-0.6, 1.5) .. controls +(-70:1)  and  +(-110:1)   ..(0.6, 1.5);
\end{scope} 
\end{tikzpicture} \\ \vspace{0.3cm}
\begin{tikzpicture}[scale=0.6, yscale = 1]
  \begin{scope}
\draw[draw=  gray, line width = 2mm] (0,0) --  (16,0); 
\draw[draw = gray, line width = 2mm] (16,4) -- (0,4); 
\draw[draw=  gray, very thick] (0,0) -- +(0,4);
\draw[draw=  gray, very thick] (4,0) -- +(0,4);
\draw[draw=  gray, very thick] (8,0) -- +(0,4);
\draw[draw=  gray, very thick] (12,0) -- +(0,4);
\draw[draw=  gray, very thick] (16,0) -- +(0,4);
\draw[draw= white, dotted, line width =1.2mm] (0,0) -- (16,0);
\draw[draw= white, dotted, line width = 1.2mm] (0,4) -- (16,4);
\end{scope}

\begin{scope}[xshift=2cm, yshift =2cm, decoration={markings, mark=at
     position 0.5 with {\arrow{>}}},postaction={decorate}]
  \draw[postaction={decorate}, xshift = 0.0cm] (-1.5, -1.5) .. controls +(70:1)  and   +(-70:1)  ..(-1.5, 1.5);
  \draw[postaction={decorate}, xshift = 0.3cm] (-1.5, -1.5) .. controls +(70:1)  and  +(-70:1)   ..(-1.5, 1.5);
  \draw[dotted, xshift = 0.6cm] (-1.5, 1.5) .. controls +(-70:1)  and  +(70:1)   ..(-1.5, -1.5);
  \draw[dotted, xshift = 0.9cm] (-1.5, 1.5) .. controls +(-70:1)  and  +(70:1)   ..(-1.5, -1.5);
  \begin{scope}[xscale= -1, yscale= -1]
  \draw[dotted, xshift = 0.0cm] (-1.5, -1.5) .. controls +(70:1)  and   +(-70:1)   ..(-1.5, 1.5);
  \draw[postaction={decorate}, xshift = 0.15cm, yscale = -1] (-1.5, -1.5) .. controls +(70:1)  and  +(-70:1)   ..(-1.5, 1.5);
  \draw[dotted, xshift = 0.3cm] (-1.5, 1.5) .. controls +(-70:1)  and  +(70:1)   ..(-1.5, -1.5);
  \draw[postaction={decorate}, xshift = 0.6cm] (-1.5, -1.5) .. controls +(70:1)  and  +(-70:1)   ..(-1.5, 1.5);
  \draw[postaction={decorate}, xshift = 0.9cm, yscale = -1] (-1.5, -1.5) .. controls +(70:1)  and  +(-70:1)   ..(-1.5, 1.5);
\end{scope}
\end{scope}
\begin{scope}[xshift=6cm, yshift =2cm, decoration={markings, mark=at
     position 0.5 with {\arrow{>}}},postaction={decorate}]
  \draw[postaction={decorate}, xshift = 0.0cm] (-1.5, -1.5) .. controls +(70:1)  and   +(-70:1)   ..(-1.5, 1.5) node[pos=0.3] {$\bullet$} ;
  \draw[postaction={decorate}, xshift = 0.3cm] (-1.5, -1.5) .. controls +(70:1)  and  +(-70:1)   ..(-1.5, 1.5) ;
  \begin{scope}[xscale= -1, yscale= -1]
  \draw[postaction={decorate}, xshift = 0.15cm, yscale = -1] (-1.5, -1.5) .. controls +(70:1)  and  +(-70:1)   ..(-1.5, 1.5);
  \draw[postaction={decorate}, xshift = 0.6cm] (-1.5, -1.5) .. controls +(70:1)  and  +(-70:1)   ..(-1.5, 1.5) node[pos=0.7] {$\bullet$} ;
  \draw[postaction={decorate}, xshift = 0.9cm, yscale = -1] (-1.5, -1.5) .. controls +(70:1)  and  +(-70:1)   ..(-1.5, 1.5) node[pos=0.3] {$\bullet$} ;
\end{scope}
\end{scope}

\begin{scope}[xshift=10cm, yshift =2cm, decoration={markings, mark=at
     position 0.5 with {\arrow{>}}},postaction={decorate}]
  \draw[dotted, xshift = 0.0cm] (-1.5, -1.5) .. controls +(70:1)  and   +(-70:1)   ..(-1.5, 1.5);
  \draw[postaction={decorate}, xshift = 0.15cm, yscale = -1] (-1.5, -1.5) .. controls +(70:1)  and  +(-70:1)   ..(-1.5, 1.5);
   \draw[dotted, xshift = 0.3cm] (-1.5, -1.5) .. controls +(70:1)  and  +(-70:1)   ..(-1.5, 1.5) ;
  \begin{scope}[xscale= -1, yscale= -1]
  \draw[postaction={decorate}, xshift = 0.15cm, yscale = -1] (-1.5, -1.5) .. controls +(70:1)  and  +(-70:1)   ..(-1.5, 1.5);
  \draw[dotted, xshift = 0.6cm] (-1.5, -1.5) .. controls +(70:1)  and  +(-70:1)   ..(-1.5, 1.5);
  \draw[dotted, xshift = 0.9cm, yscale = -1] (-1.5, -1.5) .. controls +(70:1)  and  +(-70:1)   ..(-1.5, 1.5);
\end{scope}
\end{scope}

\begin{scope}[xshift=14cm, yshift =2cm, decoration={markings, mark=at
     position 0.5 with {\arrow{>}}},postaction={decorate}]
  \draw[postaction={decorate}, xshift =0, yscale = -1] (-1.35, -1.5) .. controls +(70:1)  and  +(110:1)   ..(1.35, -1.5);
  \draw[postaction={decorate}, xscale=-1 =, yscale = 1] (-1.35, -1.5) .. controls +(70:1)  and  +(110:1)   ..(1.35, -1.5);
\end{scope} 
\end{tikzpicture}
\caption{On the second movie we added some dashed lines to clarify: on the first frame they indicate the position of strands for the canonical partition. On the third, they indicate the positions of the strands of the second frame which disappeared between the second and the third frame. The $\bullet$'s on the second indicate that we add a dot.}
\label{fig:exsaddle}
\end{figure}
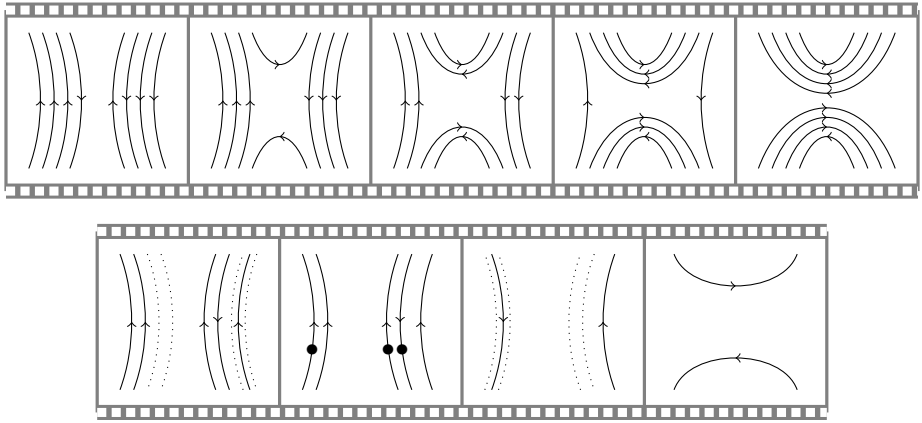

Note that we can choose different path between $P_i$ and $P_1\vee P_2$ ($i=1$ or $2$), but the composition of the $j$'s is well-defined up to a sign.

If the saddle split a component, we do the same construction in the opposite direction. We let the details to the reader.
\begin{claim}
   \label{cl:saddle}
One can fix the signs in the definition of the morphism associated with a saddle colored with $V_{m,n}$ to turn it into a chain map. The $q$-degree of this map is $2(m+n)$.
\end{claim}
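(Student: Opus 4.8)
Write $\sigma=\sum_P\sigma_P$, the sum running over admissible multi-partitions $P$ for $D$, with $\sigma_P\colon C(D_P)\to C(D'_{P'})$ zero unless $P_1\pitchfork P_2$ (notation~\ref{not:lup}), in which case it is the composite of $\G$ of an $(m+n)$-fold elementary saddle with the maps $j_{\bullet\To\bullet}$ realising $P_1\to P_1\vee P_2$ and $P_2\to P_1\vee P_2$. Each building block is a homogeneous morphism of $\mathcal{KW}_{/h}$ and the composite is, as recalled just above, well-defined up to a sign; hence each $\sigma_P$ is homogeneous of a $q$-degree that does not depend on the chosen paths. We must (i) choose signs so that $\sigma$ intertwines the differentials of $\C_\bullet(D,V_{\mathbf{m},\mathbf{n}})$ and $\C_\bullet(D',V_{\mathbf{m},\mathbf{n}})$, and (ii) compute the common $q$-degree.

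For (i) the plan is to assemble $\sigma$, $d_\bullet$ and $d'_\bullet$ into a single $S$-shaped pre-complex indexed by the product of $\Gamma_{\mathbf{m},\mathbf{n}}$ with the two-element poset (the ``mapping cylinder'' of $\sigma$) and to invoke the sign-assignment lemma~\ref{lem:precom2com}, exactly as in the proof of proposition-definition~\ref{pd:complex2}; this reduces the statement to two checks: every elementary square built from an edge $e$ of $\Gamma_{\mathbf{m},\mathbf{n}}$ and a fragment of $\sigma$ commutes up to a sign, and no thrice-iterated composition of the maps involved becomes zero after multiplication by an integer. The squares fall into three families. When $e$ concerns a component other than $l_1,l_2$, or is supported away from the saddle point $x$, the square commutes strictly by the ``far modifications commute'' relation of definition~\ref{dfn:movie}, since $\G$ of the $(m+n)$-fold saddle together with the $j$'s is supported near $x$ and the merged arcs. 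When $e$ merges two parts of $P_1$ (symmetrically for $P_2$), one compares ``$d_e$ then the $j$'s'' with ``the $j$'s then $d_{e'}$'', where $e'$ is the edge of $\Gamma_{\mathbf{m},\mathbf{n}}$ for $D'$ corresponding to $e$ — present precisely when the two merged parts are not already joined in $P_1\vee P_2$; commutativity up to a sign is then an instance of lemmas~\ref{lem:zipcrossing} and~\ref{lem:zipdigon} and proposition~\ref{prop:importantmoves}, the point being that the dots inserted in the $j$'s are exactly those produced, through the surgery and digon relations of proposition-definition~\ref{pd:functor}, when one slides a strand of $l_1$ across the saddle past the strands of $l_2$. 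The degenerate case in which merging a part of $P_1$ destroys $\pitchfork$ with $P_2$ is consistent: then $\sigma_{P'}$ is zero (or its target summand is absent) while on the other side the edge $e'$ is likewise absent, since the would-be merged part of $P_1\vee P_2$ already joins the two parts of $P_1$ in question. The non-nullity condition follows from lemmas~\ref{lem:lambda2timesid} and~\ref{lem:nu3timesid} together with the analogous computation for the saddle foam, which evaluates (via the surgery, theta-foam and sphere relations) to a non-zero multiple of an identity.

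For (ii) the $q$-degree is computed directly, most transparently on the summand indexed by the canonical multi-partition: there $P_1$, $P_2$ and $P_1\vee P_2$ are all partitions by singletons, so no $j$'s occur and $\sigma_P$ is just $\G$ of $m+n$ parallel elementary saddles; moreover $D_P$ and $D'_{P'}$ have the same numbers of positive and negative crossings (the saddle sits at a regular point and cabling turns each crossing of $D$ into the same number of crossings in $D_P$ and in $D'_{P'}$), so the grading shifts of definition~\ref{dfn:complexframed} cancel and the $q$-degree of $\sigma_P$ is $m+n$ times the degree of a single saddle foam, namely $2(m+n)$ by the formula of definition~\ref{dfn:wfoam}. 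For a general $P$ one runs the same count, now balancing the change of crossing number between $D_P$ and $D'_{P'}$ against the $q$-degrees carried by the $j$'s and their dots; these contributions telescope along any path $P_i\to P_1\vee P_2$ and sum again to $2(m+n)$, consistently with the path-independence noted above. The splitting saddle is treated identically, reading the movie in the opposite direction. The main obstacle is the middle step: organising the verification of the commuting squares so that the sign-assignment lemma applies cleanly, in particular keeping track of the dots in the $j$'s and of the degenerate configurations where $\pitchfork$ fails.
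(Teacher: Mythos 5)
First, bear in mind that the paper gives no proof of this claim to compare against: the cobordism section states explicitly that all proofs are left to the reader, and even the definition of $\sigma$ is only schematic (the paper concedes that its illustrative movie ``has no rigorous meaning in the canopolis''). So your sketch must stand on its own, and as written it is a plan rather than a proof, asserted at exactly the delicate points. For the chain-map part, reducing to lemma~\ref{lem:precom2com} is reasonable in principle (though the index shape is not literally $\Gamma_{\mathbf{m},\mathbf{n}}$ times a two-element poset, since $D$ and $D'$ have different numbers of components; one needs the mapping cylinder of the partial map $(P_1,P_2)\mapsto P_1\vee P_2$ and must check it embeds as a strong-inductive cubical set). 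But the hypotheses of that lemma --- commutation up to sign of the mixed squares and non-nullity of triple composites --- are precisely where the dots enter, and every result you invoke (lemmas~\ref{lem:zipcrossing}, \ref{lem:zipdigon}, proposition~\ref{prop:importantmoves}, lemmas~\ref{lem:lambda2timesid} and~\ref{lem:nu3timesid}) concerns only the undotted movies $\lambda$, $\nu$ and their composites; none of them says anything about squares or compositions involving the dotted maps $j$. The sentence ``the dots inserted in the $j$'s are exactly those produced through the surgery and digon relations'' states what must be computed, it does not compute it; that foam calculation is the actual content of the claim and is missing.

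The degree statement for non-canonical summands is likewise only asserted (``these contributions telescope''), and under the paper's normalization it looks genuinely problematic. All Reidemeister-type maps are degree-preserving here, so $\G(\lambda)$ (zip $+1$, digon cap $-1$) and $\G(\nu)$ (saddle $+2$, cap $-2$) have $q$-degree $0$ --- this is exactly what makes the differentials of $\C_\bullet$ homogeneous of degree $0$ --- so there is no crossing-number discrepancy left to ``balance'' the dots against: each dot contributes $+2$ outright, giving $\deg\sigma_P = 2\cdot(\#\,\textrm{saddles}) + 2\cdot(\#\,\lambda\textrm{-type } j) + 4\cdot(\#\,\nu\textrm{-type } j)$. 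Now take $V_{3,0}$ with $P_1=\{\{1,2\},\{3\}\}$ and $P_2=\{\{1\},\{2,3\}\}$: then $P_1\pitchfork P_2$, $P_1\vee P_2=\{\{1,2,3\}\}$, and both chosen paths consist of a single $\nu$-type edge, so the $j$'s alone contribute $8>6=2(m+n)$, which no non-negative number of saddles can repair. Either such summands must be shown to vanish, or the dot/saddle bookkeeping (or an extra grading shift on the summands $C(D_P)$) differs from the literal reading of the construction; your telescoping step does not even detect the tension. So step (ii) is a genuine gap, and arguably a sign that the construction and claim need to be pinned down more precisely before a proof can be written at all.
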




\subsection{Problems}
\label{sec:problems}

We would like to suggest a list of problems which arise quite naturally from the problem we dealt with:
\begin{enumerate}
\item Relate this work with the categorification of the quantum $\sll_3$ projectors defined by Rose in \cite{MR3029720}.
\item Extends this construction to the $\sll_n$-homology defined by Queffelec and Rose in \cite{1212.6076}.
\item\label{item:3} Define geometrically the category of framed webs and framed foams, find a complete list of Reidemeister moves. Find a complete list of movies and of movie moves. One can start with the 2-dimensional moves introduced by Carter \cite{doi:10.1142/9789814630627_0001} and with the framed movie moves listed by Beliakova and Wehrli in \cite{MR2462446}.
\item Once (\ref{item:3}) is done, continue the work of Clark \cite{MR2482322} and prove that the framed $\sll_3$-homology is functorial for foam cobordisms.
\item Make section~\ref{sec:cobordisms} more explicit and establish the functoriality of the colored $\sll_3$-homology.
\item Following Turner and Mackaay\cite{MR2276518}, find a new formulation of the colored $\sll_3$-homology as a space of invariant of the homology of the cable under a certain action. In the $\sll_2$-case this is this is an action of the symmetric group. We suspect that in the $\sll_3$-case, we need to consider an algebra.
\item Extend algebraically this construction to tangles, like Caprau \cite{MR3220473} did for the $\sll_2$-case.
\end{enumerate}

\appendix 

\section{Complexes and cubes}
\label{sec:complexes-cubes}

\subsection{Cubical sets}
\label{sec:cubical-sets}

\begin{dfn}
  If $n$ is a non-negative integer, the \emph{$n$th dimensional cube} $B_n$ is the $\ZZ/2\ZZ$-vector space generated by all maps $\llbracket 1, n\rrbracket$ to $\{0,*, 1\}$ (which are called \emph{faces}).  The \emph{dimension} of a face is  given by the formula:
\[
\dim f = \# f^{-1}(\{*\}).
\]
The faces of dimension 0, 1, 2 and 3 are called vertices, edges, squares and cubes respectively. A \emph{cubical set} is a subspace of an $n$th dimensional cube generated by some faces of $B_n$. Let $f$ be a face of dimension $k$ of the $n$th dimensional cube. \emph{The boundary of a face $f$}, denoted  by $\partial f$ is the sum of all faces of $B_n$ of dimension $k-1$ which are equal to $f$ on exactly $n-1$ elements of $\llbracket 1, n \rrbracket$. A cubical set $S$ is \emph{consistent} if for every face $f$ of positive dimension, $f$ is in $S$ if and only if $\partial f$ is in $S$.
\end{dfn}

\begin{rmk}
  If $f$ has dimension $k$, $\partial f$ is a sum of $2k$ faces.
  If a cubical set is consistent it is determined by its vertices. Hence, if $X$ is a set of vertices the $n$th dimensional cube, we may speak of the consistent cubical set associated with $X$. 
\end{rmk} 

The set $\{0,*,1\}$ is endowed with the total order $0<*<1$. This induces a partial order on the set of faces.  

\begin{dfn}
 A cubical set $S\subset B_n$  is \emph{inductive} if for every face $f$ of $S$ and every face $f'$ of $B_n$, $f'<f$ implies $f'\in S$. It is \emph{strong-inductive} if furthermore all its maximal faces have dimension equal to 0.
\end{dfn}
\begin{rmk}
  An inductive cubical set is entirely determined by its set of maximal faces.
\end{rmk}
\begin{lem}
  If a cubical set $S$ is strong-inductive, it is consistent.
\end{lem}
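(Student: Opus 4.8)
The plan is to extract from the two hypotheses a clean order-theoretic picture of $S$ and then verify, term by term, the two implications in the definition of ``consistent''. Since $S$ is inductive it is an order ideal (down-set) of $B_n$ for the partial order induced by $0<*<1$; being strong-inductive it is, moreover, an order ideal all of whose maximal faces are vertices. I would write $M\subseteq S$ for the set of these maximal faces, so that every $f\in S$ satisfies $f\le v$ for some $v\in M$ with $\dim v=0$. The only feature of the order I will use is: if $v$ is a vertex (a map $\llbracket 1,n\rrbracket\to\{0,1\}$) and $f\le v$, then at every position $j$ with $f(j)\in\{*,1\}$ one must have $v(j)=1$, since neither $*$ nor $1$ is $\le 0$.

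Next I would record the linear-algebra point that makes the statement ``$\partial f\in S$'' tractable: a cubical set is the span of a subset of the standard face-basis of $B_n$, so a vector of $B_n$ lies in $S$ if and only if every basis face in its support does. Apply this to a face $f$ of positive dimension $k$, with $A:=f^{-1}(\{*\})$ of size $k$; its $2k$ codimension-$1$ boundary faces are the faces $f^0_a$ (replace the $*$ at $a\in A$ by $0$) and $f^1_a$ (replace it by $1$), and one has $f^0_a<f$ while $f^1_a>f$. Hence $\partial f\in S$ holds precisely when all $f^0_a$ and all $f^1_a$ lie in $S$.

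Now the two directions. For $f\in S\Rightarrow\partial f\in S$: since $k\ge 1$ the face $f$ is not maximal in $S$, so $f\le v$ for some $v\in M$; the lower faces $f^0_a<f$ lie in the ideal $S$; and since $f(a)=*$ forces $v(a)=1$, the upper faces satisfy $f^1_a\le v$, hence $f^1_a\in S$; so every boundary face of $f$ lies in $S$ and therefore $\partial f\in S$. Conversely, for $\partial f\in S\Rightarrow f\in S$: pick any $a\in A$ (possible as $k\ge 1$); then $f^1_a\in S$, so $f^1_a\le v$ for some $v\in M$; as $f^1_a(a)=1$ we get $v(a)=1$, and since $f$ agrees with $f^1_a$ away from $a$ while $f(a)=*\le 1=v(a)$, we conclude $f\le v$, whence $f\in S$. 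Together these establish consistency.

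I do not expect a genuine obstacle; the one point requiring care is orienting oneself in the order $0<*<1$ — specifically noticing that the ``upper'' boundary faces $f^1_a$ are \emph{larger} than $f$, so inductivity alone does not place them in $S$, and the strong-inductive hypothesis (through the maximal vertex sitting above $f$) is exactly what accounts for them.
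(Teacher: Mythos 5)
Your proof is correct and follows essentially the same route as the paper: for $f\in S\Rightarrow\partial f\in S$ you use the $0$-dimensional maximal face above $f$ to bound all $2k$ boundary faces, and for the converse you use that some boundary face lies above $f$ (the paper invokes down-closure directly at this point, but your detour through the maximal vertex is the same idea). No gaps.
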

\begin{proof}
  Let $f$ be a face of $S$, and let $h$ be a $0$ dimensional maximal element of $S$ such that $f<h$. Then $\partial f$ is a sum of faces which are all smaller than or equal than $h$, hence they are all in $S$. If $\partial f$ is in $S$, all the faces in the sum of $\partial f$ are in $S$. Half of them are greater than $f$, hence $f$ is in $S$.
\end{proof}
\begin{lem}
  Let $S$ be a consistent cubical set, then the map $\partial: S\to S$ is a differential (the homological degree is given by the dimension). 
\end{lem}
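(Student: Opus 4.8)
The plan is to separate the statement into two independent assertions: first, that $\partial$ actually restricts to a linear endomorphism of $S$, and second, that $\partial\circ\partial=0$ as an operator on $S$. Only the first uses the hypothesis that $S$ is consistent; the second is purely formal.

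For the first part, since $S$ is a subspace of $B_n$ spanned by faces, it is enough to check $\partial f\in S$ for each generating face $f$ of $S$. If $\dim f=0$, then there are no faces of dimension $-1$, so $\partial f=0\in S$. If $\dim f>0$, then $\partial f\in S$ is precisely the defining property of a consistent cubical set. By linearity $\partial(S)\subseteq S$, and by construction $\partial$ lowers the dimension-grading by one, so $\partial\colon S\to S$ is a well-defined graded endomorphism.

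For the second part I would first put $\partial$ in coordinates. Fix a face $f$ of $B_n$ and set $A=f^{-1}(\{*\})$, $k=\#A$. For $i\in A$ and $\epsilon\in\{0,1\}$ let $f_i^{\epsilon}$ denote the face agreeing with $f$ everywhere except that it sends $i$ to $\epsilon$. A face $g$ of dimension $k-1$ agrees with $f$ on exactly $n-1$ coordinates if and only if $g=f_i^{\epsilon}$ for some $i\in A$ and $\epsilon\in\{0,1\}$ (in any other case the dimension or the number of agreements comes out wrong), and these $2k$ faces are pairwise distinct. Hence
\[
\partial f=\sum_{i\in A}\sum_{\epsilon\in\{0,1\}} f_i^{\epsilon}.
\]
Applying this formula again, and using that the $*$-coordinate set of $f_i^{\epsilon}$ is $A\setminus\{i\}$, gives
\[
\partial^2 f=\sum_{i\in A}\ \sum_{\epsilon\in\{0,1\}}\ \sum_{j\in A\setminus\{i\}}\ \sum_{\delta\in\{0,1\}} \bigl(f_i^{\epsilon}\bigr)_j^{\delta}.
\]
The key observation is that for $i\neq j$ one has $\bigl(f_i^{\epsilon}\bigr)_j^{\delta}=\bigl(f_j^{\delta}\bigr)_i^{\epsilon}$, since both equal the face obtained from $f$ by resetting $i$ to $\epsilon$ and $j$ to $\delta$. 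Thus each face of dimension $k-2$ occurring in the sum is indexed by an \emph{ordered} pair $(i,j)$ and also by $(j,i)$, so it appears exactly twice; working over $\ZZ/2\ZZ$ the whole sum vanishes. This proves $\partial^2 f=0$ on generators, hence $\partial^2=0$ on $S$ by linearity.

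I do not expect any real obstacle: the only thing requiring care is the bookkeeping in the double-counting step (making the pairing $(i,j)\leftrightarrow(j,i)$ explicit and checking the faces genuinely coincide), and making sure consistency is invoked exactly where $\partial f$ must be brought back inside $S$, and nowhere in the vanishing computation, which is formal.
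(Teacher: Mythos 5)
Your proof is correct and follows the same route as the paper, which simply observes that $\partial$ has degree $-1$ and that $\partial^2=0$ because the ground ring is $\ZZ/2\ZZ$. You merely make explicit what the paper leaves implicit: consistency is what guarantees $\partial$ restricts to $S$, and $\partial^2=0$ comes from each codimension-two face being hit exactly twice via the pairing $(i,j)\leftrightarrow(j,i)$.
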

\begin{proof}
  From the definition of $\partial$, we see that it is a map of degree -1. It is clear that $\partial ^2= 0_S$ since we work over $\ZZ/2\ZZ$. 
\end{proof}
\begin{prop}\label{prop:homology-cubical-set}
  Let $S\subset B_n$ be a (non-trivial) strong-inductive cubical set, then we have:
  \[ H_0(S) = \ZZ/2\ZZ \quad \textrm{and} \quad H_i(S)= 0 \quad \textrm{for all $i > 0$.}
\]
\end{prop}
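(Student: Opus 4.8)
The plan is to prove both statements simultaneously by induction on the dimension $n$ of the ambient cube $B_n$ in which $S$ sits. The base case $n=0$ is immediate: $B_0$ consists of a single vertex, so the only non-trivial cubical set is $B_0$ itself, with $H_0=\ZZ/2\ZZ$ and all higher groups zero.

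For the inductive step I would cut $S$ along the value taken by its faces on the last coordinate $n+1$. Write $S_0^\flat=\{f\in S:f(n+1)=0\}$ and $S_1^\flat=\{f\in S:f(n+1)=1\}$; since $\partial$ only modifies coordinates carrying a $*$, both are subcomplexes of $S$, and erasing the last coordinate identifies them with cubical sets $S_0,S_1\subseteq B_n$. Let also $\bar S\subseteq B_n$ be the set of those $g$ such that the face obtained from $g$ by appending a $*$ in position $n+1$ lies in $S$. Then $S_0^\flat\oplus S_1^\flat$ is a subcomplex of $S$, and the quotient $S/(S_0^\flat\oplus S_1^\flat)$ is spanned by the faces of $S$ carrying a $*$ in position $n+1$; erasing that coordinate (which lowers the dimension by $1$) identifies the quotient with $\bar S$, its homological grading shifted up by one, which I denote $\bar S[1]$. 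Using that $S$ is inductive and that all its maximal faces are vertices — so that a face of $S$ whose last coordinate is $*$ or $1$ must lie below a vertex $v\in S$ with $v(n+1)=1$ — one checks the routine facts that $S_0$ is always a non-trivial strong-inductive cubical set in $B_n$ (it contains the all-zero vertex $\mathbf 0$), and that $\bar S$ is non-trivial if and only if $S$ contains a vertex with last coordinate $1$, in which case $\bar S$ and $S_1$ are both non-trivial strong-inductive cubical sets containing $\mathbf 0$.

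Now split into two cases. If $S$ has no vertex with last coordinate $1$, then by the observation above no face of $S$ has last coordinate $*$ or $1$, so $S=S_0^\flat\cong S_0$ and the induction hypothesis applies directly. Otherwise $S_0,S_1,\bar S$ are non-trivial strong-inductive cubical sets in $B_n$, and I would run the long exact sequence associated with
\[
0\longrightarrow S_0^\flat\oplus S_1^\flat\longrightarrow S\longrightarrow \bar S[1]\longrightarrow 0 .
\]
By induction $H_\bullet(S_0)$, $H_\bullet(S_1)$ and $H_\bullet(\bar S)$ are $\ZZ/2\ZZ$ in degree $0$ and vanish otherwise, so for each $i\ge 2$ the long exact sequence squeezes $H_i(S)$ between two zero groups, giving $H_i(S)=0$; and in low degrees it reads
\[
0\longrightarrow H_1(S)\longrightarrow H_0(\bar S)\xrightarrow{\ \delta\ } H_0(S_0)\oplus H_0(S_1)\longrightarrow H_0(S)\longrightarrow 0 .
\]
The crucial point is to compute $\delta$: chasing the definition of the connecting morphism, one lifts a cycle represented by $g^{+*}$ (with $g$ a cycle of $\bar S$) to $S$ and applies $\partial$, obtaining $g^{+0}+g^{+1}\in S_0^\flat\oplus S_1^\flat$; hence $\delta$ sends the class of $\mathbf 0$ to $([\mathbf 0],[\mathbf 0])$, which is non-zero because the augmentation $H_0(S_a)\to\ZZ/2\ZZ$ carries $[\mathbf 0]$ to $1$. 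Therefore $\delta$ is injective, so $H_1(S)=\ker\delta=0$ and $H_0(S)=\operatorname{coker}\delta\cong(\ZZ/2\ZZ)^2/\langle(1,1)\rangle\cong\ZZ/2\ZZ$, completing the induction.

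The only genuine content is the bookkeeping that $S_0$, $S_1$ and $\bar S$ are again strong-inductive (mechanical from the definitions) and the explicit identification of the connecting map $\delta$ from the boundary formula. The step I expect to be the main obstacle is keeping the degenerate case $\bar S=\emptyset$ cleanly separated: there the Mayer–Vietoris input collapses, and one must observe directly that $S$ is isomorphic to a strong-inductive cubical set of strictly smaller ambient dimension and invoke the induction hypothesis on that.
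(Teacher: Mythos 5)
Your proof is correct, and it takes a genuinely different route from the paper's. The paper argues in two stages: it first proves the statement for the full cube $B_n$ (using that $B_{n+1}$ is the cone of $B_n\xrightarrow{\id\oplus\id}B_n\oplus B_n$), and then handles a general strong-inductive $S$ by induction on the number of maximal vertices, writing $S=S'\cup S''$ with $S''$ the sub-cube below a single maximal vertex and running the Mayer--Vietoris sequence $0\to S'\cap S''\to S'\oplus S''\to S\to 0$, after noting that $S'\cap S''$ is again strong-inductive in a smaller cube. You instead run a single induction on the ambient dimension, slicing an \emph{arbitrary} strong-inductive $S$ along its last coordinate via $0\to S_0^\flat\oplus S_1^\flat\to S\to \bar{S}[1]\to 0$; for $S=B_{n+1}$ this is exactly the cone decomposition, so your induction subsumes the paper's preliminary lemma and treats cube and general $S$ uniformly. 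The non-formal inputs on your side are the checks that $S_0$, $S_1$, $\bar{S}$ are non-trivial strong-inductive (your observation that a face with last coordinate $*$ or $1$ lies below a maximal vertex with last coordinate $1$ is precisely what makes these go through, and they do) and the explicit identification of the connecting map; using the augmentation to see that $[\mathbf{0}]$ generates $H_0$ of each piece is the right way to conclude that $\delta$ has image the diagonal, hence is injective with cokernel $\ZZ/2\ZZ$. The paper's route buys a reduction to the familiar cube computation, but its low-degree analysis needs an analogous non-degeneracy statement for $H_0(S'\cap S'')\to H_0(S')\oplus H_0(S'')$, which is left implicit there; so neither approach escapes an argument of that kind, and yours makes it explicit.
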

\begin{lem} For every positive integer $n$ we have:
  \[ H_0(B_n) = \ZZ/2\ZZ \quad \textrm{and} \quad H_i(B_n)= 0 \quad \textrm{for all $i > 0$.}
\]
\end{lem}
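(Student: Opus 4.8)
The plan is to identify $B_n$ with the $\ZZ/2\ZZ$-cellular chain complex of the standard $n$-cube — which is contractible — and to make this concrete through an iterated tensor decomposition, reducing everything to the trivial case $n=1$.

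First I would observe that $B_n$ splits as a tensor product of complexes over $\ZZ/2\ZZ$. A face $f\colon\llbracket 1,n\rrbracket\to\{0,*,1\}$ is nothing but the $n$-tuple $(f(1),\dots,f(n))$, so as graded $\ZZ/2\ZZ$-vector spaces $B_n\cong B_1^{\otimes n}$, the grading by dimension corresponding to the number of tensor factors equal to the edge $*$. The boundary of $B_n$ replaces exactly one occurrence of $*$ by $0$ and by $1$ and sums the two results; since we work over $\ZZ/2\ZZ$ there are no signs, and this operation is precisely the tensor-product differential $\partial_{B_n}=\sum_{i=1}^{n}\id^{\otimes(i-1)}\otimes\,\partial_{B_1}\otimes\id^{\otimes(n-i)}$. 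Hence $B_n\cong B_1^{\otimes n}$ as chain complexes.

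Next I would dispose of the case $n=1$. The complex $B_1$ is $\ZZ/2\ZZ\langle *\rangle$ in degree $1$ and $\ZZ/2\ZZ\langle 0\rangle\oplus\ZZ/2\ZZ\langle 1\rangle$ in degree $0$, with $\partial(*)=\langle 0\rangle+\langle 1\rangle\neq 0$; thus $H_1(B_1)=0$ and $H_0(B_1)=\ZZ/2\ZZ$. In fact I would record the sharper statement that the augmentation $\varepsilon\colon B_1\to\ZZ/2\ZZ$ (target: the complex concentrated in degree $0$) sending both vertices to the generator and the edge $*$ to $0$ is a chain homotopy equivalence: a homotopy inverse $\iota$ is given by sending the generator to the vertex $\langle 0\rangle$, and a contracting homotopy for $\id-\iota\varepsilon$ is $h(\langle 1\rangle)=\langle *\rangle$, $h(\langle 0\rangle)=0$, $h(\langle *\rangle)=0$, for which $\partial h+h\partial=\id-\iota\varepsilon$ is a three-line verification.

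Finally I would conclude: tensoring chain complexes over the field $\ZZ/2\ZZ$ preserves chain homotopy equivalences, so $B_n\cong B_1^{\otimes n}\simeq\ZZ/2\ZZ$, the complex concentrated in degree $0$; therefore $H_0(B_n)=\ZZ/2\ZZ$ and $H_i(B_n)=0$ for all $i>0$, as claimed. (If one prefers to avoid exhibiting the homotopy explicitly, one can instead induct on $n$ via the Künneth isomorphism $H_*(B_n)\cong H_*(B_{n-1})\otimes_{\ZZ/2\ZZ}H_*(B_1)$, valid because $\ZZ/2\ZZ$ is a field.) There is no genuine obstacle here; the only points needing a little care are the identification of $\partial_{B_n}$ with the tensor-product differential and the base-case homotopy check, both of which are routine.
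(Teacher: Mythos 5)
Your proof is correct, and it takes a different route from the paper. The paper disposes of this lemma in one line: it inducts on $n$ using the observation that $B_{n+1}$ is isomorphic, as a complex, to the cone of $B_n \stackrel{\id\oplus\id}{\longrightarrow} B_n\oplus B_n$, so the homology follows from the long exact sequence of the cone and the induction hypothesis. Your decomposition $B_n\cong B_1^{\otimes n}$ is really the same structural fact packaged multiplicatively rather than as an iterated cone (the cone of $\id\oplus\id$ is exactly $B_n\otimes B_1$ over $\ZZ/2\ZZ$), but the way you exploit it is genuinely different: instead of a long exact sequence you contract $B_1$ explicitly and invoke that tensoring over the field $\ZZ/2\ZZ$ preserves chain homotopy equivalences (or Künneth). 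What your version buys is a sharper statement — $B_n$ is chain homotopy equivalent to $\ZZ/2\ZZ$ concentrated in degree $0$, with an explicit contraction — and it is entirely sign-free in characteristic $2$; what the paper's version buys is uniformity with the rest of the appendix, where the deduction of Proposition A.4 for general strong-inductive cubical sets is again run by induction and (Mayer--Vietoris-type) long exact sequences, so the one-line cone argument fits the surrounding machinery with no extra homological input. Your identification of $\partial_{B_n}$ with the tensor-product differential is justified correctly (the boundary faces of dimension $k-1$ agreeing with $f$ on $n-1$ coordinates are precisely those obtained by replacing one $*$ by $0$ or $1$), and the base-case homotopy check is right, so there is no gap.
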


\begin{proof}
  This is an easy induction using the fact that $B_{n+1}$ is isomorphic as a complex to the cone of $B_n \stackrel{\id \oplus \id}\longrightarrow B_n \oplus B_n$.
\end{proof}
\begin{proof}[Proof of proposition\ref{prop:homology-cubical-set}]
  This is an easy induction on $n$ and on the number of maximal element in $S$: If there is one maximal element, $S$ is isomorphic to a $k$-th dimensional cube. If it has $i+1$ maximal faces of dimension 0, it is a union of two cubical sets: $S'$ with $i$ maximal faces of dimension 0 and $S''$ with 1 maximal face of dimension 0. 
Furthermore $S'\cap S''$ is isomorphic to a cubical set in $B_m$ with $m<n$.
We have the following short exact sequence:
\[
0\to S'\cap S'' \to S' \oplus S'' \to S \to 0.
\]
We conclude using the associated long exact sequence and the induction hypothesis.
\end{proof}

\begin{dfn}
  If $B_n$ is the $n$th dimensional cube, we denote by $B^*_n$ its $\ZZ/2\ZZ$-dual (as a complexe) and call it \emph{the $n$th dimensional cocube}. Similarly a \emph{cocubical set} is the $\ZZ/2\ZZ$-dual of a cubical set. If $S$ is a consistent cubical set we denote $H^\bullet(S)$ the homology of $S^*$.
\end{dfn}

Thanks to the universal coefficient theorem and proposition~\ref{prop:homology-cubical-set}, we have:
\begin{prop}\label{prop:cohomology-cubical-set}
  Let $S$ a (non-trivial) strong-inductive cubical set, then we have:
  \[ H^0(S) = \ZZ/2\ZZ \quad \textrm{and} \quad H^i(S)= 0 \quad \textrm{for all $i > 0$.}
\]
\end{prop}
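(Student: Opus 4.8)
The statement follows from Proposition~\ref{prop:homology-cubical-set} essentially by dualizing, and the sentence preceding it already indicates the mechanism (the universal coefficient theorem). Here is the plan I would carry out.

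First I would make the finiteness and linear-algebra setup explicit: $S$ is a finite-dimensional $\ZZ/2\ZZ$-vector space, since it is spanned by (some of) the finitely many faces of the $n$th dimensional cube $B_n$, and it carries the boundary differential $\partial$, which lowers the dimension by $1$. By definition $S^*$ is its $\ZZ/2\ZZ$-linear dual equipped with the transposed differential $\partial^*$, which raises dimension by $1$, and $H^\bullet(S)$ is the homology of the complex $(S^*,\partial^*)$.

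Second, I would use that the ground ring $\ZZ/2\ZZ$ is a field, so the contravariant functor $V\mapsto \mathrm{Hom}_{\ZZ/2\ZZ}(V,\ZZ/2\ZZ)$ is exact. Applying it to the complex $(S,\partial)$ therefore commutes with passing to homology, and one obtains, for every $i$, a natural isomorphism
\[
H^i(S) \;\cong\; \mathrm{Hom}_{\ZZ/2\ZZ}\bigl(H_i(S),\ZZ/2\ZZ\bigr).
\]
This is exactly the universal coefficient theorem mentioned before the statement; working over a field means the $\mathrm{Ext}^1$ term is zero, so there is no correction, and the cohomological degree $i$ matches the homological degree $i$ of $S$.

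Finally I would substitute Proposition~\ref{prop:homology-cubical-set}: since $S$ is a non-trivial strong-inductive cubical set, $H_0(S)=\ZZ/2\ZZ$ and $H_i(S)=0$ for $i>0$. Dualizing yields $H^0(S)=\mathrm{Hom}_{\ZZ/2\ZZ}(\ZZ/2\ZZ,\ZZ/2\ZZ)=\ZZ/2\ZZ$ and $H^i(S)=\mathrm{Hom}_{\ZZ/2\ZZ}(0,\ZZ/2\ZZ)=0$ for $i>0$, which is precisely the claim. There is no real obstacle in this argument; the only point requiring a moment of care is the invocation of exactness of dualization, i.e. that we are over the field $\ZZ/2\ZZ$ so that the universal coefficient sequence degenerates to an isomorphism (over $\ZZ$ an $\mathrm{Ext}^1(\ZZ/2\ZZ,\ZZ)$ term would intervene) — and keeping the grading conventions aligned between $S$ and $S^*$.
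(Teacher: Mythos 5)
Your argument is correct and is exactly the paper's route: the paper deduces the proposition in one line from Proposition~\ref{prop:homology-cubical-set} via the universal coefficient theorem, which over the field $\ZZ/2\ZZ$ degenerates to the isomorphism $H^i(S)\cong \mathrm{Hom}_{\ZZ/2\ZZ}(H_i(S),\ZZ/2\ZZ)$ you use. Your write-up merely makes explicit the exactness of dualization over a field, which is a fine elaboration of the same proof.
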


\subsection{Cubical sets and (pre-)complexes}
\label{sec:cubic-pre-compl}
In this section $R$ is a unital ring.

\begin{dfn}
  Let $S$ cubical set, an \emph{$S$-shaped space $(C,d)$} (or simply $C$, when this is not ambiguous) consists of the following data:
  \begin{itemize}
  \item for every vertex $v$ of $S$, an $R$-module $C_v$,
  \item for every edge $e$ of $S$, an $R$-module map $d_e: C_{e_0} \to C_{e_1}$ (where $e_0$ and $e_1$ are the vertices of $S$ obtained by replacing the only '$*$' of $e$ by a '$0$' or '$1$' respectively).
  \end{itemize}
If $S$ is strong-inductive and if $s$ is a square in $S$. There are two maps we can consider: the two possible compositions of $d_\bullet$ which makes sense. We say that $C$ is an \emph{$S$-shaped pre-complex}:
\begin{itemize}
\item for every square $s$ the two maps are equal up to a sign (if they are equal we say that the square commutes, else that it anti-commutes),
\item any (compatible) composition of 3 maps $d_\bullet$ multiplied by $2$ is not equal to zero.
\end{itemize}
\end{dfn}
\begin{exa}
The hypercube in the definition of $\sll_3$ homology is a $B_n$-shaped pre-complex, while the hypercube defining the odd Khovanov homology (see~\cite{0710.4300}) is not a pre-complex.
\end{exa}
\begin{rmk}
  If $C$ is an $S$-shaped pre-complex, such that all its squares anti-commute (we say that $C$ \emph{anti-commutes}), then it is naturally endowed  a structure of complex: the space  of homological degree $i$ is then given by:
\[
C_i = \bigoplus_{\substack{ v\in S, \\ \# f^{-1}(\{1\})=i}}C_v.
\]
\end{rmk}
\begin{dfn}
  Let $S$ be a cubical set, a \emph{sign assignment} $\delta$ on $S$ is an application from the set of edges of $S$ to $\ZZ/2\ZZ$ (Hence it can be seen as an element of $S^*$). If $C$ is an $S$-shaped pre-complex and $\delta$ a sign assignment on $S$, the \emph{$\delta$-deformation} of $(C, d)$, denoted $C^\delta, d^\delta)$ (or simply $C^\delta$) is the $S$-shaped pre-complex defined by 
  \begin{itemize}
  \item for every vertex $v$ of $S$, $C^\delta _v:= C_v$,
  \item for every edge $e$ of $S$, $d^\delta_e :=(-1)^{\delta(e)}d_e$ where $\ZZ/2\ZZ$ is identified with $\{0,1\}$.
  \end{itemize}
\end{dfn}

\begin{lem}\label{lem:precom2com}
Let $S$ be a strong-inductive cubical set and $C$ an $S$-shaped pre-complex. Then there exists a sign assignment $\delta$ such that $C^\delta$ anti-commutes. Furthermore, if $\delta_1$ and $\delta_2$ are two such sign assignments, then the complexes $C^{\delta_1}$ and $C^{\delta_2}$ are isomorphic.
\end{lem}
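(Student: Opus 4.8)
I would phrase everything inside the cochain complex $S^\ast$ of the cubical set $S$ (whose differential I denote $\partial^\ast$, the transpose of the boundary $\partial$ on $S$) and then feed on the vanishing $H^1(S)=H^2(S)=0$ from Proposition~\ref{prop:cohomology-cubical-set}. First I would record the ``defect'' of $C$ as a $2$-cochain: for each square $s$ of $S$ the two relevant compositions of $d_\bullet$ satisfy (one) $=\varepsilon_s\cdot$(other) with $\varepsilon_s\in\{\pm1\}$, so set $w(s)\in\ZZ/2\ZZ$ by $\varepsilon_s=(-1)^{w(s)}$, and let $u\in S^\ast$ be the $2$-cochain constantly equal to $1$ on squares. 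A direct sign bookkeeping (the four edges of $s$ split as two in each composition and together form $\partial s$) shows that, for a sign assignment $\delta\in S^\ast$ (a $1$-cochain), the deformation $C^\delta$ anti-commutes at $s$ exactly when $\langle\delta,\partial s\rangle=w(s)+1$, i.e.\ exactly when $\partial^\ast\delta=w+u$. So the existence half of the lemma becomes: $w+u$ is a coboundary; and the uniqueness half will reduce to describing the indeterminacy in $\delta$.

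Second, I would prove that $w+u$ is a cocycle. Since every $3$-cube has exactly six square faces, $\partial^\ast u=0$, so it suffices to show $w$ is a cocycle, i.e.\ that for every $3$-cube $q$ of $S$ the product of the six signs $\varepsilon_s$ over the faces $s$ of $q$ equals $+1$. Because $S$ is strong-inductive it is consistent (by the lemma preceding Proposition~\ref{prop:homology-cubical-set}), so all sub-faces of $q$ lie in $S$ and all the relevant $d_\bullet$'s are defined. Fix the $<$-minimal vertex $v_0$ of $q$ and the opposite vertex $v_1$; each of the $3!=6$ directed edge-paths from $v_0$ to $v_1$ in $q$ gives a composition of three $d_\bullet$'s, any two ``adjacent'' paths (differing by a transposition of consecutive directions) differ precisely by the sign $\varepsilon_s$ of the intervening face $s$, and one travels from the lexicographic path $P$ to the anti-lexicographic path by two different sequences of three such flips, which pass through the two complementary triples of faces of $q$. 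This gives $\varepsilon_{s_1}\varepsilon_{s_2}\varepsilon_{s_3}\,d_P=\varepsilon_{s_4}\varepsilon_{s_5}\varepsilon_{s_6}\,d_P$; the non-nullity clause of the pre-complex axioms gives $2\,d_P\neq 0$, hence $d_P\neq 0$, hence the two sign-products coincide, so the full product of the six signs is a square and therefore $+1$. \emph{This $3$-cube identity is the main obstacle}: it is the only place where the pre-complex hypotheses beyond ``squares commute up to a sign'' are used, and one must be careful that strong-inductiveness supplies all the needed sub-faces.

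Third, I would conclude. By Proposition~\ref{prop:cohomology-cubical-set}, $H^2(S)=0$, so the cocycle $w+u$ is a coboundary $\partial^\ast\delta$ for some sign assignment $\delta$; by the first step $C^\delta$ anti-commutes, and as $d^\delta_e=\pm d_e$ it is still a pre-complex, hence a genuine complex by the remark following the definition of pre-complex. For uniqueness, if $\delta_1$ and $\delta_2$ both make the deformation anti-commute, then $\partial^\ast(\delta_1+\delta_2)=0$, and since $H^1(S)=0$ we get $\delta_1+\delta_2=\partial^\ast\eta$ for a $0$-cochain $\eta$ on the vertices of $S$. Then $\Phi:=\bigl((-1)^{\eta(v)}\id_{C_v}\bigr)_{v}$ is an isomorphism of complexes $C^{\delta_1}\xrightarrow{\ \sim\ }C^{\delta_2}$: chain-map-ness on an edge $e$ with endpoints $e_0,e_1$ reads $\eta(e_0)+\eta(e_1)=\delta_1(e)+\delta_2(e)$, which is exactly $\partial^\ast\eta=\delta_1+\delta_2$ evaluated on $e$, and each component $(-1)^{\eta(v)}\id$ is invertible. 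This proves both assertions.
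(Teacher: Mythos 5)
Your proof is correct and follows essentially the same route as the paper: you encode the commutation defect as a $2$-cochain, prove it is a cocycle by comparing the edge-paths of a $3$-cube and invoking the non-nullity clause, then use $H^2(S)=0$ for existence and $H^1(S)=0$ together with the diagonal sign map $(-1)^{\eta(v)}\id_{C_v}$ for uniqueness, exactly as in the paper (whose cochain $\gamma$ is your $w+u$). One micro-quibble: in the cocycle step argue directly that a disagreement of the two sign-products would force $2\,d_P=0$, contradicting the non-nullity clause, since the intermediate weakening to ``$d_P\neq 0$'' would not by itself exclude $2$-torsion.
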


\begin{proof}
Let us define a $\ZZ/2\ZZ$-linear form $\gamma$ on the $\ZZ/2\ZZ$-vector space generated by squares in $S$:
\[
\gamma(s) =
\begin{cases}
  1 & \textrm{if $s$ commutes,} \\
  0 & \textrm{if $s$ anti-commutes.}
\end{cases}
\]
This map is meant to encode the defect of anti-commutativity of $C$. We claim that this map (seen as an element the $S^*$) satisfies $\partial^*\gamma =0$. The map  $\partial^*\gamma$ is a linear form on the $\ZZ/2\ZZ$-vector space generated by cubes of $S$. We consider a cube $c$ in $S$ (that is a map from $\llbracket 1, n \rrbracket$ to $\{0, *, 1\}$ such that $*$ has exactly three pre-images) and the associated data in $C$ (for simplicity the indices only tracks the preimages of $*$ of the cube):
\[
\begin{tikzpicture}
\node (C000) at (-6, 0)  {$C_{000}$};
\node (C001) at (-2, 2)  {$C_{001}$};
\node (C010) at (-2, 0)  {$C_{010}$};
\node (C100) at (-2,-2)  {$C_{100}$};
\node (C011) at ( 2, 2)  {$C_{011}$};
\node (C101) at ( 2, 0)  {$C_{101}$};
\node (C110) at ( 2, -2)  {$C_{110}$};
\node (C111) at ( 6, 0)  {$C_{111}$};
\draw [->] (C000) -- (C001) node [midway, above] {$d_{00*}$};
\draw [->] (C000) -- (C010) node [midway, above] {$d_{0*0}$};
\draw [->] (C000) -- (C100) node [midway, above] {$d_{*00}$};
\draw [->] (C001) -- (C011) node [midway, above] {$d_{0*1}$};
\draw [->] (C001) -- (C101) node [near end, below] {$d_{*01}$};
\draw [->] (C010) -- (C110) node [near start, above] {$d_{*10}$};
\draw [->] (C010) -- (C011) node [near start, below] {$d_{01*}$};
\draw [->] (C100) -- (C110) node [midway, above] {$d_{1*0}$};
\draw [->] (C100) -- (C101) node [near end, above] {$d_{10*}$};
\draw [->] (C110) -- (C111) node [midway, above] {$d_{11*}$};
\draw [->] (C101) -- (C111) node [midway, above] {$d_{1*1}$};
\draw [->] (C011) -- (C111) node [midway, above] {$d_{*11}$};
\draw [line width = 0.8cm, rounded corners, red, opacity =0.3, line cap= round]  (C000.center)-- (C001.center) -- (C011.center) -- (C111.center);
\draw [line width = 0.8cm, rounded corners, green, opacity =0.3, line cap= round] (C000.center)-- (C100.center) -- (C110.center) -- (C111.center);
\end{tikzpicture}
\]
There are two ways to compare the two highlighted maps (the squares of $S$ are denoted $s_\bullet$): 
\begin{align*}
d_{11*}\circ d_{1*0} \circ d_{*00} 
&= (-1)^{1+ \gamma(s_{1**})} d_{1*1}\circ d_{10*} \circ d_{*00} \\
&= (-1)^{2+ \gamma(s_{*0*})+\gamma(s_{1**}) } d_{1*1}\circ d_{*01} \circ d_{00*} \\
&= (-1)^{3+ \gamma(s_{**1})+\gamma(s_{*0*})+\gamma(s_{1**})} d_{*11}\circ d_{0*1} \circ d_{00*}, \\
\end{align*}
and 
\begin{align*}
d_{11*}\circ d_{1*0} \circ d_{*00} 
&= (-1)^{1+ \gamma(s_{**0})} d_{11*}\circ d_{*10} \circ d_{0*0} \\
&= (-1)^{2+ \gamma(s_{*1*}) + \gamma(s_{**0})} d_{*11}\circ d_{01*} \circ d_{0*0} \\
&= (-1)^{3+ \gamma(s_{0**}) + \gamma(s_{*1*}) + \gamma(s_{**0})} d_{*11}\circ d_{0*1} \circ d_{00*}, \\
\end{align*}
$C$ being a pre-complex $2(d_{11*}\circ d_{1*0} \circ d_{*00})$ is not equal to zero, we have:
\[\gamma(s_{**1})+\gamma(s_{*0*})+\gamma(s_{1**}) + \gamma(s_{0**}) + \gamma(s_{*1*}) + \gamma(s_{**0}) = 0 \quad \textrm{in $\ZZ/2\ZZ$.}
\]
This means precisely that $\partial^*\gamma(c) =0$. Since this holds for all cube $c$, we have $\partial^*\gamma =0$. Thanks to proposition \ref{prop:cohomology-cubical-set}, we know that there exists $\delta$ a sign assignment such that $\partial^*\delta = \gamma$. We claim that $C^\delta$ anti-commute. Let $s$ be a square in $S$ we have:
\begin{align*}
d_{1*}^\delta \circ d^{\delta}_{*0} &= (-1)^{\delta(1*) + \delta(*0)} d_{1*} \circ d_{*0} \\
&=(-1)^{1+ \gamma(s) +\delta(1*) + \delta(*0)} d_{*1} \circ d_{0*} \\
&=(-1)^{1+ (\delta(0*) + \delta(*0) + \delta(1*) +\delta(*1)) +\delta(1*) + \delta(*0)} d_{*1} \circ d_{0} \\
&= (-1)^{1+ *\delta(0*)  +\delta(*1) } d_{*1} \circ d_{0*} \\
&= - d_{*1}^\delta \circ d_{0*}^\delta.
\end{align*}
This proves the claim. We consider $\delta_1$ and $\delta_2$ two sign assignments such that $C^{\delta_1}$ and $C^{\delta_2}$ anti-commutes. This means that $\partial^* \delta_1 =\partial^* \delta_2 = \gamma$. Hence $\partial^*(\delta_1+\delta_2) =0$. Thanks to proposition~\ref{prop:cohomology-cubical-set}, there exists a $\ZZ/2\ZZ$-linear form $\kappa$ on the $\ZZ/2\ZZ$-vector space generated by vertices of $S$ such that $\partial^* \kappa = \delta_1 +\delta_2$.
The map $f_\kappa: C\to C$ defined on $C_v$ by $(f_\kappa)_{|C_v}:=(-1)^{\kappa(v)} \id_{C_v}$ satisfies:
$f_\kappa\circ d^{\delta_1} = d^{\delta_2} \circ f_\kappa$ and provides an isomorphism between the complexes $C^{\delta_1}$ and $C^{\delta_2}$.
\end{proof}


\bibliographystyle{alpha}
\bibliography{../../Latex/biblio}

\end{document}